\newtheorem{thm}{Theorem}[section]
\newtheorem{theorem}[thm]{Theorem}
\newtheorem{lemma}[thm]{Lemma}
\newtheorem{prop}[thm]{Proposition}
\newtheorem{conjecture}[thm]{Conjecture}
\newtheorem{defn}[thm]{Definition}
\theoremstyle{remark}
\newtheorem{remark}[thm]{Remark}
\numberwithin{equation}{section}
\newcommand{\cA}{\mathcal A}
\newcommand{\cK}{\mathcal K}
\newcommand{\cN}{\mathcal N}
\newcommand{\cG}{\mathcal G}
\newcommand{\cL}{\mathcal L}
\newcommand{\cH}{\mathcal H}
\newcommand{\cO}{\mathcal O}
\newcommand{\cR}{\mathcal R}
\newcommand{\cU}{\mathcal U}
\newcommand{\cS}{\mathcal S}
\newcommand{\bbP}{\mathbb P}
\newcommand{\bbR}{\mathbb R}
\newcommand{\bbT}{\mathbb T}
\newcommand{\bbC}{\mathbb C}
\newcommand{\bbK}{\mathbb K}
\newcommand{\bbS}{\mathbb S}
\newcommand{\bbZ}{\mathbb Z}
\newcommand{\rank}{{\rm rank\ }}
\begin{document}

\newpage
\title{Geometry of integrable non-Hamiltonian systems}

\author{Nguyen Tien Zung}
\address{Institut de Mathématiques de Toulouse, UMR5219, Université Toulouse 3}
\email{tienzung@math.univ-toulouse.fr}

\date{Version 1, July 2014}
\subjclass{53C15, 58K50, 37J35, 37C85, 58K45, 37J15}
\keywords{integrable systems, non-Hamiltonian systems, 
torus actions, Rn-actions, normal forms, linearization,   
automorphism groups,  elbolic actions, toric manifolds}%

\begin{abstract} 
This is an expanded version of the lecture notes for a minicourse that
I gave at a summer school called ``Advanced Course on Geometry and Dynamics of
Integrable Systems'' at CRM Barcelona, 9--14/September/2013.
In this text we study the following aspects of integrable non-Hamiltonian systems:
local and semi-local normal forms and associated torus actions for integrable systems, 
and the geometry of integrable systems of type $(n,0)$. Most of the results presented in this
text are very recent, and some theorems in this text are even original in the sense that
they have not been written down explicitly elsewhere.
\end{abstract}

\maketitle 

{\small \tableofcontents }

\section{Introduction}

This text is an expanded version of the lecture notes for a minicourse that
I gave at a summer school called ``Advanced Course on Geometry and Dynamics of
Integrable Systems'' organized by Vladimir Matveev, Eva Miranda and Francisco Presas
at Centre de Recerca Matemàtica (CRM) Barcelona, 9--14/September/2013. The aim of this
minicourse is to present some geometrical aspects of integrable \emph{non-Hamiltonian}
systems. Here the adjective non-Hamiltonian does not mean that the systems in question
cannot be Hamiltonian, it simply means that we consider general dynamical systems which 
may or may not admit a Hamiltonian structure, and even when they
are Hamiltonian we can sometimes forget about their Hamiltonian nature.

The notion of integrability for Hamiltonian systems can be traced back to a paper of
Joseph Liouville in 1855 \cite{Liouville-Torus1855}, some 160 years ago. Compared to that,
the similar notion of integrability for more general dynamical (non-Hamiltonian) systems
is very young. Though integrable non-Hamiltonian systems, especially non-holonomic systems,
have been studied since more than a century ago by people like Chaplygin, Suslov, Routh, etc., 
it is only in the 1990s that people started writing explicitly about the notion of integrability 
in the non-Hamiltonian context. Some of the earlier papers on this subject are 
Fedorov and Kozlov 1995 \cite{FedorovKozlov-Suslin}, Bogoyavlenskij 1998 \cite{Bogoyavlenskij-Integrability1998}, 
Dragovic, Gajic and Jovanovic 1998 \cite{DGJ-Nonholonomic1998}
Bates and Cushman 1999 \cite{BaCu-Nonholonomic1999}, 
Stolovitch 2000 \cite{Stolovitch-Singular2000}, 
Cushman and Duistermaat 2001 \cite{CuDu-Focus2001}, etc.
Many more articles on integrable non-Hamiltonian systems have appeared recently, including a whole
issue of the journal Regular and Chaotic Dynamics \cite{Borisov-IntegrableRCD2012},
with papers by Borisov, Bolsinov, Mamaev, Kilin, Kozlov
and other people. These papers provide a lot of interesting examples of integrable non-Hamiltonian
systems, from the more simple (e.g. rolling disks, rolling balls, Chinese tops, Chaplygin's skates)
to the more complicated ones.

Though different people arrive at integrable non-Hamiltonian systems from different points of
view, they converge at the following definition, which was probably first written down explicitly by 
Bogoyavlenskij \cite{Bogoyavlenskij-Integrability1998} who called it \emph{broad integrability}: 
a dynamical system is called integrable if it admits a sufficiently large family of commuting 
vector fields and common first integrals. More precisely, we have:

\begin{defn} \label{defn:Integrable}
A vector field $X$ on a manifold $M$ is said to be \textbf{integrable of type $(p,q)$}, 
where $p \geq 1,  q \geq 0, p+q =\dim M$, if there exist $p$ vector fields $X_1 = X, X_2,\hdots,X_p$ and $q$ 
functions $F_1,\hdots,F_q$ on $M$ which satisfy the following conditions:

i) The vector fields $X_1,\hdots,X_p$ commute pairwise:
$X$: \begin{equation}[X_i, X_j] = 0 \quad \forall i, j.\end{equation}

ii) The functions $F_1,\hdots, F_q$ are common first integrals of 
$X_1,\hdots, X_p$: \begin{equation} X_i(F_j) = 0 \quad  \forall i, j.\end{equation}

iii) $X_1 \wedge X_2 \wedge \hdots \wedge X_p \ne 0$ and $dF_1 \wedge \hdots \wedge dF_q \ne 0$ almost everywhere.

Under the above conditions, we will also say that $(X_1,\hdots,X_p, F_1,\hdots,F_q)$ 
is an {\bf integrable system of type $(p,q)$}.
\end{defn}

There are many reasons why the above definition of integrability for general dynamical systems
is the \emph{right} one. Let us list some of them here:

i) Hamiltonian systems on a symplectic $2n$-dimensional manifold which are integrable in the sense of Liouville are also
integrable of type $(n,n)$ in the above sense (i.e. $n$ commuting first integrals together with their $n$ commuting
Hamiltonian vector fields). Hamiltonian systems which are integrable in various generalized senses, 
e.g. non-commutatively integrable Hamiltonian systems in the sense of Fomenko--Mischenko \cite{FoMi-Noncommutative1978},
are also integrable in the sense of the above definition. Conversely, the \emph{cotangent lifting} of an integrable
non-Hamiltonian system is an integrable Hamiltonian system in the sense of Liouville \cite{AyoulZung-Galois2010}.

ii) Liouville's theorem \cite{Liouville-Torus1855} is still valid for integrable non-Hamiltonian systems. In particular,
 under an additional regularity and compactness condition, the movement of an integrable dynamical
 system is quasi-periodic, just like in the Hamiltonian case. 

iii) The theories of normal forms for Hamiltonian and non-Hamiltonian systems are essentially the same theory. 
In particular, an analytic integrable dynamical system always admits a local analytic normalization near a singular point,
be it Hamiltonian or not \cite{Zung-Poincare2002,Zung-Birkhoff2005}. An analytic Hamiltonian vector field will admit
a local analytic Birkhoff normalization if and only if it admits a local analytic Poincaré--Dulac normalization
when forgetting about the Hamiltonian structure \cite{Zung-Poincare2002,Zung-Birkhoff2005}.
 
iv) Many other results in the theory of integrable systems do not need the Hamiltonian structure either. In particular, 
Morales--Ramis--Simo's theorem on Galoisian obstructions to meromorphic integrability \cite{MRS-Galois2007} turns out 
to be valid also for non-Hamiltonian systems \cite{AyoulZung-Galois2010}. Various topological invariants for integrable
Hamiltonian systems, e.g. the monodromy and the Chern class (see \cite{Duistermaat-globalaction-angle1980,Zung-Integrable2003})
can be naturally extended to the case of integrable non-Hamiltonian systems as well (see, e.g., \cite{CuDu-Focus2001,BKK-Monodromy2013}).

v) Integrability in both classical and quantum mechanics means some kind of commutativity, and Definition
\ref{defn:Integrable} fits well into this philosophy. In fact, the equation $X_i(F_j) = 0$ can be rewritten as
$[X_i,f_j] = 0$ using the Schouten bracket. If one considers the functions $F_i$ as zeroth-order linear differential operators,
and the vector fields $X_i$ as first-order linear differential operators on the space of functions on the manifold $M$,
then the conditions $[X_i, X_j] = 0$ and $X_i(F_j) = 0$ mean that these $p+q$ differential operators commute, like
in the case of a quantum integrable system.

vi) Sometimes it is useful to forget about the symplectic or Poisson structure and consider integrable Hamiltonian systems on the
same footing as more general integrable dynamical systems. In particular, we will show in Subsections \ref{subsub:StructurePreserving}
and \ref{subsub:AA} a simple, short, and conceptual proof of the existence of action-angle variables using this approach.

The above points show mainly the similarities between Hamiltonian and non-Hamiltonian systems. Now let us indicate some 
differences between them:

a) The class of integrable non-Hamiltonian systems is much larger than the class of integrable Hamiltonian systems. 
There are many problems coming from control theory, economics, biology, etc. which are a priori non-Hamiltonian, 
but which can still be integrable. Not every integrable system admits a Hamiltonian structure, and
the problem of Hamiltonization is a very interesting 
and non-trivial problem, even in the case of dimension 2, see, e.g., \cite{BBM-Hamiltonization2011,ZungMinh-2dim2013}.

b) The geometry and topology of integrable non-Hamiltonian systems is much richer than that of integrable
Hamiltonian systems. In particular, there are manifolds which do not admit any symplectic structure but which admit
nondegenerate integrable non-Hamiltonian systems. Integrable non-Hamiltonian systems also admit many kinds of 
interesting singularities which are not available for Hamiltonian systems.

c) Being Hamiltonian has its advantages. For example, Noether's theorem
about the relationship between symmetries and first integrals needs the symplectic form, and there is no analogue of this
theorem in the non-Hamiltonian case. There are also machanisms of integrability which are specific to Hamiltonian systems,
e.g. the fact that bi-Hamiltonian systems are automatically integrable under some mild additional assumptions. 

d) For Hamiltonian systems, reduction (with respect to symmetry groups) commutes with integrability,  i.e. a proper symmetric 
Hamiltonian system is integrable if and only if the reduced system is integrable. On the other hand, a non-Hamiltonian system
which becomes integrable after a reduction is not necessarily integrable before reduction, 
see \cite{Zung-Torus2006,Jovanovic-Symmetries2008}. 

In this text, we will concentrate on two aspects of integrable dynamical systems that I'm most familiar with. 
Namely, in Section 2, we will study local and semi-local normal forms and associated torus actions for integrable systems, 
and in Section 3 we will study the geometry of integrable systems of type $(n,0)$. This class of systems of type $(n,0)$
is a particular but very important class in the geometric study of integrable systems, because every integrable system
becomes a system of type $(n,0)$ when restricted to a common level set of the first integrals.

\section{Normal forms and associated torus actions}
\label{section:torus}

\subsection{Regular Liouville torus actions and normal forms}
\subsubsection{Liouville's theorem}

In 1855, Liouville \cite{Liouville-Torus1855} showed that if a Hamiltonian system $X_H$ on a symplectic manifold 
$(M^{2n},\omega)$ is integrable with a momentum map ${\bf F} = (F_1,\hdots, F_n): M^{2n} \to \bbR^n$, $F_1 = H$, then each 
connected component of a compact regular level set of the momentum map ${\bf F}$ is diffeomorphic to an $m$-dimensional
torus on which the vector fields $X_H, X_{F_2},\hdots, X_{F_n}$ are constant, i.e. are 
invariant with respect to the action of the torus on itself
by translations. Each such connected level set $N$ is called a {\bf Liouville torus}. The torus $\bbT^n$ acts not only 
on $N$, but also on the nearby Liouville  tori by the same arguments, and so we have a torus $\bbT^n$-action
in a tubular neigborhood of $N$, which preserves $X_H, X_{F_2},\hdots, X_{F_n}$, and 
whose orbits are regular connected compact level sets of the momentum map. This torus action is called the {\bf Liouville
torus action} near $N$. 

Liouville's theorem can be naturally extended to the case of integrable non-Hamiltonian systems:

\begin{thm}[Non-Hamiltonian version of Liouville's theorem] \label{thm:Liouville}
Assume that $(X_1,\hdots,X_p,F_1,\hdots,F_q)$ is an integrable system of type $(p,q)$ on a manifold $M$  which
is regular at a compact level set $N$.  Then in a tubular neighborhood $\cU(N)$ there is, up to automorphisms of $\bbT^p$, 
a unique free torus action
$\rho: \bbT^p \times \cU(N) \to \cU(N)$
which preserves the system (i.e. the action preserves each $X_i$ and each $F_j$)
and whose orbits are regular level sets of the system.  In particular, $N$ is diffeomorphic to  $\bbT^p$, and
$\cU(N) \cong \bbT^p \times B^q$
with periodic coordinates $\theta_1 (mod\ 1),\hdots,\theta_p (mod\ 1)$ on $\bbT^p$ and coordinates
$(z_1,\hdots, z_q)$ on a $q$-dimensional ball $B^q$, such that $F_1,\hdots, F_q$ depend only on the variables
$z_1,\hdots, z_q,$ and the vector fields $X_i$ are of the type
\begin{equation} \label{eqn:XinAA}
 X_i = \sum_{j=1}^p a_{ij}(z_1,\hdots,z_q) \frac{\partial}{\partial \theta_j}.
\end{equation}
\end{thm}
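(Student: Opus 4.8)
The plan is to adapt the classical Liouville--Arnold argument, replacing the symplectic input by the two structural facts that the $X_i$ are tangent to the common level sets and form a frame on them. First I would extract the two immediate consequences of regularity at $N$: since $dF_1\wedge\cdots\wedge dF_q\ne 0$, the level set $N$ is a compact $p$-dimensional submanifold (recall $p+q=\dim M$); and since each $X_i(F_j)=0$, every $X_i$ is tangent to $N$, while $X_1\wedge\cdots\wedge X_p\ne 0$ forces $X_1,\dots,X_p$ to span $T_mN$ at each $m\in N$. Thus $N$ carries $p$ pairwise commuting, pointwise independent vector fields, and compactness of $N$ makes their flows complete on $N$.

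The next step is to identify $N$. The commuting complete fields define an $\bbR^p$-action on $N$ through the joint flow $\phi_t=\phi^{X_1}_{t_1}\circ\cdots\circ\phi^{X_p}_{t_p}$; because the $X_i$ form a frame, this action is locally free and its orbits are open, so by connectedness it is transitive. Since $\bbR^p$ is abelian the stationary subgroup $\Gamma=\{t\in\bbR^p:\phi_t|_N=\mathrm{id}\}$ is independent of the base point, and local freeness makes it discrete, giving $N\cong\bbR^p/\Gamma$; compactness of $N$ then forces $\Gamma$ to have full rank $p$. Hence $N\cong\bbT^p$, and $\bbT^p=\bbR^p/\Gamma$ acts freely and transitively on $N$ by translations.

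To globalize over a tubular neighborhood I would take $z=(z_1,\dots,z_q):=(F_1,\dots,F_q)$ as transverse coordinates (legitimate since $dF_1\wedge\cdots\wedge dF_q\ne 0$), so that the nearby regular level sets $N_z$ are again $p$-tori carrying period lattices $\Gamma_z\subset\bbR^p$. The crux of the proof is to show that $\Gamma_z$ varies smoothly and to produce a smooth basis $\lambda_1(z),\dots,\lambda_p(z)$ of period functions; this is the one genuinely nontrivial point, everything else being bookkeeping. I would fix a smooth section $z\mapsto m(z)$ transverse to the fibers and solve $\phi_t(m(z))=m(z)$ for $t$ near each basis period $\lambda_k(z_0)$: since $\partial\phi_t(m)/\partial t_k=X_k(\phi_t(m))$, the $t$-derivative of $\phi_t$ recovers the frame $X_1,\dots,X_p$ spanning the fiber, so the implicit function theorem yields each $\lambda_k(z)$ smoothly. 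Contractibility of the ball $B^q$ then guarantees that the local choice extends to a consistent global smooth basis of $\Gamma_z$ over the whole neighborhood.

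Finally, using the period functions I would define the desired action by $\rho(\theta,m)=\phi_{\sum_k\theta_k\lambda_k(z(m))}(m)$ for $\theta\in\bbT^p=\bbR^p/\bbZ^p$. On each fiber this is precisely the quotient by the full period lattice, so $\rho$ is a well-defined free $\bbT^p$-action whose orbits are the level sets; it fixes each $F_j$ since it preserves fibers, and it preserves each $X_i$ because on a fixed fiber it is a constant-time joint flow, which commutes with the flow of $X_i$. This exhibits $\cU(N)\to B^q$ as a principal $\bbT^p$-bundle, and triviality over the contractible ball gives $\cU(N)\cong\bbT^p\times B^q$ with angle coordinates $\theta_j$ supplied by a section. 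In these coordinates each $X_i$ is tangent to the fibers and satisfies $[X_i,\partial/\partial\theta_k]=0$ by $\rho$-invariance, whence $X_i=\sum_j a_{ij}(\theta,z)\,\partial/\partial\theta_j$ with $\partial a_{ij}/\partial\theta_k=0$, yielding the stated form $X_i=\sum_j a_{ij}(z)\,\partial/\partial\theta_j$. Uniqueness up to $\mathrm{Aut}(\bbT^p)=GL(p,\bbZ)$ is then automatic, since any preserving free torus action must have the level sets as orbits and hence the intrinsic lattice $\Gamma_z$, whose basis is determined only up to $GL(p,\bbZ)$.
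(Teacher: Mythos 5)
Your proposal is correct and follows essentially the same route as the paper, which only sketches the argument as ``absolutely similar'' to the classical Liouville--Arnold case: the trivial fibration by level sets from compactness and regularity of $(F_1,\hdots,F_q)$, plus the identification of each fiber as $\bbR^p/\Gamma_z\cong\bbT^p$ via the transitive $\bbR^p$-action. Your write-up simply fills in the standard details (smooth dependence of the period lattice via the implicit function theorem, construction of $\rho$ from the period basis, and the uniqueness up to $GL(p,\bbZ)$) that the paper delegates to the cited references.
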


The proof of the above theorem is absolutely similar to the case of integrable Hamiltonian systems on symplectic manifolds,
see, e.g., \cite{Bogoyavlenskij-Integrability1998,Zung-Torus2006}. It consists of 2 main points: 1) The map
$(F_1,\hdots,F_q): \cU(N) \to \bbR^q$ from a tubular neighborhood of $N$ to $\bbR^q$ is a 
topologically trivial fibration by the level
sets, due to the compactness of $N$ and the regularity of $(F_1,\hdots,F_q)$;
2) The vector fields
$X_1,\hdots,X_p$ generate a transitive action of $\bbR^p$ on the level sets near $N$, and the level sets are compact
and of dimension $p$, which imply that each level set is a $p$-dimensional compact quotient of $\bbR^p$, i.e. a torus.

Similarly to the Hamiltonian case, the regular level sets in the above theorem are called 
{\bf Liouville tori}, and the torus action is also called the {\bf Liouville torus action}. 
Theorem \ref{thm:Liouville} shows that the  flow of the vector field $X=X_1$ of an integrable system
is {\bf quasi-periodic} under some natural compactness and regularity conditions. This quasi-periodicity
is the most fundamental geometrical property of proper integrable dynamical systems.

\begin{remark}
  There is also a version of Theorem \ref{thm:Liouville} for integrable \emph{stochastic} dynamical systems, 
see \cite{ZungThien-SDS2014}. One may suspect that a version of this Liouville torus action
exists for quantum systems as well.
\end{remark}

\subsubsection{Structure-preserving property of Liouville torus actions}
\label{subsub:StructurePreserving}

The Liouville torus actions, and other associated torus actions that we will discuss later in this section,
preserve not only the system, but has the following much stronger 
{\bf structure preserving property}: roughly speaking, anything which is preserved by the system 
is also be preserved by these torus actions. In other words, one may view these torus actions as a kind of
{\bf double commutant}. More precisely, we have:

 \begin{thm}[\cite{Zung-AA2014}]  
 \label{thm:TorusPreservesStructureII}
Let $(X_1\hdots,X_p, F_1,\hdots, F_q)$ be a smooth integrable system of type $(p,q)$ on a manifold $M$ with a Liouville torus
 $N = \{F_1 = c_1, \hdots F_q = c_q\}$.
Suppose that $\cG \in \Gamma (\otimes^kTM \otimes^h T^*M)$ is a smooth tensor field which satisfies at least
one of the following two additional conditions:

i) $\cG$ is invariant with respect to the vector fields $X_1\hdots, X_p$.

ii) $\cG$ is invariant with respect to the viector field $X_1$, and moreover the orbits of $X_1$ are dense in a dense family
of Liouville tori in a tubular neighborhood of $N$. In other words, if we write $X_1 = \sum_{i=1}^p a_i(z_1,\hdots,z_q)
\partial/\partial_{\theta_i}$ in a canonical coordinate system as in Formula \ref{eqn:XinAA}, 
then for a dense family of the values of $(z_1,\hdots, z_q)$,
the numbers $a_1(z_1,\hdots,z_q), \hdots a_p(z_1,\hdots,z_q)$ are incommensurable.

Then the tensor field $\cG$ is also invariant with respect to the Liouville torus $\bbT^p$-action in a tubular neighborhood of $N$. 
\end{thm}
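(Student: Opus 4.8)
The plan is to exploit the fact that the torus action is generated by the time-one flows of a limit of rational combinations of $X_1,\dots,X_p$, so that invariance under the vector fields forces invariance under the torus. First I would work in the canonical coordinates $(\theta_1,\dots,\theta_p,z_1,\dots,z_q)$ on $\cU(N) \cong \bbT^p \times B^q$ provided by Theorem \ref{thm:Liouville}, in which each $X_i = \sum_j a_{ij}(z)\,\partial/\partial\theta_j$ and the $F_j$ depend only on $z$. The infinitesimal generators of the Liouville $\bbT^p$-action are exactly the constant-coefficient fields $\partial/\partial\theta_1,\dots,\partial/\partial\theta_p$. So the whole theorem reduces to the following claim: if a smooth tensor field $\cG$ satisfies the stated hypothesis, then $\cL_{\partial/\partial\theta_k}\cG = 0$ for every $k$, where $\cL$ denotes the Lie derivative. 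Invariance of $\cG$ under a field $X_i$ is precisely $\cL_{X_i}\cG = 0$.

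In case (i), I would fix a single Liouville torus $N_z = \{z = \text{const}\}$ on which the coefficient vector $(a_{i1}(z),\dots,a_{ip}(z))$, ranging over $i=1,\dots,p$, spans $\bbR^p$ --- this holds on a dense (in fact full-measure) set of $z$ by the nondegeneracy condition iii) of Definition \ref{defn:Integrable}, since $X_1\wedge\cdots\wedge X_p \neq 0$ means the matrix $(a_{ij}(z))$ is invertible almost everywhere. On such a torus, the constant fields $\partial/\partial\theta_k$ are $\bbR$-linear combinations of the restricted $X_i|_{N_z}$, and since Lie derivative is $\bbR$-linear in the vector-field slot, $\cL_{X_i}\cG = 0$ for all $i$ gives $\cL_{\partial/\partial\theta_k}(\cG|_{N_z}) = 0$. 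Running this over the dense set of good $z$ and invoking continuity of $\cG$ in $z$, I conclude $\cL_{\partial/\partial\theta_k}\cG = 0$ identically, which is the desired $\bbT^p$-invariance.

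In case (ii) the obstacle is genuine: we only know $\cL_{X_1}\cG = 0$ for the single field $X_1$, whose coefficient vector $(a_1(z),\dots,a_p(z))$ spans only a line, not all of $\bbR^p$. The idea is to use the density/incommensurability hypothesis to recover the full torus action from the closure of the flow of $X_1$. On a torus $N_z$ where the ratios $a_1(z):\cdots:a_p(z)$ are incommensurable, the orbit of $X_1$ is dense, so the closure of the one-parameter flow $\{\exp(tX_1)\}$ inside the translation group $\bbT^p$ is the whole $\bbT^p$. Because $\cG|_{N_z}$ is invariant under each $\exp(tX_1)$ and is continuous (indeed smooth), it is invariant under the entire closure of this subgroup, hence under all of $\bbT^p$, and therefore under each generator $\partial/\partial\theta_k$. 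Again I would run this over the dense family of good tori and pass to the limit using continuity of $\cG$ in the transverse variables $z$.

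The main subtlety I expect is justifying the passage from infinitesimal statements to the group-closure argument in case (ii): strictly, the density of the $X_1$-orbit gives invariance of $\cG$ under a dense subgroup of $\bbT^p$, and one must be careful that continuity of $\cG$ as a tensor field --- rather than merely as a function --- suffices to propagate invariance to the closure. I would phrase this by noting that for each fixed point the value of the pulled-back tensor $\rho_g^*\cG$ depends continuously on $g \in \bbT^p$, and equals $\cG$ on a dense set of $g$, hence everywhere. The remaining loose end is the interpolation in the $z$-direction: having established invariance on a dense set of tori, smoothness of $\cG$ lets me conclude invariance on all of $\cU(N)$, since $\cL_{\partial/\partial\theta_k}\cG$ is itself a continuous tensor field vanishing on a dense set.
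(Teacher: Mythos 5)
Your reduction to showing $\cL_{\partial/\partial\theta_k}\cG=0$ is the right target, but the step that gets you there is broken in both cases, and for the same reason: the Lie derivative $\cL_Y\cG$ (equivalently the pullback $\Phi^*\cG$) at a point of a torus $N_z$ depends on $Y$ (resp.\ on $d\Phi$) in a full neighborhood of that point, not merely on the restriction of $Y$ (resp.\ $\Phi$) to $N_z$. In case (i) you write $\partial/\partial\theta_k=\sum_i b_{ki}(z)X_i$ and invoke ``$\bbR$-linearity of the Lie derivative in the vector-field slot''; but the coefficients $b_{ki}(z)$ are constant only on each individual torus, not on a neighborhood, and $\cL_{fX}\cG=f\,\cL_X\cG+(\text{terms involving }df)$, so the correction terms involving $db_{ki}$ do not vanish and are exactly what is at stake. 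In case (ii) the flow $\exp(tX_1):(\theta,z)\mapsto(\theta+ta(z),z)$ is \emph{not} an element $\rho_{g}$ of the torus action for any $g$: it agrees with a translation on each individual torus but shears the transverse directions, and its differential contains the term $t\,\partial a/\partial z$, which blows up as $t\to\infty$. Hence $(\exp(t_nX_1))^*\cG=\cG$ does not pass to the limit to give $\rho_g^*\cG=\cG$; your closing continuity argument presupposes $\rho_g^*\cG=\cG$ for a dense set of $g$, which is precisely what has not been established for any $g\neq 0$. A minimal illustration with $p=q=1$: for $\cG=d\theta$ and $X_1=a(z)\partial/\partial\theta$ one has $(\exp(tX_1))^*d\theta=d\theta+t\,a'(z)\,dz$ and $\cL_{X_1}d\theta=a'(z)\,dz$, even though the restriction of the flow to each torus is a translation. (This is not a counterexample to the theorem, since $d\theta$ is not $X_1$-invariant; it is a counterexample to the restriction-to-a-torus reasoning.)

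What your outline is missing, and what the paper supplies, is a device for controlling the transverse components of $\cG$. The paper filters $\Gamma(\otimes^kTM\otimes^hT^*M)$ by the number of $\partial/\partial z_j$ and $d\theta_i$ factors appearing in a monomial term; $\cL_{X_1}$ preserves this filtration and, on the top graded piece, acts on the coefficient functions simply as the directional derivative $X_1(\cdot)$, because all the correction terms of the kind computed above strictly drop the filtration degree. Your density-of-orbits argument is then applied \emph{to these scalar coefficients}, showing they are constant on the tori; applied to $\hat\cG=\cG-\overline{\cG}$, which has zero torus-average, this forces the top-level part of $\hat\cG$ to vanish, and one inducts downward through the filtration to conclude $\hat\cG=0$. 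So the density and continuity mechanism you propose is indeed the engine of the proof, but it can only be run on coefficient functions one filtration level at a time, not on the tensor field all at once.
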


The tensor field $\cG$ in the above theorem can be of any nature. For example, when $\cG$ is an infinitesimal generator of a Lie group
action, we obtain that if a connected Lie group action preserves the system then it commutes with the Liouville torus action. 
When the system is osochore, i.e. preserves a volume form, then that volume form is also preserved by the Liouville torus action, etc.

\begin{proof} 
We will assume that Condition ii) is satisfied.
(The case when Condition i) is satisfied is absolutely similar). Fix a canonical coordinate
system  $(\theta_1 (mod\ 1), \hdots, \theta_p (mod\ 1), z_1,\hdots, z_q)$ in a tubular neighborhood
$\cU(N)$ of $N$ as given by Theorem \ref{thm:Liouville}. We will make a filtration of the space 
$\Gamma (\otimes^kTM \otimes^h T^*M)$ of  tensor fields of contravariant order $k$  and contravariant order $h$ as follows:

The subspace $T^{h,k}_s$ consists of sections of  $\otimes^kTM \otimes^h T^*M$ whose expression in the coordinates
$(\theta_1 (mod\ 1), \hdots, \theta_p (mod\ 1), z_1,\hdots, z_q,w_1,\hdots,w_r)$
contains only  terms of the type
\begin{equation}
\frac{\partial}{\partial \theta_{i_1}} \otimes \hdots \otimes \frac{\partial}{\partial \theta_{i_a}} \otimes
\frac{\partial}{\partial z_{j_1}} \otimes \hdots \otimes \frac{\partial}{\partial z_{i_b}} \otimes
d\theta_{i'_1} \otimes \hdots \otimes d\theta_{i'_c} \otimes
dz_{j'_1} \otimes \hdots \otimes dz_{j'_d}
 \end{equation}
with $b+c \leq s$. For example,
\begin{equation} 
 T^{h,k}_0 =  \left\{ \sum_{i,j'} f_{i,j'}\frac{\partial}{\partial \theta_{i_1}} \otimes \hdots \otimes \frac{\partial}{\partial \theta_{i_h}} \otimes
dz_{j'_1} \otimes \hdots \otimes dz_{j'_k} \right\} .
\end{equation}
Put $T^{h,k}_{-1}= \{0\}$.  It is clear that
\begin{equation}
 \{0\} = T^{h,k}_{-1} \subset T^{h,k}_ 0 \subset T^{h,k}_ 1 
\subset \hdots \subset T^{h,k}_{h+k} = \Gamma (\otimes^kTM \otimes^h T^*M).
\end{equation}
It is also clear that the above filtration is stable under the Lie derivative of the vector field $X_1$, i.e.
we have
\begin{equation}
\cL_{X_{1}}\Lambda \in T^{h,k}_s \ \ \forall  s=0,\hdots,k+h, \ \forall \Lambda \in T^{h,k}_s.
\end{equation}

Since $\cL_{X_1} \cG = 0$  by our hypothesis, and the Liouville torus action
commutes with the vector field $X_1$,  we also have that $\cL_{X_1} \overline \cG = 0,$ 
where the overline means the average of a tensor with respect to the Liouville torus action.
Thus we also have
\begin{equation}
\cL_{X_1}\hat {\cG} = 0,
\end{equation}
where 
\begin{equation}
 \hat \cG = \cG - \overline \cG
\end{equation}
has average equal to 0.

The equality
$\cL_{X_1} \hat \cG = 0$ 
implies that the coefficients of $\hat \cG$ of the terms which are not in $T^{h,k}_{h+k-1}$, i.e. the terms of the type
\begin{equation}
\frac{\partial}{\partial z_{j_1}} \otimes \hdots \otimes \frac{\partial}{\partial z_{i_h}} \otimes
d\theta_{i'_1} \otimes \hdots \otimes d\theta_{i'_k}, 
 \end{equation}
are  invariant with respect to $X_1$. It means that these coefficient functions are constant on the orbits of $X_1$. By continuity,
it means that they are constant on Liouville tori for which the orbits of $X_1$ are dense. But since the family of such Liouville tori
is dense in the space of all Liouville tori near $N$, it implies that these functions are constant on every Liouville torus near $N$,
i.e. they are constant with respect to the Liouville $\bbT^p$-action in a neighborhood of $N$.
But any $\bbT^p$-invariant function with average 0 is a trivial function, so in fact $\hat \cG$ does not contain any term
outside of $T^{h,k}_{h+k-1},$ i.e. we have:
\begin{equation}
\hat \cG \in  T^{h,k}_{h+k-1}.
\end{equation}
By the same arguments, one can verify that if $\hat \cG \in  T^{h,k}_s$ with $s \geq 0$ 
then in fact $\hat \cG \in  T^{h,k}_{s-1}.$ So by induction we have $\hat \cG = 0,$ i.e. $\cG = \overline \cG$ is invariant
with respect to the Liouville torus action.
\end{proof}

Theorem \ref{thm:TorusPreservesStructureII} 
and its method of proof can be extended 
to the case of other invariant structures, which are similar to tensor fields, 
but which are not tensor fields strictly speaking. In particular, 
an analogue of theorem \ref{thm:TorusPreservesStructureII} for invariant
Dirac structures was in obtained \cite{Zung-AA2014}, and probably an analogue
of Theorem \ref{thm:TorusPreservesStructureII} for invariant contact structures 
(without a contact 1-form) is probably also true, with a similar proof.

In the case of linear differential operators, we also have the following result, 
whose proof is absolutely similar to the proof of Theorem \ref{thm:TorusPreservesStructureII}:

\begin{thm}[\cite{ZungThien-SDS2014}]
\label{thm:actiondiff}
Under the assumptions of Theorem \ref{thm:Liouville}, let $\Lambda$ be a linear differential operator on $M$ which satisfies at 
least one of the following two conditions :

i) $\Lambda$ is invariant with respect to $X_1,\hdots, X_p.$

ii) $\Lambda$ in invariant with respect to $X_1$, and moreover, the orbit of $X_1$ is dense in a dense family of orbits of the Liouville 
$\mathbb{T}^p$-action near $N.$

Then $\Lambda$ is invariant with respect to the Liouville $\mathbb{T}^p$-action in a neighborhood of $N.$
\end{thm}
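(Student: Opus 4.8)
The plan is to mimic the proof of Theorem \ref{thm:TorusPreservesStructureII} as closely as possible, transporting the filtration-and-averaging argument from tensor fields to linear differential operators. First I would fix a canonical coordinate system $(\theta_1\,(mod\ 1),\hdots,\theta_p\,(mod\ 1),z_1,\hdots,z_q)$ on a tubular neighborhood $\cU(N)$ as provided by Theorem \ref{thm:Liouville}, and work in the case when Condition ii) holds (the case of Condition i) being entirely analogous, as the author already notes). A linear differential operator $\Lambda$ of order $m$ may be written locally as a finite sum of monomials $g(\theta,z)\,\partial^{\alpha}_{\theta}\partial^{\beta}_{z}$ with $|\alpha|+|\beta|\le m$. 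The key observation is that a differential operator is determined by its symbol together with lower-order data, so one can set up a filtration $\{0\}=S_{-1}\subset S_0\subset\hdots\subset S_m$ on the space of operators of order $\le m$ that plays exactly the role of $T^{h,k}_s$ in the tensor proof. I would index this filtration so that the top stratum $S_m/S_{m-1}$ records the purely $\partial_{\theta}$-part of the principal symbol, i.e. the coefficients of $\partial^{\alpha}_{\theta}$ with $|\alpha|=m$ and no $\partial_z$ derivatives.

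The engine of the argument is that this filtration is stable under $\cL_{X_1}=[X_1,\,\cdot\,]$, the commutator action of $X_1$ on operators. Since $X_1=\sum_j a_j(z)\,\partial/\partial\theta_j$ has coefficients depending only on $z$ and contains no $\partial_z$-part, commuting $X_1$ against a monomial $g(\theta,z)\,\partial^{\alpha}_{\theta}\partial^{\beta}_{z}$ either differentiates the coefficient $g$ (keeping the same order) or, when $X_1$ is pushed past a $\partial_z$ factor, produces a term in which a $\partial_z$ has been replaced by a $\partial_\theta$ acting on some $a_j(z)$, which strictly lowers the number of $z$-derivatives. Hence $\cL_{X_1}$ respects the filtration, exactly as in the tensor case. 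I would then introduce $\overline\Lambda$, the average of $\Lambda$ over the Liouville $\bbT^p$-action, and set $\hat\Lambda=\Lambda-\overline\Lambda$. Because the torus action commutes with $X_1$, from $\cL_{X_1}\Lambda=0$ we get $\cL_{X_1}\overline\Lambda=0$ and therefore $\cL_{X_1}\hat\Lambda=0$, with $\hat\Lambda$ having torus-average zero.

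The inductive step then runs downward through the filtration. Starting from the top stratum, the equation $\cL_{X_1}\hat\Lambda=0$ forces the leading coefficients (those attached to pure $\partial^{\alpha}_{\theta}$ monomials) to be annihilated by $X_1$, hence constant along the orbits of $X_1$; by the density hypothesis in Condition ii) and continuity these coefficients are constant on every Liouville torus near $N$, i.e. $\bbT^p$-invariant, and having torus-average zero they must vanish. This places $\hat\Lambda$ in the next lower stratum, and repeating the argument level by level gives $\hat\Lambda=0$, so $\Lambda=\overline\Lambda$ is $\bbT^p$-invariant.

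The main obstacle I expect is making the filtration genuinely intrinsic and $\cL_{X_1}$-stable at the level of operators rather than at the level of local coordinate expressions: unlike tensor fields, differential operators do not split canonically into components of a given order (the order is well defined but the decomposition into monomials depends on the chart), so one must check that the strata $S_s$ and the key vanishing conclusions are coordinate-independent, or at least consistent across overlapping charts of the product structure $\bbT^p\times B^q$. Concretely, the delicate point is verifying that commuting $X_1$ past the $\partial_z$-derivatives never raises the count of $\partial_z$ factors and that the leading-order coefficients transform as the fibers of the filtration require, so that the ``average zero plus $\bbT^p$-invariant implies zero'' punchline applies at each stage. Once the filtration is set up correctly, the remainder is a routine translation of the tensor-field induction, which is why the author rightly calls the proof ``absolutely similar.''
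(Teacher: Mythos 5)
Your overall architecture is exactly the intended one (the paper gives no separate proof of Theorem \ref{thm:actiondiff}; it only says the argument is ``absolutely similar'' to that of Theorem \ref{thm:TorusPreservesStructureII}, and your filtration-plus-averaging-plus-downward-induction scheme, with the density hypothesis converting $X_1$-invariance of coefficients into $\bbT^p$-invariance, is precisely that adaptation). Your worry about coordinate-independence is also not a real obstacle: the canonical coordinates of Theorem \ref{thm:Liouville} are global on $\cU(N)\cong\bbT^p\times B^q$ up to translations of $\theta$, which preserve every monomial $\partial_\theta^\alpha\partial_z^\beta$, so the decomposition of $\Lambda$ into such monomials with globally defined coefficients is unambiguous.

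However, there is a genuine error in the way you set up the filtration, and as written your first inductive step fails. You correctly compute that $[X_1,\,g\,\partial_\theta^\alpha\partial_z^\beta]=X_1(g)\,\partial_\theta^\alpha\partial_z^\beta+(\text{terms with strictly fewer }\partial_z\text{ factors})$, i.e.\ the commutator only \emph{lowers} the number of $\partial_z$ factors. For the filtration to be preserved by $\cL_{X_1}=[X_1,\cdot]$ and for the induced map on each graded piece to be pure coefficient differentiation, you must therefore take $S_s$ to be the span of monomials with \emph{at most} $s$ factors of $\partial_z$; the top stratum $S_m/S_{m-1}$, which is killed first, then consists of the monomials with the \emph{maximal} number of $\partial_z$ factors, and the pure $\partial_\theta^\alpha$ monomials sit at the \emph{bottom}, in $S_0$, and are handled last. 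This mirrors the tensor case, where the terms eliminated first are $\frac{\partial}{\partial z_{j_1}}\otimes\hdots\otimes d\theta_{i'_1}\otimes\hdots$, i.e.\ those with maximal count of $\partial/\partial z$ and $d\theta$ factors, not the terms of type $T^{h,k}_0$. With your indexing (top stratum $=$ pure $\partial_\theta$ part), the equation $\cL_{X_1}\hat\Lambda=0$ projected onto that piece does \emph{not} reduce to $X_1(g_\alpha)=0$: a monomial such as $g\,\partial_{z_1}$ contributes $-g\sum_j(\partial a_j/\partial z_1)\,\partial_{\theta_j}$, which lands exactly in the pure $\partial_\theta$ part and pollutes the equation, so you cannot conclude that the leading coefficients are annihilated by $X_1$. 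The fix is immediate and consistent with your own commutator computation — reverse the indexing, kill the coefficients of the monomials with the most $\partial_z$ factors first, and descend to $S_0$ — but the induction must be run in that direction for the argument to close.
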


\subsubsection{Action-angle variables}
\label{subsub:AA}

In the case of integrable Hamiltonian systems on symplectic manifolds, one can deduce easily from Theorem \ref{thm:TorusPreservesStructureII} 
the following famous theorem about the existence of action-angle variables: 

\begin{thm}
\label{thm:AA-Classical}
If $N$ is a Liouville torus of a integrable Hamiltonian system
on a symplectic manifold $(M^{2n},\omega)$ given by a momentum map ${\bf F}= (F_1,\hdots, F_n): M^{2n} \to \bbR^n$ then in a neigborhood $\cU(N)$
of $N$ there is a canonical system of coordinates $(\theta_1 (mod 1), \hdots, \theta_n (mod 1), z_1,\hdots, z_n)$, called {\bf action-angle variables}
in which the functions $F_1, \hdots, F_n$ are functions of the action variables $z_1,\hdots, z_n$ only and the symplectic structure $\omega$
has the canonical form $\omega = \sum_{i=1}^n dz_i \wedge d \theta_i$. 
\end{thm}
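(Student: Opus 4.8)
The plan is to deduce the existence of action-angle variables from the structure-preserving property of the Liouville torus action (Theorem \ref{thm:TorusPreservesStructureII}) applied to the symplectic form $\omega$. First I would observe that an integrable Hamiltonian system on $(M^{2n},\omega)$ with momentum map ${\bf F}=(F_1,\hdots,F_n)$ is an integrable system of type $(n,n)$ in the sense of Definition \ref{defn:Integrable}, taking the commuting Hamiltonian vector fields $X_i = X_{F_i}$ and the functions $F_i$; the commutativity $[X_{F_i},X_{F_j}]=0$ follows from $\{F_i,F_j\}=0$, and $X_{F_i}(F_j)=\{F_j,F_i\}=0$. Thus Theorem \ref{thm:Liouville} applies and furnishes, in a tubular neighborhood $\cU(N)$, coordinates $(\theta_1(mod\ 1),\hdots,\theta_n(mod\ 1), z_1,\hdots,z_n)$ together with a free Liouville $\bbT^n$-action whose orbits are the Liouville tori $\{F=c\}$, with the $F_i$ depending only on the $z_j$.

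The key step is to apply Theorem \ref{thm:TorusPreservesStructureII} to the tensor field $\cG = \omega \in \Gamma(\otimes^2 T^*M)$. Since each $X_{F_i}$ is a Hamiltonian vector field, it preserves $\omega$ (i.e. $\cL_{X_{F_i}}\omega = 0$, as $\cL_{X_H}\omega = d\,\iota_{X_H}\omega = d(dH) = 0$), so Condition i) of that theorem holds. Hence $\omega$ is invariant under the Liouville $\bbT^n$-action. Next I would use this invariance to normalize the angle coordinates: averaging the existing angle coordinates over the torus action (or equivalently using that the action is generated by $n$ commuting vector fields $\partial/\partial\theta_i$ which preserve $\omega$) shows that $\iota_{\partial/\partial\theta_i}\omega$ is a closed $1$-form that is basic, i.e. pulls back from the base. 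Writing $\omega$ in the $(\theta,z)$ coordinates and using both its closedness ($d\omega=0$) and its $\bbT^n$-invariance (coefficients independent of $\theta$), one finds that $\omega$ has no $d\theta_i\wedge d\theta_j$ component on each torus, that the $\theta$-$z$ block is of full rank, and that the coefficients are functions of $z$ alone.

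The remaining work is a change of coordinates bringing $\omega$ to the canonical form. I would set $\omega = \sum_{i,j} b_{ij}(z)\,dz_i\wedge d\theta_j + \tfrac12\sum_{i,j} c_{ij}(z)\,dz_i\wedge dz_j$ and use $d\omega=0$ to deduce that the matrix $(b_{ij})$ can be written as a gradient, so that there exist new action variables $I_i=I_i(z)$ (the classical action integrals $I_i=\oint_{\gamma_i}\alpha$ over the basic cycles, where $d\alpha=\omega$ locally) with $\sum_i dI_i\wedge d\theta_i$ absorbing the mixed block; then a shift of the angles $\theta_i \mapsto \theta_i + (\text{function of }z)$ kills the $dz_i\wedge dz_j$ part while keeping the $\theta_i$ periodic of period $1$. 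After this substitution $\omega = \sum_{i=1}^n dI_i\wedge d\theta_i$, and renaming $I_i$ as $z_i$ gives the claimed canonical coordinates. The main obstacle I anticipate is the final normalization of the period lattice and the angle shift: one must verify that the action variables $I_i$ defined via the symplectic form are compatible with the integer period lattice of the Liouville torus action (so that the $\theta_i$ remain genuine $(mod\ 1)$ angles), and that the closed mixed $1$-forms integrate to single-valued angle corrections; this is precisely where the topological structure of the torus fibration and the closedness of $\omega$ must be used together, rather than either one alone.
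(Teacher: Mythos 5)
Your proposal is correct in outline and begins exactly as the paper does: both apply Theorem \ref{thm:TorusPreservesStructureII} to $\cG=\omega$ to conclude that the Liouville $\bbT^n$-action preserves the symplectic form. After that the two arguments diverge. The paper finishes conceptually: since $N$ is Lagrangian, $\omega$ is exact on $\cU(N)$ (and the $1$-forms $\iota_{Y_i}\omega$ have vanishing periods), so the invariant torus action is \emph{Hamiltonian}; its momentum map $(z_1,\hdots,z_n)$ \emph{is} the set of action variables by definition, the angles are chosen so that a Lagrangian zero-section exists and $\partial/\partial\theta_i=X_{z_i}$, and then two $\bbT^n$-invariant forms agreeing on that section must agree everywhere. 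You instead carry out the classical Mineur--Arnold computation: write $\omega$ in the $(\theta,z)$ coordinates, use invariance and closedness to constrain the coefficients, define $I_i=\oint_{\gamma_i}\alpha$, and shift the angles. Both routes work; yours is more explicit but pushes the real content into the step you correctly flag at the end (compatibility of the $I_i$ with the integer period lattice, i.e.\ that $X_{I_i}=\partial/\partial\theta_i$), which is precisely the verification the paper's route avoids by taking the momentum map of the already-normalized period-$1$ torus action as the definition of the actions.

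One genuine slip to fix: you claim that closedness plus $\bbT^n$-invariance imply that $\omega$ has no $d\theta_i\wedge d\theta_j$ component. That is false as stated --- a term $\tfrac12\sum a_{ij}\,d\theta_i\wedge d\theta_j$ with \emph{constant} $a_{ij}$ is both closed and invariant, and closedness only forces the $a_{ij}(z)$ to be constant. The vanishing of this block is the statement that the Liouville tori are Lagrangian, and it must come from the involutivity $\{F_i,F_j\}=0$ (so that $\omega(X_{F_i},X_{F_j})=0$ on the span of the tangent space), which you already have in hand from the first paragraph; the same Lagrangian condition is what makes $\omega$ exact on $\cU(N)$ and hence makes your primitive $\alpha$ and the period integrals $\oint_{\gamma_i}\alpha$ well defined. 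With that justification inserted, your argument goes through.
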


Remark that the above action-angle variables theorem was first proved by Henri Mineur in 1935 \cite{Mineur-AA1935,Mineur-AA1937}, though it is
oftren called Arnold--Liouville theorem. It would be more appropriate to call it {\bf Liouville--Mineur theorem}.

\begin{proof}
According to Theorem \ref{thm:TorusPreservesStructureII}, the symplectic structure is preserved by the Liouville torus $\bbT^n$-action, because it is
preserved by the Hamiltonian vector fields $X_{F_1},\hdots, X_{F_n}$ of the system. But since $N$ is a Lagrangian torus, i.e. the pull-back of $\omega$
to $N$ is trivial, the cohomology class of $\omega$ in a tubular neighborhood $\cU(N)$ of $N$ is trivial, and so this action is a Hamiltonian action 
in $\cU(N)$, i.e. it is given by a momentum map 
$(z_1,\hdots z_n): \cU(N) \to \bbR^n$. Define periodic coordinates $(\theta_1,\hdots, \theta_n)$ on $\cU(N)$ in such a way that the zero section
$S = \{\theta_1 = 0, \hdots, \theta_n = 0\}$ is a Lagrangian submanifold and $\partial / \partial \theta_i = X_{z_i}$ for all $i = 1,\hdots, n$. Then
one verifies easily that $\omega = \sum_{i=1}^n dz_i \wedge d \theta_i$ on $S$. But since both forms are $\bbT^n$-invariant, it implies that they are
equal everywhere in a neighborhood of $N$.
\end{proof}

Many generalizations of Liouville-Mineur theorem, including Nekhoroshev's theorem  
about partial action-angle variables for noncommutatively
integrable Hamiltonian systems (when the Liouville tori are isotropic instead of Lagrangian) \cite{Nekhoroshev-Integrable1972}, 
Fass\`o--Sansonetto's generalization of action-angle variables for integrable Hamiltonian systems on  
almost-symplectic manifolds (with a nondegenerated bu non-closed 2-form) \cite{FaSa-AlmostSymplectic2007},
Laurent--Miranda--Vanhaecke's generalization of action-angle variables theorem to the case of Poisson manifolds
\cite{LMV-AAPoisson2011}, and more recently, our results about action-angle variables on Dirac manifolds \cite{Zung-AA2014},
can also be deduced from Theorem \ref{thm:TorusPreservesStructureII} and its variations. 

\subsection{Local normal forms of singular points} 

\subsubsection{Poincaré--Birkhoff normal forms}

It is well-known that every smooth or analytic
vector field near an equilibrium point admits a formal Poincaré-Birkhoff normal form (Birkhoff in the
Hamiltonian case, and Poincaré-Dulac in the non-Hamiltonian case), see, e.g.,
\cite{Bruno-Local1989,Roussarie-Asterisque1975}. Let us briefly recall this
Poincaré--Birkhoff normalization theory here.

Let $X$ be a given formal or analytic vector field in a neighborhood of $0$ in
$\bbK^m$, where $\bbK = \bbR$ or $\bbC$, with $X(0) = 0$. When $\bbK = \bbR$,
we may also view $X$ as a complex vector field by complexifying it. Denote by
\begin{equation}
X = X^{(1)} + X^{(2)} + X^{(3)} + ...
\end{equation}
the Taylor expansion of $X$ in some local system of coordinates, where $X^{(k)}$ is
a homogeneous vector field of degree $k$ for each $k \geq 1$.

In the Hamiltonian case on a symplectic manifold, $X = X_H$, $m= 2n$,
$\bbK^{2n}$ has a standard symplectic structure, and $X^{(j)} =
X_{H^{(j+1)}}$, where $H^{(j+1)}$ is the term of degree $j+1$ in the Taylor expansion of $H$
in a local canonical system of coordinates.

The algebra of linear
vector fields on ${\mathbb K}^{m}$, under the standard Lie bracket, is nothing but
the reductive algebra $gl(m, \bbK) = sl(m,\bbK) \oplus \bbK$. In particular, we have
\begin{equation}
X^{(1)} = X^s + X^{nil},
\end{equation}
where $X^s$ (resp., $X^{nil}$) denotes the semi-simple (resp., nilpotent) part of
$X^{(1)}$. There is a complex linear system of coordinates $(x_j)$ in ${\mathbb
C}^{m}$ which puts $X^s$ into diagonal form:
\begin{equation}
X^s = \sum_{j=1}^m \gamma_j x_j \partial / \partial x_j ,
\end{equation}
where $\gamma_j$ are complex coefficients, called {\bf eigenvalues} of $X$
(or $X^{(1)}$) at $0$.

In the Hamiltonian case, $X^{(1)} \in sp(2n,\bbK)$ which is a simple Lie
algebra, and we also have the decomposition $X^{(1)} = X^s + X^{nil}$,
which corresponds to the decomposition
\begin{equation}
H^{(2)} = H^s + H^{nil}
\end{equation}
There is a complex canonical linear system of coordinates $(x_j,y_j)$ in ${\mathbb
C}^{2n}$ in which $H^s$ has diagonal form:
\begin{equation}
H^s = \sum_{j=1}^n \lambda_j x_j y_j ,
\end{equation}
where $\lambda_j$ are complex coefficients, called {\bf frequencies} of
$H$ (or $H^{(2)}$) at $0$.

For each natural number $k \geq 1$, the vector field $X^s$ acts linearly on the
space of homogeneous vector fields of degree $k$ by the Lie bracket, and the
monomial vector fields are the eigenvectors of this action:
\begin{equation}
[\sum_{j=1}^m \gamma_j x_j \partial / \partial x_j , x_1^{b_1}x_2^{b_2}...x_n^{b_n}
\partial / \partial x_l] = (\sum_{j=1}^n b_j\gamma_j - \gamma_l) x_1^{b_1}x_2^{b_2}...x_n^{b_n}
\partial / \partial x_l .
\end{equation}

When an equality of the type
\begin{equation}
\sum_{j=1}^m b_j\gamma_j - \gamma_l = 0
\end{equation}
holds for some nonnegative integer $m$-tuple $(b_j)$ with $\sum b_j \geq 2$, we will
say that the monomial vector field $x_1^{b_1}x_2^{b_2}...x_m^{b_m}
\partial / \partial x_l$ is a {\bf resonant term},
and that the $m$-tuple $(b_1,...,b_l - 1,..., b_l)$
is a resonance relation for the eigenvalues $(\gamma_i)$. More precisely,
a {\bf resonance relation} for  the $n$-tuple of eigenvalues $(\gamma_j)$
of a vector field $X$ is an $m$-tuple $(c_j)$ of integers satisfying the
relation $\sum c_j \gamma_j = 0,$ such that $c_j \geq -1, \sum c_j \geq
1,$ and at most one of the $c_j$ may be negative.

In the Hamiltonian case, $H^s$ acts linearly on the space of functions by
the Poisson bracket. Resonant terms (i.e. generators of the kernel of this
action) are monomials $\prod x_j^{a_j}y_j^{b_j}$ which satisfy the
following resonance relation, with $c_j = a_j - b_j$:
\begin{equation}
\sum_{j=1}^n c_j\lambda_j  = 0
\end{equation}

Denote by ${\mathcal R}$  the subset of ${\mathbb Z}^m$
(or sublattice of ${\mathbb Z}^n$ in the Hamiltonian case)
consisting of all resonance relations $(c_j)$ for a
given vector field $X$. The number
\begin{equation}
r = \dim_\bbZ (\cR \otimes \bbZ)
\end{equation}
is called the {\bf degree of resonance} of $X$. Of course, the degree of
resonance depends only on the eigenvalues of the linear part of $X$, and
does not depend on the choice of local coordinates. If $r=0$ then we say
that the system is {\bf nonresonant} at 0.

The vector field $X$ is said to be in {\bf Poincaré-Birkhoff normal form}
if it commutes with the semisimple part of its linear part:
\begin{equation}
[X,X^s] = 0.
\end{equation}
In the Hamiltonian case, the above equation can also be written as
\begin{equation}
\{H,H^s\} = 0.
\end{equation}

The above equations mean that if $X$ is in normal form then its nonlinear
terms are resonant.  A transformation of coordinates (which is symplectic
in the Hamiltonian case) which puts $X$ in Poincaré-Birkhoff normal form
is called a {\bf Poincaré-Birkhoff normalization}.

\begin{thm}[Poincaré--Dulac--Birkhoff]
Any analytic formal or vector field which vanishes at 0 admits a {\it formal} 
Poincaré-Birkhoff normalization. 
\end{thm}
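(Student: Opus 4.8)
The plan is to normalize $X$ degree by degree by a sequence of near-identity formal coordinate changes, each solving a single linear ``homological'' equation. First I would fix the decomposition $X^{(1)} = X^s + X^{nil}$ and record the structural fact that, for each $k \geq 2$, the finite-dimensional space $V_k$ of homogeneous vector fields of degree $k$ carries the operator $\mathrm{ad}_{X^s} := [X^s, \cdot\,]$, which is semisimple because $X^s$ is diagonal and the monomials $x^b\,\partial/\partial x_l$ are eigenvectors with eigenvalue $\sum_j b_j\gamma_j - \gamma_l$. Semisimplicity yields a direct sum decomposition
\[
V_k = \ker\bigl(\mathrm{ad}_{X^s}|_{V_k}\bigr) \oplus \mathrm{im}\bigl(\mathrm{ad}_{X^s}|_{V_k}\bigr),
\]
in which the kernel is exactly the span of the resonant terms of degree $k$. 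The aim of the $k$-th step is to arrange that the degree-$k$ part of $X$ lies in this kernel.

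For the induction step, suppose $X$ has already been brought to a form in which all terms of degree $2,\dots,k-1$ are resonant. I would conjugate $X$ by the time-one flow of a homogeneous vector field $Y^{(k)} \in V_k$. Since $Y^{(k)}$ has degree $k \geq 2$, the Lie-series expansion of the conjugate shows that the linear part and all terms of degree $< k$ are unchanged, whereas the degree-$k$ term becomes $X^{(k)} + [X^{(1)}, Y^{(k)}] = X^{(k)} + \mathrm{ad}_{X^{(1)}}(Y^{(k)})$; higher-degree terms are altered but handled at later stages. Writing $X^{(k)} = X^{(k)}_{\mathrm{res}} + X^{(k)}_{\mathrm{nr}}$ according to the decomposition above, it suffices to solve the homological equation $\mathrm{ad}_{X^{(1)}}(Y^{(k)}) = -X^{(k)}_{\mathrm{nr}}$, for then the new degree-$k$ term is $X^{(k)}_{\mathrm{res}} \in \ker \mathrm{ad}_{X^s}$ and the normalization is extended one degree further.

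The hard part is that this equation involves $\mathrm{ad}_{X^{(1)}}$ rather than $\mathrm{ad}_{X^s}$, and $\mathrm{ad}_{X^{(1)}}$ need not be semisimple. I would resolve this by observing that $\mathrm{ad}_{X^{(1)}} = \mathrm{ad}_{X^s} + \mathrm{ad}_{X^{nil}}$ is precisely the Jordan decomposition of $\mathrm{ad}_{X^{(1)}}$ on $V_k$: the first summand is semisimple, the second is nilpotent, and they commute because $[X^s, X^{nil}] = 0$. Restricting to the invariant subspace $W := \mathrm{im}(\mathrm{ad}_{X^s}|_{V_k})$, the semisimple part is invertible, so $\mathrm{ad}_{X^{(1)}}|_W = \mathrm{ad}_{X^s}|_W \bigl(\mathrm{Id} + (\mathrm{ad}_{X^s}|_W)^{-1}\,\mathrm{ad}_{X^{nil}}|_W\bigr)$ is the composition of an invertible operator with a unipotent one, hence invertible. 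Since $X^{(k)}_{\mathrm{nr}} \in W$, the homological equation has a unique solution $Y^{(k)} \in W$. This is the only point where the nilpotent part requires care; if $X^{(1)}$ were semisimple the step would be immediate.

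Finally I would assemble the successive transformations. Each coordinate change agrees with the identity up to order $k-1$, so the infinite composition over $k = 2, 3, \dots$ converges in the formal (Krull) topology, since up to any fixed degree only finitely many steps contribute; the limit is a formal normalization putting $X$ into Poincaré--Birkhoff normal form $[X, X^s] = 0$. The Hamiltonian case is entirely parallel: one replaces $V_k$ by the homogeneous polynomials of degree $k+1$, the bracket by $\{H^s, \cdot\,\}$, and the generators $Y^{(k)}$ by Hamiltonian ones so that every transformation is symplectic, and the same Jordan-decomposition argument solves the corresponding homological equation and yields $\{H, H^s\} = 0$.
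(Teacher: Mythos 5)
Your proof is correct and follows essentially the same classical route as the paper's sketch: step-by-step normalization by near-identity Lie transforms, eliminating nonresonant terms by solving homological equations and invoking formal (Krull) convergence of the infinite composition. If anything, your degree-by-degree formulation is more complete than the paper's monomial-by-monomial sketch, since your Jordan-decomposition argument for the invertibility of $\mathrm{ad}_{X^{(1)}}$ on $\mathrm{im}\bigl(\mathrm{ad}_{X^s}\bigr)$ explicitly handles the contribution of $X^{nil}$, which the paper's formula (dividing only by the eigenvalue $\sum_j b_j\gamma_j - \gamma_l$ of $\mathrm{ad}_{X^s}$) glosses over.
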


The proof of the above theorem is based on the classical method of step-by-step
normalization: at each step one eliminates a non-zero nonresonant monomial term 
$C_{b,l}x_1^{b_1}x_2^{b_2}...x_n^{b_n}
\partial / \partial x_l$ of lowest degree by a local coordinate transformation (diffeomorphism)
of the type $$\exp(C_{b,l}x_1^{b_1}x_2^{b_2}...x_n^{b_n}/ (\sum_{j=1}^m b_j\gamma_j - \gamma_l)),$$
where $\exp$ means the time-1 flow of the vector field. The total number of steps
is infinite in general, and the composition of all these consecutive normalizing maps
converges in the formal category but does not necessarily 
converge in the analytic category in gereral.

\subsubsection{Toric characterization of local normal forms}

Denote by ${\mathcal Q} \subset {\mathbb Z}^m$ the sublattice of ${\mathbb
Z}^m$ consisting of $m$-dimensional vectors $(\rho_j) \in {\mathbb Z}^m$ which
satisfy the following properties :
\begin{equation}
\label{eqn:Q} \sum_{j=1}^m \rho_j c_j = 0 \  \forall \ (c_j) \in {\mathcal R} \ , \
{\rm and} \ \ \rho_j = \rho_k \ \ {\rm if} \ \ \gamma_j = \gamma_k \
\end{equation}
(where $\mathcal R$ is the set of resonance relations as before). In the
Hamiltonian case, $\mathcal Q$ is defined by
\begin{equation}
\label{eqn:Q2} \sum_{j=1}^n \rho_j c_j = 0 \  \forall \ (c_j) \in {\mathcal R} .
\end{equation}

We will call the number
\begin{equation}
\label{eqn:t} d = \dim_\bbZ \mathcal Q
\end{equation}
the {\bf toric degree} of $X$ at $0$. Of course, this number depends only
on the eigenvalues of the linear part of $X$, and we have the following
(in)equality : $r + d = n$ in the Hamiltonian case (where $r$ is the
degree of resonance), and $r + d \leq m$ in the non-Hamiltonian case.

Let $(\rho^{1}_j),...,(\rho^d_j)$ be a basis of $\mathcal Q$. For each $k=1,...,
d$
define the following diagonal linear vector field $Z_k$ :

\begin{equation}
\label{eqn:Z} Z_k = \sum_{j=1}^m \rho^k_j x_j \partial / \partial x_j
\end{equation}
in the non-Hamiltonian case, and $Z_k = X_{F^k}$ where
\begin{equation}
\label{eqn:F} F^k = \sum_{j=1}^n \rho^k_j x_jy_j
\end{equation}
in the Hamiltonian case.

The vector fields $Z_1,...,Z_r$ have the following remarkable properties :

a) They commute pairwise and commute with $X^s$ and $X^{nil}$, and they are linearly
independent almost everywhere.

b) $iZ_j$ is a periodic vector field of period $2\pi$ for each $j \leq r$ (here $i =
\sqrt{-1}$). What does it mean is that if we write $iZ_j = \Re (iZ_j) + i \Im (iZ_j)
$, then $\Re (iZ_j)$ is a periodic real vector field in ${\mathbb C}^n = {\mathbb
R}^{2n}$ which preserves the complex structure.

c) Together, $iZ_1,..., iZ_r$ generate an effective linear ${\mathbb T}^r$-action in
${\mathbb C}^n$ (which preserves the symplectic structure in the Hamiltonian case),
which preserves $X^s$ and $X^{nil}$.

Another equivalent way to define the toric degree is as follows: it is the smallest number $d$
such that one can write $X^s = \sum_{j=k}^d \alpha_k Z_k$, where $\alpha_k$ are complex
coefficients and each $Z_k$ has the form 
$Z_k = \sum_{j=1}^m \rho^k_j x_j \partial / \partial x_j$
with  $\rho^k_j \in \bbZ$. The minimality of $d$ is equivalent to the fact that the numbers
$\alpha_1,\hdots, \alpha_d$ are incommensurable.

A simple calculation shows that $X$ is in Poincaré-Birkhoff normal form,
i.e. $[X,X^s] = 0$, if and only if we have
\begin{equation}
[X,Z_k] = 0 \ \ \ \forall \  k=1,...,r.
\end{equation}

The above commutation relations mean that if $X$ is in normal form, then
it is preserved by the effective $r$-dimensional torus action generated by
$iZ_1,...,iZ_r$. Conversely, if there is a torus action which preserves $
X$, then because the torus is a compact group we can linearize this torus
action (using Bochner's linearization theorem
\cite{Bochner-Linearization1945} in the non-Hamiltonian case, and the
equivariant Darboux theorem in the Hamiltonian case, see e.g.
\cite{CoDaMo-Moment1988,GuSt-Convexity1982}), leading to a normalization
of $X$. In other words, we have:

\begin{thm}[\cite{Zung-Poincare2002,Zung-Birkhoff2005}]
\label{thm:toricPB} A holomorphic (Hamiltonian) vector field $X$ in a neighborhood of $0$ in
${\mathbb C}^m$ (or $\bbC^{2n}$ with a standard symplectic form)
admits a locally holomorphic Poincaré-Birkhoff normalization if and only if it
is preserved by an effective holomorphic (Hamiltonian)
action of a real torus of dimension $t$,
where $t$ is the toric degree of $X^{(1)}$ as defined in (\ref{eqn:t}), in a
neighborhood of $0$ in ${\mathbb C}^m$ (or $\bbC^{2n}$),
which has $0$ as a fixed point and whose
linear part at $0$ has appropriate weights (given by the lattice $\mathcal Q$
defined in (\ref{eqn:Q},\ref{eqn:Q2}),
which depends only on the linear part $X^{(1)}$ of $X$).
\end{thm}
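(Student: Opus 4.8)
\emph{The plan.} I would prove the two implications separately, using the equivalence
$[X,X^s]=0 \iff [X,Z_k]=0$ for all $k=1,\dots,t$ (the ``simple calculation'' recorded just above the statement) as the bridge between the analytic notion of normalization and the geometric notion of a torus action. Here $t=\dim_\bbZ\mathcal Q$, the vectors $Z_1,\dots,Z_t$ are the diagonal generators attached to a basis $(\rho^1_j),\dots,(\rho^t_j)$ of $\mathcal Q$, and $iZ_1,\dots,iZ_t$ are the infinitesimal generators of the standard effective linear $\mathbb{T}^t$-action furnished by properties (a)--(c) above. The content of that bridging equivalence, which I would quote rather than rederive, is the duality between $\mathcal Q$ and $\mathcal R$ together with the equal-eigenvalue conditions in (\ref{eqn:Q}): a monomial of multi-index $c$ is resonant (killed by $\mathrm{ad}_{X^s}$) exactly when $c\perp\mathcal Q$ (killed by every $\mathrm{ad}_{Z_k}$).

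\emph{The ``only if'' direction.} I would start from a holomorphic Poincaré--Birkhoff normalization $\Phi$, that is, a local biholomorphism fixing $0$ (symplectic in the Hamiltonian case) with $Y:=\Phi_*X$ satisfying $[Y,Y^s]=0$. The bridging equivalence gives $[Y,Z_k]=0$ for every $k$, so $Y$ is invariant under the linear action generated by $iZ_1,\dots,iZ_t$; this action is holomorphic, effective, fixes $0$, and has linear weights prescribed by $\mathcal Q$. Pulling it back through $\Phi$, i.e. conjugating $g\mapsto \Phi^{-1}\circ g\circ\Phi$, yields a holomorphic (Hamiltonian) $\mathbb{T}^t$-action preserving $X=\Phi^{-1}_*Y$. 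Effectiveness and the fixed point at $0$ survive because $\Phi$ is a biholomorphism fixing the origin, and the weights survive because they are conjugation-invariant, hence unchanged under $D\Phi(0)$.

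\emph{The ``if'' direction.} I would take a holomorphic $\mathbb{T}^t$-action $\rho$ preserving $X$, fixing $0$, with the prescribed weights, and linearize it. Since $\mathbb{T}^t$ is compact, Bochner's averaging construction produces a local biholomorphism $\Phi$ (made symplectic via the equivariant Darboux theorem in the Hamiltonian case) conjugating $\rho$ to its linear part at $0$; the averaging integral over $\mathbb{T}^t$ preserves holomorphy, so $\Phi$ is holomorphic. Because the weights of $\rho$ match $\mathcal Q$, the linearized action is precisely the one generated by $iZ_1,\dots,iZ_t$. Invariance of $X$ under $\rho$ then reads $[Y,Z_k]=0$ for all $k$, where $Y=\Phi_*X$, and the bridging equivalence upgrades this to $[Y,Y^s]=0$. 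Hence $\Phi$ is a holomorphic normalization.

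\emph{The main obstacle.} The heart of the argument is the linearization step in the ``if'' direction, and it carries the two genuinely nontrivial points. First, one must ensure that Bochner's construction can be carried out inside the holomorphic category (and, in the Hamiltonian case, the symplectic one) rather than merely the smooth one; this is where the averaging over the compact torus and the equivariant Darboux theorem do the real work. Second, one must match the linearized action exactly with the standard generators $Z_1,\dots,Z_t$: this is precisely why the hypothesis pins down the weights by the lattice $\mathcal Q$, which guarantees both that $iZ_1,\dots,iZ_t$ span the Lie algebra of the image torus and that $Y^s$ lies in their complex span, so that invariance under all the $Z_k$ is genuinely equivalent to $[Y,Y^s]=0$ and not merely to commutation with a proper subtorus.
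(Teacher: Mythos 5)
Your proposal is correct and follows essentially the same route as the paper, which sketches exactly this argument in the paragraph preceding the statement: the equivalence $[X,X^s]=0 \iff [X,Z_k]=0$ for all $k$ gives the ``only if'' direction via the linear $\bbT^t$-action generated by $iZ_1,\hdots,iZ_t$, and the ``if'' direction is obtained by linearizing the given torus action with Bochner's theorem (equivariant Darboux in the Hamiltonian case) and reading off the normalization. Your added care about holomorphy of the averaging and about matching the linearized weights to the lattice $\mathcal Q$ fills in details the paper leaves implicit, but does not change the approach.
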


The above theorem is true in the formal category as well. But of
course, any vector field admits a formal Poincaré-Birkhoff normalization,
and a formal torus action. This (formal) torus action is intrinsically  
assocaited to the singular point, and  we will call it the {\bf associated torus
action} of the system (i.e. vector field) at the singular point.

\begin{remark}
The above toric point of view of normalization for vector fields was first developed 
in \cite{Zung-Poincare2002,Zung-Birkhoff2005}.
It was later extended to the case of diffeomorphisms (i.e. discrete-time dynamical systems) by Raissy
in \cite{Raissy-Torus2010}, and also used by Chiba \cite{Chiba-Renormalization2009} in singular perturbation and
renormalization methods. 
 \end{remark}

Theorem \ref{thm:toricPB} has many important implications. One of them is:

\begin{prop}
\label{prop:realcomplex} A real analytic vector field $X$ (Hamiltonian
or non-Hamiltonian) in the neighborhood of an equilibrium point
admits a local real analytic Poincaré-Birkhoff normalization if and only
if it admits a local holomorphic Poincaré-Birkhoff
normalization when considered as a holomorphic vector field.
\end{prop}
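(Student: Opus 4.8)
The plan is to dispatch the easy implication by direct complexification and to extract the substantial implication from the toric characterization of Theorem~\ref{thm:toricPB}, using the real structure carried by $X$. For the direction (real $\Rightarrow$ holomorphic), a convergent real analytic coordinate change $\Phi$ bringing $X$ into Poincar\'e--Birkhoff normal form has, termwise, a complexification $\Phi_{\bbC}$ that is a local biholomorphism near $0\in\bbC^m$; since the normal-form condition $[X,X^s]=0$ is a system of polynomial identities among Taylor coefficients, it is insensitive to whether the coordinates are read over $\bbR$ or $\bbC$, so $\Phi_{\bbC}$ is a holomorphic Poincar\'e--Birkhoff normalization. Nothing further is needed in this direction.

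The content lies in the converse. Assume $X$, regarded as a holomorphic vector field near $0\in\bbC^m$, admits a holomorphic normalization. By Theorem~\ref{thm:toricPB} this is equivalent to the existence of an effective holomorphic action $\rho$ of the real torus $\bbT^t$ (with $t$ the toric degree) on a neighborhood $\cU$ of $0$, preserving $X$, fixing $0$, and with linear part dictated by the weight lattice $\mathcal{Q}$ of (\ref{eqn:Q}). The extra ingredient is that $X$ is \emph{real}: it is invariant under the anti-holomorphic involution $\sigma$ of $\bbC^m$ whose fixed set is the real domain. Because the eigenvalues $\gamma_j$ of $X^{(1)}$ are permuted by $\gamma_j\mapsto\overline{\gamma_j}$ and the defining conditions of $\mathcal{Q}$ are stable under this permutation, the \emph{linear} model $\rho^{(1)}$ is already $\sigma$-invariant, i.e.\ a genuine real torus action.

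I would then promote this to $\sigma$-invariance of the full action. Since $\sigma$ is anti-holomorphic and $X$ is $\sigma$-invariant, the conjugate $\rho^{\sigma}(g):=\sigma\circ\rho(g)\circ\sigma$ is again an effective holomorphic $\bbT^t$-action preserving $X$ and fixing $0$, and it has the same linear part $\rho^{(1)}$. Invoking the intrinsic canonicity of the associated torus action recorded after Theorem~\ref{thm:toricPB}, I would identify $\rho$ with $\rho^{\sigma}$, if necessary after symmetrizing within the family of such actions, so that the chosen action is genuinely real. A $\sigma$-equivariant version of Bochner's linearization theorem then furnishes a local analytic coordinate change that is real on the real locus and conjugates $\rho$ to $\rho^{(1)}$; in these real coordinates $X$ commutes with $X^s=\sum_k\alpha_k Z_k$ and is therefore in Poincar\'e--Birkhoff normal form. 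In the Hamiltonian case $\sigma$ is anti-symplectic and one replaces Bochner linearization by an equivariant Darboux theorem.

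The step I expect to be the main obstacle is precisely this $\sigma$-compatibility: showing that the convergent torus action supplied by Theorem~\ref{thm:toricPB} can be chosen invariant under the real structure, i.e.\ that $\rho$ and $\rho^{\sigma}$ can be reconciled via the canonicity of the associated torus action, and that the ensuing Bochner (or Darboux) linearization can be carried out equivariantly while remaining in the analytic category. Everything else is formal bookkeeping once the real, convergent torus action is in hand.
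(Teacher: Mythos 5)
Your proposal follows essentially the same route as the paper: the easy direction by termwise complexification, and the substantive direction by observing that complex conjugation induces an involution on the associated torus action of Theorem~\ref{thm:toricPB}, so that the convergent torus action can be made compatible with the real structure and then linearized equivariantly (Bochner, or equivariant Darboux in the Hamiltonian case) to yield a real analytic normalization. The step you flag as the main obstacle --- reconciling $\rho$ with $\rho^{\sigma}$ using the canonicity of the associated torus action --- is exactly the point the paper attributes to \cite{Zung-Birkhoff2005}, so your outline matches the intended argument.
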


The proof of the above proposition (see \cite{Zung-Birkhoff2005})
is based on the fact that the complex conjugation induces an involution on the
torus action which governs the Poincaré-Birkhoff normalization. 
Recall that, even when the vector field is real, the torus acts 
not in the real space but in the complexified space in general. Only a
subtorus of this associated torus  acts in the real space. 
The dimension of this real subtorus action can be called
the {\bf real toric degree} of the system.

\subsubsection{Structure-preserving property of associated torus actions} 

The associated torus action at a singular point of a vector field has the same
{\bf struture-preserving property} as the Liouville torus actions discussed in the previous subsection:

\begin{thm}
If a formal or analytic tensor field $\cG$ is preserved by a formal or analytic vector field $X$ which vanishes
at a point $O$, then the associated torus action of $X$ at $O$ also preserves $\cG.$ 
\end{thm}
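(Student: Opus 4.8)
The plan is to reduce to normalized coordinates and then decompose $\cG$ into weight components under the associated torus action, showing that the equation $\cL_X\cG=0$ annihilates every non-invariant component. Since both the hypothesis $\cL_X\cG=0$ and the desired conclusion (invariance of $\cG$ under the associated torus action) are intrinsic and transform naturally under the formal normalizing diffeomorphism, I may assume without loss of generality that we already work in coordinates $(x_1,\dots,x_m)$ in which $X$ is in Poincaré--Birkhoff normal form and the associated torus $\bbT^d$-action ($d$ the toric degree) is the linear one generated by the diagonal vector fields $Z_k=\sum_j \rho^k_j\,x_j\,\partial/\partial x_j$, $k=1,\dots,d$. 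In these coordinates $[X,Z_k]=0$ for all $k$, one has $X^s=\sum_k \alpha_k Z_k$ with incommensurable coefficients $\alpha_k$, and, the torus being connected, invariance of $\cG$ is equivalent to $\cL_{Z_k}\cG=0$ for every $k$.

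First I would set up the weight decomposition. Each $\cL_{Z_k}$ acts diagonally on monomial tensor fields: a monomial $x^a \bigotimes_s \partial/\partial x_{i_s}\bigotimes_t dx_{j_t}$ is an eigenvector of $\cL_{Z_k}$ with integer eigenvalue $\sum_j a_j\rho^k_j-\sum_s\rho^k_{i_s}+\sum_t\rho^k_{j_t}$. Hence the space of formal tensor fields of the given type decomposes, degree by degree, into joint eigenspaces, giving $\cG=\sum_{\mu\in\bbZ^d}\cG_\mu$ with $\cL_{Z_k}\cG_\mu=\mu_k\,\cG_\mu$. Because $[X,Z_k]=0$ we have $[\cL_X,\cL_{Z_k}]=\cL_{[X,Z_k]}=0$, so $\cL_X$ commutes with the whole family $\{\cL_{Z_k}\}$ and therefore maps each weight space $\cG_\mu$ into itself (even though $\cL_X$ mixes polynomial degrees). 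Consequently $\cL_X\cG=0$ forces $\cL_X\cG_\mu=0$ for every $\mu$, and it suffices to prove $\cG_\mu=0$ whenever $\mu\neq 0$.

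The heart of the argument is an invertibility statement. On the weight-$\mu$ space $\cL_{X^s}=\sum_k\alpha_k\cL_{Z_k}$ acts as the scalar $\sum_k\alpha_k\mu_k$, which by incommensurability of the $\alpha_k$ is nonzero as soon as $\mu\neq 0$. Since $X^{(1)}=X^s+X^{nil}$ is the Jordan decomposition of the linear part, $\cL_{X^{(1)}}=\cL_{X^s}+\cL_{X^{nil}}$ is, on each fixed-degree weight-$\mu$ subspace, the sum of the nonzero scalar $\sum_k\alpha_k\mu_k$ and the nilpotent operator $\cL_{X^{nil}}$, hence invertible. Now grade $\cG_\mu$ by polynomial degree and let $D_0$ be its lowest occurring degree. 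The higher-order terms $X^{(j)}$, $j\geq 2$, strictly raise degree, so the degree-$D_0$ part of the identity $\cL_X\cG_\mu=0$ reads $\cL_{X^{(1)}}\cG_{\mu,D_0}=0$; invertibility then forces $\cG_{\mu,D_0}=0$, contradicting the minimality of $D_0$ unless $\cG_\mu=0$. Thus $\cG=\cG_0$, i.e. $\cG$ is invariant under the associated torus action.

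The main obstacle is not any single computation but the bookkeeping: one must verify that $\cL_X$ genuinely preserves the weight spaces, which rests squarely on $[X,Z_k]=0$ (so the reduction to normal form is essential), and that the nilpotent contribution $\cL_{X^{nil}}$ can never cancel the scalar $\sum_k\alpha_k\mu_k$. In the purely formal category the argument is complete as stated; in the analytic case, when $X$ admits an analytic normalization so that the torus action is analytic, the same weight decomposition applies and formal invariance of the analytic tensor $\cG$ upgrades to analytic invariance, since $\cL_{Z_k}\cG$ is then an analytic tensor all of whose Taylor coefficients vanish. This is the exact analogue, at a singular point, of the filtration argument used for Theorem \ref{thm:TorusPreservesStructureII}.
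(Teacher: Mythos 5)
Your proof is correct and follows essentially the same route as the paper's: reduce to Poincaré--Birkhoff normal form, observe that monomial tensors are joint eigenvectors of the $\cL_{Z_k}$ so that $\cL_{X^s}=\sum_k\alpha_k\cL_{Z_k}$ acts with eigenvalue $\sum_k\alpha_k\mu_k$, $\mu\in\bbZ^d$, and use incommensurability of the $\alpha_k$ to force every occurring weight to be zero. The only difference is that where the paper simply invokes the Jordan--Dunford decomposition of $\cL_X$ (whose semisimple part is $\cL_{X^s}$) to pass from $\cL_X\cG=0$ to $\cL_{X^s}\cG=0$, you establish that kernel inclusion by hand via the weight decomposition, the scalar-plus-nilpotent invertibility of $\cL_{X^{(1)}}$ on nonzero weight spaces, and the lowest-degree induction --- a useful unpacking of the same idea rather than a different method.
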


\begin{proof}(Sketch)
We can assume that $X$ is already in Poincaré--Birkhoff normal form, i.e. $[X, X^s] = 0$ where
$X^s = \sum_{k=1}^d \alpha_k Z_k$ is the semisimple linear part of $X$ at $O$, 
$Z_k = \sum_{j=1}^m \rho^k_j x_j \partial / \partial x_j$ with $\rho^k_j \in \bbZ$, 
$d$ is the toric degree, and 
$\sqrt{-1}Z_1,\hdots, \sqrt{-1}Z_d$ are the generators of the associated torus action. One verifies easily that
the Lie derivative $\cL_X$ is a linear oerator on the space of formal tensor fields of the type of $\cG$, whose
semisimple part (in the Jordan--Dunford decomposition) is $\cL_{X^s}$, so $\cL_X \cG = 0$ implies that
$\cL_{X^s}\cG = 0$, which implies that $\cL_{X^s}\cK = 0$ for every monomial term $\cK$ of $\cG$, because each
monomial tensor term is an eigenvector of the linear operator $\cL_{X^s}$. 
Thus we have $\sum_{k=1}^d \alpha_k \cL_{Z_k}\cK = 0$ for every monomial term $\cK$ of $\cG$. But since the numbers
$\alpha_1,\hdots,\alpha_d$ are incommensurable and each $\cL_{Z_k}\cK$ is an integer multiple of $\cK$, we must have
that $\cL_{Z_k}\cK = 0$ for every $k = 1,\hdots, d$.
\end{proof}

In particular, in the Hamiltonian case,  the symplectic structure is automatically preserved by the associated
torus action. Á a consequence, the existence of a analytic Birkhoff normalisation is equivalent to the existence of a Poincaré-Dulac
normalization (forgetting about the symplectic structure).

\subsubsection{Local normalization of analytic integrable systems}

When the vector field is analytically integrable, then 
Theorem \ref{thm:toricPB} leads to the following strong result about the existence 
of analytic Poincaré-Birkhoff normalization:

\begin{thm}[\cite{Zung-Poincare2002,Zung-Birkhoff2005}]
\label{thm:PBmain} Let $X$ be a local analytic vector field in $(\bbK^m,0)$, 
where $\bbK = \bbR$ or $\bbC$, such that $X(0) = 0$. 
Then $X$ admits a local analytic Poincaré-Birkhoff normalization in a
neighborhood of $0$ (which is compatible with the volume form or the symplectic structure
if $X$ is an isochore or a Hamiltonian vector field).
\end{thm}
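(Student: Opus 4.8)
Since this statement closes the section on analytic integrable systems and is introduced as the promised consequence of integrability, I read $X$ as being analytically integrable of some type $(p,q)$ with $p+q=m$; this hypothesis is what drives the proof, and without it the conclusion is false. The plan is to reduce the entire theorem to the construction of a single analytic object — an effective holomorphic torus action — and then to quote the toric characterization already in hand. By Theorem \ref{thm:toricPB}, $X$ admits a local analytic Poincaré--Birkhoff normalization if and only if it is preserved by an effective holomorphic $\bbT^d$-action (with $d$ the toric degree of $X^{(1)}$) that fixes $0$ and carries the weights prescribed by the lattice $\mathcal Q$. Thus all the content lies in producing such an analytic torus action. The parenthetical compatibility is then free: once the action is analytic, the structure-preserving property of associated torus actions (the singular-point analogue of Theorem \ref{thm:TorusPreservesStructureII} proved just above) shows that any volume form or symplectic form preserved by $X$ is automatically preserved by the action, so the normalization delivered by Theorem \ref{thm:toricPB} is isochore or symplectic as appropriate.

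First I would complexify: if $\bbK=\bbR$, extend $X$ and the whole integrable structure $(X_1=X,\dots,X_p,F_1,\dots,F_q)$ to holomorphic data on $(\bbC^m,0)$, build the holomorphic torus action there, and recover a real analytic normalization at the very end through the complex-conjugation involution, exactly as in Proposition \ref{prop:realcomplex}. Next, the Poincaré--Dulac--Birkhoff theorem supplies a formal normalization of $X$, hence a formal associated torus action $\rho_\infty$, generated by $\sqrt{-1}\,Z_1,\dots,\sqrt{-1}\,Z_d$ and obtained by conjugating the linear torus action by the formal normalizing map. Because $\rho_\infty$ is intrinsic to the linear part and $X$ is in formal normal form, $\rho_\infty$ commutes formally with every $X_i$ and preserves every $F_j$.

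The heart of the argument — the only place where analytic integrability, as opposed to the mere existence of a formal normal form, is indispensable — is to show that $\rho_\infty$ actually converges to a genuine holomorphic action; this cannot follow from formal considerations alone, since a generic analytic vector field has a divergent normalization. The mechanism I would use is to realize the torus geometrically, as the maximal compact subtorus contained in the closure of the holomorphic $\bbC^p$-action generated by the analytic fields $X_1,\dots,X_p$. On the regular part of the fibration by common level sets of $F_1,\dots,F_q$ this closure is modelled, fibre by fibre, by the Liouville torus action of Theorem \ref{thm:Liouville}; the $d$ integral-weight generators $Z_k$ single out exactly the elliptic directions that remain periodic as one approaches the singular fibre through $0$. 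Because the $X_i$ and $F_j$ are analytic and the weights $\rho^k_j$ are integral and constant along the whole family of fibres, this compact torus is cut out by analytic data and should extend holomorphically across the singular fibre to the desired $\bbT^d$-action near $0$.

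The main obstacle is precisely this convergence step: everything above is formal bookkeeping, and the difficulty is to prove that the intrinsically defined torus action is holomorphic rather than merely formal, and to control it uniformly as the regular fibres degenerate onto the singular fibre through $0$. Concretely, one must show that integrability forbids the accumulation of small divisors that would otherwise destroy convergence — that the candidate action is governed by analytic, not merely formal, gluing data with fixed integral weights. Once the holomorphic $\bbT^d$-action is in hand, Bochner's linearization theorem in the non-Hamiltonian case (and the equivariant Darboux theorem in the Hamiltonian case) linearizes it, Theorem \ref{thm:toricPB} yields the analytic normalization, and the structure-preserving property secures the isochore and symplectic refinements.
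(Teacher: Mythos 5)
Your overall strategy coincides with the paper's: you read the (implicit) integrability hypothesis into the statement, reduce everything via the toric characterization of Theorem \ref{thm:toricPB} to the existence of an effective \emph{analytic} torus action of dimension equal to the toric degree, handle the real case by complexification and the conjugation involution as in Proposition \ref{prop:realcomplex}, and obtain the isochore/symplectic compatibility for free from the structure-preserving property of the associated torus action. Your geometric mechanism for producing the action --- the periodic directions of the holomorphic $\bbC^p$-action on the regular common level sets of the first integrals (the Liouville tori), with the integral weights $\rho^k_j$ singling out the directions that survive at the singular fibre --- is also the right picture, and it is essentially how the cited proof proceeds.

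The gap is that the one step carrying all of the mathematical content is announced rather than carried out. You correctly isolate it (``show that the intrinsically defined torus action is holomorphic rather than merely formal, and control it uniformly as the regular fibres degenerate''), but you offer no mechanism for the uniform control, and ``integrability forbids the accumulation of small divisors'' is a restatement of the goal, not an argument. What actually closes this step in \cite{Zung-Poincare2002,Zung-Birkhoff2005} is quantitative: one bounds the period maps of the $\bbC^p$-action on the regular level sets using Lojasiewicz-type inequalities for the analytic first integrals (after a resolution of singularities of the singular locus of the associated fibration), which shows that the torus action, defined a priori only on the dense regular set, stays locally bounded near $0$; Riemann's extension theorem then extends it holomorphically across the singular set, and Bochner linearization finishes as you say. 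Without some such uniform estimate, your ``should extend holomorphically across the singular fibre'' is exactly the point at issue --- a family of tori that is well defined on each regular fibre can still degenerate or blow up as the fibres collapse onto the singular one --- so as written the proposal reduces the theorem to its hardest step without proving it.
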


Partial cases of the above theorem were obtained earlier by many
authors, including Rüssmann \cite{Russmann-NF1964} (the nondegenerate Hamiltonian
case with 2 degrees of freedom),
Vey \cite{Vey-Separable1978,Vey-Isochore1979} (the nondegenerate
Hamiltonian and isochore cases), Ito \cite{Ito-Birkhoff1989} (the
nonresonant Hamiltonian case), Ito \cite{Ito-Birkhoff1992} and
Kappeler et al. \cite{KaKoNe-Birkhoff1998} (the Hamiltonian case
with a simple resonance), Bruno and Walcher \cite{BrWa-NF1994} (the
non-Hamiltonian case with $m=2$). These
authors, except Vey who was more geometric, relied on long and heavy analytical
estimates to show the convergence of an infinite normalizing coordinate
transformation process. On the other hand, the proof of Theorem \ref{thm:PBmain}
in \cite{Zung-Poincare2002,Zung-Birkhoff2005} is a geometrical proof which uses 
resolution of singularities and Lojasiewicz inequalities in order to show the existence of 
an analytic torus action (i.e. to show that the associated torus action is not just 
formal but analytic), and is relatively short.

\begin{remark}
Also in the case of infinite-dimensional systems, one can talk about Poincaré-Birkhoff normalization,
and there should be an analytic associated torus action, see, e.g., \cite{KaToZu-NLS2009,KuPe-VeyInfinite2009}.
\end{remark}

\subsubsection{Toric action for a commuting family of vector fields}

It is well-known that if $X_1,\hdots, X_p$ is a family of formal or analytic 
pair-wise commuting vector fields which vanish at a point $O$, then they will admit
a simultaneous formal Poincaré--Birkhoff normalization at $O$, see; e.g., 
\cite{Stolovitch-Singular2000,Stolovitch-NF2009} and references therein. This fact corresponds to
the existence of an intrinsic formal associated torus action for the family 
$X_1,\hdots, X_p$ at $O$, whose dimension will be called the {\bf toric degree}
of the family at $O$: it is the smallest number $d$ such that we can write 
$X_i^s = \sum_{k=1}^d \alpha_{ik} Z_k$ for every $i=1,\hdots,p$, where $X_i^s$
is the semisimple linear part of $X_i$, and the vector fields $Z_k = \sum_{j=1}^m \rho^k_j x_j \partial / \partial x_j$
are like in the case of a single vector field. Another equivalent definition is 
that this torus degree is the toric degree
of a generic linear combination $\sum_{i=1}^p c_i X_i$ of the family $(X_1,\hdots, X_p).$
Using this associated torus action for a family of commuting vector fields, we get the
following simultaneous version of Theorem \ref{thm:PBmain}, whose proof
remains the same:

\begin{thm}
\label{thm:PBmain2} Let $(X_1,\hdots,X_p,F_1,\hdots,F_q)$ be a local analytic integrable system of
type $(p,q)$ in $(\bbK^m,0)$, 
where $\bbK = \bbR$ or $\bbC$, such that $X_1(0) = \hdots = X_p(0) = 0$. 
Then the vector fields $X_1,\hdots, X_p$ admit a simultaneous local analytic Poincaré-Birkhoff 
normalization in a neighborhood of $0$ (which is compatible with the volume form or the symplectic structure
if the system is isochore or Hamiltonian).
\end{thm}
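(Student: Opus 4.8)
The plan is to reduce the simultaneous statement to the single--vector--field case of Theorem \ref{thm:PBmain} by passing to a generic linear combination of the $X_i$, exploiting the fact (recalled just above) that the associated torus action of the commuting family coincides with the associated torus action of a generic member. First I would invoke the simultaneous Poincaré--Birkhoff theorem to obtain a formal simultaneous normalization, equivalently the formal associated torus action of the family, generated by $\sqrt{-1}Z_1,\hdots,\sqrt{-1}Z_d$ with $d$ the toric degree of the family and $Z_k = \sum_{j=1}^m \rho^k_j x_j \partial/\partial x_j$, $\rho^k_j \in \bbZ$. The whole point is then to show that this \emph{formal} torus action is in fact \emph{analytic}.

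To do so I would fix generic constants $c_1,\hdots,c_p$ and set $X = \sum_{i=1}^p c_i X_i$. By the characterization of the toric degree of a family given above, for generic $(c_i)$ the toric degree of $X$ equals $d$ and the generators $Z_k$ of its associated torus action are exactly those of the family; moreover $(X,X_2,\hdots,X_p,F_1,\hdots,F_q)$ is again an integrable system of type $(p,q)$, since a generic combination still satisfies $X \wedge X_2 \wedge \hdots \wedge X_p \ne 0$ almost everywhere. Thus $X$ is an analytically integrable vector field, and Theorem \ref{thm:PBmain} applies to it: its associated torus action is not merely formal but analytic, by the geometric argument using resolution of singularities and Lojasiewicz inequalities. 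Since this action is precisely the associated torus action of the family, I conclude that the latter is analytic.

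It remains to check that this single analytic torus action normalizes all of the $X_i$ at once and respects any invariant volume or symplectic structure. Because $[X,X_i]=0$ for every $i$, each $X_i$ is a tensor field preserved by $X$, so by the structure--preserving property of associated torus actions (the theorem stated above) the associated torus action of $X$ preserves every $X_i$. Linearizing this analytic torus action --- by Bochner's theorem in the non--Hamiltonian case, by the equivariant Darboux theorem in the Hamiltonian case --- therefore puts $X_1,\hdots,X_p$ simultaneously into Poincaré--Birkhoff normal form. If the system is isochore or Hamiltonian, the invariant volume form or symplectic form is likewise preserved by $X$, hence by the same structure--preserving property by the torus action, and linearizing equivariantly with respect to it yields a normalization compatible with that structure.

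The genuinely hard analytic content is hidden inside Theorem \ref{thm:PBmain} --- the passage from a formal to an analytic torus action --- which I am allowed to assume. Consequently the main thing to verify carefully is the reduction itself: that genericity of $(c_i)$ can be arranged so that $X$ simultaneously (a) remains integrable of type $(p,q)$, (b) has toric degree equal to that of the family, and (c) has associated torus action with the same generators $Z_k$. This is where I expect the only real subtlety to lie --- it amounts to showing that the ``bad'' choices of $(c_i)$, for which the toric degree drops or the generators differ, form a proper subset of the parameter space, so that a generic choice works and the proof of Theorem \ref{thm:PBmain} indeed goes through verbatim.
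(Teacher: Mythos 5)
Your proposal is correct and follows essentially the same route as the paper, which gives no detailed argument beyond observing that the toric degree (and associated torus action) of the family is that of a generic linear combination $\sum_i c_i X_i$ and asserting that the proof of Theorem \ref{thm:PBmain} ``remains the same.'' Your reduction --- applying Theorem \ref{thm:PBmain} to a generic combination, identifying its analytic associated torus action with that of the family, and using the structure-preserving property plus Bochner/equivariant Darboux to normalize all the $X_i$ and any invariant volume or symplectic form simultaneously --- is exactly the intended implementation of that remark, and you correctly locate the only real point needing care (genericity of the $c_i$) at the right place.
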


\subsection{Geometric linearization of nondegenerate singular points}

\subsubsection{Nondegenerate singular points and linear systems} 

Consider an integrable system  $(X_1,\hdots, X_p, F_1,\hdots, F_q)$  
of type $(p,q)$ on a manifold $M$, and let $O \in M$ be a singular point
of the system. The number
\begin{equation}
k = \dim Span(X_1(O), \hdots, X_p(O))
\end{equation}
is called the {\bf rank} of $O$. If $k=0$ then we say that $O$ is a {\bf fixed point}.
If $k>0$, then we make a local reduction in order to obtain a
system of type $(p-k,q)$ with a fixed point as follows: without loss of generality, we can assume that
$X_1(O) \wedge \hdots \wedge X_k(O) \neq 0,$ i.e. $X_1, \hdots X_k$ generate an local free $\bbR^k$-action in a neighborhood
$\cU(O)$ of $O$. Since the $(p+q-k)$-tuple 
$(X_{k+1},\hdots, X_p, F_1,\hdots, F_p)$ is invariant with respect to this local $\bbR^k$-action, 
it can be projected to an integrable system of type $(p-k,q)$ with a fixed poin on the quotient of $\cU(O)$
by this local $\bbR^k$-action. The definition of nondegeneracy below will not depend on the choice of local reduction.

Assume now that $O$ is a fixed point.
Denote by $Y_i$ the linear part of $X_i$ at $O$, and by 
$G_j$ the homogeneous part  (i.e. the non-constant terms of lowest degree in the Taylor expansion)  
of $F_j$ in some coordinate system. The first terms of the Taylor expansion of the identities  $[X_i,X_k] = 0$ 
and $X_i(F_j) = 0$ show that the vector fields $Y_1,\hdots, Y_p$ commute with each other and have $G_1,\hdots, G_q$ as common first 
integrals. Hence, $(Y_1,\hdots,Y_p, G_1,\hdots, G_q)$ is again an integrable system of type $(p,q)$, 
which shall be called the {\bf linear part}
of the system $(X_1,\hdots, X_p, F_1,\hdots, F_q)$, provided that the independence conditions 
$Y_1 \wedge \hdots  \wedge Y_p \neq 0$ and $dG_1 \wedge \hdots \wedge dG_q \neq 0$ (almost everywhere) still hold.

\begin{defn} 
\label{defn:NondegenerateSingularPoint}
1) An integrable system $(Y_1,\hdots,Y_p, G_1,\hdots, G_q)$ of type $(p,q)$ is called 
{\bf linear} if the vector fields $Y_1,\hdots Y_p$ are linear and the functions $G_1,\hdots, G_q$ are homogeneous.
If, moreover, the linear vector fields $Y_1,\hdots Y_p$ are semisimple, then  
$(Y_1,\hdots,Y_p, G_1,\hdots, G_q)$ is called a {\bf nondegenerate linear integrable system}. \\
2) A singular point $O$ of rank $k$  of an integrable system of type $(p,q)$ 
is called {\bf nondegenerate singular point} if after a local reduction it becomes a 
fixed point of an integrable system of type $(p-k,q)$ 
whose linear part is a nondegenerate linear integrable system.
\end{defn}

\begin{remark}
If $z$ is an isolated singular point of $X_1$ in an integrable system $(X_1,\hdots, X_p, F_1,\hdots, F_q)$, 
then it will be automatically a fixed point of the system. Indeed, if  $X_i(z) \neq 0$ for some $i$, then due to the commutativity of $X_1$ with 
$X_i$, $X_1$ will vanish not only at $z$, but on the whole local  trajectory of $X_i$ which goes through $z$, and so $z$ 
will be a non-isolated singular point of $X_1$. In the definition of nondegeneracy of linear systems, we don't require the origin to be
an isolated singular point. For example, the system $(x_1 \frac{\partial}{ \partial x_1}, x_2)$ is a nondegenerate linear system of type $(1,1)$, for 
which the origin is a non-isolated singular point. 
\end{remark}

\begin{remark}
In the Hamiltonian case on a symplectic manifold, $p = q = n$, when $Y_1, \hdots, Y_n$ are linear Hamiltonian vector fields
in a canonical coordinate system, we can take $G_1,\hdots, G_n$ to be their respective quadratic Hamiltonian functions: $Y_i = X_{G_i}.$
The above definition generalizes in a natural way the well-known notion of nodegenerate singular points of
integrable Hamiltonian systems (see, e.g., \cite{Vey-Separable1978,Eliasson-Normal1990,Zung-Symplectic1996}): 
in the Hamiltonian case on a symplectic manifold with $p=q=n$, the fact that
$Y_1,\hdots,Y_n$ are linear semisimple means that they generate a Cartan subalgebra of the simple Lie 
algebra $sp(2n,\bbK)$ of linear symplectic vector fields.
It is well-known that, already in the Hamiltonian case, not every integrable
linear system is nondegenerate. 
For example, in $\mathbb{R}^4$, take $\displaystyle G_1 = x_1y_1 - x_2 y_2, G_2 = y_1y_2, 
Y_1 = x_1 \frac{\partial}{\partial x_1} - y_1 \frac{\partial}{\partial y_1} - x_2 \frac{\partial}{\partial x_2} + y_2 \frac{\partial}{\partial y_2},
Y_2 = y_2 \frac{\partial}{\partial x_1} + y_1 \frac{\partial}{\partial x_2}.$ 
Then this is a degenerate (non-semisimple) integrable
linear Hamiltonian system.
\end{remark}

\subsubsection{Nondegenerate linear systems as linear torus actions}

Consider a nondegenerate linear integrable system  $(Y_1,\hdots,Y_p, G_1,\hdots, G_q)$.
Recall that the Lie algebra of linear vector fields on $\bbK^m$, where $\bbK = \bbR$ or $\bbC$, is naturally isomorphic to the reductive
Lie algebra $gl(m,\bbK)$. Since $Y_1,\hdots, Y_p$ are commuting semisimple elements in $gl(m,\bbK)$, then they can be diagonalized
simultaneously over $\bbC.$ In other words, there is a complex coordinate system in which $Y_1,\hdots, Y_p$ are diagonal:
\begin{equation}
Y_i = \sum_{i=j}^m c_{ij} x_j\frac{\partial}{\partial x_j} .
\end{equation}
The linear independence of $Y_1, \hdots, Y_p$ means that the matrix $(c_{ij})^{i=1,\hdots,p}_{j=1,\hdots,m}$ is of rank
$p$. The set of polynomial common first integrals of $Y_1,\hdots, Y_p$ is the vector space spanned by the monomial functions
$\prod_{j=1}^m x_j^{\alpha_j}$ which satisfy the {\bf resonance equation}
\begin{equation} \label{eqn:resonance}
\sum_{j=1}^m \alpha_j c_{ij} = 0 \
\text{for all} \  i=1,\hdots, p.
\end{equation}

The set of nonnegative integer solutions of Equation (\ref{eqn:resonance}) is the intersection 
$S \cap \mathbb{Z}^m_+,$ 
where 
$S = \left\{ (\alpha_i) \in \mathbb{R}^m \ | \  \sum_{j=1}^m \alpha_j c_{ij} = 0 \ \text{for all} \  i=1,\hdots, p \right\}$ 
 is the $q$-dimensional
space of all real solutions of (\ref{eqn:resonance}), and $\mathbb{Z}^m_+$ is the set of nonnegative $m$-tuples of integers.
The functional independence of $G_1,\hdots,G_q$ implies that this set $S \cap \mathbb{Z}^m_+$ must have dimension 
$q$ over $\mathbb{Z}$. In particular, the set $S \cap \mathbb{R}^m_+$ has dimension $q$ over $\mathbb{R},$
and the resonance equation (\ref{eqn:resonance}) is equivalent to a linear system of equations with integer coefficients.
In other words, using a linear transformation to replace $Y_i$ by new vector fields 
\begin{equation}
Z_i = \sum_{j} a_{ij} Y_j 
\end{equation}
with an appropriate invertible matrix $(a_{ij})$ with constant coefficients, we may assume that 
\begin{equation}
Z_i = \sum_{i=j}^m \tilde{c}_{ij} x_j \frac{\partial}{ \partial x_j},
\end{equation}
where 
$\tilde{c}_{ij} = \sum_{k} a_{ik} c_{kj} \in \bbZ \ \forall \ i,j .$ The vector fields 
$\sqrt{-1}Z_1, \hdots, \sqrt{-1}Z_p$ are the generators of a linear effective $\bbT^p$-action, which is exactly the
associated torus action of the family $(Y_1,\hdots, Y_p)$ in the sense of local normal form theory. 
In particular, in this case (and for nondegenerate fixed points of integrable systems of type $(p,q)$), 
the toric degree is equal to $p$. Thus we have:

\emph{A nondegenerate linear integrable system of type $(p,q)$ is essentially 
the same as an effective linear torus $\bbT^p$-action in $\bbC^m$}.

Hence, the classification of nondegenerate linear integrable systems is essentially the same as the classification of linear torus
actions. Keep in mind that, when the system is real, the torus will act in the complexified space.
 
\subsubsection{Geometric equivalence}

Geometrically, an integrable system of type $(p,q)$ may be viewed as a singular fibration given by the level sets
of the map $(F_1,\hdots, F_q): M \to \bbK^q$, such that on each fiber there is an infinitesimal $\bbK^p$-action
generated by the commuting vector fields  $(X_1,\hdots,X_p).$
Denote by $\mathcal{F}$ the algebra of common first integrals of $X_1,\hdots,X_p.$ 
Instead of taking $F_1,\hdots, F_q$, we can choose from $\mathcal{F}$ any other family of $q$ functionally independent functions, and they will
still form with $X_1,\hdots, X_p$ an integrable system. Moreover, in general, there is no natural preferred choice of $q$ functions in
$\mathcal{F}$.  For example, consider a linear integrable 4-dimensional system of type $(1,3)$, i.e. with 1 vector field and 3 functions. The vector field is
$Y = x_1 \frac{\partial}{\partial x_1} + x_2 \frac{\partial}{\partial x_2} - x_3 \frac{\partial}{\partial x_3} - x_4 \frac{\partial}{\partial x_4}.$
The corresponding resonance equation is: $\alpha_1 + \alpha_2 - \alpha_3 - \alpha_4 = 0$.  The algebra of algebraic first integrals
is generated by the functions $x_1x_3, x_1x_4, x_2,x_3, x_2x_4$; it has functional dimension 3 but cannot be generated by just 3 functions.

Thus, instead of specifying $q$ first integrals, 
from the geometrical point of view it is better to look at the whole algebra 
$\mathcal{F}$ of first integrals of an integrable system of type $(p,q)$.
Notice also that, if $f_{ij} \in \mathcal{F}$ ($i,j=1,\hdots,p$)  such that the matrix $(f_{ij})$ is invertible, then  by putting
\begin{equation}
 \hat{X_i} = \sum_{ij} f_{ij} X_j \ \ \text{for all} \ \ i=1,\hdots,p,
\end{equation}
we get another integrable system $(\hat{X_1},\hdots,\hat{X_p},F_1,\hdots,F_q)$, 
which, from the geometric point of view, is essentially the same as the original system.
So we have the following definition:

\begin{defn} \label{defn:GeometricEquivalence}
Two integrable dynamical  systems  $(X_1,\hdots,X_p,F_1,\hdots,F_q)$ and $(X'_1,\hdots,X'_p,F'_1,\hdots,F'_q)$
of type $(p,q)$ on a manifold $M$ are said to be {\bf geometrically equal}, if they have the same algebra of first 
integrals (i.e. $F'_1,\hdots,F'_p$ are functionally dependent of $F_1,\hdots,F_p$ and vice versa), and there exists
an invertible  matrix $(f_{ij})_{i=1,\hdots,p}^{j=1,\hdots,p}$ (i.e. whose determinant does not vanish anywhere), 
whose entries $f_{ij}$ are first integrals of the system, such that one can write 
\begin{equation}
X'_i = \sum_{j}f_{ij} X_j \ \ \forall \ i=1,\hdots,p. 
\end{equation}
Two integrable systems are said to be {\bf geometrically equivalent} if they become geometrically the same 
after a diffeomorphism. 
\end{defn}

\begin{remark}
Though the choice of first integrals is not important in Definition \ref{defn:GeometricEquivalence} 
of geometric  equivalence, the $q$-tuple $F_1,\hdots, F_q$ of first integrals in Definition \ref{defn:NondegenerateSingularPoint} of 
nondegeneracy must be chosen so that  not only they are functionally independent, 
but their homogeneous parts are also functionally independent.  
(According to a simple analogue of Ziglin's lemma \cite{Ziglin-Branching1982}, 
in the analytic case, such a choice is always possible).
\end{remark}

It's clear that, near a regular point, i.e. a point $z$ such that $X_1 \wedge \hdots \wedge X_p (z) \neq 0$, any
integrable system of  type $(p,q)$ will be locally geometrically equivalent to the rectified
system $(X_1 = \frac{\partial}{\partial x_1}, \hdots, X_p = \frac{\partial}{\partial x_p}, F_{1}= x_{p+1},\hdots, F_q = x_m)$. 
The question about the local structure of integrable systems becomes interesting only at singular points.

\subsubsection{Linearization and rigidity of nondegenerate singular points} 
The theorems presented in this subsubsection are from the paper \cite{Zung-Nondegenerate2012}.

Due to resonances, it is impossible to linearize integrable vector
fields in general (if there were no resonance relations, there would be no formal first integral either). 
So the following result about geometric lineariztion, i.e. linearization up to geometric equivalence, 
is the best that one can hope for in the case of analytic integrable systems:

\begin{theorem}
\label{thm:linearization-nondegenerate}
An analytic (real or complex) integrable system near a nondegenerate fixed point is locally geometrically equivalent to a nondegenerate 
linear integrable system, namely its linear part. 
\end{theorem}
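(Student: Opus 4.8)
The plan is to leverage the toric characterization of normal forms (Theorem \ref{thm:toricPB} and Theorem \ref{thm:PBmain2}) to produce an analytic torus action, and then use that torus action together with the structure-preserving philosophy to straighten out the fibration into its linear model. Since the fixed point $O$ is nondegenerate, by Definition \ref{defn:NondegenerateSingularPoint} the semisimple linear parts $Y_1,\hdots,Y_p$ of $X_1,\hdots,X_p$ are already semisimple and independent, so the toric degree of the commuting family equals $p$. Theorem \ref{thm:PBmain2} then gives a \emph{simultaneous local analytic Poincaré--Birkhoff normalization}: after an analytic change of coordinates we may assume each $X_i$ commutes with its semisimple linear part, and consequently the whole family is preserved by an effective analytic $\bbT^p$-action whose generators are the $\sqrt{-1}Z_k$ from Subsection on nondegenerate linear systems as linear torus actions. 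This reduces the problem to analyzing a system that is invariant under a known linear torus action.

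\emph{First} I would record what normalization buys us: once $[X_i,Y_i]=0$ for the semisimple parts (and the nilpotent parts vanish by nondegeneracy), each $X_i$ lies in the centralizer of the torus action, hence is a sum of resonant monomial vector fields. Writing $X_i = Y_i + (\text{higher resonant terms})$, the key observation is that every resonant monomial coefficient is itself a first integral of the linearized system: a resonant term attached to $\partial/\partial x_l$ carries a monomial $\prod x_j^{b_j}$ whose exponents satisfy the resonance relation, so by Equation \eqref{eqn:resonance} the product $\prod x_j^{b_j}/x_l$ (equivalently, the coefficient when one factors out the vector field direction) lies in the algebra $\mathcal{F}$ of common first integrals of $Y_1,\hdots,Y_p$. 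This is exactly the data needed to invoke geometric equivalence as in Definition \ref{defn:GeometricEquivalence}.

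\emph{Next} I would build the matrix exhibiting geometric equivalence. Because each normalized $X_i$ can be written as $X_i = \sum_j g_{ij}\, Y_j$ where the $g_{ij}$ are first integrals with $g_{ij}(O)=\delta_{ij}$ (the identity matrix at $O$ coming from $X_i$ having linear part $Y_i$), the matrix $(g_{ij})$ is invertible in a neighborhood of $O$ by continuity of the determinant. This is precisely the invertible first-integral matrix of Definition \ref{defn:GeometricEquivalence}, so the normalized system $(X_1,\hdots,X_p,F_1,\hdots,F_q)$ is geometrically equal to $(Y_1,\hdots,Y_p,G_1,\hdots,G_q)$. To handle the first integrals $F_j$, one argues that any analytic function annihilated by all $X_i$ is, after normalization, annihilated by all $Y_i$ as well (its lowest-degree part is a first integral of the linear system, and the structure-preserving property forces the function to be $\bbT^p$-invariant), so the two systems share the same algebra $\mathcal{F}$ of first integrals.

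\emph{The hard part} will be establishing that every normalized resonant vector field coefficient is genuinely an element of $\mathcal{F}$ \emph{and} that these coefficients assemble into a well-defined invertible matrix with entries in $\mathcal{F}$ rather than merely formal expressions --- in other words, passing from the ``each monomial is resonant'' statement to a clean factorization $X_i=\sum_j g_{ij}Y_j$ with $g_{ij}\in\mathcal{F}$. The subtlety is that a single resonant vector field need not factor uniquely through the $Y_j$, so one must use the independence of $Y_1,\hdots,Y_p$ (the rank-$p$ condition on the coefficient matrix) to select a canonical factorization, and then verify analyticity and invertibility of $(g_{ij})$ near $O$. Once the factorization is secured, the invertibility at $O$ is immediate and the geometric equivalence follows directly from Definition \ref{defn:GeometricEquivalence}; the real analytic case is covered by Proposition \ref{prop:realcomplex}, which guarantees the normalizing transformation can be taken real analytic when $X$ is.
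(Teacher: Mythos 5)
Your overall strategy coincides with the paper's: invoke the simultaneous analytic Poincar\'e--Birkhoff normalization (Theorem \ref{thm:PBmain2}) to get a linearized analytic $\bbT^p$-action, then write each $X_i$ as a first-integral-coefficient combination of the linear generators and conclude via Definition \ref{defn:GeometricEquivalence}. The difference is that what you defer as ``the hard part'' is precisely the content of the paper's Division Lemma (Lemma \ref{lemma:division}), and you have not actually supplied that argument, so as written there is a genuine gap at the decisive step.

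Concretely, your claim that ``every resonant monomial coefficient is itself a first integral'' is not correct as stated. A resonant term $\prod_j x_j^{b_j}\,\partial/\partial x_l$ of $X_i$ satisfies $\sum_j b_j c_{ij} - c_{il}=0$ for all $i$, so the natural candidate coefficient $\prod_j x_j^{b_j}\cdot x_l^{-1}$ is only a \emph{Laurent} monomial solution of the resonance equation \eqref{eqn:resonance}: when $b_l=0$ it has a pole along $\{x_l=0\}$ and does not lie in the algebra $\mathcal F$ of analytic first integrals. The paper's Division Lemma handles exactly this: writing $X=\sum_i g_i Z_i$ with $Z_i = x_i\,\partial/\partial x_i$, one knows a priori only that $x_ig_i$ is analytic, and one proves analyticity of $g_i$ by producing, for each $i$, a monomial first integral $\prod_j x_j^{\alpha_j}$ with $\alpha_i\neq 0$ (forcing $x_ig_i$ to vanish on $\{x_i=0\}$ and hence be divisible by $x_i$), together with a separate argument, using the completeness of the commuting family, for the degenerate case where no such monomial exists and $Z_i$ must itself be a combination of the $Y_j$. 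Without this lemma your factorization $X_i=\sum_j g_{ij}Y_j$ with $g_{ij}\in\mathcal F$ is not established, and everything downstream (invertibility of $(g_{ij})$ at $O$, equality of the algebras of first integrals) rests on it. The paper also reaches the hypothesis of the Division Lemma slightly differently --- via the tangency relations $Z_i\wedge X_1\wedge\hdots\wedge X_p=0$ coming from the level sets of $(F_1,\hdots,F_q)$, rather than via the resonant monomial expansion --- but that is a minor variation; the missing division argument is the essential point you would need to add.
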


\begin{proof}
The above theorem is a simple consequence of Theorem \ref{thm:PBmain2} about the existence of an effective analytic torus $\bbT^p$-action  
in the neighborhood of a nondegenerate fixed point of an integrable system of type $(p,q)$ (because the toric degree is equal to $p$ in this case), 
and the fact that nondegenerate linear integrable systems of type $(p,q)$ are essentially the same as linear $\bbT^p$-actions. 

Indeed, let $\sqrt{-1}Z_1,\hdots,\sqrt{-1}Z_p$ be the generators of the associated analytic torus action near nondegenerate a fixed point $O$
of an analytic integrable system $(X_1,\hdots,X_p, F_1\hdots, F_q).$ We can assume that the torus action is alreay linearied, i.e.,
$Z_1,\hdots, Z_p$ are linear vector fields. Since, for every i, $Z_i$ is tangent to the level sets of $(F_1,\hdots, F_q)$, and the tangent
space to these level sets at a generic point is spanned by $X_1,\hdots, X_p$, we have that
\begin{equation}
Z_i \wedge X_1 \wedge \hdots \wedge X_p = 0 
\end{equation}
for all $i=1,\hdots, p.$. Since $ Z_1,\hdots, Z_p$ are independent, by 
dimensional consideration, the inverse is also true: $X_i \wedge Z_1 \hdots \wedge Z_p = 0$ for all $i=1,\hdots, p.$ 
Lemma \ref{lemma:division} below says that we can write 
$X_i = \sum_{j} f_{ij} Z_j$ in a unique way, where $f_{ij}$ 
are local analytic functions, which are also first integrals of the system.
The fact that the matrix $(f_{ij})$ is invertible, i.e. it has non-zero determinant at $O$, 
is also clear, because $(Z_1, \hdots, Z_p)$
are nothing but a linear transformation of the linear part of $(X_1,\hdots,X_p).$

What we have proved is that,  near a nondegenerate fixed point, 
an integrable system is geometrically equivalent to its linear part, at least
in the complex analytic case. In the real analytic case, 
the vector fields $(Z_1,\hdots,Z_p)$ are not real in general, but the proof
will remain the same after a complexification, 
because the Poincaré-Dulac normalization in the real case can be chosen to be real.  
\end{proof}

\begin{lemma}[Division lemma] 
\label{lemma:division}
If  $(Y_1,\hdots,Y_p, G_1,\hdots, G_q)$ is a nondegenerate linear integrable system, and $X$ is a local analytic vector field 
which commutes with $Y_1,\hdots,Y_p$ and such that $X \wedge Y_1 \wedge \hdots \wedge Y_p = 0$, then we can write 
$X = \sum f_i Y_i$ in a unique way, where $f_i$ are local analytic functions which are common first integrals of $Y_1,\hdots, Y_p$.
\end{lemma}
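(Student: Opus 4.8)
The plan is to diagonalize the linear system and then show that the coefficients produced by Cramer's rule on the generic locus are in fact holomorphic first integrals. Since $Y_1,\hdots,Y_p$ are commuting semisimple linear vector fields, after complexifying (when $\bbK=\bbR$) and a linear change of coordinates we may assume they are diagonal, and, replacing them by the integer combinations generating the associated torus action (an invertible substitution that only relabels the sought $f_i$), that $Y_i=\sum_{j=1}^m c_{ij}\,x_j\,\partial/\partial x_j$ with $c_{ij}\in\bbZ$. Writing $X=\sum_l g_l\,\partial/\partial x_l$, the relation $X\wedge Y_1\wedge\hdots\wedge Y_p=0$ says that on the dense open set $U=\{x_1\cdots x_m\neq 0\}$, where the $Y_i$ are pointwise independent, one has $X=\sum_i\lambda_i Y_i$ for unique functions $\lambda_i$; by Cramer's rule (using a $p\times p$ minor of $(c_{ij})$ of rank $p$) these $\lambda_i$ are meromorphic near $0$, with poles confined to the coordinate hyperplanes.

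First I would record two elementary facts. Differentiating $X=\sum_i\lambda_i Y_i$ and using $[Y_j,X]=0$ together with $[Y_j,Y_i]=0$ gives $\sum_i Y_j(\lambda_i)Y_i=0$, so $Y_j(\lambda_i)=0$: each $\lambda_i$ is a (meromorphic) common first integral, hence a Laurent series $\lambda_i=\sum_{\beta}a_{i,\beta}x^\beta$ supported on $\beta\in S\cap\bbZ^m$, where $S=\{\beta:\sum_l c_{il}\beta_l=0\ \forall i\}$. Second, comparing $\partial/\partial x_l$-components in $X=\sum_i\lambda_i Y_i$ gives $g_l=\mu_l\,x_l$ with $\mu_l:=\sum_i c_{il}\lambda_i$, and for each fixed $\beta$ the coefficient vector $(b_{l,\beta})_l$, where $\mu_l=\sum_\beta b_{l,\beta}x^\beta$, equals $C^{\mathrm T}(a_{i,\beta})_i$ and therefore lies in the row space $S^{\perp}$ of $C=(c_{ij})$.

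The crux is to promote the meromorphic $\lambda_i$ to holomorphic functions, i.e. to rule out negative exponents, and I would argue coefficient by coefficient. Since each $g_l=\mu_l x_l$ is holomorphic, $b_{l,\beta}\neq 0$ forces $\beta+e_l\geq 0$, i.e. $\beta_j\geq 0$ for $j\neq l$ and $\beta_l\geq -1$. Thus if $\beta$ has two or more negative entries no $l$ is admissible and $b_{l,\beta}=0$ for all $l$; and if $\beta$ has a single negative entry it must equal $-1$, at some position $l_0$, and then $b_{l,\beta}=0$ for every $l\neq l_0$. In the latter case the coefficient vector is $b_{l_0,\beta}\,e_{l_0}\in S^{\perp}$, so $b_{l_0,\beta}\,\gamma_{l_0}=0$ for all $\gamma\in S$; taking $\gamma=\beta\in S$, whose $l_0$-th entry is $-1$, yields $b_{l_0,\beta}=0$. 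Hence every $\beta$ carrying a nonzero coefficient satisfies $\beta\geq 0$, so all $\mu_l$, and therefore all $\lambda_i$, are holomorphic. Equivalently, each $\lambda_i$ is a meromorphic first integral whose polar set has been shown to meet no coordinate hyperplane in codimension one, so it extends holomorphically by Riemann's second extension theorem.

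Writing $f_i:=\lambda_i$, we obtain holomorphic first integrals with $X=\sum_i f_i Y_i$ on $U$, hence everywhere by continuity. Uniqueness is immediate: any relation $\sum_i(f_i-f_i')Y_i=0$ vanishes on $U$, where the $Y_i$ are independent, forcing $f_i=f_i'$. Finally, in the real-analytic case the same $f_i$ are obtained after complexification; since $X$ and the $Y_i$ are real, complex conjugation produces another solution, so uniqueness forces each $f_i$ to be real. The main difficulty, and the step I expect to absorb most of the work, is precisely the holomorphy argument: controlling the possible poles along the coordinate hyperplanes, for which the interplay between the holomorphy of the components $g_l$, the weight (first-integral) constraint $\beta\in S$, and the membership of the coefficient vectors in $S^{\perp}$ is essential.
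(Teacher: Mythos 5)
Your proof is correct, and it shares the paper's overall skeleton: reduce to the diagonal integer normal form $Y_i=\sum_j c_{ij}x_j\partial/\partial x_j$, observe that the coefficients of $X$ relative to the $Y_i$ (equivalently, relative to the fields $x_l\partial/\partial x_l$) are a priori only meromorphic with poles along the coordinate hyperplanes and are common first integrals, and then fight the one real battle of the lemma, namely showing that these coefficients are actually analytic. Where you genuinely diverge is in how that battle is fought. The paper works at the level of functions: for each index $l$ it picks a monomial first integral $\prod_j x_j^{\alpha_j}$ with $\alpha_l\neq 0$, uses the resulting linear relation $\sum_j \alpha_j g_j=0$ to show that the $l$-th component of $X$ vanishes on $\{x_l=0\}$ and is hence divisible by $x_l$; and it needs a separate degenerate case (when no monomial first integral involves $x_l$), handled by observing that $x_l\partial/\partial x_l$ then lies in the span of the $Y_i$, commutes with $X$, and forces the divisibility directly. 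Your argument instead works Laurent coefficient by Laurent coefficient and exploits the duality between the support lattice $S=\ker C$ (forced by the first-integral property of the $\lambda_i$) and the coefficient space $S^{\perp}$ (the row space of $C$, forced by $X$ being a pointwise combination of the $Y_i$); the punchline is the self-pairing $\langle b_{\cdot,\beta},\beta\rangle=0$, which kills the only coefficient that survives the holomorphy constraint when $\beta$ has a single negative entry, necessarily equal to $-1$. This buys you a uniform argument with no case split and no appeal to the completeness of the commuting family, at the price of somewhat more bookkeeping; the paper's version is shorter to state but leans on the existence of suitable monomial first integrals and, in the degenerate case, on the remark that one cannot adjoin another independent commuting field. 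Your handling of uniqueness and of the real case (by conjugation and uniqueness after complexification) is also correct and fills in a point the paper leaves implicit.
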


\begin{proof}
 Without loss of generality, we may assume that $Y_i = \sum_{j} c_{ij} Z_j$, where $c_{ij}$ are integers and 
$Z_i = x_i \frac{\partial}{ \partial x_i}$ in some coordinate system $(x_1,\hdots, x_m)$. 
We will write $X = \sum_i g_i Z_i,$ where $x_i g_i$ are analytic functions.
The main point is to prove that $g_i$ are analytic functions, and the rest of the lemma will follow easily. Let $\prod_i x_i^{\alpha_i}$
be a polynomial first integral of the linear system. Then we also have $X(\prod_i x_i^{\alpha_i}) = 0,$ which implies that
$\sum_i \alpha_i g_i = 0.$ If $\alpha_1 \neq 1$ then  $x_1g_1 = (-\sum_{i=2}^m x_1g_i)/\alpha_1$ vanishes when
$x_1 = 0,$ and so $x_1g_1$ is divisible by $x_1$, which means that $g_1$ is analytic. Thus, for each $i$, if we can choose
a monomial first integral $\prod_i x_i^{\alpha_i}$ such that $\alpha_i \neq 0,$ then $g_i$ is analytic. Assume now that
all monomial first integrals $\prod_i x_i^{\alpha_i}$ must have $\alpha_1 = 0.$ It means that all the first integrals are also
invariant with respect to the vector field $Z_1 = x_1 \frac{\partial}{ \partial x_1}$. Then $Z_1$ must be a linear combination of
$Y_1,\hdots, Y_p$ (because the system is already ``complete'' and one cannot add another independent commuting vector field to it),
and we have $[Z_1, X] = 0.$ From this relation it follows easily that $g_1$ is also analytic in this case. Thus, all functions $g_i$
are analytic.
\end{proof}

Theorem \ref{thm:linearization-nondegenerate} can be extended to the case of non-fixed 
nondegenerate singular points in an obvious way, with the same proof, using 
the toric characterization of local normalizations of vector fields:
 
\begin{thm}\label{thm:linearization-nondegenerate2}
 Any analytic integrable dynamical system near a nondegenerate singular point is locally geometrically 
equivalent to a direct product of a linear nondegenerate integrable system and a constant (regular) integrable system.
\end{thm}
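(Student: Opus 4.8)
The plan is to reduce everything to the fixed-point case already settled in Theorem \ref{thm:linearization-nondegenerate} by first splitting off the $k$ rank directions as a trivial factor and then linearizing the remaining reduced system of type $(p-k,q)$. First I would use a geometric equivalence to arrange that exactly $k$ of the vector fields are nonvanishing at $O$ and the other $p-k$ vanish there. Since the rank of $O$ is $k$, after renumbering we may assume $X_1(O)\wedge\hdots\wedge X_k(O)\neq 0$, and each $X_j(O)$ with $j>k$ lies in the span of $X_1(O),\hdots,X_k(O)$. Writing $X_j(O)=\sum_{i=1}^k c_{ji}X_i(O)$ with constants $c_{ji}$ and replacing $X_j$ by $\tilde X_j=X_j-\sum_{i=1}^k c_{ji}X_i$ for $j>k$ is a geometric equivalence (the matrix is constant, hence its entries are trivially first integrals), after which $\tilde X_{k+1},\hdots,\tilde X_p$ all vanish at $O$ while $X_1,\hdots,X_k$ remain independent there.

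Next I would straighten the nonvanishing directions simultaneously. As $X_1,\hdots,X_k$ commute and are linearly independent at $O$, the simultaneous flow-box theorem gives analytic coordinates $(t_1,\hdots,t_k,y_1,\hdots,y_{m-k})$ in which $X_i=\partial/\partial t_i$ for $i=1,\hdots,k$. Because $\tilde X_{k+1},\hdots,\tilde X_p$ commute with $X_1,\hdots,X_k$ and $F_1,\hdots,F_q$ are first integrals of $X_1,\hdots,X_k$, none of these objects depends on $t_1,\hdots,t_k$; they are therefore vector fields and functions in the $y$-variables alone. On the $y$-slice this produces an analytic integrable system $(\tilde X_{k+1},\hdots,\tilde X_p,F_1,\hdots,F_q)$ of type $(p-k,q)$ for which the origin is a fixed point, and by the definition of nondegeneracy of $O$ this reduced system is precisely a nondegenerate fixed point of an integrable system of type $(p-k,q)$.

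Then I would apply the fixed-point result to the reduced system: by Theorem \ref{thm:linearization-nondegenerate} (itself resting on Theorem \ref{thm:PBmain2} and Lemma \ref{lemma:division}) there is an analytic change of the $y$-coordinates and linear vector fields $Z_{k+1},\hdots,Z_p$, the generators of the associated $\mathbb{T}^{p-k}$-action built from the lattice $\mathcal Q$ of the nondegenerate linear part, together with first integrals $f_{jl}$ of the reduced system such that $\tilde X_j=\sum_{l=k+1}^{p}f_{jl}Z_l$ with $(f_{jl})$ invertible at the origin. Pulled back to the product coordinates the $f_{jl}$ depend only on $y$, hence are automatically first integrals of the full system. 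Collecting everything, an analytic diffeomorphism of $(\bbK^m,0)$ carries the full system into the product of the constant system $(\partial/\partial t_1,\hdots,\partial/\partial t_k)$ of type $(k,0)$ on a $t$-ball and the nondegenerate linear system $(Z_{k+1},\hdots,Z_p,G_1,\hdots,G_q)$ of type $(p-k,q)$. The transition matrix between the original $p$-tuple and this product $p$-tuple is block triangular, with a $k\times k$ identity block in the $t$-directions and the invertible block $(f_{jl})$ in the $y$-directions; it is therefore invertible with first-integral entries, which is exactly a geometric equivalence.

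The main obstacle I expect is bookkeeping rather than a genuinely new difficulty: one must verify that the straightening of $X_1,\hdots,X_k$ and the linearizing diffeomorphism of the reduced $y$-system assemble into a single product-form diffeomorphism, so that the two factors are genuinely decoupled and the resulting transition matrix of first integrals is invertible throughout a neighborhood of $O$ and block triangular. In the real analytic case the reduced linear vector fields $Z_l$ need not be real, but, exactly as in the proof of Theorem \ref{thm:linearization-nondegenerate}, the real Poincaré--Dulac normalization can be chosen invariant under complex conjugation, so the argument descends from the complexification to the real setting.
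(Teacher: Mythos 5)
Your overall strategy --- kill the values of $X_{k+1},\hdots,X_p$ at $O$ by a constant matrix, straighten $X_1,\hdots,X_k$ by a simultaneous flow-box, reduce to a type $(p-k,q)$ system with a nondegenerate fixed point, and invoke Theorem \ref{thm:linearization-nondegenerate} there --- is exactly the reduction the paper has in mind (the paper gives no written proof, asserting only that the fixed-point proof extends). But there is one step that fails as written, and it is not mere bookkeeping. After the flow-box, each $\tilde X_j$ ($j>k$) has the form $\tilde X_j=\sum_{i=1}^k a_{ji}(y)\,\partial/\partial t_i+V_j$, where $V_j$ is the $y$-part; commutation with $\partial/\partial t_i$ only forces the coefficients to be $t$-independent, it does not kill the $\partial/\partial t_i$-components. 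Your ``$y$-slice'' system is therefore the \emph{projection} $(V_{k+1},\hdots,V_p)$, and after linearizing it you are left with $\tilde X_j=\sum_i a_{ji}(y)\,\partial/\partial t_i+\sum_l f_{jl}(y)Z_l$. The transition matrix to the product system then has the off-diagonal block $(a_{ji})$, and these functions are in general \emph{not} first integrals of the system, so the matrix is not a geometric equivalence. A minimal example: $X_1=\partial/\partial t$, $X_2=y\,\partial/\partial t+y\,\partial/\partial y$ on $\bbK^2$. Here $a_{21}=y$, the only first integrals are constants, and no invertible matrix of first integrals turns $(X_1,X_2)$ into $(\partial/\partial t,\,y\,\partial/\partial y)$; one genuinely needs the extra change of coordinates $s=t-y$.

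The missing ingredient is thus a shear $t_i\mapsto t_i+g_i(y)$ decoupling the two factors, i.e.\ solving the cohomological equations $V_j(g_i)=-a_{ji}+(\text{first integral})$, whose compatibility conditions $V_j(a_{j'i})=V_{j'}(a_{ji})$ follow from $[\tilde X_j,\tilde X_{j'}]=0$. This is solvable in the analytic category because, after the reduced fields are put in the integral diagonal form $Z_l=\sum_j \tilde c_{lj}x_j\,\partial/\partial x_j$, the divisors $\langle\tilde c_l,\alpha\rangle$ occurring on non-resonant monomials are nonzero integers, so there is no small-divisor problem; equivalently, one can appeal to the associated torus action of the \emph{full} family along the rank-$k$ orbit (the toric characterization the paper alludes to), whose linearization automatically produces flow-box coordinates adapted to the transversal linear model. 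With that one addition your argument closes; the constant-matrix normalization, the flow-box, the identification of the reduced system with a nondegenerate fixed point, the use of Lemma \ref{lemma:division}, and the descent from the complexification to the real case are all correct.
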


Another related result is the following deformation rigidity theorem for nondegenerate singular points:

\begin{theorem}\label{thm:rigid}
 Let 
$(X_{1, \theta},\hdots, X_{p, \theta}, F_{1, \theta}, \hdots, F_{q, \theta})$ 
be an analytic family of integrable systems of type $(p,q)$
depending on a parameter $\theta$ which can be multi-dimensional: $\theta = (\theta_1,\hdots,\theta_s)$, 
and assume that $z_0$ is a nondegenerate fixed point when $\theta = 0$. Then there exists a local 
analytic family of fixed points $z_{\theta}$, such that $z_{\theta}$ is a fixed point of 
$(X_{1, \theta},\hdots, X_{p, \theta}, F_{1, \theta}, \hdots, F_{q, \theta})$ for each $\theta$, and moreover,
up to geometric equivalence, the local structure of  $(X_{1, \theta},\hdots, X_{p, \theta}, F_{1, \theta}, \hdots, F_{q, \theta})$ at $z_\theta$ 
does not depend on $\theta$.
\end{theorem}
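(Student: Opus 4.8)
The plan is to reduce the parametrized rigidity statement to the non-parametrized linearization Theorem \ref{thm:linearization-nondegenerate} by treating the parameter $\theta$ as a collection of extra ``dummy'' variables, thereby enlarging the system to a single integrable system in which the parameter becomes a set of trivial first integrals. First I would locate the family of fixed points $z_\theta$. At $\theta=0$ the point $z_0$ is a nondegenerate fixed point, so the linear parts $Y_{1,0},\hdots,Y_{p,0}$ are semisimple and (after reduction) span a $p$-dimensional space of commuting diagonalizable vector fields whose joint zero locus is cut out transversally. The equations $X_{i,\theta}(z)=0$ for $i=1,\hdots,p$ define the locus of fixed points; at $(\theta,z)=(0,z_0)$ the Jacobian in the $z$-variables is governed by the nondegenerate linear part, which is invertible on the normal directions. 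Hence by the analytic implicit function theorem there is a unique local analytic section $\theta \mapsto z_\theta$ with $z_\theta$ a fixed point of the $\theta$-system, and after the analytic change of coordinates moving $z_\theta$ to the origin we may assume $z_\theta \equiv 0$ for all $\theta$.

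Next I would set up the enlarged system. Consider the manifold $M \times B$, where $B$ is a ball in the parameter space, with coordinates $(x,\theta)$. Define vector fields $\widetilde X_i = X_{i,\theta}$ (now viewed as vector fields on $M\times B$ with no $\partial/\partial\theta_k$ component, so that $\theta$ is constant along them) for $i=1,\hdots,p$, and append the $s$ trivial vector fields $\widetilde X_{p+k} = 0$ together with the functions $\theta_1,\hdots,\theta_s$ as additional first integrals. Because each $\widetilde X_i$ commutes with each $\widetilde X_j$ in the $x$-variables (the family is integrable for each fixed $\theta$) and annihilates every $\theta_k$, and because the original first integrals $F_{j,\theta}$ together with the $\theta_k$ remain functionally independent, the tuple $(\widetilde X_1,\hdots,\widetilde X_p, F_{1,\theta},\hdots,F_{q,\theta},\theta_1,\hdots,\theta_s)$ is an analytic integrable system of type $(p,q+s)$ on $M\times B$ near $(0,0)$. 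The origin is a fixed point of this enlarged system, and its linear part in the $x$-directions is exactly the nondegenerate linear part at $\theta=0$, so the enlarged system is nondegenerate of toric degree $p$.

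Applying Theorem \ref{thm:linearization-nondegenerate} (or Theorem \ref{thm:PBmain2} directly) to the enlarged system yields an effective analytic torus $\bbT^p$-action near $(0,0)$ whose generators $\sqrt{-1}Z_1,\hdots,\sqrt{-1}Z_p$ linearize the system up to geometric equivalence. Since the $\theta_k$ are first integrals, this torus action preserves each $\theta_k$, hence acts fibrewise over $B$; consequently its linear part $(Z_1,\hdots,Z_p)$ is independent of $\theta$ after a $\theta$-preserving analytic normalizing change of coordinates. Invoking the division Lemma \ref{lemma:division} fibrewise, we write $\widetilde X_i = \sum_j f_{ij}Z_j$ with the $f_{ij}$ analytic first integrals of the enlarged system and the matrix $(f_{ij})$ invertible at the origin. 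This exhibits the $\theta$-system as geometrically equivalent, by a $\theta$-preserving diffeomorphism, to a fixed $\theta$-independent linear model $(Z_1,\hdots,Z_p)$, which is precisely the assertion that the local structure is independent of $\theta$ up to geometric equivalence.

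The main obstacle I anticipate is verifying that the normalization and the division lemma can be carried out \emph{uniformly in the parameter}, i.e. that the torus action produced by the enlarged system really splits as a $\theta$-family of actions on the fibers with $\theta$-independent linear model, rather than merely an action that happens to preserve the $\theta_k$. The key point resolving this is that the toric degree of the enlarged system equals $p$ (the added first integrals $\theta_k$ contribute to $q$ but not to the toric part, since the $Z_j$ annihilate them), so the linear part of the torus action lies entirely in the $x$-directions with $\theta$-independent integer weights dictated by the lattice $\mathcal Q$ of the nondegenerate linear part at $\theta=0$; this is exactly why the trick of absorbing $\theta$ as dummy first integrals forces the desired uniformity and makes the ``same proof'' comment in the statement precise.
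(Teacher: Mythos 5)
Your proposal is essentially identical to the paper's proof: the paper also absorbs the parameters $\theta_1,\hdots,\theta_s$ as additional first integrals to form one ``big'' integrable system of type $(p,q+s)$ with $z_0$ still a nondegenerate fixed point, and then applies Theorem \ref{thm:linearization-nondegenerate}; your write-up just supplies the details the paper leaves implicit. One small caveat: since a nondegenerate fixed point need not be an isolated zero of the vector fields, the implicit function theorem does not by itself give a unique section $z_\theta$ — but the existence of an analytic family of fixed points follows anyway once the big system is geometrically linearized, so this does not affect the argument.
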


\begin{proof}
We can put the integrable systems in this family together to get one ``big'' integrable system of type $(p, q+s)$, with the last
coordinates $x_{m+1}, \hdots, x_{m+s}$ as additional first integrals. Then $z_0$ is still a nondegenerate fixed point for this big integrable system,
and we can apply Theorem (\ref{thm:linearization-nondegenerate}) to get the desired result.   
\end{proof}

We also have an extension of Ito's theorem \cite{Ito-Birkhoff1989} to the non-Hamiltonian case. Ito's theorem says that, an analytic integrable 
Hamiltonian system at a non-resonant singular point (without the requirement of nondegeneracy of the momentum map at that point) can
also be locally geometrically linearized (i.e. locally one can choose the momentum map so that the system becomes nondegenerate and geometrically
linearizable). For Hamiltonian vector fields, there are many auto-resonances due to their Hamiltonian nature, which are not counted as
resonance in the Hamiltonian case. So, in the non-Hamiltonian case, we have to replace the adjective ``non-resonant'' by ``minimally-resonant'':

\begin{defn}
A vector field $X$ in a integrable dynamical system $(X_1 = X, \hdots, X_p, F_1,\hdots, F_q)$ of type $(p,q)$ 
is called {\bf minimally resonant} at a singular point $z$ if its toric degree at $z$ is equal to $p$ (maximal possible). 
 \end{defn}

\begin{theorem}  
Minimally-resonant singular points of analytic integrable systems are also 
locally geometrically linearizable in the sense that one can change the 
auxiliary commuting vector fields (keeping the first vector field and the functions intact) 
in order to obtain a new integrable system which is locally geometrically linearizable.
\end{theorem}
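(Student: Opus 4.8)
The plan is to exploit the associated torus action of the single vector field $X_1 = X$, whose dimension equals the toric degree of $X_1$, which by the minimal-resonance hypothesis attains its maximal value $p$. First I would invoke Theorem \ref{thm:PBmain} (applicable since $X_1$, being part of an integrable system of type $(p,q)$, is analytically integrable) together with the toric characterization of Theorem \ref{thm:toricPB} to produce a local analytic Poincaré--Birkhoff normalization and the associated analytic $\bbT^p$-action, generated by linear semisimple vector fields $Z_k = \sum_j \rho^k_j x_j\, \partial/\partial x_j$ with $\rho^k_j \in \bbZ$. By construction $[X_1,Z_k]=0$, and the semisimple linear part decomposes as $X_1^s = \sum_{k=1}^p \alpha_k Z_k$ with $\alpha_1,\hdots,\alpha_p$ incommensurable. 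In particular \emph{every} $\alpha_k$ is nonzero, since a vanishing $\alpha_k$ would be a nontrivial rational relation, contradicting the minimality (equivalently, maximality $=p$) of the toric degree. The crucial additional input is the structure-preserving property of the associated torus action (the unnumbered theorem of Subsection 2.2.4): because each $F_j$ satisfies $X_1(F_j)=0$ and is therefore preserved by $X_1$, it is also preserved by the torus action, so $Z_k(F_j)=0$ for all $j,k$, i.e. the $Z_k$ are tangent to the fibres of $(F_1,\hdots,F_q)$.

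Next I would build the candidate system by keeping $X_1$ and $F_1,\hdots,F_q$ intact and replacing the auxiliary fields by the torus generators, setting $X'_i = Z_i$ for $i=2,\hdots,p$ (transported back to the original coordinates by the normalizing diffeomorphism). This is a legitimate integrable system of type $(p,q)$: the $Z_i$ commute among themselves and with $X_1$, they admit $F_1,\hdots,F_q$ as common first integrals, and independence is checked below. Since the $Z_k$ are tangent to the fibres of $(F_1,\hdots,F_q)$, which are generically of dimension $p$ (as $\dim M = p+q$ and $dF_1\wedge\hdots\wedge dF_q\neq 0$ almost everywhere), and since $Z_1,\hdots,Z_p$ are independent almost everywhere (their weight matrix $(\rho^k_j)$ has rank $p$), they span the tangent space to a generic fibre; as $X_1$ is tangent to the same fibres, it lies in their span generically, whence $X_1\wedge Z_1\wedge\hdots\wedge Z_p = 0$ identically by analytic continuation.

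The heart of the argument is then the Division Lemma \ref{lemma:division}, applied to the nondegenerate linear system whose vector fields are $Z_1,\hdots,Z_p$ (these form an effective linear $\bbT^p$-action, hence a nondegenerate linear integrable system of type $(p,q)$, by the correspondence of the subsection on nondegenerate linear systems as torus actions). It produces a \emph{unique} expression $X_1 = \sum_{k=1}^p f_{k} Z_k$ with $f_k$ analytic common first integrals of the $Z_k$. Comparing linear parts gives $f_k(O) = \alpha_k$, so $f_1(O)=\alpha_1\neq 0$ by the first paragraph. Consequently the transition matrix from $(X_1, Z_2,\hdots,Z_p)$ to $(Z_1,Z_2,\hdots,Z_p)$ is triangular with determinant $f_1$, nonzero at $O$ and hence in a neighborhood; this simultaneously secures independence, $X_1\wedge Z_2\wedge\hdots\wedge Z_p = f_1\, Z_1\wedge\hdots\wedge Z_p \neq 0$ almost everywhere. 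By Definition \ref{defn:GeometricEquivalence} the new system $(X_1, Z_2,\hdots,Z_p, F_1,\hdots,F_q)$ is geometrically equal, in the normalizing chart, to the linear $\bbT^p$-action $(Z_1,\hdots,Z_p,F_1,\hdots,F_q)$; undoing the normalizing diffeomorphism then exhibits it as geometrically linearizable in the original coordinates, exactly as claimed.

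The step I expect to be the main obstacle is pinning down the invertibility of the transition matrix, i.e. guaranteeing $f_1(O)=\alpha_1\neq 0$: this is precisely where the maximality of the toric degree, through the incommensurability of the $\alpha_k$, is indispensable, for otherwise some $\alpha_k$ could vanish and $X_1$ would fail to recover the corresponding $Z_k$-direction. A secondary technical point is the reduction to a genuine fixed point when $O$ has positive rank $k$: there I would first perform the local reduction described before Definition \ref{defn:NondegenerateSingularPoint} to obtain a system of type $(p-k,q)$, apply the construction above, and lift the resulting generators back through the free $\bbR^k$-action, in the same manner as the passage from Theorem \ref{thm:linearization-nondegenerate} to Theorem \ref{thm:linearization-nondegenerate2}.
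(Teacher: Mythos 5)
Your proposal is correct and follows exactly the route the paper indicates for this theorem (which it does not write out): apply the toric characterization of the Poincaré--Birkhoff normalization to $X_1$ alone, use minimal resonance to get a $p$-dimensional analytic associated torus action with nonvanishing coefficients $\alpha_k$, invoke the structure-preserving property so that the generators $Z_k$ preserve the $F_j$, and conclude via the Division Lemma \ref{lemma:division} and the invertibility of the transition matrix, exactly as in the proof of Theorem \ref{thm:linearization-nondegenerate}. The only points left implicit (choosing first integrals with functionally independent homogeneous parts à la Ziglin, and checking that the invariant algebra of the $\bbT^p$-action has functional dimension $q$) are glossed over to the same degree in the paper's own proof of Theorem \ref{thm:linearization-nondegenerate}.
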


The proof of the above theorem is also similar to the proof of Theorem \ref{thm:linearization-nondegenerate} 
and is a direct consequence of the toric characterization of the Poincaré--Birkhoff normalization.

\subsubsection{Geometric linearization in the smooth case}

In the smooth case, we still have the same definitions of linear part, geometric equivalence, 
nondegeneracy and geometric linearization as in the analytic case. 
We have the following conjecture, which is the smooth version of Theorem \ref{thm:linearization-nondegenerate2}:

\begin{conjecture}
\label{conjecture:SmoothLinear}
 Any smooth integrable dynamical system near a nondegenerate singular point is locally geometrically 
smoothly equivalent to a direct product of a linear nondegenerate integrable system and a constant system.
\end{conjecture}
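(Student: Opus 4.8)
The plan is to mimic the analytic proof of Theorem \ref{thm:linearization-nondegenerate2} as closely as possible, replacing each analytic ingredient by a smooth counterpart. First I would perform the same local reduction as in the analytic case to pass from a nondegenerate singular point of rank $k$ to a nondegenerate fixed point of a reduced system of type $(p-k,q)$; this reduction quotients a neighborhood by the free local $\bbR^k$-action generated by $X_1,\hdots,X_k$, and it carries over verbatim to the smooth category since it uses only the flow-box theorem and the regularity of the $\bbR^k$-action. So it suffices to treat a nondegenerate fixed point $O$, where the toric degree equals $p$ and the formal associated torus $\bbT^p$-action is available.

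The heart of the matter is to realize this \emph{formal} torus action by a genuine \emph{smooth} one. Here the two categories diverge sharply: in the analytic case one invokes resolution of singularities and Lojasiewicz inequalities (Theorem \ref{thm:PBmain2}) to upgrade the formal action to an analytic one, but neither tool is available smoothly, and---more seriously---complexification, which in the analytic setting packages elliptic and hyperbolic directions into a single complex torus action, has no smooth analogue. I would therefore split $T_O M \otimes \bbC$ into the \emph{compact} (elliptic) block, on which the real torus acts effectively, and the \emph{hyperbolic} block, on which the real points of the torus act trivially. On the compact block the periodic flows give genuine smooth circle actions; averaging these and applying Bochner's linearization theorem \cite{Bochner-Linearization1945} produces a smooth linear $\bbT^r$-action, with $r$ the real toric degree, preserving the system. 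On the hyperbolic block one must instead invoke a smooth linearization theorem of Sternberg--Chen type for the (partially hyperbolic) commuting family $X_1,\hdots,X_p$, linearizing the contracting and expanding directions while keeping them equivariant with respect to the torus action on the compact block.

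With a smooth linear torus action and a smooth linearization of the hyperbolic directions in hand, the remaining step is a smooth \emph{division lemma} in the spirit of Lemma \ref{lemma:division}: one wants $X_i = \sum_j f_{ij} Z_j$ with $f_{ij}$ smooth first integrals and $(f_{ij})$ invertible at $O$. In the analytic proof divisibility followed from vanishing of the coefficient functions on coordinate hyperplanes; in the smooth case the corresponding statement should follow from Hadamard's lemma (smooth division by a coordinate), applied to the relation $\sum_i \alpha_i g_i = 0$ but now interpreted through the structure of the ring of $\bbT^r$-invariant smooth first integrals rather than through monomials, with the Malgrange division theorem as a backup when that ring is singularly generated. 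Assembling the linear torus action, the hyperbolic linearization, and the smooth division would then give the smooth geometric equivalence to a product of a nondegenerate linear system and a constant system, exactly as asserted.

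The step I expect to be the main obstacle is the \emph{smooth gluing of the hyperbolic and elliptic blocks}, i.e.\ producing a single smooth normalizing diffeomorphism that simultaneously linearizes the hyperbolic directions and intertwines the torus action on the compact directions, uniformly as the elliptic parameters vary. Smooth linearization of hyperbolic systems is governed by the full set of resonances, and here those resonances are precisely what the notion of geometric equivalence is designed to absorb; the difficulty is to perform this absorption smoothly and equivariantly at the same time, controlling the flat (infinitely tangent) error terms---most plausibly by a homotopy (Moser path) argument that kills the discrepancy between the smooth system and its formal normal form. Showing that such a path method converges smoothly in the joint presence of hyperbolic and compact directions is, to my mind, exactly where the conjecture is hard, and why it is stated as a conjecture rather than a theorem.
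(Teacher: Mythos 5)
The statement you are trying to prove is Conjecture~\ref{conjecture:SmoothLinear}, which the paper explicitly leaves open: there is no proof of it in the paper, only a list of special cases where it is known (the Hamiltonian case \cite{Eliasson-Normal1990}, type $(m,0)$ and type $(1,n-1)$ systems \cite{Zung-Nondegenerate2012,Zung-OrbitalSmooth2012}, and some low-dimensional cases) together with a six-point program of techniques that might eventually yield a proof. Your proposal is, in substance, a restatement of that same program: reduction to a fixed point, a smooth torus action of dimension equal to the \emph{real} toric degree obtained via averaging and Bochner's theorem \cite{Bochner-Linearization1945}, an equivariant Sternberg--Chen linearization of the commuting hyperbolic part in the spirit of Belitskii--Kopanskii \cite{BK-Equivariant2002}, a smooth division lemma replacing Lemma~\ref{lemma:division} (via Hadamard/Malgrange division and Mather-style finite determinacy \cite{Mather-Determinacy1969}), and a final gluing. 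So you have not taken a different route; you have rediscovered the intended route, and, as you yourself concede in your last paragraph, you have not executed its hardest step.

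Concretely, the gap is twofold. First, even the existence of the smooth $\bbT^d$-action ($d$ the real toric degree) near the singular point is not free: your ``averaging the periodic flows on the compact block'' presupposes that the candidate circle flows are already genuine smooth periodic flows of the nonlinear system near $O$, which is exactly what has to be proved (in the analytic case this comes from Theorem~\ref{thm:PBmain2} via resolution of singularities and Lojasiewicz inequalities, neither of which survives in the smooth category). Second, and more seriously, no existing version of the equivariant Sternberg--Chen theorem applies to a \emph{commuting family} of partially hyperbolic vector fields that must be linearized simultaneously, equivariantly with respect to the torus action, and only up to multiplication by an invertible matrix of smooth first integrals; the passage from ``formally geometrically linear'' (which Borel's theorem does give) to ``smoothly geometrically linear'' requires killing flat discrepancies in the presence of a nontrivial ring of first integrals, and you give no argument --- Moser path or otherwise --- for why this iteration converges. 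Until that step is supplied, what you have is a plausible strategy for an open problem, not a proof.
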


As of this writing, the above conjecture is still open in the general case.
The smooth case is much more complicated than the analytic case, because when the real toric degree is smaller than the 
toric degree, one cannot complexify a smooth system to find the torus action (whose dimension is equal to the toric
degree) in general. Nevertheless, we know that the conjecture is true in the following particular cases:

a) Hamiltonian systems. The smooth linearization theorem for non-degenerate singular points of smooth integrable Hamiltonian systems
was proved by Eliasson \cite{Eliasson-Thesis1984,Eliasson-Normal1990}. 
Strictly speaking Eliasson \cite{Eliasson-Thesis1984,Eliasson-Normal1990}
wrote down only a sketch of the proof in the case of non-elliptic singularities, though all the main ingredients are there. See 
also \cite{DufourMolino-AA,Zung-FocusII2002,Miranda-Thesis2003,Chaperon-Focus2013,SanWa-Focus2013} for details and
other methods of proof. For elliptic singularities of integrable Hamiltonian systems, one can also use the toric characterization
to prove the local geometric linearization theorem, like in the analytic case. (For hyperbolic singularities the situation is
more complicated, because one cannot complexify a smooth system in general in order to find a torus action).

b) Systems of type $(m,0)$, i.e. a family of $m$ independent commuting vector fields on a $m$-dimensional manifold. 
In this case, we have:

\begin{thm}\cite{Zung-Nondegenerate2012} 
\label{thm:NormalForm_n0}
Let $O$ be a nondegenerate singular point of rank $k$ 
a smooth integrable system $(X_1,\hdots, X_m)$  of type $(m,0)$. 
Then there exists a smooth local coordinate 
system $(x_1,x_2,..., x_m)$ in a neighborhood of $O$, non-negative integers $h, e \geq 0$ (which do not
depend on the choice of coordinates)
such that $h + 2e = m -k$, and a real invertible $n\times n$ matrix $(v_i^j)$ 
such that the vector fields $Y_i = \sum_{j=1}^n v_i^j X_j$ 
have the following form:
\begin{equation} 
\begin{cases}
Y_i = x_i\frac{\partial }{\partial x_i} \quad \forall \ i = 1,\hdots, h, \\
Y_{h+2j-1} = x_{h+2j-1}\frac{\partial }{\partial x_{h+2j-1}} +  x_{h+2j}\frac{\partial }{\partial x_{h+2j}},  \\
Y_{h+2j} = x_{h+2j-1}\frac{\partial }{\partial x_{h+2j}} -  
      x_{h+2j}\frac{\partial }{\partial x_{h+2j-1}} \quad \forall \ j = 1, \hdots, e, \\
Y_i = \frac{\partial }{\partial x_i} \quad \forall \ i = m-k+1, \hdots, m.
\end{cases}
\end{equation}
\end{thm}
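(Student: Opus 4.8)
The plan is to reduce first to the case of a fixed point, and then to exploit the fact that a system of type $(m,0)$ has \emph{no} nonconstant common first integrals, so that a single cleverly chosen linear combination of the $X_i$ can carry out the entire linearization. The guiding idea is to produce, inside the family, a vector field whose linear part is the Euler field; this has a nonresonant node as its linear part and is therefore smoothly linearizable, which sidesteps the usual difficulty of the smooth category (the impossibility of complexifying to find an analytic torus action).

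First I would perform the rank-$k$ reduction. After a constant recombination of the $X_i$ we may assume $X_{m-k+1}(O),\dots,X_m(O)$ are linearly independent while $X_1(O)=\dots=X_{m-k}(O)=0$. Being nonvanishing and pairwise commuting near $O$, the fields $X_{m-k+1},\dots,X_m$ are simultaneously rectifiable to $\partial/\partial x_{m-k+1},\dots,\partial/\partial x_m$; the remaining fields are invariant under the resulting free local $\bbR^k$-action and descend to an integrable system $(\bar X_1,\dots,\bar X_{m-k})$ of type $(m-k,0)$ with a nondegenerate fixed point on the transversal $\Sigma=\{x_{m-k+1}=\dots=x_m=0\}\cong(\bbR^{m-k},O)$. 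A standard gauge change of the rectifying coordinates, whose solvability follows from $[X_i,X_j]=0$, removes the remaining vertical components, so that near $O$ the system is a direct product of the flat system $\partial/\partial x_{m-k+1},\dots,\partial/\partial x_m$ and the reduced system on $\Sigma$ (this is the smooth, type-$(m,0)$ refinement of the product structure of Theorem \ref{thm:linearization-nondegenerate2}). It then suffices to linearize the reduced system at its fixed point by a constant recombination and a smooth change of coordinates on $\Sigma$.

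The crux is this reduced fixed-point case. By Definition \ref{defn:NondegenerateSingularPoint} the linear parts $\bar X_i^{(1)}$ are commuting, semisimple, and linearly independent on $\bbR^{m-k}$, so they are simultaneously diagonalizable over $\bbC$ and decompose $\bbR^{m-k}$ into $h$ one-dimensional real eigenlines and $e$ two-dimensional rotation–scaling blocks with $h+2e=m-k$. Because their number equals the dimension of a maximal torus of $gl(m-k,\bbR)$, the $\bar X_i^{(1)}$ span such a maximal torus; this torus contains the center $\bbR E$, where $E=\sum_j x_j\,\partial/\partial x_j$ is the Euler field. Hence there is a real combination $W=\sum_i a_i\bar X_i$ with $W^{(1)}=E$. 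The eigenvalues of $E$ are all equal to $1$: they lie in the Poincaré domain and are nonresonant (a relation $1=\sum m_j$ with $\sum m_j\ge 2$ is impossible), so Sternberg's smooth linearization theorem applies, and after a smooth coordinate change on $\Sigma$ we may assume $W=E$ exactly.

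Finally I would propagate the linearization to the whole family. Each $\bar X_i$ commutes with $W=E$, and a smooth field $V=\sum_l V_l\,\partial/\partial x_l$ with $[E,V]=0$ satisfies $E(V_l)=V_l$, forcing every $V_l$ to be homogeneous of degree one, hence linear. Thus all $\bar X_i$ are now linear, commuting and semisimple, and putting this family into real canonical form by a linear change of coordinates yields one-dimensional real eigenlines together with two-dimensional blocks; a final constant recombination extracts exactly the generators $x_i\,\partial/\partial x_i$, the radial fields $x_{h+2j-1}\,\partial/\partial x_{h+2j-1}+x_{h+2j}\,\partial/\partial x_{h+2j}$, and the rotations $x_{h+2j-1}\,\partial/\partial x_{h+2j}-x_{h+2j}\,\partial/\partial x_{h+2j-1}$. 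Together with the $k$ flat directions this is the asserted form, and the matrix $(v_i^j)$ is \emph{constant} precisely because a type-$(m,0)$ system admits no nonconstant common first integrals (compare the division Lemma \ref{lemma:division}). The main obstacle is exactly the smooth (rather than analytic) linearization of $W$; the Euler-field choice is what overcomes it, since it collapses the whole problem, \emph{including} the elliptic directions, into a single nonresonant node to which Sternberg's theorem applies directly, with no torus action required.
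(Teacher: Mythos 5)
Your proposal is correct and follows essentially the same route as the paper's own sketch (attributed there to a referee of \cite{Zung-Nondegenerate2012}): produce a linear combination whose linear part is the radial/Euler field, linearize it smoothly by Sternberg's theorem, and observe that commuting with the Euler field forces all the remaining fields to be linear, with the positive-rank case handled by passing to a transversal to the orbit. Your justification that the Euler field lies in the span of the linear parts (they form a maximal torus of $gl(m-k,\bbR)$, which contains the center) and your verification of nonresonance are welcome details that the paper leaves implicit.
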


One can prove the above theorem along the following arguments, which are due to 
a referee of the paper \cite{Zung-Nondegenerate2012}: 
In the case of a fixed point, the linear part of an appropriate  linear combination $E = \sum a_i X_i$
of the vector fields $X_1,\hdots, X_m$ is a radial vector field, i.e. has the form 
$E^{(1)} = \sum_{i=1}^n x_i \frac{\partial}{\partial x_i}$. By Sternberg's theorem, $E$ is smoothly
linearizable, i.e. we can assume that $E = \sum_{i=1}^m x_i \frac{\partial}{\partial x_i}$ after a smooth
change of the coordinate system. Since the vector fields $X_i$ commute with the radial vector field 
$E = \sum_{i=1}^m x_i \frac{\partial}{\partial x_i}$, they are automatically linear in the new coordinate
system. The case of a singular point of positive rank $k > 0$ can be reduced to the case of a fixed point, 
by considering the $m-k$-dimensional isotropy algebra of the infinitesimal $\mathbb{R}^m$-action 
generated by $X_1,\hdots,X_m$ at the singular point, and showing the existence of a
$m-k$-dimensional invariant submanifold of the subaction of this isotropy algebra, 
which is transverse to the local orbit through the singular point 
of the $\mathbb{R}^m$-action.

c) Systems of type $(1,m -1)$, i.e. a vector field with a complete set of first integrals. In this case we have:

\begin{thm}\cite{Zung-OrbitalSmooth2012} 
\label{thm:1_n-1_case}
Let $(X,F_1,\hdots,F_{n-1})$ be a smooth integrable system of type $(1, n-1)$ with a fixed point $O$ which satisfies the following
nondegeneracy conditions: \\
1)  The semisimple part of the linear part of $X$ at  $O$  is non-zero, and the $\infty$-jets
of $F_1,\hdots,F_{n-1}$ at $O$ are funtionally independent. \\
2)  If moreover $0$ is an eigenvalue  of $X$  at $O$ with multiplicity $k \geq 1$,
then the differentials of the functions $F_1,\hdots, F_k$ are linearly independent at $O$:
 $dF_1(O) \wedge \hdots \wedge dF_k(O) \neq 0.$ \\
Then there exists a local smooth coordinate system $(x_1,\hdots,x_n)$ in which $X$ can be written as
\begin{equation}
X = F  X^{(1)},
\end{equation}
where $X^{(1)}$ is a semisimple linear vector field in $(x_1,\hdots,x_n)$, and $F$ is a smooth first
integral of $X^{(1)}$ such that $F(O) \neq 0.$
\end{thm}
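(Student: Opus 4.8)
The plan is to reduce everything, via the toric and normal-form machinery already developed, to a single smooth \emph{orbital} linearization statement, which is the only hard point. First I would diagonalize the semisimple linear part: in suitable complex linear coordinates $X^s = \sum_{j=1}^n \gamma_j x_j\, \partial/\partial x_j$. The $\infty$-jets of $F_1,\hdots,F_{n-1}$ are functionally independent formal first integrals of $X$, and taking the semisimple part of the operator $\cL_X$ (exactly as in the sketch proof of the structure-preserving theorem at a singular point) shows they are also first integrals of $X^s$. Since the functional dimension of the formal first integrals of a diagonal field equals $n-\dim_{\mathbb{Q}}\mathrm{span}_{\mathbb{Q}}\{\gamma_1,\hdots,\gamma_n\}$, this dimension is at least $n-1$; as $X^s\neq 0$ it is exactly $n-1$ and $\dim_{\mathbb{Q}}\mathrm{span}_{\mathbb{Q}}\{\gamma_j\}=1$. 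Hence each $\gamma_j$ is a rational multiple of a single $\gamma\neq 0$, and after clearing denominators $X^s=\gamma Z_0$ with $Z_0=\sum_j \rho_j x_j\,\partial/\partial x_j$, $\rho_j\in\bbZ$. Thus the toric degree is exactly $1$ (the minimally resonant case for $p=1$), and the weights $\rho_j$ must contain both signs, and possibly zeros, since a definite sign would leave $Z_0$ with no nonconstant first integral at all.

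Next I would reformulate the conclusion: it suffices to find smooth local coordinates in which $X\wedge Z_0=0$, i.e. in which $X$ is everywhere proportional to the \emph{linear} field $Z_0$. Indeed, granting this, a smooth division (below) gives $X=FZ_0$ with $F$ smooth; comparing linear parts in $F(O)Z_0=\gamma Z_0+X^{nil}$ forces $X^{nil}=0$ and $F(O)=\gamma\neq 0$, so $X^{(1)}:=Z_0$ is semisimple and $X=FX^{(1)}$. That $F$ is a first integral of $Z_0$ is equivalent to $[X,Z_0]=0$: writing $X=\sum_i A_i\,\partial/\partial x_i$ one computes $Z_0(F)=[Z_0,X]^i/(\rho_i x_i)$, so I must arrange the linearization to be compatible with the fibration by the level sets of $\mathbf{F}=(F_1,\hdots,F_{n-1})$ -- equivalently to preserve the infinite-order commutation $[X,Z_0]=0$ of the formal normal form -- so that $F$ comes out as a function of the first integrals. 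Geometrically the task is to straighten the singular fibration by the $X$-invariant level sets of $\mathbf{F}$ onto the fibration by orbits of the linear diagonal field $Z_0$.

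The construction I would organize by the sign of the weights. The zero-weight block (eigenvalue $0$, multiplicity $k$) is handled directly by Condition 2: $dF_1\wedge\hdots\wedge dF_k(O)\neq 0$ lets me take $F_1,\hdots,F_k$ as $k$ of the coordinates, which freezes those directions. When $\gamma$ is purely imaginary the generator $\sqrt{-1}Z_0$ integrates to a genuine smooth circle action commuting with $X$ to infinite order, which I linearize by Bochner's theorem \cite{Bochner-Linearization1945}, putting the elliptic block into normal form. The genuinely hard regime is the \emph{hyperbolic} one, $\gamma$ real with mixed-sign weights: here the real toric degree can be smaller than the toric degree $1$, so the associated torus action is not realizable from a complexification and Theorem \ref{thm:PBmain} is unavailable. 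I would instead linearize directly in the smooth category by a Sternberg-type argument adapted to the resonant setting -- e.g. a homotopy (path) method interpolating $X_t=Z_0+t(X-Z_0)$ and solving the cohomological equation $\cL_{Z_0}u_t=X-Z_0$ \emph{only in the directions transverse to resonances}, while the resonant nonlinear terms (which, away from the degenerate directions treated in Lemma \ref{lemma:division}, are first integrals times the diagonal generators $x_l\,\partial/\partial x_l$) are absorbed into the conformal factor rather than removed. The hyperbolicity of $Z_0$ supplies the tame estimates that make the homotopy field smooth and integrable, and the $\infty$-jet independence of Condition 1 controls the flat remainders, so that the resulting change of coordinates is smooth and respects the fibration by $\mathbf{F}$.

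Finally, once $X\wedge Z_0=0$, I would produce $F$ by a smooth analogue of the division Lemma \ref{lemma:division}. From $A_i\rho_j x_j=A_j\rho_i x_i$ and a monomial first integral $\prod_j x_j^{\alpha_j}$ with $\alpha_i\neq 0$, one shows, using Hadamard's lemma (or Malgrange's smooth division theorem), that each coefficient is divisible by the corresponding $x_i$, so that $F$ is smooth; the degenerate case in which some coordinate direction admits no such monomial is settled exactly as in Lemma \ref{lemma:division}, by observing that the offending $x_l\,\partial/\partial x_l$ would then have to belong to the system. Combined with the compatibility of the linearization with $\mathbf{F}$ from the previous step, this gives $Z_0(F)=0$ and $F(O)=\gamma$, completing the proof. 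I expect the hyperbolic construction to be where essentially all the difficulty lies: the failure of complexification means one cannot borrow the analytic torus action, and one must prove smoothness and convergence of the orbital linearizing transformation in the presence of resonances, pushing the resonant terms into the first-integral factor $F$ instead of eliminating them.
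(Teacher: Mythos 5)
First, a point of comparison: the lecture notes do not actually prove Theorem \ref{thm:1_n-1_case} — it is imported from \cite{Zung-OrbitalSmooth2012} with no proof given — so your proposal can only be measured against what a complete argument must contain. Your preparatory reductions are sound and consistent with the paper's machinery: taking the semisimple part of $\cL_X$ to see that the formal first integrals of $X$ are first integrals of $X^s$; deducing $\dim_{\mathbb{Q}}\mathrm{span}_{\mathbb{Q}}\{\gamma_1,\hdots,\gamma_n\}=1$ from the existence of $n-1$ independent formal first integrals (you only need the inequality ``functional dimension $\leq n-\dim_{\mathbb{Q}}\mathrm{span}_{\mathbb{Q}}\{\gamma_j\}$'', which holds; equality can fail, e.g.\ for the Euler field); hence $X^s$ is a multiple of an integer-weighted diagonal field $Z_0$ and the toric degree is $1$; and reducing the conclusion to ``find smooth coordinates with $X\wedge Z_0=0$'' plus a smooth division lemma in the spirit of Lemma \ref{lemma:division} is the right skeleton. (Two small slips: when $k=n-1$ a single nonzero weight of definite sign is admissible, so the weights need not have mixed signs; and the vanishing of $X^{nil}$ has to be extracted from the formal normal form \emph{before} you linearize, not read off afterwards.)

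The genuine gap is the step you yourself identify as carrying essentially all the difficulty: the smooth orbital linearization in the hyperbolic regime. As written it does not go through. Sternberg--Chen \cite{Chen-Vector1963,Sternberg} and its equivariant refinement \cite{BK-Equivariant2002} linearize vector fields that are \emph{formally linearizable}; here $X$ is never formally linearizable — the resonant part is the whole infinite series $\hat F Z_0$ — so these theorems cannot be invoked, even ``adapted''. The homotopy scheme you sketch must solve $\cL_{Z_0}u=Y$ for the non-resonant part of a perturbation that is only flat modulo resonances, in the presence of the $k$-dimensional kernel of $Z_0$ (so $Z_0$ is not hyperbolic once $k\geq 1$), while simultaneously preserving the fibration by $\mathbf{F}$; no estimates are supplied, and ``absorbing the resonant terms into the conformal factor'' is precisely the content of the theorem, not an available black box. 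Likewise, applying Bochner's theorem \cite{Bochner-Linearization1945} in the elliptic block presupposes an honest smooth circle action commuting with $X$, whose existence in the smooth (non-analytic) category is itself a nontrivial claim that must be proved first — this is exactly Step 1 of the heuristic programme the paper lists after Conjecture \ref{conjecture:SmoothLinear}. In short, your text is a plausible roadmap that mirrors that programme, but the analytic core of the theorem is asserted rather than proved.
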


(The condegeneracy condition 2 in the above theorem is conjectured to be superfluous).

d) Some low-dimensional cases. In particular, the case of 
non-Hamiltonian focus-focus singular points of smooth integrable systems
of type $(2,2)$ was studied by Jiang Kai (unpublished, talk presented in Barcelona in 09/2013).

Let us indicate here why we believe that the above conjecture is true, and some methods which could be used to prove it.

1) By geometric arguments similar to the ones used in \cite{Zung-Symplectic1996,Zung-Poincare2002,Zung-Birkhoff2005}, we can show
the existence of a smooth torus $\bbT^d$-action which preserves the system, where $d$ is the {\it real} toric degree
of the system. Up to geometric equivalence, 
we can also assume that the vector fields which generate this torus action are part of our system.
The remaining vector fields of the system are hyperbolic and invariant with respect to this smooth torus action.

2) Theorem \ref{thm:linearization-nondegenerate} is also true in the formal case with the same proof, 
and so we can apply a formal linearization to our smooth system. Together with Borel's theorem, 
it means that there is a local smooth coordinate system in which our system is already geometrically linear up to a flat term.

3) After the above Step 2, one can try to use  results and techniques on finite determinacy of mappings 
à la Mather \cite{Mather-Determinacy1969} to find  a matrix whose entries are smooth first integrals, such that 
when multiplying this matrix with our vector fields, we obtain a new geometrically equivalent system whose vectors
are linear + flat terms.

4) One can now try to invoke an equivariant version of Sternberg--Chen theorem \cite{Chen-Vector1963,Sternberg}, 
due to Belitskii and Kopanskii \cite{BK-Equivariant2002}, 
which says that smooth equivariant hyperbolic vector fields which are formally linearizable are also smoothly equivariantly linearizable.
Of course, we will have to do it simultaneously for all commuting hyperbolic vector fields. So we need an extension of 
the result of Belitskii and Kopanskii to the situation of a smooth 
$\bbR^k$-action with some hyperbolicity property which is formally linear. Maybe we would also need a version of 
Belitskii--Kopanskii--Sternberg--Chen for  vector fields which have first integrals. 

5) The results and techniques of Chaperon \cite{Chaperon-geometrie1986,Chaperon-RZ2013} for the smooth
linearization of $\bbR^k \times \bbZ^h$-actions  may be very useful here. Of course,
techniques from the integrable Hamiltonian case, e.g. \cite{ColinVey-Morse1979, DufourMolino-AA, Eliasson-Normal1990}, 
in particular division lemmas for nondegenerate smooth systems, can probably be extended to the non-Hamiltonian case as well.

6) Geometrically, at least at the linear level, via a spectral decomposition, 
a complicated nondegenerate singular point can be decomposed into a direct product of its {\it indecomposable components}. For example, in
the Hamiltonian case, there are only 3 kinds of  indecomposable nondegenerate singular components, 
namely 2-dimensional elliptic, 2-dimensional hyperbolic and 4-dimensional focus-focus
(see, e.g., \cite{Zung-Symplectic1996}). One can try to reduce the linearization problem for a complicated singular point
to the decomposition problem plus the linearization problem for each of its components. 

\subsection{Semi-local torus actions and normal forms} \hfill

Consider a level set 
\begin{equation}
N = \{F_1 = c_1, \hdots, F_q = c_q\} 
\end{equation}
of an integrable system $(X_1\hdots, X_p, F_1, \hdots, F_q)$ of type $(p,q)$ on a manifold $M$.
We will assume that $N$ is connected compact, but the Liouville's theorem fails for $N$, because
$N$ contains a singular point of the system. The set $N$ is partitioned into the orbits of the
$\bbR^p$-action generated by the commuting vector fields $X_1,\hdots,X_p$. Under some mild conditions,
$N$ will contain a compact orbit $O$ if this $\bbR^p$-action. A natural question arises: does there exist
a natural associated torus action near $N$ or near $O$, which preserves the system and which is transitive
on $O$? 

We know that the answer is yes, at least in the case of nondegenerate singularities. For example, 
in the case of integrable Hamiltonian systems, it has been shown in \cite{Zung-Symplectic1996} that if 
$N$ is a connected compact  nondegenerate singular level set of rank $k$ (i.e. $\dim O = k$ where
$O$ is a compact orbit of the system in $N$) then there exists a Hamiltonian torus $\bbT^k$-action
in an neigborhood of $N$ which preserves the system and which is transitive on $O$. 
A semi-local normal form (linearization) theorem near a compact nondegenerate singular orbit $O$ of
a smooth integrable Hamiltonian system was obtained by Miranda and Zung in \cite{MirandaZung-NF2004}, based
on this torus action and on the virtual commutativity of the automorphism group.
Miranda--Zung linearization theorem \cite{MirandaZung-NF2004} can probably be extended to the case
of nondegenerate compact singular orbits of integrable non-Hamiltonian systems 
(modulo Conjecture \ref{conjecture:SmoothLinear}).

The nondegeneracy condition is a natural condition and most singularities are nondegenerate. 
However, there are also degenerate singularities,
and one is also interested in the existence of associated torus actions for such singularities.
It turns out that, in the real analytic case, the so-called {\bf finite type condition}, 
which is much weaker than the
nondegeneracy condition, suffices. (In a ``reasonable'' system, all degenerate singularities will be of finite 
type). To formulate this condition, denote by $M_\bbC$ a small
open complexification of our manifold $M^m$ on which the complexification ${\bf
X}_\bbC, {\bf F}_\bbC$ of $\bf X$ and $\bf F$ exists, where $\bf X = (X_1,\hdots, X_p)$ denotes the $p$-tuple of
vector fields and ${\bf F} = (F_1,\hdots,F_q): M \to \bbR^q$ denote the $q$-vector valued first integral of our
integrable system. Denote by $N_\bbC$ a
connected component of ${\bf F}_\bbC^{-1}({\bf F}(O))$ which contains $N$.

\begin{defn} With the above notations, the singular orbit $O$ is called
of {\bf  finite type} if there is only a finite number of orbits of the
infinitesimal $\bbC^p$-action generated by ${\bf X}_\bbC$ in $N_\bbC$, and $N_\bbC$ contains a
regular point of the map ${\bf F}_\bbC$.
\end{defn}

\begin{thm}[\cite{Zung-Tedemule2003}]
\label{thm:tedemule-action} With the above notations, if $O$ is a compact
finite type singular orbit of dimension $r$, then there is a real analytic
torus action of $\bbT^r$ in a neighborhood of $O$ which preserves the
integrable system $({\bf X},{\bf F})$ and which is transitive on $O$. If
moreover $N$ is compact, then this torus action exists in a neighborhood
of $N$. 
\end{thm}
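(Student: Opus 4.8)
The plan is to construct the desired analytic torus action by passing to the complexification and exploiting the finite type hypothesis to produce a compact toral structure, then descending back to the real picture. First I would work in the complexified setting $(M_\bbC, {\bf X}_\bbC, {\bf F}_\bbC)$ and focus on the connected component $N_\bbC$ of ${\bf F}_\bbC^{-1}({\bf F}(O))$ containing $N$. The key consequence of the finite type condition is that the singular complex fiber $N_\bbC$ carries only finitely many orbits of the infinitesimal $\bbC^p$-action. Since $N_\bbC$ also contains a regular point of ${\bf F}_\bbC$, the generic orbits are open dense affine pieces $\bbC^p/\Gamma$ for some lattice-like subgroup $\Gamma$, and the orbit structure stratifies $N_\bbC$ into finitely many pieces on which the $\bbC^p$-action is locally free up to isotropy. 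The idea is that this finiteness forces the closures of generic $\bbC^p$-orbits to be compact complex-toral objects, which will supply the periods needed to build the torus action.

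Next I would identify the periods. On a generic orbit near $O$ the flow of ${\bf X}_\bbC$ generates a $\bbC^p$-action; the isotropy lattice $\Gamma \subset \bbC^p$ (the set of times that return to the base point) is a closed subgroup, and the finite type condition ensures that $\Gamma$ has rank exactly $p$ (so the generic orbit closure is a complex torus $\bbC^p/\Gamma$), while along the degenerate strata some periods degenerate in a controlled way. Because $O$ itself is a compact orbit of real dimension $r$, the isotropy of the compact orbit is a lattice of rank $r$ in the relevant $\bbR^p$. The plan is to use these periods to define, for the $\bbR^p$-action generated by ${\bf X}$ in a neighborhood of $O$, a well-defined $\bbT^r$-action: concretely, one chooses $r$ real linear combinations $Y_1,\hdots,Y_r$ of $X_1,\hdots, X_p$ (with coefficients that are first integrals, following the structure-preserving philosophy and Definition \ref{defn:GeometricEquivalence}) whose time-$1$ flows are the identity on $O$ and, by analytic continuation of the period lattice through the finite type degeneration, remain $1$-periodic throughout a neighborhood of $O$.

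The analyticity of the constructed action is where the finite type condition does its real work. Having fixed the period lattice on the generic stratum, I would show that the periods extend real-analytically across the singular strata of $N_\bbC$: the finiteness of the number of complex orbits prevents the period functions from oscillating or accumulating badly, so the naive period map — a priori only defined on the regular part — extends analytically to a full neighborhood of $O$. This is essentially a Hartogs-type or removable-singularity argument in the complexified picture, combined with the fact that $N_\bbC$ contains a regular point of ${\bf F}_\bbC$ so the period lattice is nondegenerate of full rank generically. Once the $\bbT^r$-action is built and shown analytic near $O$, I would invoke Theorem \ref{thm:TorusPreservesStructureII} (in its analytic/semi-local incarnation) to confirm it genuinely preserves the whole system $({\bf X}, {\bf F})$, and I would check transitivity on $O$ directly from the construction since $O$ is a single compact $r$-dimensional orbit whose isotropy lattice was exactly what defined the torus.

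Finally, to globalize from a neighborhood of $O$ to a neighborhood of $N$ when $N$ is compact, I would use that the torus action near $O$ preserves the fibers of ${\bf F}$ and hence extends by the flow of ${\bf X}$ to the saturation of a neighborhood of $O$; compactness of $N$ guarantees this saturation covers a full tubular neighborhood $\cU(N)$, and the uniqueness-up-to-automorphism reasoning of Theorem \ref{thm:Liouville} ensures the locally defined torus actions glue into a single global $\bbT^r$-action. The main obstacle I anticipate is precisely the analytic extension of the period lattice across the degenerate strata of $N_\bbC$: on the regular part the construction is routine, but controlling the behavior of the periods as orbits degenerate — proving they stay bounded, single-valued, and real-analytic rather than merely continuous — is the technical heart of the matter, and this is exactly the point at which the finite type hypothesis (finiteness of complex orbits plus the presence of a regular point) is indispensable.
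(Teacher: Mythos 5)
The paper gives no proof of this theorem -- it is quoted from \cite{Zung-Tedemule2003} -- but your overall strategy (complexify, read off a period lattice from the isotropy of the $\bbR^p$-action on nearby orbits, extend the periods analytically across the singular fiber using the finite type hypothesis, and generate the $\bbT^r$-action by vector fields of the form $\sum_j p_{ij}X_j$ with first-integral coefficients) is indeed the strategy of the cited paper. Two points, however, need attention. First, a factual error: you assert that the finite type condition forces the generic $\bbC^p$-orbit closures in $N_\bbC$ to be \emph{compact} complex tori $\bbC^p/\Gamma$ with $\Gamma$ of rank $p$. A discrete subgroup of rank $p$ in $\bbC^p \cong \bbR^{2p}$ gives a quotient of the type $(\bbC^*)^p$, which is not compact; compactness would require rank $2p$, and nothing in the hypotheses provides it. The relevant lattice is not the full isotropy of a generic complex orbit but the rank-$r$ lattice part of the isotropy $\Gamma_O \cong \bbZ^r \times \bbR^{p-r}$ of the compact orbit $O$ itself, whose generators one then tries to propagate to nearby orbits as period functions.

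Second, and more seriously, the step you yourself identify as ``the technical heart'' -- proving that the period functions, a priori defined and multivalued only on the regular part, stay bounded, single-valued, and extend real-analytically to a full neighborhood of $O$ -- is the entire content of the theorem, and your proposal offers only the intuition that ``finiteness of the number of complex orbits prevents the periods from oscillating.'' A Riemann/Hartogs removable-singularity argument applies only \emph{after} boundedness is established, and boundedness of the periods near a degenerate fiber is exactly what can fail for a general (non-finite-type) singularity. The actual argument requires quantitative control -- Lojasiewicz-type inequalities and the local structure of the finitely many orbit strata in $N_\bbC$, in the spirit of the convergence proofs of \cite{Zung-Poincare2002,Zung-Birkhoff2005} -- to bound the return times of the flow near the singular fiber. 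Without this the proof is a plausible outline rather than a proof. A minor additional remark: invoking Theorem \ref{thm:TorusPreservesStructureII} to check that the constructed action preserves the system is both unjustified (that theorem is stated for regular Liouville tori) and unnecessary, since an action generated by $\sum_j p_{ij}X_j$ with the $p_{ij}$ first integrals automatically commutes with the $X_i$ and preserves the $F_j$.
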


A very closely result to the above theorem and whose proof also uses the same ingredients is the
following theorem about the local automorphism group of an integrable systems near a compact singular 
orbit.  Denote by $\cA_O$ the local automorphism group of the integrable system
$({\bf X},{\bf F})$ at $O$, i.e. the group of germs of local analytic
diffeomorphisms in a neigborhood of $O$ which preserve ${\bf X}$ and ${\bf F}$. Denote by $\cA_O^0$ the
subgroup of $\cA_O$ consisting of elements of the type $g^1_Z$, where $Z$
is a analytic vector field in a neighborhood of $O$ which preserves the
system and $g^1_Z$ is the time-1 flow of $Z$. The torus in the previous
theorem is of course an Abelian subgroup of $\cA_O^0$. 

\begin{thm}[\cite{Zung-Tedemule2003}]
If $O$ is a compact finite type singular orbit as above,
then $\cA^0_O$ is an Abelian normal subgroup of
$\cA_O$, and $\cA_O/\cA^0_O$ is a finite group.
\end{thm}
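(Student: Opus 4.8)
The plan is to describe $\cA^0_O$ through its infinitesimal generators first, and then to bound the discrete quotient $\cA_O/\cA^0_O$ by the finite combinatorial data coming from the finite type hypothesis. Let $\mathfrak{a}$ denote the space of germs of analytic vector fields $Z$ near $O$ which preserve the system infinitesimally, i.e. $[Z,X_i]=0$ for all $i$ and $Z(F_j)=0$ for all $j$; by the Jacobi identity and the Leibniz rule this is a Lie algebra, and by definition $\cA^0_O$ is generated by the time-$1$ flows $g^1_Z$, $Z\in\mathfrak{a}$. I would prove that $\mathfrak{a}$ is \emph{abelian} by working on the dense open set of regular points (the complement of a proper analytic subset, by condition iii) of Definition \ref{defn:Integrable}). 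Near a regular point the commuting independent fields $X_1,\dots,X_p$ can be simultaneously rectified to $\partial/\partial x_1,\dots,\partial/\partial x_p$, with $F_1,\dots,F_q$ serving as transverse coordinates; the conditions $Z(F_j)=0$ and $[Z,X_i]=0$ then force $Z=\sum_{i=1}^p b_i\,\partial/\partial x_i$ with each $b_i$ a function of the first integrals only. Any two such fields manifestly commute, so $[Z,W]$ vanishes on the dense regular set and hence, by real-analyticity, everywhere. Thus $\mathfrak{a}$ is abelian, the flows commute, $g^1_Z g^1_W=g^1_{Z+W}$ and $(g^1_Z)^{-1}=g^1_{-Z}$, so $\cA^0_O=\exp\mathfrak{a}$ is genuinely an abelian subgroup.

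Normality is then immediate: for $\phi\in\cA_O$ and $Z\in\mathfrak{a}$ one has $\phi\,g^1_Z\,\phi^{-1}=g^1_{\phi_*Z}$, and the pushforward $\phi_*Z$ again lies in $\mathfrak{a}$ because $[\phi_*Z,X_i]=[\phi_*Z,\phi_*X_i]=\phi_*[Z,X_i]=0$ and $(\phi_*Z)(F_j)=\big(Z(F_j\circ\phi)\big)\circ\phi^{-1}=\big(Z(F_j)\big)\circ\phi^{-1}=0$, using that $\phi$ preserves every $X_i$ and every $F_j$. Hence conjugation carries $\cA^0_O$ into itself and $\cA^0_O\trianglelefteq\cA_O$.

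The substantial part is the finiteness of $\cA_O/\cA^0_O$, and here the finite type hypothesis and the analytic torus action of Theorem \ref{thm:tedemule-action} do the work. The torus $\bbT^r$ is transitive on $O$ and lies in $\cA^0_O$, so after composing any $\phi\in\cA_O$ with a suitable torus element I may assume $\phi$ fixes a chosen point $x_0\in O$ (every element of $\cA_O$ sends the distinguished compact orbit $O$ to itself). Passing to a transversal to $O$ at $x_0$ and reducing by the torus, I reduce to automorphisms of an integrable system with a \emph{fixed point}. Complexifying, the finite type condition says that the singular fibre $N_\bbC$ contains only finitely many orbits of the $\bbC^p$-action; since $\phi$ preserves that action it permutes these finitely many orbits together with their incidence relations, which yields a homomorphism from the subgroup of $\cA_O$ fixing $x_0$ into a finite group.

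To finish I must show that the kernel of this homomorphism — automorphisms fixing $x_0$ and preserving each $\bbC^p$-orbit — is contained in $\cA^0_O$, and this is the step I expect to be the main obstacle. I would attack it through the linear part together with the rigidity and normalization machinery already used to produce the torus action. The differential $d\phi_{x_0}$ preserves the linearized finite type system: it commutes with the semisimple linear parts of the $X_i$ and fixes the leading parts of the $F_j$. The group of such linear automorphisms is a Lie group whose identity component is exactly $\exp$ of the linear infinitesimal automorphisms, and whose component group is finite precisely because finite type forbids continuous families of extra symmetries. One then needs the rigidity statement that an analytic automorphism whose linear part lies in this identity component becomes, after composing with a suitable flow $g^1_W$ with $W\in\mathfrak{a}$, the identity — i.e. the germ is determined modulo $\cA^0_O$ by its linear part. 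Granting this (which parallels Theorem \ref{thm:rigid} and the toric normalization), $\cA_O/\cA^0_O$ embeds into a finite group and is therefore finite. The delicate point throughout is matching the analytically defined $\cA^0_O$ with the linear ``identity component'', i.e. upgrading formal/linear rigidity to a convergent one, which is exactly where the analyticity of the associated torus action — the crux of Theorem \ref{thm:tedemule-action} — must be invoked.
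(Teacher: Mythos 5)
First, note that the paper itself offers no proof of this statement: it is quoted from \cite{Zung-Tedemule2003} with only the remark that ``the proof also uses the same ingredients'' as Theorem \ref{thm:tedemule-action}, so your proposal can only be judged on its own merits. The first two thirds of your argument are correct and essentially forced: rectifying $X_1,\dots,X_p$ near a regular point shows that any two infinitesimal automorphisms commute there, analyticity propagates this to a neighborhood of $O$, hence $\cA^0_O=\{g^1_Z\}$ is closed under composition and abelian; and the identity $\phi\,g^1_Z\,\phi^{-1}=g^1_{\phi_*Z}$ with $\phi_*Z$ again an infinitesimal automorphism gives normality. (A small point you should still address: a germ of a vector field near the compact set $O$ need not have a well-defined time-$1$ flow as a germ of diffeomorphism near $O$ unless one argues it is sufficiently tangent to the fibration; this is implicit in the definition of $\cA^0_O$ but worth a sentence.)

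The finiteness of $\cA_O/\cA^0_O$, however, is where your proposal has a genuine gap, and you say so yourself. Two distinct reductions are left unproven, and they carry the entire analytic content. (a) You pass from ``$\phi$ fixes $x_0$ and permutes the finitely many $\bbC^p$-orbits of $N_\bbC$ trivially'' to ``$d\phi_{x_0}$ lies in the automorphism group of the linearized system,'' and then assert that the identity component of that linear group consists of exponentials of \emph{linear} infinitesimal automorphisms which moreover belong to $\mathfrak a$. But an infinitesimal automorphism of the linearized system need not extend to a germ of an analytic vector field preserving the actual (nonlinear, possibly degenerate) system, so the finite component group of the linear algebraic group does not yet bound $\cA_O/\cA^0_O$. (b) The rigidity claim --- that an automorphism whose linear part lies in this identity component can be corrected by some $g^1_W$, $W\in\mathfrak a$, to the identity --- is precisely the theorem in disguise: at a finite type (hence possibly degenerate) singularity one cannot invoke Poincar\'e--Birkhoff linearization or Theorem \ref{thm:rigid}, and establishing such a statement is exactly what requires the machinery (complexification, Lojasiewicz-type estimates, and the construction of the analytic torus action) behind Theorem \ref{thm:tedemule-action}. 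As it stands, your argument proves abelianness and normality completely, but reduces the finiteness assertion to two unproved claims that are of essentially the same depth as the assertion itself.
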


Theorem \ref{thm:tedemule-action} reduces the study of the behavior of
integrable systems near compact singular orbits to the study of fixed
points with a finite Abelian group of symmetry (this group arises from the
fact that the torus action is not free in general, only locally free). For
example, as was shown in \cite{Zung-Degenerate2000}, the study of {\it
corank-1} singularities of Liouville-integrable systems is reduced to the
study of families of functions on a $2$-dimensional symplectic disk which
are invariant under the rotation action of a finite cyclic group
$\bbZ/\bbZ_k$, where one can apply the theory of singularities of
functions with an Abelian symmetry developed by Wassermann
\cite{Wassermann-Symmetry1988} and other people. A (partial)
classification up to diffeomorphisms of corank-1 degenerate singularities
was obtained by Kalashnikov \cite{Kalashnikov-1Corank1998} (see also
\cite{Zung-Degenerate2000,GoSt-1Corank1987}), and symplectic invariants
were obtained by Colin de Verdière \cite{Colin-Singular2003}.

Notice also that Theorem \ref{thm:tedemule-action}, together with Theorem \ref{thm:PBmain}
and the toric characterization of Poincaré-Birkhoff normalization,
provides an analytic Poincaré-Birkhoff normal form in the neighborhood a
singular invariant torus of an integrable system. More precisely, with the above notations,
we can formulate Theorem \ref{thm:PBorbit} below. First make a reduction (i.e. quotient)
of the system near $O$ with respect to the torus action in Theorem \ref{thm:tedemule-action}. Then
the vector fields of the reduced system vanishes at the image of $O$ under the reduction, and so we can
talk about the toric degree of this reduced system: it is equal to the toric degree of the reduction of 
$\sum_{i=1}^q a_i X_i$, where the numbers $a_i$ are in generic position. Denote this reduced toric number
by $t$.

\begin{thm} \label{thm:PBorbit}
With the above notations, in a neighborhood of $O$ in the complexified manifold $M_\bbC$ there is a natural 
effective analytic torus $\bbT^{t + \dim O}$-action which preserves the system $({\bf X}_\bbC, {\bf F}_\bbC)$,
and which leaves $O$ invariant and is transitive on $O$. The torus $\bbT^{\dim O}$-action in Theorem \ref{thm:tedemule-action} 
is a subaction of this torus action. Moreover, this torus action has the structure-preserving property: any tensor field preserved
by the system is also preserved by this torus action.
\end{thm}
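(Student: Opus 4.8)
The plan is to reduce the problem to the already-solved fixed-point case by quotienting out the torus $\bbT^{r}$ of Theorem \ref{thm:tedemule-action}, solve the resulting local problem with Theorem \ref{thm:PBmain2} and the toric characterization of normal forms, and then lift the new torus back up and glue it to $\bbT^{r}$. First I would fix the analytic $\bbT^{r}$-action of Theorem \ref{thm:tedemule-action} in $\cU(O)_{\bbC}$ and linearize it by Bochner's theorem, obtaining complex coordinates $(\theta_{1},\dots,\theta_{r},y_{1},\dots,y_{m-r})$ in which the infinitesimal generators are $\xi_{a}=\partial/\partial\theta_{a}$ and $O=\{y=0\}$. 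Since the action preserves the system, each $\xi_{a}$ commutes with every $X_{i}$, so $X_{i}$ has $\theta$-independent coefficients and splits as $X_{i}=\sum_{b}c_{ib}(y)\,\partial/\partial\theta_{b}+V_{i}$ with $V_{i}=\sum_{k}d_{ik}(y)\,\partial/\partial y_{k}$ the reduced field. From $X_{i}(F_{j})=0$ and the $\bbT^{r}$-invariance of the $F_{j}$ one gets $V_{i}(F_{j})=0$, and from $[X_{i},X_{i'}]=0$ one extracts both $[V_{i},V_{i'}]=0$ and the compatibility relations $V_{i}(c_{i'b})=V_{i'}(c_{ib})$. Thus $(V_{1},\dots,V_{p},F_{1},\dots,F_{q})$ is an integrable system on the transversal $D_{\bbC}=\{\theta=0\}$ with a fixed point at $O$; applying Theorem \ref{thm:PBmain2} yields an effective analytic torus action of dimension $t$ with linear generators $\sqrt{-1}\,Z_{1},\dots,\sqrt{-1}\,Z_{t}$, $Z_{k}=\sum_{j}\rho^{k}_{j}y_{j}\,\partial/\partial y_{j}$, satisfying $[Z_{k},V_{i}]=0$ and $Z_{k}(F_{j})=0$.

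The heart of the proof is lifting this $\bbT^{t}$ to $\cU(O)_{\bbC}$ so that it preserves the \emph{full} fields $X_{i}$, not merely the $V_{i}$. I look for corrected generators $\tilde Z_{k}=Z_{k}+\sum_{b}\phi_{kb}(y)\,\partial/\partial\theta_{b}$, and a direct bracket computation (all coefficients being $\theta$-independent, and $Z_{k},V_{i}$ living in the $y$-variables) gives
\[ [\tilde Z_{k},X_{i}]=\sum_{b}\bigl(Z_{k}(c_{ib})-V_{i}(\phi_{kb})\bigr)\,\frac{\partial}{\partial\theta_{b}}. \]
Hence I must solve the cohomological system $V_{i}(\phi_{kb})=Z_{k}(c_{ib})$ for all $i$ (with $k,b$ fixed), and \textbf{this solvability is the main obstacle.} Its integrability condition $V_{i}\bigl(Z_{k}(c_{i'b})\bigr)=V_{i'}\bigl(Z_{k}(c_{ib})\bigr)$ does hold, by commuting $Z_{k}$ past $V_{i}$ (using $[Z_{k},V_{i}]=0$) and invoking $V_{i}(c_{i'b})=V_{i'}(c_{ib})$. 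To produce an actual analytic solution I would expand in Fourier modes of the $\bbT^{t}$-action: because each $V_{i}$ preserves the modes, the right-hand side $Z_{k}(c_{ib})$ has zero $\bbT^{t}$-average, and a generic combination $\sum_{k}s_{k}Z_{k}$ is invertible on every nonzero mode by the incommensurability built into the toric degree; the passage from formal to analytic solvability near the singular fixed point is exactly the type of estimate handled by the resolution-of-singularities and Łojasiewicz machinery of \cite{Zung-Poincare2002,Zung-Birkhoff2005,Zung-Tedemule2003}. Finally the periodic $Z_{k}$-flow may produce a constant $\theta$-drift, which I absorb by subtracting the multiple $\sum_{b}\phi_{kb}(0)\,\xi_{b}$, so that each $\tilde Z_{k}$ vanishes on $O$ and is genuinely periodic.

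The fields $\xi_{1},\dots,\xi_{r},\tilde Z_{1},\dots,\tilde Z_{t}$ then commute pairwise and have linearly independent linear parts (translations in $\theta$ and the diagonal $Z_{k}$ in $y$) whose weights generate an effective linear torus of dimension $t+r$; by the toric characterization they integrate to an effective analytic $\bbT^{t+r}=\bbT^{t+\dim O}$-action. By construction it preserves $(\mathbf{X}_{\bbC},\mathbf{F}_{\bbC})$, leaves $O=\{y=0\}$ invariant, is transitive on $O$ (already the $\xi_{a}$ are), and contains the action of Theorem \ref{thm:tedemule-action} as the subaction generated by the $\xi_{a}$. This is the asserted torus action.

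For the structure-preserving property I would apply, to this combined torus, the very argument of the structure-preserving theorem for associated torus actions at singular points established earlier. Let $\cG$ be a tensor field with $\cL_{X_{i}}\cG=0$ for all $i$, and take a generic $X=\sum_{i}a_{i}X_{i}$. Along the combined torus directions the semisimple part of $X$ is $\sum_{a}\beta_{a}\xi_{a}+\sum_{k}\alpha_{k}\tilde Z_{k}$ with $\beta_{a}=\sum_{i}a_{i}c_{ia}(0)$ and with $(\alpha_{k})$ the frequencies of the reduced field $\sum_{i}a_{i}V_{i}$, the full tuple $(\beta_{a},\alpha_{k})$ being incommensurable for generic $a_{i}$ by the definition of the (reduced) toric degree. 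Since the combined torus preserves the system, $X$ is in normal form with respect to it, so the Jordan--Dunford semisimple part of the operator $\cL_{X}$ is $\sum_{a}\beta_{a}\cL_{\xi_{a}}+\sum_{k}\alpha_{k}\cL_{\tilde Z_{k}}$; thus $\cL_{X}\cG=0$ forces this semisimple operator to annihilate every monomial term of $\cG$, and incommensurability then forces $\cL_{\xi_{a}}\cG=0$ and $\cL_{\tilde Z_{k}}\cG=0$ for each generator. Hence $\cG$ is invariant under the whole $\bbT^{t+\dim O}$-action, completing the proof.
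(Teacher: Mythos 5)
The paper does not actually prove this theorem (it is explicitly ``left to the reader as an exercise''); it only names the ingredients: reduce by the $\bbT^{\dim O}$-action of Theorem \ref{thm:tedemule-action}, apply Theorem \ref{thm:PBmain2} and the toric characterization to the reduced fixed point, and recombine. Your proposal is precisely this route, and its formal skeleton is correct: the splitting $X_i=\sum_b c_{ib}\,\partial/\partial\theta_b+V_i$, the cohomological system $V_i(\phi_{kb})=Z_k(c_{ib})$ with its compatibility relations, and the structure-preserving argument via incommensurability of the combined weights, which correctly mirrors the paper's earlier proof for associated torus actions at fixed points (granted that minimality in the definition of the toric degree rules out integer relations among the tuple $(\beta_a,\alpha_k)$, which you should say explicitly).

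The genuine gap is in the analytic solvability of the lifting equation. You justify it by saying a generic combination $\sum_k s_k Z_k$ is invertible on every nonzero isotypic mode; that gives only formal invertibility. On the mode labelled by $n\in\bbZ^t\setminus\{0\}$ this operator is multiplication by $\sum_k s_k n_k$, and for $t\ge 2$ these numbers accumulate at $0$, so a Neumann-series inversion of a single $\cL_V$ has small divisors and need not converge analytically; moreover the Łojasiewicz/resolution machinery of \cite{Zung-Poincare2002,Zung-Birkhoff2005} is designed to prove convergence of the normalizing torus action, not to solve an arbitrary cohomological equation, so the citation does not close this step. The fix is to exploit the simultaneity of your system: since the $p\times t$ frequency matrix $(\alpha_{ik})$ of the reduced fields has rank $t$ by minimality of the toric degree, one has $\max_i\bigl|\sum_k\alpha_{ik}n_k\bigr|\ge c\,|n|$ for all integer $n\neq 0$, so on each mode you invert whichever $\cL_{V_i}$ has the large eigenvalue and use your compatibility relations to check that these mode-by-mode solutions assemble into one function; this is exactly the mechanism by which integrability kills small divisors in the cited convergence proofs, and it is what makes the analytic estimate work. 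A further small point: vanishing of $\tilde Z_k$ on $O$ does not yet make $\sqrt{-1}\,\tilde Z_k$ periodic of period $2\pi$; you should choose $\phi_{kb}$ supported on the modes with $n_k\neq 0$ (possible because $Z_k(c_{ib})$ automatically is), so that its average along the $Z_k$-circle vanishes and the time-$2\pi$ map is the identity rather than a $y$-dependent translation in $\theta$.
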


The proof of the above theorem has not been written down explicitly anywhere, but is left to the reader as an exercise.

\section{Geometry of integrable systems of type (n,0)}

In this section, we will study the geometry of smooth integrable systems of type $(n,0)$,
following a recent paper  with Nguyen Van Minh \cite{ZungMinh_Rn2014}. 
We refer to this paper for various details and proofs that will be omitted here.

Recall that, a smooth integrable system of type
$(n,0)$ means an $n$-tuple of commuting smooth vector fields $X_1,\hdots, X_n$
on a $n$-dimensional manifold $M^n$. (There is no function, just vector fields).
We will always assume in this section
that the system is $(X_1,\hdots, X_n)$ {\bf nodegenerate}, i.e. every singular point of it is nondegenerate.
Moreover, we will assume that commuting vector fields $X_1,\hdots, X_n$ are {\bf complete}, 
i.e. they generate an action of $\bbR^n$ on $M^n$, which we will denote by
\begin{equation}
\rho: \bbR^n \times M^n \to M^n.
\end{equation}

We will say that $\rho$ is a {\bf nondegenerate $\bbR^n$ action} on $M^n$, and that 
$X_1,\hdots, X_n$ are the {\bf generators} of $\rho$. Instead of takling about the system
$(X_1,\hdots, X_n)$, we will talk about the $\bbR^n$-action $\rho,$ which is the same thing.
If a point $z \in M^n$ is of rank $k$ with respect to the tystem 
$X_1,\hdots, X_n$, then it is also of rank $k$ with respect to $\rho,$ in the sense that the orbit
$\cO_z$ of $\rho$ through $z$ is of dimension $k$.

If $v = (v^1,\hdots,v^n)  \in \bbR^n$ is a non-trivial element of $\bbR^n$, then we put
\begin{equation}
 X_v = \sum_{i=1}^n v^i X_i
\end{equation}
and call it the {\bf generator of the action $\rho$ associated to $v$}. 
If $v_1,\hdots,v_n \in \bbR^n$ from a basis of $\bbR^n$, 
then the vector fields $X_{v_1},\hdots, X_{v_n}$ 
also generate the same $\bbR^n$ action as $\rho$, up to an automorphism of $\bbR^n$.

We will study the global geometry of nondegenerate $\bbR^n$-actions on $n$-manifolds. But first, we need some
local and semi-local normal form results.

\subsection{Normal forms and automorphism groups}
\label{subsection:singularities}

\subsubsection{Local normal forms and adapted bases}

Recall from Theorem \ref{thm:NormalForm_n0} that, if $z$ is a singular point of rank $k$ of a nondegenerate
smooth action $\rho: \bbR^n \times M^n \to M^n$ generated by commuting vector fields $X_1,\hdots, X_n,$ then there is
a smooth local coordinate system $(x_1,\hdots,x_n)$ in a neighborhood of $z$ and a basis  
$(v_1,\hdots,v_n)$ of $\bbR^n$ such that locally we have
\begin{equation}
\label{eqn:NormalForm_n0}
\begin{cases}
X_{v_i} = x_i\frac{\partial }{\partial x_i} \quad \forall \ i = 1,\hdots, h, \\
X_{v_{h+2j-1}} = x_{h+2j-1}\frac{\partial }{\partial x_{h+2j-1}} +  x_{h+2j}\frac{\partial }{\partial x_{h+2j}},  \\
X_{v_{h+2j}} = x_{h+2j-1}\frac{\partial }{\partial x_{h+2j}} -  
      x_{h+2j}\frac{\partial }{\partial x_{h+2j-1}} \quad \forall \ j = 1, \hdots, e, \\
X_{v_i} = \frac{\partial }{\partial x_i} \quad \forall \ i = n-k+1, \hdots, n,
\end{cases}
\end{equation}
where $X_{v_i} = \sum_{j=1}^n v_i^j X_j$ is the generator of $\rho$ associated to $v_i$ for each $i=1,\hdots,n.$

\begin{figure}[htb] 
\begin{center}
\includegraphics[width=115mm]{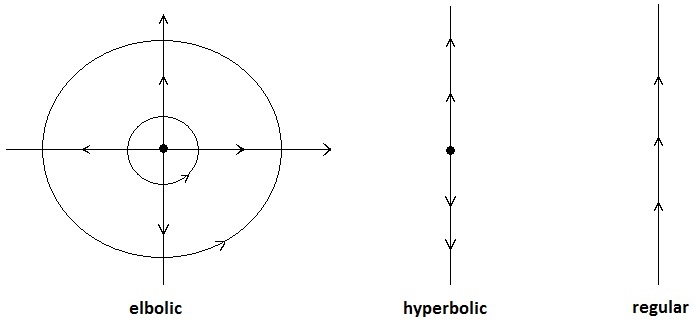}
\caption{Elbolic, hyperbolic, and regular components of $\bbR^n$-actions on $n$-manifolds.}
\label{fig:elbolic}
\end{center}
\end{figure}

The couple $(h,e)$ in the above formula does not depend on the choice of coordinates and bases, and is called the
{\bf HE-invariant} of the action $\rho$ at $z$.
The number $e$ is called the number of {\bf elbolic} components, and $h$ is called the number of
{\bf hyperbolic} components at $z$. The coordinate system $(x_1, \hdots, x_n)$ in the above formula
is called a local {\bf canonical system of coordinates}, and the basis $(v_1,\hdots, v_n)$ of $\bbR^n$
is called an {\bf adapted basis} of the action $\rho$ at $p$.
Local canonical coordinate systems at a point $p$ and associated adapted bases of $\bbR^n$
are not unique, but they are related to each other by the following theorem:

\begin{thm}\label{thm:CanonicalCoordinate}
Let $(x_1, \hdots, x_n)$ be a canonical system of coordinates at a point $z$ of a nondegenerate action $\rho$
together with an associated adapted basis $(v_1,\hdots, v_n)$ of $\bbR^n$.
Let $(y_1,\hdots, y_n)$ be another canonical system of coordinates at $z$ together with an associated adapted 
basis  $(w_1,\hdots, w_n)$ of $\bbR^n$. Then we have:

i) The vectors $(v_1,\hdots, v_h)$ are the same as the vectors $(w_1,\hdots, w_h)$ up to permutations, where 
$h$ is number of hyperbolic components.

ii) The $e$-tuples of pairs of vectors $((v_{h+1},v_{h+2}),\hdots, (v_{h+2e-1},v_{h+2e}))$ is also the 
same as the $e$-tuples $((w_{h+1},w_{h+2}),\hdots, (w_{h+2e-1},w_{h+2e}))$ up to permutations
and changes of sign of the type 
\begin{equation}
(v_{h+2i-1},v_{h+2i}) \mapsto (v_{h+2i-1},-v_{h+2i})
\end{equation}
(only the second vector, the one whose corresponding generator of $\rho$ is a vector field
whose flow is $2\pi$-periodic, changes sign).

iii) Conversely, if $(x_1, \hdots, x_n)$ and  $(v_1,\hdots, v_n)$ are as in 
Formula \eqref{eqn:NormalForm_n0}, 
and $(w_1,\hdots, w_n)$ is another basis of $\bbR^n$ which satisfies the above conditions i) and ii), then
$(w_1,\hdots, w_n)$ is the adapted basis of $\bbR^n$ for another canonical system of coordinates 
$(y_1,\hdots, y_n)$ at $z$.
\end{thm}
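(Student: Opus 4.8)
The plan is to show that everything needed to pin down the hyperbolic and elbolic vectors of an adapted basis is encoded in the linearized isotropy representation at $z$, which is an intrinsic object, so that two adapted bases can differ only by the stated ambiguities. First I would isolate the intrinsic data. The isotropy subspace
$$\mathfrak{g}_z = \{v \in \bbR^n : X_v(z) = 0\}$$
is a codimension-$k$ subspace of $\bbR^n$ depending only on $\rho$ and $z$, and by \eqref{eqn:NormalForm_n0} it is spanned by $v_1,\hdots,v_{n-k}$ (and equally by $w_1,\hdots,w_{n-k}$). For $v\in\mathfrak{g}_z$ the generator $X_v$ vanishes at $z$, so its linear part $L_v\in\mathrm{End}(T_zM)$ is well defined independently of any coordinate choice, and $v\mapsto L_v$ is linear. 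Nondegeneracy makes the $L_v$ semisimple, and commutativity of $\rho$ makes them commute, so $T_zM\otimes\bbC$ splits into joint eigenspaces, producing a finite intrinsic set $W\subset\mathfrak{g}_z^{\ast}\otimes\bbC$ of weights. Reading off \eqref{eqn:NormalForm_n0}, each hyperbolic block contributes a real weight $v_i^{\vee}$ ($i\le h$), and each elbolic pair contributes a conjugate pair $v_{h+2j-1}^{\vee}\pm\sqrt{-1}\,v_{h+2j}^{\vee}$, where $v_\ell^{\vee}$ denotes the basis of $\mathfrak{g}_z^{\ast}$ dual to $(v_1,\hdots,v_{n-k})$. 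Thus $W$ is a basis of $\mathfrak{g}_z^{\ast}\otimes\bbC$ consisting of $h$ real weights and $e$ conjugate pairs of non-real weights; in particular this recovers the HE-invariant as $(h,e)$.

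Next I would invert this to recover the adapted vectors from $W$ alone. Let $\{e_\omega\}_{\omega\in W}$ be the basis of $\mathfrak{g}_z\otimes\bbC$ dual to $W$; it is intrinsic. Complex conjugation fixes each real weight and swaps the members of each conjugate pair, hence permutes $W$ and fixes $e_\lambda$ for every real weight $\lambda$, so $e_\lambda\in\mathfrak{g}_z$. Checking against \eqref{eqn:NormalForm_n0} gives $v_i=e_{\lambda_i}$ for hyperbolic indices (with $\lambda_i=v_i^{\vee}$), and for each elbolic pair $\{\mu,\bar\mu\}$ with $\mu=v_{h+2j-1}^{\vee}+\sqrt{-1}\,v_{h+2j}^{\vee}$,
$$v_{h+2j-1} = e_\mu + e_{\bar\mu}, \qquad v_{h+2j} = \sqrt{-1}\,(e_\mu - e_{\bar\mu}).$$
So each hyperbolic vector is the intrinsic dual vector of a real weight; each elbolic radial vector is the symmetric combination attached to an unordered conjugate pair, while the elbolic rotational vector is the antisymmetric combination, which the unordered pair determines only up to sign, since swapping $\mu\leftrightarrow\bar\mu$ fixes $e_\mu+e_{\bar\mu}$ and negates $\sqrt{-1}\,(e_\mu-e_{\bar\mu})$.

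Statements i) and ii) then follow at once. The recovery formulas used only the normal form, so they apply verbatim to $(w_1,\hdots,w_n)$, giving $w_i=e_{w_i^{\vee}}$ with $w_i^{\vee}$ a real weight of the \emph{same} intrinsic set $W$. Hence $\{w_1,\hdots,w_h\}=\{e_{\lambda_1},\hdots,e_{\lambda_h}\}=\{v_1,\hdots,v_h\}$, which is i); and the conjugate pairs of $(w)$ are the same unordered pairs of $W$, so each $w$-pair equals some $v$-pair with identical radial vector and with rotational vector agreeing up to sign, which is ii). For the converse iii), I would first realize the permitted ambiguities by coordinate changes that visibly preserve \eqref{eqn:NormalForm_n0}: permuting the hyperbolic coordinates, permuting the elbolic coordinate pairs, and the sign change $x_{h+2j}\mapsto -x_{h+2j}$, which leaves the radial field unchanged and negates the rotational field, matching $v_{h+2j}\mapsto -v_{h+2j}$. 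After this the first $n-k$ adapted vectors are exactly $w_1,\hdots,w_{n-k}$. To supply the regular coordinates, note the regular generators $X_{w_{n-k+1}},\hdots,X_{w_n}$ commute among themselves and with the singular generators, so the multi-time flow
$$\Phi(t_{n-k+1},\hdots,t_n;\,\cdot\,) = \exp\Big(\sum_{i>n-k} t_i X_{w_i}\Big)$$
restricted to the slice $\{x_{n-k+1}=\cdots=x_n=0\}$ is a local diffeomorphism; setting $y_i=t_i$ for $i>n-k$ and keeping the singular coordinates defines $(y_1,\hdots,y_n)$, in which $X_{w_i}=\partial/\partial y_i$ for $i>n-k$ while the singular generators, preserved by these commuting flows, keep their normal form. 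Hence $(w_1,\hdots,w_n)$ is adapted to $(y_1,\hdots,y_n)$.

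The only delicate point is the bookkeeping of complex conjugation: one must verify that real weights yield genuinely real dual vectors, and that an unordered conjugate pair fixes the radial vector exactly and determines the rotational vector exactly up to sign, with no further identifications among distinct pairs and no hidden extra freedom. Once the intrinsic weight decomposition is set up correctly, the remaining steps, including the regular factor via the flow-box above, are routine.
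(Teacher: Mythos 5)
Your proof is correct. The paper itself states Theorem \ref{thm:CanonicalCoordinate} without proof, deferring to \cite{ZungMinh_Rn2014}, and your route is essentially the natural (and, as far as I can tell, the intended) argument: the singular part of an adapted basis is recovered intrinsically from the joint weight system of the linearized isotropy representation $v\mapsto L_v$ of $\mathfrak{g}_z=\{v\in\bbR^n : X_v(z)=0\}$ on $T_zM\otimes\bbC$, with real weights giving the hyperbolic vectors exactly and unordered conjugate pairs giving the elbolic pairs up to the sign of the rotational vector, while the converse is handled by permutations and sign changes of the singular coordinates followed by a flow box in the regular directions. Two points are worth making explicit when writing this up: first, when the rank $k$ is positive, $0$ is also a joint weight (with multiplicity $k$, on the tangent space to the orbit), so $W$ must be taken to be the set of \emph{nonzero} weights --- these are $n-k$ in number and linearly independent, being the image of the dual basis $(v_i^{\vee})$ under an invertible block-diagonal map, and this is what guarantees that the corresponding joint eigenspaces are one-dimensional and the dual vectors $e_\omega$ are well defined; second, in part iii) the transversality at $z$ of $X_{w_{n-k+1}},\hdots,X_{w_n}$ to the slice $\{x_{n-k+1}=\cdots=x_n=0\}$ follows because $w_{n-k+1},\hdots,w_n$ project to a basis of $\bbR^n/\mathfrak{g}_z$, so the flow-box map is indeed a local diffeomorphism and the singular generators, being tangent to the slice and invariant under the regular flows, keep their normal form in formula \eqref{eqn:NormalForm_n0}.
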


\begin{remark}
 The fact that the last vectors (from $w_{h+2e+1}$ to $w_n$) in an adapted basis can be arbitrary 
(provided that they form together with $w_1, \hdots, w_{h+2e}$ a basis of $\bbR^n$)
is very important in the global picture, because it allows us to glue different local canonical 
pieces together in a flexible way.
\end{remark}

It follows immediately from the local normal form formula \eqref{eqn:NormalForm_n0} that the singular set
$\cS = \{x \in M^n \ |\ \rank x < n\}$ 
of a nondegenerate action $\rho: \bbR^n \times M^n \to M^n$ is a stratified manifold, whose strata are 
$ \cS_{h,e} = \{x \in M^n \ |\ \text{HE-invariant of } x \text{ is } (h,e)\}$
given by the HE-invariant, and $\dim \cS_{h,e} = n -h-2e$ if $\cS_{h,e} \not= \emptyset$.
If $\cS \not= \emptyset$ then $\dim \cS  = n-1$ or $\dim \cS  = n-2$.
When there are singular points with hyperbolic components then $\dim \cS  = n-1$, and 
when there are only elbolic singularities ($h=0$ for every point) then  $\dim \cS  = n-2$. 

\begin{defn} \label{def:AssociatedVector}
 1) If $\cO_z$ is a singular orbit of corank 1 of a nondegenerate action $\rho: \bbR^n \times M^n \to M^n$,
i.e. the HE-invariant of $\cO_p$ is $(1,0)$, 
then the unique vector $v \in \bbR^n$ such that the corresponding generator $X_v$ of $\rho$ can be written as 
 $X_v = x\frac{\partial}{ \partial x}$ near each point of $\cO_z$ is called the {\bf associated vector} of $\cO_z$.

2) If $\cO_z$ is a singular orbit of HE-invariant $(0,1)$ (i.e. corank 2 transversally elbolic) then the couple of vectors 
$(v_1, \pm v_2)$ in $\bbR^n$, where $v_2$ is determined only up to a sign, such that $X_{v_1}$ and $X_{v_2}$ can be locally written as 
$$\begin{cases}
   X_{v_1} = x\frac{\partial}{ \partial x} + y\frac{\partial}{ \partial y}\\
 X_{v_2} = x\frac{\partial}{ \partial y} - y\frac{\partial}{ \partial x}
\end{cases}$$
is called the {\bf associated vector couple} of $\cO_z$.

\end{defn}

\subsubsection{Local automorphism groups and the reflection principle}

\begin{thm}\label{thm:LocalAutomorphism}
 Let $z$ be a nondegenerate singular point of HE-invariant $(h,e)$ and 
rank $r$ of an action $\rho: \bbR^n \times M^n \to M^n$ $(n= h + 2e + r)$. Then the group 
of germs of local isomorphisms (i.e. local diffeomorphisms which preserve the action) which fix the point $z$ 
is isomorphic to  $\bbT^e \times \bbR^{e+h} \times (\bbZ_2)^h$. The part $\bbT^e \times \bbR^{e+h}$
of this group comes from the action $\rho$ itself (internal automorphisms given by the action of the isotropy 
group of $\rho$ at $z$).
\end{thm}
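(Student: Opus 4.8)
The plan is to work in the canonical coordinate system provided by Theorem \ref{thm:NormalForm_n0}, so that $z$ is the origin and the generators $X_{v_1},\dots,X_{v_n}$ take the explicit form of Formula \eqref{eqn:NormalForm_n0}. I interpret a local isomorphism fixing $z$ as a germ of diffeomorphism $\phi$ with $\phi(0)=0$ which commutes with the action, i.e. $\phi_* X_{v_i}=X_{v_i}$ for every $i$ (this choice, rather than ``up to an automorphism of $\bbR^n$'', is what produces reflections but no permutation factors). The first thing I would record is the internal subgroup. The isotropy algebra of $\rho$ at $z$ is spanned by $v_1,\dots,v_{h+2e}$, since the hyperbolic and elbolic generators have linear flows (scalings and rotations) fixing $0$, while the $r$ regular generators translate $0$. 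The map $t\mapsto(\rho_t \text{ as a germ})$ from this $(h+2e)$-dimensional subspace into the germ group has kernel $(2\pi\bbZ)^e$, coming from the $2\pi$-periodicity of the $e$ elbolic rotations, so its image is $\cong\bbR^{h+e}\times\bbT^e=\bbT^e\times\bbR^{e+h}$; concretely these are the linear maps scaling each hyperbolic coordinate by a positive factor and each elbolic pair by an element of $\bbC^\ast$.

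Next I would peel off the regular directions. Commuting with the translation flows $X_{v_i}=\partial/\partial x_i$ ($i>h+2e$) forces the non-regular components $\phi_1,\dots,\phi_{h+2e}$ to be independent of the regular coordinates and each regular component to take the form $\phi_i=x_i+c_i$ with $c_i$ depending only on the non-regular coordinates. Commuting in addition with the hyperbolic scalings and elbolic flows forces each $c_i$ to be invariant under all of these; since a continuous function invariant under $x\mapsto e^sx$ is independent of $x$, each $c_i$ is constant, and $\phi(0)=0$ makes it zero. Hence $\phi_i=x_i$ for every regular $i$, and the problem reduces to classifying germs $\psi$ of $(\bbR^{h+2e},0)$ commuting with the linear $(\bbR_{>0})^h\times(\bbC^\ast)^e$–action generated by the $h$ hyperbolic scalings and the $e$ elbolic pairs (radial scaling together with rotation).

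The heart of the argument is a weight computation. Because $\psi$ commutes with each generator, the pullback operator $\psi^\ast$ commutes with each $X_{v_i}$ acting on functions, and the coordinate functions are common eigenfunctions. Thus for hyperbolic $j\le h$ the function $g_j:=\psi^\ast x_j$ satisfies $X_{v_j}g_j=g_j$ and $X_{v_i}g_j=0$ for $i\neq j$; invariance under the other scalings forces $g_j$ to depend on $x_j$ alone, and the equation $x_j\,\partial g_j/\partial x_j=g_j$ then forces $g_j=c_jx_j$ with $c_j\in\bbR^\ast$. Writing $w_l=x_{h+2l-1}+\sqrt{-1}\,x_{h+2l}$ for the elbolic pairs, the same reasoning makes $\psi^\ast w_l$ homogeneous of degree one and of rotation-frequency $+1$, hence equal to $c_lw_l$ with $c_l\in\bbC^\ast$; the frequency condition excludes the $\bar w_l$ term, which is precisely why complex conjugation (reflection) is \emph{not} an automorphism in an elbolic block, whereas each hyperbolic block does admit the reflection $c_j<0$. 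Consequently $\psi$, and with it $\phi$, is forced to be linear and diagonal, with hyperbolic scalars in $\bbR^\ast$, elbolic scalars in $\bbC^\ast$, and identity on the regular coordinates.

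Assembling, the automorphism group is exactly $(\bbR^\ast)^h\times(\bbC^\ast)^e$. Using $\bbR^\ast\cong\bbR_{>0}\times\bbZ_2$ and $\bbC^\ast\cong\bbR_{>0}\times\bbT$, this is $\bbR^{h+e}\times\bbT^e\times(\bbZ_2)^h=\bbT^e\times\bbR^{e+h}\times(\bbZ_2)^h$, with identity component $\bbT^e\times\bbR^{e+h}$ equal to the internal subgroup identified at the start and the extra $(\bbZ_2)^h$ being the hyperbolic reflections. I expect the main obstacle to be the third step: rigorously ruling out higher-order and infinitely flat contributions to the eigenfunctions $\psi^\ast x_j$ and $\psi^\ast w_l$, so that the weight conditions force genuine linearity of $\psi$ rather than merely of its leading term. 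The smoothness analysis of the scaling equation $x g'=g$ near $0$ and of its elbolic analogue is where nondegeneracy and the reflection principle of Theorem \ref{thm:CanonicalCoordinate} do the real work.
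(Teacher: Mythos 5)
Your proposal is correct, and it follows what is essentially the only natural route: the paper itself gives no proof of this theorem (it is deferred to \cite{ZungMinh_Rn2014}), but the argument there is of the same kind --- pass to the canonical coordinates of Theorem \ref{thm:NormalForm_n0}, translate strict equivariance into the eigenfunction equations $X_{v_i}\phi_j=(X_{v_i}x_j)\circ\phi$, and solve them block by block, with the hyperbolic blocks contributing $\bbR^\ast$ and the elbolic blocks contributing $\bbC^\ast$ because $\bar w_l$ has the wrong rotation weight. Two small remarks: the step you flag as the main obstacle is in fact harmless, since on each side of $\{x_j=0\}$ the equation $x_j\,\partial g_j/\partial x_j=g_j$ (with $g_j$ depending on $x_j$ alone) forces $g_j=c_\pm x_j$ exactly, and $C^1$-smoothness at the origin glues $c_+=c_-$, so no flat corrections can survive; and the ``real work'' is done by the local normal form, not by Theorem \ref{thm:CanonicalCoordinate} or the reflection principle, which are not needed for this purely local computation.
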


The finite automorphism group $(\bbZ_2)^k$ in the above theorem acts not only locally in the neighborhood 
of a singular point $p$ of HE-invariant $(h,e)$, but also in the neighborhood of a smooth closed manifold 
of dimension $n-h-2e$ which contains $p$. More precisely, we have the following {\bf reflection principle}, 
which is somewhat similar to the Schwartz reflection principle in complex analysis:

\begin{thm}[Reflection principle] \label{thm:Reflection}
a) Let $z$ be a point of HE-invariant $(1,0)$ of a nondegenerate $\bbR^n$-action $\rho$ 
on a manifold $M^n$ without boundary. 
Denote by $v \in \bbR^n$ the associated vector of $z$ (i.e. of the orbit $\cO_z$) as in 
Definition \ref{def:AssociatedVector}. Put
\begin{equation} 
\cN_v = \{y \in M^n \ |\ X_v(y) = 0 \text{ and } X_v \text{ can be written as  } 
x_1\frac{\partial}{\partial x_1} \text{ near }  y\}.
\end{equation} 
Then $\cN_v$ is a smooth embedded hypersurface of dimension $n-1$ of $M^n$ 
(which is not necessarily connected), and there is a unique non-trivial involution 
$\sigma_v : \cU(\cN_v) \to \cU(\cN_v)$ from a neighborhood of $\cN_v$ to itself which preserves the 
action $\rho$ and which is identity on $\cN_v$.

b) If the HE-invariant of $z$ is $(h,0)$ with $h>1$, then
we can write 
\begin{equation} 
z \in \cN_{v_1,\hdots,v_h} = \cN_{v_1}\cap \hdots \cap \cN_{v_h}
\end{equation} 
where $\cN_{v_i}$ are defined as in a), $(v_1,\hdots,v_h)$ is a free family of vectors in $\bbR^n$,
the intersection $\cN_{v_1}\cap \hdots \cap \cN_{v_h}$ is transversal and $\cN_{v_1,\hdots,v_h}$
is a closed smooth submanifold of codimension $h$ in $M$. The involutions $\sigma_{v_1},\hdots, \sigma_{v_h} $
generate a group of automorphisms of $(\cU(\cN_{v_1,\hdots,v_h}), \rho)$ isomorphic to $(\bbZ_2)^h$.
\end{thm}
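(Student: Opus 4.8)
The plan is to reduce everything to the local normal form of Theorem~\ref{thm:NormalForm_n0} and then to assemble the resulting local reflections into a global involution using the rigidity of canonical coordinates from Theorem~\ref{thm:CanonicalCoordinate}. First I would fix a point $y\in\cN_v$ and choose a canonical coordinate system $(x_1,\hdots,x_n)$ with adapted basis in which $X_v=x_1\partial/\partial x_1$; such a system exists precisely because $y\in\cN_v$. In these coordinates $\cN_v$ coincides with the slice $\{x_1=0\}$, since $X_v$ vanishes exactly there while keeping the form $x_1\partial/\partial x_1$ throughout the chart, which shows that $\cN_v$ is a smooth embedded hypersurface of dimension $n-1$. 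I would then propose the local reflection $\sigma_v\colon(x_1,x_2,\hdots,x_n)\mapsto(-x_1,x_2,\hdots,x_n)$. Since the vector $v$ uses up the coordinate $x_1$, every other generator $X_{v_i}$ of the adapted basis is, in these coordinates, built only from $\partial/\partial x_j$, $x_j\partial/\partial x_j$ and the planar dilation and rotation fields in the coordinates $x_2,\hdots,x_n$, none of which involve $x_1$; as $x_1\partial/\partial x_1$ is itself invariant under $x_1\mapsto-x_1$, the map $\sigma_v$ preserves each $X_{v_i}$ and hence the whole action $\rho$, and it is manifestly an involution fixing $\{x_1=0\}$ pointwise.

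The crucial step is local uniqueness. At a point of HE-invariant $(1,0)$ the canonical form is $X_{v_1}=x_1\partial/\partial x_1$ and $X_{v_i}=\partial/\partial x_i$ for $i\geq 2$. If $\tau$ is any germ of diffeomorphism preserving $\rho$ and fixing $\cN_v$ pointwise, then invariance under the translation flows of the fields $\partial/\partial x_i$ forces $\tau(x_1,\hdots,x_n)=(f(x_1),x_2+g_2(x_1),\hdots,x_n+g_n(x_1))$; fixing $\{x_1=0\}$ gives $f(0)=0$ and $g_i(0)=0$; and commuting with the flow of $x_1\partial/\partial x_1$ forces $f(x_1)=ax_1$ and $g_i\equiv 0$. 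The involution condition $a^2=1$ leaves only $a=\pm 1$, i.e.\ the identity or $\sigma_v$. This shows that $\sigma_v$ is the unique non-trivial such involution on the dense open stratum $\cS_{1,0}$ of $\cN_v$.

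Next I would glue. Covering $\cN_v$ by canonical charts and defining $\sigma_v$ in each as above, on any overlap the two locally defined maps are both non-trivial action-preserving involutions fixing $\cN_v$, so by the uniqueness just proved they agree on the dense set of HE-$(1,0)$ points, hence everywhere by continuity; thus they patch to a single smooth involution on a neighborhood $\cU(\cN_v)$. The one point demanding care is the behaviour along the deeper strata $\cS_{h,e}$ contained in $\cN_v$: at such a point I would take an adapted basis with $v$ the first hyperbolic vector, so that $X_v=x_1\partial/\partial x_1$ while all remaining generators again live in $x_2,\hdots,x_n$, and the reflection $x_1\mapsto-x_1$ is still a well-defined action-preserving involution that agrees with the generic one by uniqueness. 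This confirms that $\sigma_v$ extends smoothly over all of $\cN_v$ and completes part a).

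Part b) then follows by applying part a) to each hyperbolic vector $v_1,\hdots,v_h$: in canonical coordinates at $z$ one has $X_{v_i}=x_i\partial/\partial x_i$, so $\cN_{v_i}=\{x_i=0\}$, the intersection $\cN_{v_1}\cap\hdots\cap\cN_{v_h}=\{x_1=\hdots=x_h=0\}$ is transversal of codimension $h$, and each $\sigma_{v_i}\colon x_i\mapsto-x_i$ is defined near it. Because the $\sigma_{v_i}$ act on disjoint coordinates they commute, are independent, and generate a group isomorphic to $(\bbZ_2)^h$. The main obstacle I anticipate is not any single computation but the global consistency of the gluing in the third step: one must be sure that reflections coming from different canonical charts, related by the transformations catalogued in Theorem~\ref{thm:CanonicalCoordinate}, genuinely coincide on overlaps and remain smooth across the lower-dimensional singular strata lying inside $\cN_v$ — which is exactly what the local rigidity (uniqueness) statement is engineered to guarantee.
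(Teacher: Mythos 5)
The paper does not actually prove this theorem in the text --- it is quoted from \cite{ZungMinh_Rn2014} without an argument --- but your proposal follows exactly the route that the surrounding machinery (Theorem \ref{thm:NormalForm_n0}, Theorem \ref{thm:CanonicalCoordinate}, and the $(\bbZ_2)^h$ factor in Theorem \ref{thm:LocalAutomorphism}) is set up for, and it is essentially correct: the local model $x_1\mapsto -x_1$, the germ-uniqueness computation at a point of HE-invariant $(1,0)$, and the reduction of part b) to part a) are all sound. One step needs to be tightened: in the gluing, agreement of two locally defined reflections ``on the dense set of HE-$(1,0)$ points, hence everywhere by continuity'' is not quite enough, because that set is a codimension-one (indeed measure-zero) subset of the open overlap and continuity alone does not propagate equality off it. The correct completion is to observe that the coincidence set of two $\rho$-preserving maps is closed and invariant under the local $\bbR^n$-action (equivariance gives $\sigma(\rho(t,p))=\rho(t,\sigma(p))$), that germ-uniqueness at a $(1,0)$-point already yields agreement on a full open neighborhood of that point, and that every local orbit in a canonical chart around a deeper-stratum point of $\cN_v$ contains points arbitrarily close to the $(1,0)$-stratum; together with density of regular points this forces agreement on the whole overlap. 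Two smaller omissions: you should justify that the $(1,0)$-stratum is open and dense in $\cN_v$ (the deeper strata have dimension $<n-1$ by the normal form), and part b) asserts that $\cN_{v_1,\hdots,v_h}$ is \emph{closed}, which you do not address --- it follows from the normal form at a limit point, since $X_v$ vanishing there with $v$ not equal to a single hyperbolic basis vector of the adapted basis would contradict the local form $x_1\partial/\partial x_1$ required at nearby points of $\cN_v$.
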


\subsubsection{Semi-local norml forms}

Consider an orbit $\cO_z = \{ \rho(t,z) \ |\ \ t \in \bbR^n \}$ though a point $z \in M^n$
of a given $\bbR^n$-action $\rho$. Since $\cO_z$ is a quotient of $\bbR^n$, 
it is diffeomorphic to $\bbR^k \times \bbT^l$ for some
nonnegative integers $k,l \in \bbZ_+$. 

\begin{defn} \label{defn:HERT-invariant}
The {\bf HERT-invariant} of an orbit $\cO_z$ or a point $z$ in it is
the quadruple $(h,e,r,t)$, where $h$ is the number of transversal hyperbolic components, 
$e$ is the number of transversal elbolic components, 
and $\bbR^r \times \bbT^t$ is the diffeomorphism type of the orbit.
\end{defn}

An orbit is compact if and only if $r = 0$, in which case it is a torus of dimension $t$.
We have the following linear model for a tubular neighborhood of a compact orbit of 
HERT-invariant $(h,e,0,t)$:

\begin{itemize}
\item The orbit is
\begin{equation}
\{0\} \times \{0\} \times \bbT^t / (\bbZ_2)^k,
\end{equation}
which lies  in 
\begin{equation}
B^{h} \times B^{2e} \times \bbT^t / (\bbZ_2)^k,
\end{equation}
(where $B^h$ is a ball of dimension $h$), with coordinates $(x_1, \hdots,x_{h+2e})$ 
on $B^{h} \times B^{2e}$ and  $(z_1, \hdots,z_t) \mod 2\pi$ 
on $\bbT^t$, and $k$ is some nonnegative integer such that $k\leq  \min(h,t)$.
\item The (infinitesimal) action of $\bbR^n$   is generated by the vector fields 
\begin{equation}
\begin{cases}
Y_i = x_i\frac{\partial }{\partial x_i} \quad \forall \quad i = 1, \hdots, h \\
Y_{h+2j-1} = x_{h+2j-1}\frac{\partial }{\partial x_{h+2j-1}} +  x_{h+2j}\frac{\partial }{\partial x_{h+2j}}  \\
Y_{h+2j} = x_{h+2j-1}\frac{\partial }{\partial x_{h+2j}} -  x_{h+2j}\frac{\partial }{\partial x_{h+2j-1}} \quad \forall \quad j = 1, \hdots, e \\
Y_{h+2e+i} = \frac{\partial }{\partial z_i} \quad \forall \quad i = 1, \hdots, t.
\end{cases}
\end{equation}
like in the local normal form theorem.
  \item The Abelian group $(\bbZ_2)^k$ acts on $B^h \times B^{2e} \times \bbT^t $ freely,
 component-wise, and by isomorphisms of the action, so that the quotient is still a manifold with
 an induced action of $\bbR^n$ on it. The action of $(\bbZ_2)^k$ on $B^{h}$ is 
by an injection from $(\bbZ_2)^k$ to the involution group $(\bbZ_2)^h$ generated by the
reflections $\sigma_i: (x_1,\hdots,x_i,\hdots, x_h) \mapsto (x_1,\hdots, -x_i,\hdots, x_h)$, its
action on $B^{2e}$ is trivial, and its action on $\bbT^t$ is via an injection of $(\bbZ_2)^k$
into the group of translations on $\bbT^t$.
\end{itemize}

\begin{thm} \label{thm:semi-localform}
Any compact orbit of a nondegenerate action
$\rho: \bbR^n \times M^n \to M^n$ can be linearized, i.e. there is a tubular neighborhood of it
which is, together with the action $\rho$, isomorphic to a linear model described above.
\end{thm}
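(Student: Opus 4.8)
The plan is to realize a tubular neighborhood $\cU(\cO)$ of the compact orbit $\cO \cong \bbT^t$ as a \emph{suspension} (flat bundle) over $\cO$ whose fiber is a transverse slice carrying the local linear model, and then to read off the model from the monodromy of this bundle. Since $\cO$ is compact its rank equals $t$ and, at any $z \in \cO$, the HE-invariant $(h,e)$ satisfies $h + 2e + t = n$. The isotropy subgroup $\Gamma_z \subset \bbR^n$ of the $\bbR^n$-action splits as $\bbR^{h+2e} \oplus \bbZ^t$: the continuous part $\bbR^{h+2e}$ is spanned by the hyperbolic and elbolic vectors of an adapted basis (whose generators vanish at $z$), while the lattice $\bbZ^t$ accounts for the orbit directions closing up into $\bbT^t = \bbR^n/\Gamma_z$. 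First I would fix a slice $S \cong B^{h+2e}$ transverse to $\cO$ at $z$, on which the germ of the $\bbR^{h+2e}$-action is itself a nondegenerate system of type $(h+2e,0)$ with $z$ as a fixed point; Theorem \ref{thm:NormalForm_n0} then puts this transverse action into the standard form on $B^h \times B^{2e}$, and Theorem \ref{thm:LocalAutomorphism} identifies its automorphism group as $\bbT^e \times \bbR^{e+h} \times (\bbZ_2)^h$. Flowing the slice along the $\bbR^t$ of orbit directions exhibits $\cU(\cO) \cong (\bbR^t \times S)/\bbZ^t$, where $\bbZ^t$ acts by translation on $\bbR^t$ composed with a \emph{holonomy} automorphism of $S$; this is a flat bundle over $\bbT^t$ classified by a homomorphism $\mu : \bbZ^t \to \bbT^e \times \bbR^{e+h} \times (\bbZ_2)^h$.

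The next step is to normalize $\mu$. Because every automorphism in play preserves the \emph{labelled} action $\rho$ (each generator individually), the block decomposition into hyperbolic lines and elbolic planes is preserved with no permutations and no hyperbolic--elbolic mixing; in particular no sign flip of an elbolic rotation generator occurs, so the holonomy is trivial on $B^{2e}$ up to the connected rotation factor $\bbT^e$. The connected part $\bbT^e \times \bbR^{e+h}$ of $\mu$ is \emph{interior}, i.e. realized by $\rho$ itself (this is exactly the part of the automorphism group coming from the action in Theorem \ref{thm:LocalAutomorphism}); using the freedom in the choice of canonical coordinates and adapted bases granted by Theorem \ref{thm:CanonicalCoordinate} --- especially the freedom in the orbit-direction vectors --- I would absorb this connected part by reparametrizing the flow and re-choosing the slice smoothly over $\bbT^t$, so that after this gauge change $\mu$ takes values in the external factor $(\bbZ_2)^h$. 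The residual homomorphism $\bar\mu : \bbZ^t \to (\bbZ_2)^h$ is generated by the reflections $\sigma_{v_i}$ furnished by the reflection principle (Theorem \ref{thm:Reflection}), which are genuinely external and cannot be untwisted.

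Finally I would match the normalized suspension with the linear model. Set $(\bbZ_2)^k = \mathrm{im}(\bar\mu)$; as a subgroup of $(\bbZ_2)^h$ generated by $t$ elements it satisfies $k \le \min(h,t)$. For a single nontrivial generator, the mapping-torus identification $(\bbR \times S)/\langle (s,f)\mapsto (s+1,\sigma f)\rangle \cong (\bbR/2\bbZ \times S)/\bbZ_2$ rewrites the twist as a \emph{free} $\bbZ_2$ acting component-wise by a half-period translation on a circle factor of $\bbT^t$ and by the reflection $\sigma$ on $B^h$, while acting trivially on $B^{2e}$; freeness comes entirely from the half-period translation. Carrying this out for a basis of $\mathrm{im}(\bar\mu)$ produces precisely $B^h \times B^{2e} \times \bbT^t/(\bbZ_2)^k$ with the standard generators $Y_i$, and the constructed diffeomorphism conjugates $\rho$ to the model action, proving Theorem \ref{thm:semi-localform}.

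I expect the main obstacle to be the \emph{smooth} trivialization of the interior part of the holonomy in the second step: one must show that the $\bbT^e \times \bbR^{e+h}$-valued holonomy cocycle is a smooth coboundary realized by reparametrizing $\rho$ over all of $\bbT^t$, and, relatedly, that the locally defined periodic elbolic flows assemble into an honest transverse $\bbT^e$-action in the whole tubular neighborhood (so that Theorem \ref{thm:NormalForm_n0} can be applied fiberwise in a consistent way). Checking freeness of the resulting $(\bbZ_2)^k$-action and the bound $k \le \min(h,t)$ when several reflections and translations interact is then a finite, combinatorial verification.
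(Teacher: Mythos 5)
Your proposal is sound and follows essentially the route of the reference \cite{ZungMinh_Rn2014} to which the paper defers this proof: local normal form (Theorem \ref{thm:NormalForm_n0}) on an invariant transverse slice at a point $z$ of the orbit, transport of the slice by the action over $\bbT^t$, and identification of the lattice holonomy as elements of the local automorphism group $\bbT^e \times \bbR^{e+h} \times (\bbZ_2)^h$ of Theorem \ref{thm:LocalAutomorphism}, whose connected part is absorbed and whose residual $(\bbZ_2)^k$ part produces the twisted model. The ``main obstacle'' you flag is in fact harmless: since the internal factor $\bbT^e \times \bbR^{e+h}$ is exactly the image of the connected isotropy $\bbR^{h+2e} \subset \Gamma_z$ under $u \mapsto \rho(u,\cdot)$, you can kill the connected part of each holonomy by replacing the lattice generator $w_i \in \Gamma_z$ with $w_i - u_i$ for a suitable $u_i \in \bbR^{h+2e} \subset \Gamma_z$ (and then taking the span of the new lattice as the complement of $\bbR^{h+2e}$), so no smooth trivialization of a $\bbT^e \times \bbR^{e+h}$-valued cocycle over $\bbT^t$ is ever needed.
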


More generally, for any point $z$ lying in a orbit $\cO_z$ of HERT-invariant $(h,e,r,t)$ which is not necessarily
compact (i.e. the number $r$ may be strictly positive), we still have the following linear model:

\begin{itemize}
\item The intersection of the orbit with the manifold is
\begin{equation}
 \{0\} \times \{0\} \times \bbT^t / (\bbZ_2)^k \times B^r,
 \end{equation}
which lies in 
\begin{equation}
(B^{h} \times B^{2e} \times \bbT^t / (\bbZ_2)^k) \times B^r
\end{equation}
 with coordinates $(x_1, \hdots,x_{h+2e})$ on $B^{h} \times B^{2e}$, 
$(z_1, \hdots,z_t) \mod 2\pi$ on $\bbT^t$, and $\zeta_1,\hdots, \zeta_r$ on $B^r$
 and $k$ is some nonnegative integer such that $k\leq  \min(h,t)$.
\item The action of $\bbR^n$   is generated by the vector fields 
\begin{equation}
\begin{cases}
Y_i = x_i\frac{\partial }{\partial x_i} \quad \forall \quad i = 1, \hdots, h, \\
Y_{h+2j-1} = x_{h+2j-1}\frac{\partial }{\partial x_{h+2j-1}} +  x_{h+2j}\frac{\partial }{\partial x_{h+2j}}  \\
Y_{h+2j} = x_{h+2j-1}\frac{\partial }{\partial x_{h+2j}} -  x_{h+2j}\frac{\partial }{\partial x_{h+2j-1}} \quad \forall \quad j = 1, \hdots, e, \\
Y_{h+2e+i} = \frac{\partial }{\partial z_i} \quad \forall \quad i = 1, \hdots, t, \\
Y_{h+2e+t+i} = \frac{\partial }{\partial \zeta_i} \quad \forall \quad i = 1, \hdots, r.
\end{cases}
\end{equation}
  \item The Abelian group $(\bbZ_2)^k$ acts on $\bbR^h \times \bbR^{2e} \times \bbT^t $ freely
in the same way as in the case of a compact orbit.
\end{itemize}

\begin{thm} \label{thm:semi-localform2}
Any point $q$ of any HERT-invariant $(h,e,r,t)$ with respect to a nondegenerate action 
$\rho: \bbR^n \times M^n \to M^n$ admits a neighborhood which is isomorphic to a linear
model described above.
\end{thm}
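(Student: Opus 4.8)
The plan is to reduce Theorem \ref{thm:semi-localform2} to the compact-orbit case already settled in Theorem \ref{thm:semi-localform}, by splitting off the $r$ non-compact regular directions of the orbit as an inert product factor. Since $q$ has HERT-invariant $(h,e,r,t)$ we have $n = h + 2e + r + t$ and $\cO_q \cong \bbR^r \times \bbT^t$. First I would fix a basis $(v_1,\hdots,v_n)$ of $\bbR^n$ adapted to the orbit through $q$: the first $h$ vectors carrying the hyperbolic generators and the next $2e$ carrying the elbolic pairs as in Formula \eqref{eqn:NormalForm_n0}, the vectors $v_{h+2e+1},\hdots,v_{h+2e+t}$ generating the compact torus directions, and $v_{h+2e+t+1},\hdots,v_n$ generating the non-compact $\bbR^r$ directions. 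Set $W = \mathrm{span}(v_1,\hdots,v_{h+2e+t})$ and $V = \mathrm{span}(v_{h+2e+t+1},\hdots,v_n)$, so that $\bbR^n = W \oplus V$.

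The generators $X_{v_{h+2e+t+1}},\hdots,X_{v_n}$ of the $V \cong \bbR^r$ subaction span the $\bbR^r$ tangent directions of $\cO_q$; near $q$ they are nonvanishing and independent, and, by compactness of the torus $\{0\}\times\{0\}\times\bbT^t$, they remain so in a neighborhood of that whole torus. Hence the $V$-flow is free there and yields a product structure $\cU \cong \Sigma \times B^r$ on a neighborhood of $q$, with $V$ acting by translation on the $B^r$ factor and $\Sigma$ a transversal slice of dimension $h+2e+t$. Because all generators of $\rho$ commute, the $W$-generators are invariant under the $V$-flow and therefore descend to a well-defined $\bbR^{h+2e+t}$-action of $W$ on $\Sigma$. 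Crucially, $\dim \Sigma = \dim W$, so this is again a (generically free) action of the type studied in this section.

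Next I would check that the projected $W$-action on $\Sigma$ is nondegenerate and that the image $\bar q$ of $q$ lies on a compact orbit of HERT-invariant $(h,e,0,t)$. Nondegeneracy is a condition on the linear part at singular points, which is carried entirely by the hyperbolic and elbolic generators sitting inside $W$; these are untouched by the projection along $V$, so the transverse structure, and in particular the HE-invariant $(h,e)$, is preserved. The $W$-orbit through $\bar q$ is generated by the hyperbolic and elbolic vectors, which fix $\bar q$, together with the $t$ torus directions, hence equals $\bbT^t$ and is compact. Applying Theorem \ref{thm:semi-localform} to this compact orbit of the nondegenerate $W$-action on $\Sigma$ produces canonical coordinates $(x_1,\hdots,x_{h+2e},z_1,\hdots,z_t)$ on $B^h \times B^{2e} \times \bbT^t/(\bbZ_2)^k$ in which the $W$-generators take the stated standard form, together with the finite group $(\bbZ_2)^k$ acting as in the compact case. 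Reinstating the product structure $\Sigma \times B^r$ with coordinates $\zeta_1,\hdots,\zeta_r$ on $B^r$, along which $V$ acts by the generators $Y_{h+2e+t+i} = \partial/\partial\zeta_i$, gives exactly the linear model in the statement; the group $(\bbZ_2)^k$ does not meet the $B^r$ factor, as required.

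The hard part will be the compatibility of the splitting $\bbR^n = W \oplus V$ with the isotropy data of $q$, which is what makes the reduction legitimate. One must arrange that the stabilizer $\Gamma_q \subset \bbR^n$ — whose identity component is the $(h+2e)$-dimensional subspace spanned by the hyperbolic and elbolic vectors and whose component group $\pi_1(\cO_q) \cong \bbZ^t$ is the period lattice of the torus directions — is entirely contained in $W$, so that $V$ injects onto the $\bbR^r$-factor of $\cO_q$ and the $W$-orbit of $\bar q$ is genuinely the compact torus $\bbT^t$ rather than a noncompact or dense image. Once the adapted basis is chosen compatibly with $\Gamma_q$ this holds, and the remaining work — extending the transversal product structure over the whole compact torus and verifying that the reflection involutions of Theorem \ref{thm:Reflection} and the gluing of local canonical charts governed by Theorem \ref{thm:CanonicalCoordinate} descend to $\Sigma$ — is carried out exactly as in the proof of Theorem \ref{thm:semi-localform}, with the inert $\bbR^r$ factor simply carried along.
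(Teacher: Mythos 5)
Your proposal is correct and follows essentially the same route as the paper, which disposes of Theorem \ref{thm:semi-localform2} with the single remark that it is ``a parametrized version of Theorem \ref{thm:semi-localform}'' and a corollary of it: splitting off the $\bbR^r$ directions transverse to $W = Z_\rho(q)\otimes\bbR$ as an inert product factor and applying the compact-orbit normal form to the slice is exactly that parametrization, and your attention to placing the full isotropy group inside $W$ is the right point to check.
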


Theorem \ref{thm:semi-localform2} it simply a parametrized version of Theorem \ref{thm:semi-localform}, and can
also be seen as a corollary of Theorem \ref{thm:semi-localform}

\begin{remark}
 The difference between the compact case and the noncompact case is that, when $\cO_q$ is a compact orbit,
we have a linear model for a whole tubular neighborhood of it, but when $\cO_q$ is noncompact we have
a linear model only for a neighborhood of a ``stripe'' in $\cO_q$.
\end{remark}

\subsubsection{The twisting groups}

The minimal required group $(\bbZ_2)^k$ in Theorem \ref{thm:semi-localform}
and Theorem \ref{thm:semi-localform2}  is naturally isomorphic to the group
\begin{equation} \label{eqn:TwistingGroup}
 G_q = (Z_\rho (q) \cap (Z_\rho \otimes  \bbR))/Z_\rho.
\end{equation}

\begin{defn} \label{defn:TwistingGroup}
The group $G_q$ defined by the above formula is called the {\bf twisting group} of the action $\rho$ at $q$ (or at
the orbit $\cO_q$). The orbit $\cO_q$ is said to be {\bf non-twisted} (and $\rho$ is said to be non-twisted at $q$) 
if $G_q$ is trivial, otherwise it is said to be {\bf twisted}.
\end{defn}

\begin{remark}
The twisting phenomenon also appears in real-world physical integrable  Hamiltonian systems, 
and it was observed, for example, by Fomenko and his collaborators in their study of 
integrable Hamiltonian systems with 2 degrees
of freedom. See, e.g., \cite{BolsinovFomenko-IntegrableBook}. 
\end{remark}

\subsection{Induced torus action and reduction}

\subsubsection{The toric degree} 

Given a nondegenerate action $\rho: \bbR^n \times M^n \to M^n$, denote by
\begin{equation}
Z_{\rho} = \{ g \in \bbR^n : \rho(g,.) = Id_{M^n}\}
\end{equation}
the isotropy group of $\rho$ on $M^n$. Since $\rho$ is locally free almost everywhere due to 
its nondegeneracy,
$Z_{\rho}$ is a discrete subgroup of $\bbR^n$, so we have 
$Z_{\rho} \cong  \bbZ^k$
for some integer $k$ such that $0 \leq k \leq n.$  The action $\rho$ of $\bbR^n$ descends to an action of 
$\bbR^n / Z_{\rho} \cong \bbT^k \times \bbR^{n-k}$
on $M$, which we will also denote by $\rho$:
\begin{equation}
\rho : (\bbR^n/Z_\rho) \times M^n \to M^n.
\end{equation}

We will denote by 
\begin{equation} \label{eqn:rhoT-action}
\rho_\bbT: \bbT^k \times M^n \to M^n
\end{equation}
the subaction of $\rho$ given by the subgroup $\bbT^k \subset \bbT^k \times \bbR^{n-k} \cong \bbR^n/Z_\rho$. 
More precisely, $\rho_\bbT$ is the action of $(Z_{\rho}\otimes \bbR)/Z_{\rho}$ on $M^n$ induced from $\rho$,  which becomes a $\bbT^k$-action after an isomorphism from $(Z_{\rho}\otimes \bbR)/Z_{\rho}$ to $\bbT^k$. We will call $\rho_\bbT$ the {\bf induced torus action} of $\rho.$

\begin{defn}
The number $k = \rank_\bbZ Z_{\rho}$ is called the {\bf toric degree} of the action $\rho$. 
If the toric degree is equal to 0 then $\rho$ is called a {\bf totally hyperbolic action}.
\end{defn}

\begin{remark}
If $M^n$ admits an $\bbR^n$-action of toric degree $k$, then in particular it must admit an 
effective $\bbT^k$-action. When $k \geq 1$, this condition is a strong 
topological condition. For example, Fintushel \cite{Fintushel-Circle1977} 
showed (modulo Poincaré's conjecture which is now a theorem) that among
simply-connected 4 manifolds, only the manifolds 
$\bbS^4, \bbC \bbP^2,-\bbC \bbP^2,\bbS^2 \times \bbS^2$ and their connected sums
admit an effective locally smooth $\bbT^1$-action. 
This list is the same as the list of simply-connected 4-manifolds admitting an effective $\bbT^2$-action,
according to Orlik and Raymond \cite{OR_Torus1}, \cite{OR_Torus2}. A classification of non-simply-connected
4-manifolds admitting an effective $\bbT^2$-action can be found in Pao \cite{Pao-TorusAction1}.
\end{remark}

Even though the toric degree is a global invariant of the action, it can in fact be determined semi-locally 
from the HERT-invariant of any point on $M$ with respect to the action. More precisely, we have:

\begin{thm} \label{thm:HERT-toricdegree}
Let $\rho: \bbR^n \times M^n \to M^n$ be a nondegenerate smooth action of $\bbR^n$ on a $n$-dimensional 
manifold $M^n$ and $p \in M$ be an arbitrary point of $M$. If the HERT-invariant of $p$ with respect to $\rho$
is $(h,e,r,t)$, then the toric degree of $\rho$ on $M$ is equal to $e + t$.
\end{thm}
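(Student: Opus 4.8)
The plan is to sandwich the toric degree $k=\rank_\bbZ Z_{\rho}$ between $e+t$ from above and below, working in the semi-local linear model of Theorem~\ref{thm:semi-localform2} at $p$; throughout I assume $M^n$ is connected (otherwise the statement fails across components, since $Z_{\rho}$ is the global isotropy). The two periodic ingredients of the model, the elbolic rotations and the torus translations, are exactly the directions along which $\rho$ descends to genuine circle actions, so they are what produces the lattice $Z_{\rho}$.

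For the upper bound $k\le e+t$, fix an adapted basis $(v_1,\hdots,v_n)$ of $\bbR^n$ at $p$ and identify a neighborhood of $\cO_p$ with $(B^h\times B^{2e}\times\bbT^t/(\bbZ_2)^{k'})\times B^r$ (here $(\bbZ_2)^{k'}$ is the twisting group), so that each $X_{v_i}$ becomes one of the model generators $Y_i$. Given $g\in Z_{\rho}$, the diffeomorphism $\rho(g,\cdot)$ is the identity on $M$, hence lifts to the cover $B^h\times B^{2e}\times\bbT^t\times B^r$ as the time-one flow $\exp(X_g)=\sum_i c_iY_i$, which must be a deck transformation, i.e. an element of the twisting group. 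The point is that $\exp(X_g)$ only rescales, spirals, and translates the coordinates, so it can never reflect a hyperbolic coordinate $x_i\mapsto -x_i$; but every nontrivial twisting element does reflect some $x_i$ (the twisting group injects into $(\bbZ_2)^h$). Therefore the deck transformation is trivial and $\exp(X_g)=\mathrm{Id}$ on the cover, which forces the hyperbolic, elbolic-radial and $B^r$-components of $g$ to vanish and the $e$ elbolic-rotation and $t$ torus components to lie in $2\pi\bbZ$. Thus $Z_{\rho}$ lies in a lattice of rank $e+t$.

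For the lower bound I would produce $e+t$ independent elements of $Z_{\rho}$. For each elbolic rotation vector $v_{h+2j}$ $(j=1,\hdots,e)$ and each torus vector $v_{h+2e+i}$ $(i=1,\hdots,t)$ of the adapted basis, the corresponding generator has $2\pi$-periodic flow, so $g:=2\pi v_{h+2j}$ (resp. $2\pi v_{h+2e+i}$) satisfies $\rho(g,\cdot)=\mathrm{Id}$ on a neighborhood of $p$. These $e+t$ vectors are linearly independent, so it suffices to upgrade ``identity near $p$'' to ``identity on $M$''. To this end consider $U_g=\{q\in M:\rho(g,\cdot)\equiv\mathrm{Id}\text{ near }q\}$, which is open and contains $p$. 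I claim it is also closed: if $q_n\to q$ with $q_n\in U_g$, then $\rho(g,\cdot)$ fixes $q$ and has identity differential at $q$ (both are limits of the corresponding data on the neighborhoods of the $q_n$). Passing to the semi-local model at $q$ and writing $\exp(X_g)$ as a constant combination of the model generators there, the two conditions ``$q$ is fixed'' and ``$D\rho(g,\cdot)_q=\mathrm{Id}$'' force exactly the vanishing/integrality of the components that make $\exp(X_g)$ the identity near $\cO_q$; hence $q\in U_g$. By connectedness $U_g=M$, so $g\in Z_{\rho}$, and $\rank_\bbZ Z_{\rho}\ge e+t$. Combining the two bounds gives $k=e+t$.

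The hard part is the closedness of $U_g$, and this is exactly where nondegeneracy is indispensable: for a \emph{semisimple} linear model, fixing the point together with a trivial linearization forces the flow parameter into the period lattice (or to zero) in every block, so the map is automatically locally trivial. Were a nilpotent (degenerate) part present, a shear such as $\exp(t\,x_1\partial/\partial x_2)$ would fix the origin with identity derivative without being locally the identity, and the argument would collapse. The only other point requiring care is the twisting group, which I dispose of above by the observation that an $\bbR^n$-flow can realize none of the reflections carried by a nontrivial deck transformation.
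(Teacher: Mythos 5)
Your proof is correct, and it gets to the conclusion by a genuinely different route than the paper's. The paper's argument is organized around \emph{regular} orbits: it first bounds the toric degree by $t(z)$ at a regular point, then shows that any two regular orbits have the same isotropy group, concludes that $Z_\rho$ equals the isotropy of any regular orbit, and only afterwards transfers the count to singular points via the two inequalities $e(z)+t(z)\geq$ toric degree and $e(z)+t(z)\leq$ toric degree. You instead work entirely at the given point $p$, whatever its type: the semi-local model gives the upper bound $\rank_\bbZ Z_\rho \leq e+t$ at once (with the key observation, which you also need later, that a flow $\exp(X_g)$ acts on each hyperbolic coordinate by a positive scaling and so can never realize the reflection carried by a nontrivial element of the twisting group, forcing the deck transformation to be trivial), and the lower bound is obtained by globalizing the $e+t$ obvious local periods through the open-closed set $U_g$, whose closedness rests on the lemma that in a semisimple linear model ``fixes $q$ and has identity differential at $q$'' already forces ``identity near $q$''. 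What your approach buys is uniformity --- regular and singular points are treated identically, and you actually obtain the sharper statement that $Z_\rho$ is exactly the rank-$(e+t)$ lattice $2\pi\,\mathrm{span}_\bbZ\{v_{h+2j},\,v_{h+2e+i}\}$ read off from the adapted basis at any single point; what the paper's route buys is the standalone intermediate fact that all regular orbits share one isotropy group, which is used elsewhere in the theory. One small slip in your closing aside: the shear $\exp(t\,x_1\partial/\partial x_2)$ does \emph{not} have identity derivative at the origin (its linearization is $I+tE_{21}$), so it is not quite the right illustration of what fails without semisimplicity; for \emph{linear} vector fields the identity differential condition still forces the unipotent factor to be trivial, and the true role of nondegeneracy in your argument is simply that it guarantees the semi-local linear model exists at every point, including the limit point $q$ in the closedness step. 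This does not affect the validity of the proof.
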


\begin{proof}
See \cite{ZungMinh_Rn2014} for the proof. It consists of the following five steps, and each step is based on
relatively simple topological arguments:

\underline{Step 1}: If $z \in M$ is a regular point then $\text{toric degree}(\rho) \leq t(z)$.

\underline{Step 2}: If $\cO_1$ and $\cO_2$ are two arbitrary different 
regular orbits then $Z_\rho (\cO_1) = Z_\rho (\cO_2),$ where $Z_\rho(\cO) \subset \bbR^n$
denotes the isotropy group of $\rho$ on $\cO$.

\underline{Step 3}: $Z_\rho = Z_\rho(\cO)$ for any regular orbit $\cO$. In particular, for any regular point $z$, 
 the toric rank of $\rho$ is equal to $t(z)$, and $e(z) = h(z) = 0$. 

\underline{Step 4}: If $z \in M^n$ is a singular point then $e(z) + t(z) \geq $ toric degree $(\rho)$.

\underline{Step 5}: The converse is also true: $e(z) + t(z) \leq $ toric degree $(\rho)$.
\end{proof}

The simplest case of nondegenerate systems of type $(n,0)$ is when the toric degree of $\rho$ is equal to $n$.
This case is a special case of Liouville's theorem: we have an effective action of $\bbT^n$ on $M^n$, 
and $M^n$ itself is diffeomorphism to the torus $\bbT^n$.

\subsubsection{Quotient space and reduced action} 
In general, if the toric degree $t(\rho)$ is positive, i.e. if the action 
$\rho: \bbR^n \times M^n \to M^n$ is not totally hyperbolic, then it
naturally projects down to an action of
$\bbR^n / (Z_\rho \otimes \bbR) \cong \bbR^{r(\rho)}$  on the quotient space $Q = M^n/\rho_\bbT$ of $M^n$ 
by the induced torus action $\rho_\bbT$, which we will denote by $\rho_\bbR:$
\begin{equation}
 \rho_\bbR: \bbR^{r(\rho)} \times Q \to Q,
\end{equation}
after an identification of $\bbR^n / (Z_\rho \otimes \bbR)$ with $\bbR^{r(\rho)}$.
Here $r(\rho) = \dim \bbR^n/(Z_\rho \otimes \bbR) = n - t(\rho) = \dim Q.$ 
We will call $\rho_\bbR$ the 
{\bf reduced action} of $\rho$.

There is a small technical problem. Namely, due to the singularities and the twisting groups, in general the quotient space $Q$ is not
a manifold  but an orbifold with boundary and corners. More precisely, 
it follows from the  normal form theorems that for every point $z \in Q$, 
locally a neigborhood $z$ in $Q$ is diffeomorphic to a direct product of the type
\begin{equation}
(D^2_1/\bbT^1_1) \times \hdots \times (D^2_e/\bbT^1_e) \times (B^h / G_z) \times B^r,
\end{equation}
where $(h,e,r,t)$ is the HERT invariant and $G_z$ is the twisting group of $z$ (i.e. of any point in $M^n$ whose image under 
the projection $M^n \to Q$ is $z$), $B^r$ and $B^h$ are balls of dimensions $r$ and $h$ respectively,
and each $D^2_i/\bbT^1_i$ is a half-closed interval obtained as the quotient of a 2-dimensional disk $D^2_i$ by 
the standard rotational action of $SO(2) \cong \bbT^1$.

Due to this fact, we have to extend the notion of nondegenerate 
$\bbR^r$-actions to the case of orbifolds: it simply means that, locally, we have a nondegenerate (infinitesimal) 
$\bbR^r$-action on a local branched covering space, which is a manifold together with a finite group action on 
it so that the quotient by that finite group action is our local orbifold, and we require that the $\bbR^r$-action 
commutes with the finite group action so that it can be projected to an $\bbR^r$-action on the orbifold.
In the case with boundary and corners, the boundary components are singular orbits of the action. 
The notions of toric degree 
can be naturally be extended to the case of actions on orbifolds with boundary and corners, 
and if the toric degree is 0 we will also say that the action is totally hyperbolic. With this in mind, 
we have the following reduction theorem:

\begin{thm} \label{thm:hypAction-quotientSpace}
Let $\rho: \bbR^n \times M^n \to M^n$ be a nondegenerate action of toric degree $t(\rho)$
on a connected manifold $M^n$, and put $r = r(\rho) = n - t(\rho)$. Then the quotient
space $Q = M^n/\rho_\bbT$ of $M^n$ by the associated  torus action $\rho_\bbT$ is an
orbifold of dimension $r$, and the reduced action $\rho_\bbR$ of $\bbR^n/(Z_\rho \otimes \bbR) \cong \bbR^r$
on $Q$ is totally hyperbolic.  If
the twisting group $G_z$ is trivial for every point $z \in M^n$, then $Q$ is a manifold with boundary and corners.
\end{thm}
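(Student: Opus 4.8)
The plan is to reduce everything to the explicit semi-local models supplied by Theorem \ref{thm:semi-localform2} and then to quotient by $\rho_\bbT$ chart by chart. First I would fix an arbitrary point $z \in M^n$ with HERT-invariant $(h,e,r,t)$ and invoke Theorem \ref{thm:semi-localform2} to obtain a neighborhood isomorphic to the linear model $(B^h \times B^{2e} \times \bbT^t)/G_z \times B^r$, on which the generators of $\rho$ are the vector fields $Y_i$ displayed there. By Theorem \ref{thm:HERT-toricdegree} the toric degree $t(\rho)=e+t$ is constant over $M^n$, and in this chart the induced torus action $\rho_\bbT$ is exactly the $\bbT^{e+t}$-action generated by the $e$ elbolic rotations $Y_{h+2},Y_{h+4},\hdots,Y_{h+2e}$ together with the $t$ translations $Y_{h+2e+1},\hdots,Y_{h+2e+t}$ of the $\bbT^t$-factor.

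Next I would carry out the local quotient. Passing to polar coordinates $x_{h+2j-1}=s_j\cos\phi_j$, $x_{h+2j}=s_j\sin\phi_j$ on each elbolic plane turns the rotational generator into $\partial/\partial\phi_j$ and the radial generator $Y_{h+2j-1}$ into $s_j\,\partial/\partial s_j$; hence each factor $B^2/\bbT^1$ becomes a half-open interval and $B^{2e}/\bbT^e\cong[0,\varepsilon)^e$. The $\bbT^t$-translations act freely on the $\bbT^t$-factor, and since the translational part of the finite twisting group $G_z$ injects into these translations (Definition \ref{defn:TwistingGroup} and formula \eqref{eqn:TwistingGroup}), quotienting by $\bbT^t$ simultaneously collapses the $\bbT^t$-factor to a point and absorbs that part of $G_z$; what survives of $G_z$ is its reflection image in $(\bbZ_2)^h$ acting on $B^h$. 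Therefore the quotient neighborhood in $Q$ is
\begin{equation}
[0,\varepsilon)^e \times \left(B^h/G_z\right) \times B^r,
\end{equation}
which is precisely the orbifold-with-corners chart announced before the theorem. Its dimension is $e+h+r=n-(e+t)=r(\rho)$, so $\dim Q=r(\rho)$ throughout. These charts are the slices of the compact torus action $\rho_\bbT$, so by the slice theorem they are mutually compatible, and their transition maps are constrained by the uniqueness part of Theorem \ref{thm:CanonicalCoordinate}; assembling them yields a global orbifold structure on $Q$.

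To see that the reduced action $\rho_\bbR$ is totally hyperbolic, I would track the remaining generators of $\bbR^n/(Z_\rho\otimes\bbR)\cong\bbR^{r(\rho)}$ in the same chart. The radial fields $s_j\,\partial/\partial s_j$ (descents of the elbolic $Y_{h+2j-1}$), the hyperbolic fields $Y_i=x_i\,\partial/\partial x_i$ on $B^h/G_z$, and the regular translations $\partial/\partial\zeta_i$ on $B^r$ together generate $\rho_\bbR$; none of them is periodic, so the isotropy lattice $Z_{\rho_\bbR}$ is trivial and $\rho_\bbR$ has toric degree $0$, i.e. it is totally hyperbolic in the orbifold sense. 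Finally, if $G_z$ is trivial for every $z$, then $B^h/G_z=B^h$ and each chart reduces to $[0,\varepsilon)^e\times B^h\times B^r$, a manifold with boundary and corners; patching these gives the stated structure on $Q$.

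The step I expect to be the main obstacle is not the pointwise computation but the global assembly: one must check that the finite twisting groups $G_z$, defined orbit by orbit in \eqref{eqn:TwistingGroup}, fit together into a genuine orbifold atlas, i.e. that the residual reflection actions on neighboring slices $B^h$ are compatible under the transition maps and that the strata of different HERT-type meet along the closures dictated by the normal forms. This is where one really uses that $\rho_\bbT$ is a compact torus action, so that the orbit space is Hausdorff and the slice theorem supplies canonical local uniformizations; the delicate bookkeeping is ensuring that the elbolic boundary facets and the hyperbolic corner facets are identified consistently across charts, which is controlled by the rigidity statement of Theorem \ref{thm:CanonicalCoordinate}.
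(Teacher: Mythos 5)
Your proposal is correct and follows essentially the same route as the paper, which derives the theorem directly from the semi-local linear models (Theorem \ref{thm:semi-localform2}) by observing that each chart descends to $(D^2_1/\bbT^1_1)\times\hdots\times(D^2_e/\bbT^1_e)\times(B^h/G_z)\times B^r$ and that only the non-periodic (radial, hyperbolic, regular) generators survive in the reduced action. Your chart-by-chart quotient computation, the dimension count $h+e+r=n-t(\rho)$, and the identification of the residual reflection action of $G_z$ all match the paper's argument.
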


\subsubsection{Cross multi-sections and reconstruction}

In the case when $(M^n, \rho)$ has no twistings, $Q$ is a manifold with boundary and corners,
and one can talk about cross sections of the singular torus fibration $M^n \overset{\bbT^{t(\rho)}}{\longrightarrow} Q$
over $Q$. We will say that an embeded submanifold with boundary and corners $Q_c \subset M^n$ is a smooth {\bf cross section} of the 
singular fibration $M^n \to Q$ if the projection map $proj. : Q_c \to Q$ is a diffeomorphism. 
The existence of a cross section
is equivalent to the fact that the \emph{desingularization via blowing up} 
of $M^n \to Q$ is a trivial principal $\bbT^{t(\rho)}$-bundle.
(The blowing up process here does not change the quotient space of the action $\rho_\bbT$ on $M^n$, but changes every singular orbit 
of $\rho_\bbT$ into a regular orbit, and changes $M^n$ into a manifold with boundary and corners, see Figure \ref{desingularization} 
for an illustration. This blow-up process is a standard one, and it was used for example by Dufour and Molino \cite{DufourMolino-AA} 
in the construction
of action-angle variables near elliptic  singularities of integrable Hamiltonian systems).

\begin{figure}[htb] 
\begin{center}
\includegraphics[width=80mm]{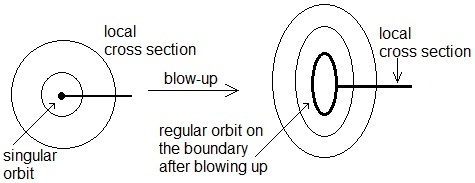}
\caption{Desingularization of $M^n \to Q$ by blowing up.}
\label{desingularization}
\end{center}
\end{figure}

In the case when $(M^n, \rho)$ has twistings, a priori $Q$ is only an orbifold and we cannot 
have a submanifold $Q_c$ in $M^n$ diffeomorphic to $Q$. In this case, instead of sections, one can talk about  
multi-sections: a smooth {\bf multi-section} of $M^n \to Q$ is a smooth embedded 
submanifold with boundary and corners $Q_c$ in $M^n$, together with a finite subgroup $G \subset(Z_\rho \otimes \bbR)/Z_\rho$
such that $Q_c$ is invariant with respect to $G$ (i.e. if $z\in Q_c$ and 
$w \in G$ then $\rho(w,z) \in Q_c$), and $Q_c/G \cong Q$ via the projection.
Remark that multi-sections also appear in many other places in the literature. 
For example, Davis and Januskiewicz in \cite{Davis-Convex1991} used them in their study of quasi-toric manifolds. 
They were also used in \cite{Zung-Symplectic1996}
in the construction of partial action-angle coordinates near singular fibers of integrable Hamiltonian systems.

\begin{prop}
\label{prop:multisection} i) If $(M^n, \rho)$ has no twistings, 
then the singular torus fibration $M^n \to M^n/\rho_\bbT = Q$ 
admits a smooth cross section $Q_c$.

ii) If $(M^n,\rho)$ has twistings, then the singular torus fibration $M^n \to M^n/\rho_\bbT = Q$
admits a smooth multi-section $(Q_c,G)$, where $G \subset(Z_\rho \otimes \bbR)/Z_\rho $ is generated by the 
twisting groups $G_z$ ($z \in M$) of $(M^n,\rho)$.
\end{prop}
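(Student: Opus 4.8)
The plan is to produce the (multi-)section first locally, from the normal forms, and then to organise the global gluing through the desingularised principal torus bundle, on which the commuting vector fields furnish a flat connection.

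First I would extract local sections from the semi-local normal form (Theorem \ref{thm:semi-localform2}). In the linear model $B^h \times B^{2e} \times \bbT^t/(\bbZ_2)^k \times B^r$ a section of $M^n \to Q$ is explicit: in each of the $e$ elbolic planes $(x_{h+2j-1},x_{h+2j})$ take the ray $\{x_{h+2j-1}\ge 0,\ x_{h+2j}=0\}$, which maps diffeomorphically onto the half-interval $D^2_j/\bbT^1_j$; in the toral factor take the single point $\{z_1=\dots=z_t=0\}$; and keep all of $B^h$ and $B^r$. Since the toric degree is $e+t$ locally (Theorem \ref{thm:HERT-toricdegree}), these rays together with the chosen point trivialise the whole induced torus $\bbT^{t(\rho)}$ over the local quotient $B^h\times[0,\varepsilon)^e\times B^r$. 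When $(\bbZ_2)^k$ is trivial (no twisting) this descends to a genuine local cross section; these pieces are the intended charts of $Q_c$.

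Next, as recorded in the text, a global cross section exists if and only if the blow-up $\tilde M\to Q$ of the collapsing (elbolic) circles is a trivial principal $\bbT^{t(\rho)}$-bundle over the manifold-with-corners $Q$, so the task is to prove this triviality, and here I would use the global $\bbR^n$-action. Choose a basis of $\bbR^n$ whose first $t(\rho)$ vectors span $Z_\rho\otimes\bbR$ and whose last $r(\rho)$ vectors are complementary, giving torus generators $X_{v_1},\dots,X_{v_{t(\rho)}}$ and transverse generators $X_{w_1},\dots,X_{w_{r(\rho)}}$. The fields $X_{w_a}$ commute with the $X_{v_i}$, hence are $\bbT^{t(\rho)}$-invariant and transverse to the torus orbits on $\tilde M$, so they span a connection; because the whole family commutes, $[X_{w_a},X_{w_b}]=0$, the connection is \emph{flat}, and its horizontal lifts are realised by the complete flows of the reduced action $\rho_\bbR$.

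The characteristic class of this flat bundle is the image, under the Bockstein of $0\to Z_\rho\to Z_\rho\otimes\bbR\to\bbT^{t(\rho)}\to 0$, of the holonomy class in $H^1(Q;\bbT^{t(\rho)})$, and the section exists precisely when it vanishes; this is the crux. Here I would invoke Theorem \ref{thm:hypAction-quotientSpace}: the reduced action $\rho_\bbR$ is \emph{totally hyperbolic}, so $Z_{\rho_\bbR}=0$ and no nontrivial element of $\bbR^{r(\rho)}$ closes up a flow loop; the holonomy of any loop carried by the hyperbolic flow therefore lifts canonically to $Z_\rho\otimes\bbR$ and is killed by the Bockstein. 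The remaining generators of $\pi_1(Q)$ are the small loops around the singular strata, and for these one reads the monodromy off the local models: an elbolic face contributes a collapsing circle, a vanishing cycle of the blow-up whose circle subbundle is canonically trivial, while a hyperbolic stratum contributes no circle and hence no monodromy; using the backward hyperbolic flow to retract $Q$ onto its lower strata, a downward induction on the corank then forces the whole class to vanish. I expect this monodromy bookkeeping --- reconciling loops of $Q$ that are not flow orbits with the flat structure --- to be the main obstacle, which is exactly why the completeness of the $X_i$ and the total hyperbolicity of $\rho_\bbR$, and not any purely local datum, are indispensable.

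Finally, for the twisted case (ii), recall that the minimal group $(\bbZ_2)^k$ of the linear model is canonically the twisting group $G_z$, acting on $B^h$ by the reflections and on the toral factor $\bbT^t$ by translations. Thus the local section of the first paragraph is not $G_z$-invariant --- $G_z$ translates the chosen toral base point --- and the correct $G_z$-invariant local object is the union of its $|G_z|$ translates $B^h\times(\text{rays})\times G_z$. Setting $G=\langle G_z : z\in M^n\rangle\subset (Z_\rho\otimes\bbR)/Z_\rho$, I would run the gluing of the two previous paragraphs $G$-equivariantly: the flat connection and the retraction are $G$-invariant because $G\subset\bbT^{t(\rho)}$ commutes with all the $X_{w_a}$, so they produce a $G$-invariant embedded submanifold $Q_c$ meeting each fibre in a single $G$-orbit, whence $Q_c/G\cong Q$ via the projection. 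Embeddedness and transversality of $Q_c$ follow from the explicit local models together with the freeness of the $G$-action on the regular part, giving the desired multi-section.
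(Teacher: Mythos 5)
Your local construction of sections from the linear models, and your plan of deducing (ii) from (i) by passing to an equivariant/covering picture, are consistent with what the paper does: its own (very terse) argument for (i) is ``sheaf theory, based on the existence of local cross sections and the contractibility of $Q$'' --- meaning a patching argument over the contractible closed hyperbolic domains of $(Q,\rho_\bbR)$ established in Subsection \ref{subsection:hyperbolic} --- and (ii) is obtained from (i) by an appropriate covering of $(M^n,\rho)$.

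The global step of your argument, however, has a genuine gap: the assertion that the transverse generators $X_{w_1},\dots,X_{w_{r(\rho)}}$ span a flat connection on the desingularized principal $\bbT^{t(\rho)}$-bundle $\tilde M\to Q$. At a point of HERT-invariant $(h,e,r,t)$ the values of \emph{all} generators of $\rho$ span only the $(r+t)$-dimensional tangent space of the orbit, so modulo the $t$-dimensional torus direction your $r(\rho)=h+e+r$ fields $X_{w_a}$ span only an $r$-dimensional subspace; whenever $h+e>0$, i.e.\ exactly along the singular strata of $Q$, the would-be horizontal distribution drops rank and is not a connection. Concretely, across a transversally hyperbolic hypersurface the relevant generator is $x\,\partial/\partial x$, whose flow never reaches $\{x=0\}$, so there is no parallel transport of fibres across the singular strata; your flat structure exists only over the open hyperbolic domains of $Q$, which are contractible, so flatness there buys nothing. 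Consequently the holonomy/monodromy bookkeeping in your third paragraph never engages the actual obstruction: for a principal $\bbT^{t(\rho)}$-bundle the obstruction to a global section lives in $H^2(Q;\bbZ^{t(\rho)})$ and is carried precisely by the gluing of local trivializations across the singular strata, not by holonomy of loops inside a domain. Controlling that gluing is the content of the paper's \v{C}ech/sheaf-theoretic argument over the cover of $Q$ by its contractible closed hyperbolic domains, and it is the step your proposal leaves unproved; the equivariant part (ii) then inherits the same gap since it is built on (i).
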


The first part of above proposition can be proved using sheaf theory, based on the existence of local cross sections and
the contractibility of $Q$, 
which will be explained in Subsection \ref{subsection:hyperbolic}. The second part follows from the first part
and an appropriate covering of $(M^n,\rho)$.

The cross (multi-)sections allow one to go back (i.e. reconstruct) from $(Q, \rho_\bbR)$ to $(M,\rho).$
In particular, we have:

\begin{thm} \label{cor:EquivariantUniqueness}
 Assume that $(M_1^n,\rho_1)$ and $(M_2^n,\rho_2)$ have the same quotient space $M^n_1/\rho_{1 \bbT} = M_2^n/ \rho_{2 \bbT} = Q$,
and moreover they have the same isotropy  at every point of $Q$: $Z_{\rho_1}(z) = Z_{\rho_2}(z)\ \forall\ z \in Q$, where
$Z_{\rho_1}(z)$ means the isotropy group of $\rho_1$ on the $\rho_{1 \bbT}$-orbit corresponding to $z$. Then $\rho_{1 \bbT}$ 
and $\rho_{2 \bbT}$ are isomorphic, i.e. there is a
diffeomorphism $\Phi: M_1^n \to M_2^n$ which sends $\rho_{1 \bbT}$ to $\rho_{2 \bbT}$.
\end{thm}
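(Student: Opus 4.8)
The plan is to reconstruct the desired diffeomorphism directly from cross-sections, spreading a fibrewise identification built over $Q$ by means of the torus action. First I would normalize the two acting tori. Applying Step~3 in the proof of Theorem~\ref{thm:HERT-toricdegree} to any regular orbit shows that its $\rho_i$-isotropy equals the global isotropy $Z_{\rho_i}$; reading the hypothesis at a regular $z$ thus gives $Z_{\rho_1}=Z_{\rho_2}=:Z\cong\bbZ^k$ as the same lattice in $\bbR^n$. Hence $\rho_{1\bbT}$ and $\rho_{2\bbT}$ are actions of one and the same torus $\bbT^k=(Z\otimes\bbR)/Z$, and over each $z\in Q$ the two induced torus actions have equal stabilizers. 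The full isotropy subgroup $Z_{\rho_i}(z)\subset\bbR^n$ determines the HERT-invariant $(h,e,r,t)$ at $z$ (its connected part has dimension $h+2e$, the transverse lattice part has rank $t$, and $e=k-t$), while by formula~\eqref{eqn:TwistingGroup} the twisting group $G_z=(Z_{\rho_i}(z)\cap(Z\otimes\bbR))/Z$ depends only on $Z_{\rho_i}(z)$ and $Z$. Therefore the equalities of $Q$ and of all isotropy subgroups force the pointwise HERT-invariants and twisting groups of $(M_1^n,\rho_1)$ and $(M_2^n,\rho_2)$ to coincide; this matching of local models is exactly what will make the construction below smooth.

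Treating first the non-twisted case, I would use Proposition~\ref{prop:multisection}(i) to choose smooth cross-sections $s_i\colon Q\to M_i^n$ of $M_i^n\to Q$, and set
\begin{equation}
\Phi\colon M_1^n\to M_2^n,\qquad \Phi\bigl(\rho_{1\bbT}(g,s_1(z))\bigr)=\rho_{2\bbT}(g,s_2(z)),\qquad g\in\bbT^k,\ z\in Q.
\end{equation}
Since $s_1$ meets every fibre, $\Phi$ is defined everywhere; and if $\rho_{1\bbT}(g,s_1(z))=\rho_{1\bbT}(g',s_1(z'))$ then $z=z'$ (same image in $Q$) and $g-g'\in Z_{\rho_1}(z)=Z_{\rho_2}(z)$, so the prescribed images agree. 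Thus $\Phi$ is a well-defined $\bbT^k$-equivariant bijection covering $\mathrm{id}_Q$, with the symmetric inverse obtained by exchanging $s_1$ and $s_2$.

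The main obstacle is the smoothness of $\Phi$, which is substantive only along the singular orbits, where $\rho_{i\bbT}$ degenerates and $M_i^n\to Q$ fails to be a locally trivial principal bundle. Away from the singular set $s_i$ trivializes the free $\bbT^k$-bundle and $\Phi$ reads as $(z,g)\mapsto(z,g)$, hence is smooth. Near a singular orbit I would pass to the semi-local normal coordinates of Theorem~\ref{thm:semi-localform}: since the two models agree, both $M_1^n$ and $M_2^n$ are identified with the same standard model $B^h\times B^{2e}\times\bbT^t\times B^r$ carrying the identical standard action, which yields an evident smooth local $\bbT^k$-equivariant isomorphism $\phi_U$. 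Because the fibres of $M_i^n\to Q$ are single $\bbT^k$-orbits and both $\Phi,\phi_U$ are equivariant over $\mathrm{id}_Q$, the two differ by the fibrewise $\bbT^k$-action through a torus-valued gauge $c\colon U\to\bbT^k$ that depends only on $s_1,s_2$ and is defined only modulo the pointwise stabilizers (the relevant automorphisms being internal, by Theorem~\ref{thm:LocalAutomorphism}). The delicate point is that along an elbolic centre the stabilizer contains the corresponding circle, so the angular part of $c$ that becomes undefined there is precisely the part killed by the stabilizer; hence $c$ extends smoothly across the centres and the hyperbolic walls, and $\Phi$ is smooth on $U$. Carrying this verification out uniformly is the heart of the argument.

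Finally, in the twisted case I would replace cross-sections by the multi-sections $(Q_c^i,G)$ of Proposition~\ref{prop:multisection}(ii); the finite group $G\subset(Z\otimes\bbR)/Z$ is the same for both actions, being generated by the common twisting groups $G_z$. I would build a $G$-equivariant identification $Q_c^1\to Q_c^2$ over $Q$ by the same local-model comparison and extend it by $\bbT^k$-equivariance, so that it descends to the quotients; the smoothness check proceeds exactly as above on the branched cover. This produces the required diffeomorphism $\Phi\colon M_1^n\to M_2^n$ intertwining $\rho_{1\bbT}$ and $\rho_{2\bbT}$.
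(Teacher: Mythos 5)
Your proposal follows essentially the same route as the paper's own (two-line) proof: transport a cross/multi-section of $M_1^n \to Q$ to one of $M_2^n \to Q$ by a diffeomorphism covering the identity of $Q$ and extend $\bbT^k$-equivariantly, with the equality of the isotropy groups guaranteeing that the extension is well defined. You additionally spell out the smoothness verification near singular orbits via the semi-local normal forms, which the paper leaves implicit; this is a correct elaboration rather than a genuinely different argument.
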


\begin{proof}
 Simply send a multisection in $M_1^n$ over $Q$ to a multi-section 
in $M_2^n$ over $Q$ by a diffeomorphism which projects to the identity
map on $Q$, and extend this diffeomorphism to the whole $M_1^n$ in the unique equivariant way with respect to the associated
torus actions. The fact that the isotropy groups are the same allows us to do so.
\end{proof}

Beware that, even though the two torus actions $\rho_{1 \bbT}$ and $\rho_{2 \bbT}$ in the above theorem are isomorphic,
and even if we assume that the two reduced actions $\rho_{1 \bbR}$ and $\rho_{2 \bbR}$ on $Q$ are the same, it does not mean that
$\rho_1$ and $\rho_2$ are isomorphic. The difference between the isomorphism classes of $\rho_1$ and $\rho_2$ can be measured
in terms of an invariant called the \emph{monodromy}, which will be explained in Subsection \ref{subsection:monodromy}.

\subsection{Systems  of toric degree n-1 and n-2}

\subsubsection{The case of toric degree n-1}

Consider  a nondegenerate action $\rho$ of toric degree $n-1$ on a compact
connected manifold $M^n$, an orbit $\cO_p$ of this action, and denote by $(h,e,r,t) $ the HERT-invariant of $\cO_p$.

According to Theorem \ref{thm:HERT-toricdegree}, we have $e+t = n-1$. On the 
other hand, the total dimension is $n = h + 2e + r + t$. These two equalities imply that 
$h+e + r= 1$, which means that one of the three numbers $h, e, r$ is equal to 1 and the other two 
numbers are 0. So we have only three possibilities: 

1) $r = 1, h = e = 0, t = n-1$, and
$\cO_p \cong \bbT^{n-1} \times \bbR$ is a regular orbit. The action $\rho_\bbT$
of $\bbT^{n-1}$ on such an orbit is free with the orbit space diffeomorphic to an open
interval.

2) $r = e = 0, h = 1, t = n-1$, and $\cO_p \cong \bbT^{n-1}$ is a compact singular
orbit of codimension 1 which is transversally hyperbolic. The action $\rho_\bbT$
of $\bbT^{n-1}$ on such an orbit is locally free; it is either free (the non-twisted case)
or have the isotropy group equal to $\bbZ_2$ (the twisted case).

3) $e = 1, h = r = 0, t = n-2$, and
$\cO_p \cong \bbT^{n-2}$ is a compact singular
orbit of codimension 2 which is transversally elbolic.

The orbit space $Q = M^n/\bbT^{n-1}$ of the action 
\begin{equation}
\rho_\bbT: \bbT^{n-1} \times M^n \to M^n
\end{equation}
is a compact one-dimensional manifold with or without
boundary, on which we have the reduced $\bbR$-action $\rho_\bbR$. 
The singular points of this $\bbR$-action
on $M^n/\bbT^{n-1}$ correspond to the singular orbits  of $\rho$. Since the toric degree is $n-1$ and not
$n$ and $M$ is compact, $\rho$ must have at least one singular orbit, and hence the quotient space
$Q = M^n/\bbT^{n-1}$ contains at least one singular point.
Topologically, $Q$ must be a closed interval or a circle, and globally, we have the following 4 cases:

\begin{figure}[htb]
\begin{center}
\includegraphics[width=100mm]{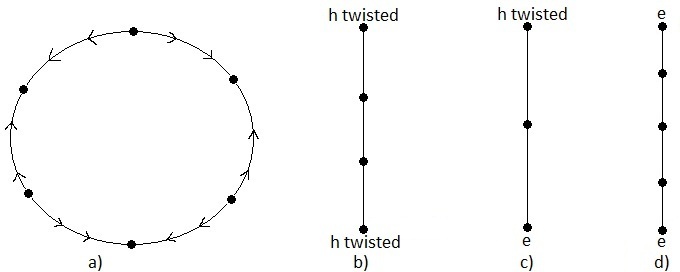}
\caption{The 4 cases of toric degree $n-1$}
\label{fig:n-1}
\end{center}
\end{figure}

\underline{Case a}: \emph{$Q$ is a circle, which contains  $m > 0$  hyperbolic points with respect to $\rho_\bbR$}. 
Notice that, $m$ is necessarily an even number, because the vector field which generates
the hyperbolic $\bbR$-action $\rho_\bbR$ on $Q$ changes direction on adjacent regular intervals, see Figure \ref{fig:n-1}a
for an illustration. The $\bbT^{(n-1)}$-action is free in this case, so $M^n$ is a $\bbT^{n-1}$-principal bundle 
over $Q$. Any homogeneous $\bbT^{n-1}$-principal bundle over a circle is trivial, so $M^n$ is diffeomorphic to
$\bbT^n \cong \bbT^{n-1} \times \bbS^1$ in this case.

\underline{Case d}: \emph{$Q$ is an interval, and each endpoint of $Q$ corresponds to a transversally
elbolic orbit of $\rho$}.
Topologically, in this case, the manifold $M^n$ can be obtain by gluing 2 copies of the ``solid torus'' $D^2 \times \bbT^{n-2}$
together along the boundary. When $n=2$, there is only one way to do it, and $M$ is diffeomorphic to a sphere $\bbS^2$.
When $n \geq 3 $, the gluing can be classified by the homotopy class (up to conjugations) of the two vanishing 
cycles on the common boundary $\bbT^{n-1}$. 
When $n=3$, the manifold $M^3$ is either $\bbS^2 \times \bbS^1$
(if the two vanishing cycles are equal up to a sign) or a 3-dimensional \emph{lens space}.
  
\underline{Case c}: \emph{$Q$ is an interval, one endpoint of $Q$ corresponds to a twisted transversally
hyperbolic orbit of $\rho$, and the other endpoint corresponds to a transversally elbolic orbit of $\rho$}.
Due to the twisting, the ambient manifold is non-orientable in this case. But $(M^n,\rho)$
admits a double covering $\widetilde{(M^n,\rho)}$ which belongs to Case b. 
If $n=2$ then $M^2 = \bbR \bbP^2$ in this case.
 
\underline{Case b}: \emph{$Q$ is an interval, and each endpoint of $Q$ corresponds to a twisted transversally
hyperbolic orbit of $\rho$}. Again, in this case, $M$ is non-orientable, but $(M^n,\rho)$ 
admits a normal $(\bbZ_2)^2$-covering $\widetilde{(M^n,\rho)}$ 
which is orientable and belongs to Case a. If $n=2$ then $M^2$ is a Klein bottle in this case.
 
We can classify actions of toric degree $n-1$ on closed manifolds as follows:

View $Q$ as a non-oriented graph, with singular points as vertices.
Mark each vertex of $Q$ with the vector or the vector couple in $\bbR^n$ associated to the corresponding orbit
of $\rho$ (in the sense of Definition \ref{def:AssociatedVector}). 
Then $Q$ becomes a marked graph,  which will be denoted by $Q_{\text{marked}}$. 
Note that $Q_{\text{marked}}$ and the isotropy group $Z_\rho \subseteq \bbR^n$ are invariants of $\rho$, 
which satisfy the following conditions ($C_i$)-($C_{iv}$):

\begin{itemize}
 \item[$C_i$)] $Q$ is homeomorphic to a circle or an interval.
 If $Q$ is a circle then it has an even positive number of vertices. 
 \item[$C_{ii}$)] Each interior vertex of $Q$ is marked with a vector in $\bbR^n$. If $Q$ is an interval then 
each end vertex of $Q$ is marked with either a vector or a couple of vectors of the type 
$(v_1, \pm v_2)$ in $\bbR^n$ (the second vector in the couple is only defined up to a sign).
 \item[$C_{iii}$)] $Z_\rho$ is a lattice of rank $n-1$ in $\bbR^n$. 
 \item[$C_{iv}$)] If $v \in \bbR^n$ is the mark at a vertex of $Q$, then
\begin{equation}
\bbR.v \oplus (Z_\rho \otimes \bbR) = \bbR^n.
\end{equation}
 If $(v, \pm w)$ is the mark at a vertex of $Q$, then we also have
\begin{equation}
\bbR.v \oplus (Z_\rho \otimes \bbR) = \bbR^n
\end{equation}
while $w$ is a primitive element of $Z_\rho$. Moreover, if $v_i$ and $v_{i+1}$ are two consecutive marks
(each of them may belong to a couple, e.g. $(v_i, \pm w_i)$), then they lie on different 
sides of $Z_\rho\otimes \bbR$ in $\bbR^n$.
\end{itemize}

In the case when $Q$ is a  circle, then there is another invariant of $\rho$, 
called the \emph{monodromy} and defined as follows:

Denote by $F_1, \hdots, F_m$ the $(n-1)$-dimensional orbits of $(M^n, \rho)$ in cyclical order (they correspond to
vertices of $Q$ in cyclical order). Denote by $\sigma_i$ the reflection associated to $F_i$. Let $z_1 \in M$ be an arbitrary 
regular point which projects to a point lying between the images of $F_m$ and $F_1$ in $Q$.
Put $z_2 = \sigma_1(z_1)$ (which is a point lying on the regular orbit between $F_1$ and $F_2$), 
$z_3 = \sigma_2(z_2),\hdots,z_{m+1} = \sigma_m(z_m)$. Then $z_{m+1}$ lies on the same regular orbit as $z_1$,
and so there is a unique element $\mu \in \bbR^n/Z_\rho$ such that 
\begin{equation}
z_{m+1} = \rho(\mu, z_1).
\end{equation}
This element $\mu$ is called the {\bf monodromy} of the action. Notice that $\mu$ does not depend on the choice of $z_1$
nor on the choice of $F_1$ (i.e. which singular orbit is indexed as the first one), but only on the choice of the orientation
of the cyclic order on $Q$: If we change the orientation 
of $Q$ then $\mu$ will be changed to $-\mu$. So a more correct way 
to look at the monodromy is to view it as a homomorphism from $\pi_1(Q) \cong \bbZ$ to $\bbR^n/Z_\rho$.
\begin{figure}[htb]
\begin{center}
\includegraphics[width=80mm]{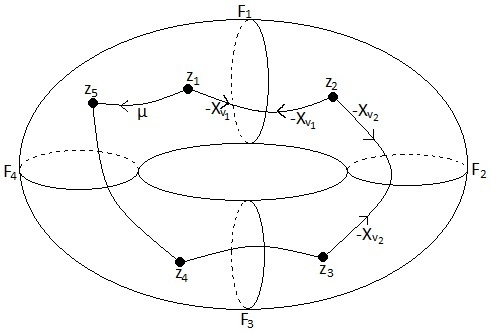}
\caption{Monodromy $\mu$ when $Q \cong \bbS^1$.}
\label{fig:monodromyS1}
\end{center}
\end{figure}

\begin{thm} \label{thm:class-marked}
1) If $(Q_{\text{marked}}, Z)$ is a pair of marked graph and lattice which satisfies the conditions ($C_i$)-($C_{iv}$) above,
then they can be realized as the marked graph and the isotropy group of a nondegenerate action of $\bbR^n$
of toric degree $n-1$ on a compact $n$-manifold. Moreover, if $Q$ is a circle then any monodromy 
element $\mu \in \bbR^n/Z$ can also be realized. 

2) a) In the case when $Q$ is an interval, then any two such actions having the same 
$(Q_{\text{marked}}, Z)$-invariant are isomorphic. b) In the case when $Q$ is a circle, 
then any two actions having the same ($Q_{\text{marked}}, Z, \mu$) are isomorphic.
\end{thm}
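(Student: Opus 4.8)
The plan is to treat realization and uniqueness separately, reducing both to the combinatorics of the one-dimensional quotient $Q$ together with the torus-bundle data encoded by $Z$ and the marks, and then to invoke the semi-local normal forms of Theorem~\ref{thm:semi-localform} and the equivariant reconstruction Theorem~\ref{cor:EquivariantUniqueness}.

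For the realization (Part~1), I would build $(M^n,\rho)$ by gluing the linear models of Theorem~\ref{thm:semi-localform} along their regular pieces. Over each open edge of $Q$ one places a trivial block $\bbT^{n-1}\times I$, where $\bbT^{n-1}=(Z\otimes\bbR)/Z$ carries the standard induced torus action and the generator of the complementary $\bbR$-direction produces the hyperbolic reduced flow along $I$. At an interior vertex marked by a vector $v$ one glues in a hyperbolic block $B^1\times\bbT^{n-1}$ (or its $\bbZ_2$-twisted quotient), with $X_v=x\,\partial/\partial x$; condition $(C_{iv})$, namely $\bbR.v\oplus(Z\otimes\bbR)=\bbR^n$, guarantees transversality, so the resulting action is nondegenerate. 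At an elbolic endpoint marked $(v,\pm w)$ one fills in a solid torus $B^2\times\bbT^{n-2}$ whose vanishing cycle is the primitive element $w\in Z$ --- primitivity of $w$ is exactly what makes the center of the disk a smooth point rather than an orbifold point --- while at a twisted hyperbolic endpoint one uses the folded model $B^1\times\bbT^{n-1}/\bbZ_2$. The alternation built into $(C_{iv})$, that consecutive marks lie on opposite sides of $Z\otimes\bbR$, forces the reduced hyperbolic flow to reverse direction across each vertex, so the blocks assemble into a globally consistent totally hyperbolic reduced $\bbR$-action on $Q$; the freedom in adapted bases described by Theorem~\ref{thm:CanonicalCoordinate} shows the gluings are well defined. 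When $Q$ is a circle, the identification used to close the last edge onto the first may be precomposed with translation by any element of $\bbR^n/Z$, and unwinding the definition of $\mu$ through the reflections $\sigma_i$ shows this shifts the monodromy by exactly that element, so every $\mu\in\bbR^n/Z$ is realized.

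For the uniqueness (Part~2), I would argue in three steps. First, the marks and the lattice $Z$ pin down the pointwise isotropy $Z_\rho(z)$ along every orbit --- $\bbR.v\oplus Z$ at a hyperbolic vertex, the cycle $w$ at an elbolic vertex, and the twistings read off from the marks together with $Z$ --- so Theorem~\ref{cor:EquivariantUniqueness} furnishes a diffeomorphism $\Phi$ intertwining the induced torus actions $\rho_{1\bbT}$ and $\rho_{2\bbT}$ and covering a diffeomorphism $\overline{\Phi}$ of $Q$. Second, both reduced actions $\rho_{i\bbR}$ are totally hyperbolic $\bbR$-actions on the same $1$-manifold with identical singular points, hence smoothly conjugate; after composing $\Phi$ with a torus-equivariant diffeomorphism covering this conjugacy I may assume $\overline{\Phi}$ intertwines $\rho_{1\bbR}$ and $\rho_{2\bbR}$. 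Third, there remains the transverse generator, i.e.\ the lift of the reduced flow; using a cross- or multi-section from Proposition~\ref{prop:multisection} one compares the two lifts fiberwise. In the interval case $Q$ is contractible, the (multi-)section is global, and the residual torus-gauge freedom absorbs any discrepancy, so $\Phi$ can be corrected into an isomorphism of the full $\bbR^n$-actions; this is Part~2a. In the circle case the obstruction to closing the corrected $\Phi$ around the loop is precisely the difference of the two monodromies, computed by composing the reflections $\sigma_1,\hdots,\sigma_m$ of Theorem~\ref{thm:Reflection}; equality of $\mu$ makes this obstruction vanish, which is Part~2b.

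The step I expect to be the main obstacle is this last one: upgrading the torus-equivariant isomorphism to an isomorphism of the full $\bbR^n$-actions and showing that $(Q_{\text{marked}},Z)$ together with $\mu$ is a \emph{complete} invariant. The crux is to prove that the monodromy defined through the reflection involutions is exactly the obstruction to a globally compatible lift of the reduced hyperbolic flow, with no finer invariant surviving; this requires propagating the local isomorphisms supplied by the semi-local normal forms once around the circle and verifying that the successive reflections compose to the stated $\mu$. The interval case is easier only because contractibility removes this global obstruction at the outset.
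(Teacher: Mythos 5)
Your realization argument (Part 1) is essentially the paper's: surgery on the linearized pieces of Theorem~\ref{thm:semi-localform}, with the conditions ($C_i$)--($C_{iv}$) guaranteeing compatibility and the monodromy realized by twisting the closing identification by a translation $\rho(g,\cdot)$. For the uniqueness (Part 2) you take a genuinely different, more structured route. The paper's proof is a one-shot propagation: pick regular points $z_1,z_2$ over the same point of $Q_{\text{marked}}$, set $\Phi(\rho_1(\theta,z_1))=\rho_2(\theta,z_2)$ on a single regular orbit, and extend across the singular walls by the reflection principle and continuity; equality of the marked graphs makes each crossing forced, and in the circle case equality of $\mu$ is exactly what lets the extension close up. You instead stage the construction through the torus reduction: first match the $\bbT^{n-1}$-actions via Theorem~\ref{cor:EquivariantUniqueness} (using that the marks and $Z$ determine the isotropy over every point of $Q$), then conjugate the reduced hyperbolic flows on the $1$-dimensional quotient, and finally correct the transverse lift fiberwise using a (multi-)section, identifying the monodromy as the holonomy obstruction over $\pi_1(Q)$. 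Both arguments are correct and rest on the same ingredients (semi-local normal forms, reflections, the definition of $\mu$); the paper's is shorter and more elementary, while yours makes explicit \emph{why} $(Q_{\text{marked}},Z,\mu)$ is a complete invariant --- the isotropy data control the torus bundle, the fan-type alternation controls the reduced flow, and $\mu$ is precisely the residual gauge holonomy --- which generalizes more readily to the higher-codimension reductions treated later in the paper.
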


\begin{proof}
1) The proof is by surgery, i.e. gluing of linearized pieces given by Theorem \ref{thm:semi-localform}. There is no 
obstruction to doing so.

2a) If there are 2 different actions $(M_1, \rho_1)$ and $(M_2, \rho_2)$ with the same marked graph $(S_{\text{marked}}, Z)$, 
then one can construct an isomorphism $\Phi$ from $(M_1, \rho_1)$ to $(M_2, \rho_2)$ as follows:
Take $z_1 \in M_1$    and $z_2 \in M_2$ such that $z_1$ and $z_2$ project to the 
same regular point on $S_{\text{marked}}$. Put $\Phi (z_1) =z_2$. Extend $\Phi$ to $\cO_{z_1}$ by the formula 
$\Phi (\rho_1(\theta,z_1)) = \rho_2(\theta, z_2).$
Then extend $\Phi$ to rest of $M_1$ by the reflection principle and the continuity principle.

2b) The proof is similar to that of assertion 2a).

\end{proof}

\subsubsection{Three-dimensional case of toric degree $1= 3-2$}

Consider an action $\rho: \bbR^3 \times M^3 \to M^3$ of toric degree 1.
Let $z \in \cO_z$ be a point in a singular orbit of $\rho$. Denote the
HERT-invariant of $z$ by $(h,e,r,t)$, and by $k=\rank_{\bbZ_2}G_z$ the rank over $\bbZ_2$
of the twisting group $G_z$ of $\rho$ at $z$. According to the results of the previous 
subsections, we have following constraints on the nonnegative integers $h,e,r,t,k$:
\begin{equation}
h + 2e + r + t = 3,\ \ e+t = 1,\ \ e + h \geq 1,\ \ k \leq \min (h,t). 
\end{equation}
In particular, we must have $k\leq 1$, i.e. the twisting group $G_z$
is either trivial or isomorphic to $\bbZ_2$.

Taking the above constraints into account, we have the following 
full list of possibilities for the singular point $z$, together with their abbreviated names:

I. $(h) \quad h = 1, e=0, r=1, t=1, G_z = \{0\}$

II. $(h_t) \quad h = 1, e=0, r=1, t=1, G_z = \bbZ_2$

III. $(e) \quad h = 0, e=1, r=1, t=0, G_z = \{0\}$

IV. $(h-h) \quad h = 2, e=0, r=0, t=1, G_z = \{0\}$

V. $(h-h_t) \quad h = 2, e=0, r=0, t=1, G_z = \bbZ_2$
acting by the involution $(x_1,x_2)\mapsto (-x_1,x_2)$

VI. $(h-h)_t \quad h = 2, e=0, r=0, t=1, G_z = \bbZ_2$
acting by the involution $(x_1,x_2)\mapsto (-x_1,-x_2)$

VII. $(e-h) \quad h = 1, e=1, r=0, t=0, G_z = \{0\}$

In the above list, $(h)$ means hyperbolic non-twisted, $(h-h)_t$ means a joint twisting
of a product of 2 hyperbolic components, and so on.

The local structure of the corresponding 2-dimensional quotient space $Q^2=M^3/\rho_{\bbT}$
(together with the traces of singular orbits on $M^3$) is described in Figure \ref{fig:7types}.
\begin{figure}[htb] 
\begin{center}
\includegraphics[width=120mm]{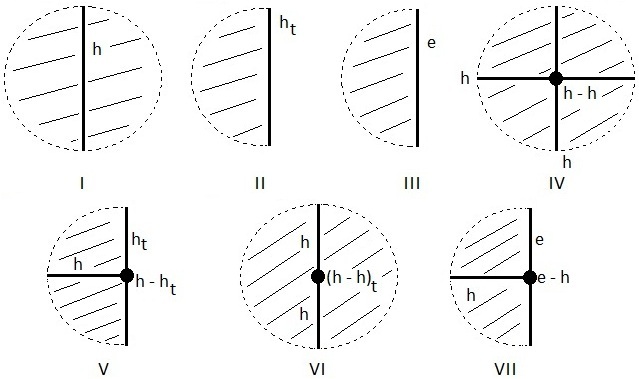}
\caption{The 7 types of singularities of $\bbR^3$-actions of toric degree 1 on 3-manifolds.}
\label{fig:7types}
\end{center}
\end{figure}

Remark that, in Case VI, locally $Q\cong D^2/\bbZ_2$ is homeomorphic but not diffeomorphic to a disk.
In the other cases, $Q$ can be viewed locally as either a disk (without boundary) or a haft-disk (with boundary)
but it cannot be a corner.

Globally, the quotient space $Q$ can be obtained by gluing copies of the above 
7 kinds of local pieces together, in a way which respects the letters (e.g. an edge marked $e$
will be glued to an edge marked $e$, an edge marked $h_t$ will be glued to an edge marked $h_t$).

Notice, for example, that Case II and Case III in the above list are different but have diffeomorphic quotient spaces. 
To distinguish such situations, we must attach letters to the singularities, which describe the corresponding types
of singularities coming from $(M^3,\rho)$. The quotient space $Q$ together with these letters on its graph of singular
orbits will be called the {\bf typed quotient space} and denoted by $Q_{typed}$.

\begin{thm} \label{thm:n-2a}
 1) Let  $(Q_{typed},\rho_\bbR)$ be the typed quotient space of $(M^3,\rho)$, where $\rho$ is of toric degree 1
and $M^3$ is a 3-manifold without boundary. Then each singularity of $Q_{typed}$ belongs to one of the seven types I--VII
listed above.

2) Conversely, let $(Q_{typed},\rho_\bbR)$ be a 2-orbifold together with a totally hyperbolic action $\rho_\bbR$ on it, and
together with the letters on the graph of singular orbits, such that the singularities of $Q_{typed}$ belong to the above
list of seven types I--VII. Then there exists $(M^3,\rho)$ of toric degree 1 which admits  $(Q_{typed},\rho_\bbR)$
as its quotient. Moreover, the $\bbT^1$-equivariant diffeomorphism type of $M^3$ is completely determined by $Q_{typed}$.
\end{thm}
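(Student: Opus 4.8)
For assertion 1) the plan is a finite enumeration. I would start from the two numerical identities that any singular point $z$ must satisfy: the dimension identity $h + 2e + r + t = 3$ and the toric-degree identity $e + t = 1$ supplied by Theorem \ref{thm:HERT-toricdegree} (the toric degree being $1$). These give $h + e + r = 2$; combined with the condition $h + e \geq 1$ that makes $z$ singular and the twisting bound $k = \rank_{\bbZ_2} G_z \leq \min(h,t)$ read off from the linear model, the nonnegative integer solutions form exactly the required finite list. Indeed $e=1$ forces $t=0$ and $h+r=1$, giving types III $(e)$ and VII $(e-h)$, both with $G_z=\{0\}$ because $\min(h,0)=0$; while $e=0$ forces $t=1$ and $h+r=2$, giving $(h,r)=(1,1)$ with $G_z$ trivial (type I) or $\bbZ_2$ (type II), and $(h,r)=(2,0)$ with $G_z$ trivial (type IV) or $\bbZ_2$. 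In this last case the two embeddings $\bbZ_2 \hookrightarrow (\bbZ_2)^2$ that are inequivalent up to the permutation of the two hyperbolic factors yield the single-factor twist V and the joint twist VI. This exhausts the seven types.

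For the existence half of assertion 2), I would reconstruct $(M^3,\rho)$ from $(Q_{typed},\rho_\bbR)$ by gluing the linear models of Theorem \ref{thm:semi-localform}, exactly as in the proof of Theorem \ref{thm:class-marked}(1). First cover $Q$ by orbifold charts, one for each of the seven singular types together with the two-dimensional regular charts; over each chart the prescribed type and mark single out a standard local piece of the singular $\bbT^1$-fibration $M^3 \to Q$ carrying the induced torus action $\rho_\bbT$. I would then glue these pieces over the overlaps, the gluing being constrained to respect the letters on the graph of singular orbits (an edge typed $h$, resp. $h_t$, resp. $e$, glues only to an edge of the same type). Over the regular locus the pieces are principal $\bbT^1$-bundles, so what remains is to see that there is no obstruction to patching; this follows from the contractibility and local-section arguments underlying Proposition \ref{prop:multisection}, which in the twisted cases V and VI produces a multi-section rather than an honest section.

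For the uniqueness half, suppose $(M_1^3,\rho_1)$ and $(M_2^3,\rho_2)$ both realize $(Q_{typed},\rho_\bbR)$. Since the type attached to each singular edge records precisely the local model, and in particular the isotropy of the torus action along the corresponding orbit, the two actions have the same quotient $Q$ and the same isotropy $Z_{\rho_1}(z) = Z_{\rho_2}(z)$ at every $z \in Q$. Theorem \ref{cor:EquivariantUniqueness} then applies essentially verbatim: I would transport a multi-section of $M_1^3$ over $Q$ to a multi-section of $M_2^3$ by a diffeomorphism covering $\mathrm{id}_Q$, and extend it equivariantly to all of $M_1^3$, obtaining a $\bbT^1$-equivariant diffeomorphism $M_1^3 \to M_2^3$. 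Note that the conclusion is deliberately only about the $\bbT^1$-action: as observed after Theorem \ref{cor:EquivariantUniqueness}, the full $\bbR^3$-actions may still differ by a monodromy invariant, so one cannot upgrade the statement to an isomorphism of $\rho_1$ and $\rho_2$.

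The step I expect to be the real obstacle is the rigidity claim in assertion 2), namely ruling out any residual bundle invariant (an Euler-class-type freedom of the principal $\bbT^1$-bundle over the regular part of $Q$) that could vary while $Q_{typed}$ is held fixed. The mechanism that saves us is that the transversally elbolic edges (types III and VII) act as vanishing cycles: along such an edge the circle fibre collapses in the direction prescribed by the associated vector couple of Definition \ref{def:AssociatedVector}, and this collapse trivializes the would-be Euler class and pins the bundle down. Making this precise — equivalently, checking that the isotropy data invoked in Theorem \ref{cor:EquivariantUniqueness} genuinely determines the global $\bbT^1$-bundle and that the twisting groups of types V and VI are correctly carried by the multi-section — is where the genuine work lies; the remainder is the standard cut-and-paste with the linear models.
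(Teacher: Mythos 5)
Your proposal is correct and follows essentially the same route as the paper: part 1 by enumerating the nonnegative integer solutions of $h+2e+r+t=3$, $e+t=1$, $e+h\geq 1$, $k\leq\min(h,t)$, and part 2 by gluing linear models for existence and invoking Theorem \ref{cor:EquivariantUniqueness} for uniqueness. The only place you leave work open --- ruling out a residual choice in the global $\bbT^1$-bundle --- is dispatched in the paper by the one-line observation that in toric degree $1$ the lattice $Z_\rho\cong\bbZ$ is one-dimensional, so the isotropy groups admit no multiple choices (precisely the freedom that reappears for $n\geq 4$ in Theorem \ref{thm:n-2b}), which is cleaner than your vanishing-cycle heuristic but amounts to the same rigidity.
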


\begin{proof}
 1) It was shown above that the list I--VII is complete in the case of dimension 3, due to dimensional constraints.

2) When the toric degree is 1, assuming that $Z_\rho \cong \bbZ$ is fixed in $\bbR^3$, 
because $Z_\rho$  has only 1 dimension and doesn't allow multiple choices, 
there is a unique choice of isotropy groups in this case. 
The second part of the theorem now follows from Theorem \ref{cor:EquivariantUniqueness}.
\end{proof}

\begin{figure}[htb] 
\begin{center}
\includegraphics[width=120mm]{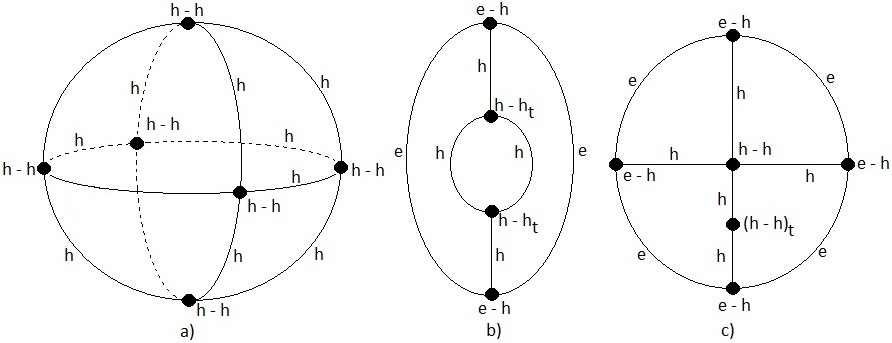}
\caption{Example of $Q^2$ for $n=3, t(\rho) =1$.}
\label{fig:Q2-7kinds}
\end{center}
\end{figure}

Some examples of realizable $Q_{typed}$ which can be obtained by gluing the above 7 kinds of pieces
are shown in Figure \ref{fig:Q2-7kinds}. Notice that $Q_{typed}$ may be without boundary 
(as in Figure \ref{fig:Q2-7kinds}a) or with boundary 
(Figure \ref{fig:Q2-7kinds}b and \ref{fig:Q2-7kinds}c). 
The boundary components of $Q_{typed}$ correspond to the orbits of type $e$ (elbolic)
and $h_t$ (hyperbolic twisted). In the interior of $Q_{typed}$, one may have edges of type $h$ (hyperbolic non-twisted)
and singular points of type $h-h$ or $(h-h)_t$. In Figure \ref{fig:Q2-7kinds}c,
$Q_{typed}$ is not a smooth manifold (though it is homeomorphic to a disk). The branched 2-covering of 
Figure \ref{fig:Q2-7kinds}c is shown in Figure \ref{fig:branched-Double} ($\bbZ_2$ acts by rotating $180^\circ$ around 0).
It is easy to see that, the 3-manifolds corresponding to the situations a), b) c) 
in this example are $\bbS^2 \times \bbS^1$, $\bbR \bbP^2 \times \bbS^1$, and $\bbR \bbP^3$ respectively.

\begin{figure}[htb] 
\begin{center}
\includegraphics[width=80mm]{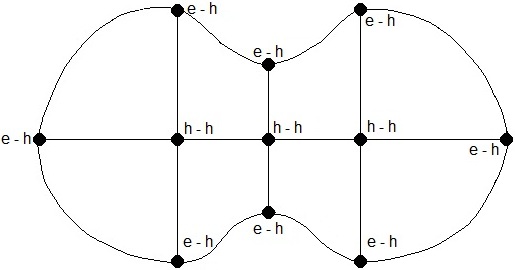}
\caption{Branched double covering of Figure \ref{fig:Q2-7kinds}c.}
\label{fig:branched-Double}
\end{center}
\end{figure}

\begin{remark} If $M^3$ admits an action of $\bbR^3$ of toric degree 1, 
then $M^3$ is a graph-manifold in the sense of Waldhausen, see, e.g. \cite{JacoShalen-GraphManifolds}. 
As was observed by Fomenko \cite{Fomenko-MorseTheory1986}, graph-manifolds are also precisely
those manifolds which can appear as isoenergy 3-manifolds in an integrable Hamiltonian system
with 2 degrees of freedom.
\end{remark}

\subsubsection{The case of dimension $n \geq 4$ with toric degree $n-2$}

When the dimension $n$ is at least 4 and the toric degree is $n-2 \geq 2$, we have the following 3 new types
of singularities, in addition to the 7 types listed above (See Figure \ref{fig:add3types}):

VIII. $(h_t-h_t) \quad h = 2, e=0, r=0, t=n-2, G_z = \bbZ_2 \times \bbZ_2$ acting separately on the two hyperbolic components.

IX. $(e-h_t) \quad h = 1, e=1, r=0, t=n-3, G_z = \bbZ_2$.

X. $(e-e) \quad h = 0, e= 2, r=0, t=n-4, G_z = \{0\}$.

\begin{figure}[htb] 
\begin{center}
\includegraphics[width=100mm]{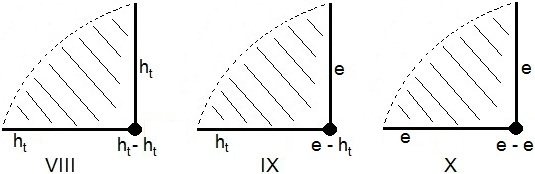}
\caption{The additional 3 possible types of singularities for actions of toric degree $n-2$ when $n\geq 4$.}
\label{fig:add3types}
\end{center}
\end{figure}

\begin{thm} \label{thm:n-2b}
Let  $(Q_{typed},\rho_\bbR)$ be the typed quotient space of $(M^n,\rho)$, where $M^n$ is a compact manifold and 
 $\rho: \bbR^n \times M^n \to M^n$ is nondegenerate of toric degree $n-2$. 
Then each singularity of $Q_{typed}$ belongs to one of the ten types I--X
listed above. Conversely, let $(Q_{typed},\rho_\bbR)$ be a 2-orbifold with boundary and corners together with 
a totally hyperbolic action $\rho_\bbR$ on it, and
together with the letters on the graph of singular orbits, such that the singularities of $Q_{typed}$ belong to the above
ten types I--X. Then for any $n \geq 4$ there exists $(M^n,\rho)$ of toric degree 
$n-2$ which admits  $(Q_{typed},\rho_\bbR)$ as its quotient. 
\end{thm}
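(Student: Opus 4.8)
The statement has an enumeration half and a realization half, and I would treat them separately. For the enumeration, I would run the same dimensional bookkeeping as in the three-dimensional case. For any point $q$ with HERT-invariant $(h,e,r,t)$ the local model of Theorem \ref{thm:semi-localform2} gives $n = h + 2e + r + t$, while Theorem \ref{thm:HERT-toricdegree} applied to toric degree $n-2$ gives $e + t = n-2$. Subtracting the two identities yields
\begin{equation}
h + e + r = 2 .
\end{equation}
A point lies on a singular orbit of $Q_{typed}$ exactly when $e + h \geq 1$, so the admissible triples are $(h,e,r) \in \{(2,0,0),(1,1,0),(1,0,1),(0,2,0),(0,1,1)\}$. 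For each of these $t = n-2-e$ is forced, and the only remaining datum is the twisting group $G_q$, subject to $k := \mathrm{rank}_{\bbZ_2} G_q \leq \min(h,t)$ together with the prescribed injection of $(\bbZ_2)^k$ into the reflection group $(\bbZ_2)^h$. Listing the possibilities is then a finite check: $(2,0,0)$ produces IV, V, VI and, once $\min(2,n-2)\geq 2$ (i.e. $n\geq 4$), also VIII; $(1,1,0)$ produces VII and, once $t=n-3\geq 1$ (i.e. $n\geq 4$), also IX; $(1,0,1)$ produces I and II; $(0,2,0)$ produces X, which requires $t = n-4 \geq 0$ (i.e. $n \geq 4$); and $(0,1,1)$ produces III. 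This accounts for all ten types and explains why VIII, IX, X appear precisely for $n \geq 4$.

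For the realization, the plan is to glue standard local pieces. I would first cover $Q_{typed}$ by charts, each modeled either on a regular $2$-disk or on one of the ten singular types; over each chart Theorem \ref{thm:semi-localform2} supplies a canonical piece of $(M^n,\rho)$, namely the linear model $(B^{h}\times B^{2e}\times \bbT^{t}/(\bbZ_2)^k)\times B^r$ with its explicit generators. Before gluing I must fix global isotropy data: choose a lattice $Z_\rho \cong \bbZ^{n-2}$ in $\bbR^n$ and, at each elbolic edge and each twisted-hyperbolic orbit, choose the associated vector or vector couple of Definition \ref{def:AssociatedVector} so that the analogues of conditions $(C_{iii})$--$(C_{iv})$ of Theorem \ref{thm:class-marked} hold (each associated vector complementary to $Z_\rho \otimes \bbR$, each elbolic vanishing circle primitive in $Z_\rho$). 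Since $Z_\rho$ has rank $n-2 \geq 2$ and only finitely many vectors are constrained, such a choice exists.

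With the isotropy data fixed, I would glue the local pieces over their overlaps. On an overlap the action is regular or of strictly lower type, so the gluing is an equivariant identification of singular $\bbT^{n-2}$-fibrations covering the identity on $Q$ and respecting the totally hyperbolic reduced action $\rho_\bbR$. Taking the overlaps contractible, each torus fibration is trivial there and the transition is dictated only by the prescribed isotropy, so the pieces fit together; this is where Theorem \ref{cor:EquivariantUniqueness} guarantees that the torus part $\rho_\bbT$ is pinned down by $Q$ together with the isotropy. The reduced hyperbolic action $\rho_\bbR$ then lifts to the two remaining commuting generators using the product structure of the linear models, equivalently via a cross multi-section as in Proposition \ref{prop:multisection}; together with the $\bbT^{n-2}$-generators these assemble the full $\bbR^n$-action $\rho$. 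Twisting is built in exactly as in the local models, by letting $(\bbZ_2)^k$ act and passing to the quotient, performing the construction on the appropriate finite cover when necessary and descending, as in Proposition \ref{prop:multisection} ii).

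The hard part will be the gluing step, i.e. showing that the singular $\bbT^{n-2}$-fibration over the two-dimensional base $Q$ carries no global obstruction. In the one-dimensional base of Theorem \ref{thm:class-marked}, principal torus bundles over an interval or a circle are automatically trivial, so the phrase ``there is no obstruction'' was immediate; over a two-dimensional $Q$ nontrivial bundle classes are a priori possible. The point to verify is that the elbolic boundary edges, where one fibre circle collapses so that the fibration behaves like a disk bundle, together with the freedom in the choice of multi-section, kill any potential obstruction. Since the theorem asserts only existence and not uniqueness, it is enough to exhibit one consistent family of gluing data, and the contractibility of the charts reduces the matching to the isotropy, which has already been arranged; so I expect the obstruction to vanish, one dimension higher than in Theorem \ref{thm:class-marked}.
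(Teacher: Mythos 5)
Your proposal is correct and follows essentially the same route as the paper: the enumeration is the same dimensional bookkeeping ($h+e+r=2$ together with the constraint $k\le\min(h,t)$, which is exactly what forces types VIII--X to require $n\ge 4$), and the realization is the same strategy of choosing compatible isotropy data and gluing the linear local models. The paper's own proof is only two sentences, asserting that the main point is the choice of compatible isotropy groups and calling it a simple exercise, so your more detailed treatment --- in particular your explicit attention to the possible obstruction for the singular torus fibration over a two-dimensional base --- is a faithful (and more careful) expansion of the intended argument rather than a departure from it.
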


\begin{proof}
The main point of the proof is to show that  one can choose compatible isotropy groups, but it is a simple excercise.
Remark that, unlike the case of dimension 3, when $n \geq 4$ the typed quotient $Q_{typed}$ 
does not determine the diffeomorphism type of the 
manifold $M$ completely, because there are now multiple choices for the isotropy groups.
\end{proof}

\begin{figure}[htb] 
\begin{center}
\includegraphics[width=50mm]{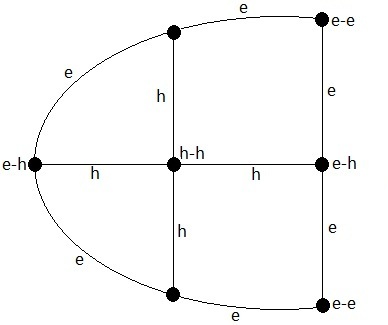}
\caption{Example of $Q = M^4/\bbT^2$.}
\label{fig:exampleM4T2}
\end{center}
\end{figure}

An example of the quotient space $Q$, which can't appear for $n=3$ but can appear for $n \geq 4$, 
is shown in  Figure \ref{fig:exampleM4T2}.

\subsection{The monodromy}
\label{subsection:monodromy}

\subsubsection{Definition of monodromy}

In the classification of actions of toric degree $n-1$, we have encountered a global 
invariant called the monodromy. It turns out that the monodromy can also be defined for any nondegenerate 
action $\rho: \bbR^n \times M^n \to M^n$ of any toric degree, and is one of the main invariants of the action.

Choose an arbitrary regular point $z_0 \in (M^n, \rho)$, and a loop 
$\gamma : [0,1] \to M^n, \gamma(0) = \gamma(1) = z_0$. By a small perturbation 
which does not change the homotopy class of $\gamma$, we may assume that
$\gamma$ intersects the $(n-1)$ singular orbits of $\rho$ transversally (if at all), 
and does not intersect the orbits of dimension $\leq n -2$.
Denote by $p_1, \hdots, p_m$ ($m \geq 0$) the singular points of corank 1 on the loop $\gamma$,
and $\sigma_1, \hdots, \sigma_m$ the associated reflections of the singular hypersurfaces which contain 
$p_1, \hdots, p_m$ respectively as given by Theorem \ref{thm:Reflection}. 
Put $z_1 =\sigma_1(z_0), z_2 =\sigma_2(z_1), \hdots, z_m =\sigma_m(z_{m-1})$. 
(The involution $\sigma_0$ can be extended from a small neighborhood of $p_1$
to $z_0$ in a unique way which preserves $\rho$, and so on). Then $z_m$ lies in 
the same regular orbit as $z_0$, so there is a unique element
$\mu = \mu(\gamma ) \in \bbR^n/Z_\rho$ such that $z_m = \rho(\mu(\gamma), z_0)$.It turns out
that $\mu(\gamma )$ depends only on the homotopy class of $\gamma$, and gives rise to
a group homomorphism  
\begin{equation}
\mu : \pi_1(M^n,z_0) \to \bbR^n/Z_\rho. 
\end{equation}
Moreover, due to the commutativity of $\bbR^n/Z_\rho$, $\mu$ does not depend on the choice of $z_0$ and can be
viewed as a homomorphism from the first homology group 
$H_1(M^n, \bbZ)$ to $\bbR^n/Z_\rho$, which for simplicity will also be denoted by $\mu$:
\begin{equation}
\mu: H_1(M^n, \bbZ) \to \bbR^n/Z_\rho.
\end{equation}

\begin{defn}
The above homomorphisms $\mu : H_1(M^n, \bbZ) \to \bbR^n/Z_\rho$ and $\mu : \pi_1(M^n) \to \bbR^n/Z_\rho$
are called the {\bf monodromy} of the action $\rho: \bbR^n \times M^n \to M^n$.
\end{defn}

\begin{remark} 
The above monodromy is a continuous invariant, and is completely different from the notions
of monodromy defined by Duistermaat \cite{Duistermaat-globalaction-angle1980} and 
Zung \cite{Zung-Integrable2003} for integrable Hamiltonian systems, which are
discrete invariants.
\end{remark}

\subsubsection{Monodromy and twisting groups}

A simple but important observation is that the twisting groups are subgroups of the monodromy group, i.e.
the image of $\pi_1(M^n)$ by $\mu$ in $\bbR^n/Z_\rho$:

\begin{thm}[Twistings and monodromy] \label{thm:TwistingMonodromy}
For any point $z \in M^n$, we have
\begin{equation}
G_z \subseteq Im(\mu),
\end{equation}
where $G_z = (Z_\rho(z) \cap Z_\rho \otimes \bbR)/Z_\rho$ is the twisting group
of the action $\rho$ at $z$, and $Im(\mu) = \mu(\pi_1(M^n)) \subseteq \bbR^n/Z_\rho$
is the image of $\pi_1(M^n)$ by the monodromy map $\mu$. In particular,
if $M^n$ is simply-connected, then $Im(\mu)$ is trivial, and $\rho$ has no twisting.
\end{thm}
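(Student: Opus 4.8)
The plan is to realize every element of the twisting group $G_z$ as the monodromy $\mu([\gamma])$ of an explicit loop $\gamma$ based near $\cO_z$; since $\mu$ is a homomorphism, it suffices to do this for a generating set of $G_z$. Recall from Theorem \ref{thm:semi-localform} and the discussion around \eqref{eqn:TwistingGroup} that, writing $(h,e,r,t)$ for the HERT-invariant of $z$, a neighborhood $\cU(\cO_z)$ is isomorphic to
\begin{equation}
\tilde U / G_z, \qquad \tilde U = B^h \times B^{2e} \times \bbT^t \times B^r,
\end{equation}
where $G_z \cong (\bbZ_2)^k$ acts freely, each generator flipping a nonempty set of hyperbolic coordinates $x_i \mapsto -x_i$ and simultaneously translating $\bbT^t$ by a half-period. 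Thus $\pi : \tilde U \to \cU(\cO_z)$ is a free $G_z$-covering, and under the natural isomorphism $G_z \cong (Z_\rho(z) \cap (Z_\rho \otimes \bbR))/Z_\rho$ a generator $g_\alpha$ is represented by the toric vector $\tilde\tau_\alpha \in Z_\rho(z) \cap (Z_\rho \otimes \bbR)$ whose induced torus translation is exactly the half-period appearing in $g_\alpha$.

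Fix a generator $g_\alpha$, let $S_\alpha \subseteq \{1,\dots,h\}$ be the set of hyperbolic coordinates it flips, and pick a regular base point $\tilde z_0 = (a_1,\dots,a_h,y,\theta_0,\zeta) \in \tilde U$ with all $a_i > 0$ and all elbolic coordinates of $y$ nonzero. I would take $\gamma_\alpha$ to be the image under $\pi$ of a path in $\tilde U$ joining $\tilde z_0$ to $g_\alpha \cdot \tilde z_0$, chosen so that it moves the coordinates $x_i$ ($i \in S_\alpha$) through zero one at a time while keeping all other coordinates fixed. Since $\pi(g_\alpha \cdot \tilde z_0) = \pi(\tilde z_0)$, this is a loop in $\cU(\cO_z) \subseteq M^n$; because only one $x_i$ vanishes at each crossing and the elbolic coordinates stay away from $0$, each crossing meets the hypersurface $\cN_{v_i}$ of Theorem \ref{thm:Reflection} transversally at a genuine corank-$1$ point, and the loop avoids the deeper strata. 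Hence $\gamma_\alpha$ is an admissible loop for the definition of $\mu$.

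To compute $\mu([\gamma_\alpha])$ I would use that each reflection $\sigma_{v_i}$ lifts, on the covering $\tilde U$, to the sign change $x_i \mapsto -x_i$: this descended involution fixes $\cN_{v_i}$ and preserves $\rho$, so it coincides with the reflection-principle involution by the uniqueness in Theorem \ref{thm:Reflection}. Composing the reflections at the successive crossings therefore sends $z_0$ to the point with all coordinates $x_i$ ($i \in S_\alpha$) negated and the torus coordinate unchanged; applying the deck transformation $g_\alpha$ returns the hyperbolic coordinates to the original chamber at the cost of translating $\theta_0$ by the half-period, so that in $\cU(\cO_z)$ one has
\begin{equation}
\Big(\prod_{i \in S_\alpha} \sigma_{v_i}\Big)(z_0) = \rho(\tilde\tau_\alpha, z_0).
\end{equation}
Consequently $\mu([\gamma_\alpha]) = [\tilde\tau_\alpha] = g_\alpha$ in $\bbR^n/Z_\rho$. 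As $\alpha$ ranges over a generating set this yields $G_z \subseteq Im(\mu)$, and the final assertion follows since $\pi_1(M^n) = \{1\}$ forces $Im(\mu) = \{0\}$, hence $G_z = \{0\}$ for every $z$.

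The main obstacle I expect is the bookkeeping in the last step: one must verify carefully that the descended reflections coincide with the involutions $\sigma_{v_i}$, and that the net effect of composing them and re-entering the positive chamber by $g_\alpha$ is precisely the toric translation representing $g_\alpha$ under the natural identification $G_z \cong (Z_\rho(z) \cap (Z_\rho \otimes \bbR))/Z_\rho$. A secondary, more routine point is to confirm that the crossings can be arranged to be transversal, one at a time, and disjoint from the elbolic and deeper singular strata, so that $\gamma_\alpha$ genuinely qualifies as an admissible loop in the definition of $\mu$.
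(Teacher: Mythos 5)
Your proposal is correct and follows essentially the same route as the paper: the paper's loop flows by $\rho(2tw,z_0)$ out to $\rho(w,z_0)$ and then closes up by a short path crossing the hyperbolic hypersurfaces in the linearized neighborhood, which is homotopic to your projected path in the $(\bbZ_2)^k$-cover from $\tilde z_0$ to $g_\alpha\cdot\tilde z_0$, and the reflection count giving $\mu([\gamma])=w \bmod Z_\rho$ is the same verification the paper leaves to the reader. The only slip is that your path cannot keep ``all other coordinates fixed,'' since $g_\alpha\cdot\tilde z_0$ differs from $\tilde z_0$ by a half-period translation on $\bbT^t$ in addition to the sign flips; inserting that translation (which is exactly the first half of the paper's loop) introduces no new crossings, and your subsequent computation already accounts for it.
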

\begin{proof}
Let $q \in M^n, (w \mod Z_\rho) \in G_z$, and $z_0$ be a regular point close enough to $z$.
Consider the loop $\gamma: [0,1] \to M^n$  defined as follows:
\begin{itemize}
\item $\gamma(t) = \rho(2tw,z_0) \quad \forall 0 \leq  t \leq \frac{1}{2}$
\item $\gamma(t)$ for $\frac{1}{2} \leq  t \leq 1$ is a path from $\rho(w,z_0)$ to $z_0$
in a small neighborhood of $z$.
\end{itemize}
The one verifies, using the definition of the monodromy and the semi-local normal form theorem,
that $\mu([\gamma]) = w \mod Z_\rho$.
\end{proof}
The proof of the above theorem shows that the monodromy map $\mu : H_1(M^n,\bbZ) \to \bbR^n/Z_\rho$
satisfies the following compatibility condition (*) with the isotropy groups:

\vspace{0.3cm}
(*) \emph{ If $[\gamma] \in H_1(M^n,\bbZ)$  can be represented by a loop of the type 
$\{\rho(tw,p) | t \in [0,1]\}$ where $p \in M^n, w \in Z_\rho(p) \cap Z_\rho\otimes \bbR$, then
$\mu([\gamma])= w \mod Z_\rho$.}
\vspace{0.3cm}

In particular, If $[\gamma] \in H_1(M^n,\bbZ)$  can be represented by a loop of the type 
$\{\rho(tw,p) | t \in [0,1]\}$ where $w \in Z_\rho$, then $\mu([\gamma])=0$.

\begin{remark}
We don't know yet if $G_{\text{torsion}}$ is completely generated by the twisting elements or not in general. 
\end{remark}

\subsubsection{Changing of monodromy} The torus action $\rho_\bbT$ induces a natural
homomorphism 
\begin{equation}
\tau: Z_\rho \to H_1(M^n,\bbZ),
\end{equation}
which associates to each element $w \in Z_\rho$ the homology class of a loop of the type 
$\{\rho(tw,z) | t \in [0,1]\}$ in $H_1(M^n,\bbZ)$, which does not depend on the choice of $z$
in $M^n$. The composition of $\tau$ with $\mu$ is trivial, because the image of the homology class of
any such loop under the monodromy map is zero. Thus we can also view the monodromy as a homomorphism
from $H_1(M^n,\bbZ)/\tau(Z_\rho)$ to $\bbR^n/Z_\rho$, which, by abuse of language, we will also denote by $\mu$: 
\begin{equation}
\mu : H_1(M^n,\bbZ)/\tau(Z_\rho) \to \bbR^n/Z_\rho.
\end{equation}

According to the structural theorem for finitely generated Abelian groups, we can write
\begin{equation}
H_1(M^n,\bbZ)/\tau(Z_\rho) = G_{\text{torsion}}\oplus G_{\text{free}},
\end{equation}
where $G_{\text{torsion}} \subseteq H_1(M^n,\bbZ)/\tau(Z_\rho)$ is its torsion part, and
$G_{\text{free}} \cong \bbZ^k$, where $k = \rank_\bbZ \big(H_1(M^n,\bbZ)/\tau(Z_\rho)\big)$,
is a free part complementary to $G_{\text{torsion}}$.
This decomposition of $H_1(M^n,\bbZ)/Im(Z_\rho)$ gives us a decomposition of $\mu$:
\begin{equation}
\mu = \mu_{\text{torsion}}\oplus \mu_{\text{free}},
\end{equation}
where $\mu_{\text{torsion}} : G_{\text{torsion}} \to \bbR^n/Z_\rho$
is the restriction of $\mu$ to the torsion part $G_{\text{torsion}}$, and
$\mu_{\text{free}}$ is the restriction of $\mu$ to $G_{\text{free}}$.

Notice that $\mu_{\text{torsion}}$ is not arbitrary, but must satisfy the above 
compatibility condition (*) with the twisting groups. On the other hand, $\mu_{\text{free}}$
can be arbitrary. More precisely, we have:

\begin{thm} \label{thm:trans-monodromy}
With the above notations, assume that $\mu_{\text{free}}' : G_{\text{free}} \to \bbR^n/Z_\rho$
is another arbitrary homomorphism from $G_{\text{free}}$ to $\bbR^n/Z_\rho$. Put 
\begin{equation}
 \mu' = \mu_{\text{torsion}}\oplus \mu_{\text{free}}':  H_1(M^n,\bbZ)/\tau(Z_\rho) \to \bbR^n/Z_\rho.
\end{equation}
Then there exists another nondegenerate action $\rho': \bbR^n \times M^n \to M^n$, 
which has the same orbits as $\rho$ and the same isotropy group at each point of $M^n$ 
as $\rho$, but whose monodromy is $\mu'$.
\end{thm}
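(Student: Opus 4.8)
The plan is to produce $\rho'$ from $\rho$ by an explicit \emph{torus twist}: I keep the induced torus action $\rho_\bbT$ and the entire orbit foliation untouched, and merely reparametrize the way $\bbR^n$ sweeps out each orbit by composing with a multivalued translation along the torus fibers whose periods account for the prescribed change of monodromy. First I would record the difference $\delta = \mu' - \mu$. By construction $\delta$ vanishes on $G_{\text{torsion}}$ and equals $\mu'_{\text{free}} - \mu_{\text{free}}$ on $G_{\text{free}}$; it factors through $H_1(M^n,\bbZ)/\tau(Z_\rho)$ and vanishes on $\tau(Z_\rho)$. Since the monodromy takes its values in $(Z_\rho\otimes\bbR)/Z_\rho$, the homomorphism $\delta$ lands in $(Z_\rho\otimes\bbR)/Z_\rho$.

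Next I realize $\delta$ by a closed differential form. Because $G_{\text{free}}\cong\bbZ^k$ is free, I may choose for each free generator a lift in $Z_\rho\otimes\bbR$ of its $\delta$-value; this determines a homomorphism $H_1(M^n,\bbZ)\to Z_\rho\otimes\bbR$ that vanishes on the torsion and on $\tau(Z_\rho)$ and that reduces modulo $Z_\rho$ to $\delta$. Identifying $\mathrm{Hom}(H_1(M^n,\bbZ),\bbR)$ with $H^1(M^n;\bbR)$ and applying de Rham's theorem componentwise, I obtain a closed $(Z_\rho\otimes\bbR)$-valued $1$-form $\eta$ on $M^n$ whose period homomorphism $[\gamma]\mapsto\int_\gamma\eta$ is exactly this lift.

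Now I construct $\rho'$. Pass to the universal cover $\pi:\widetilde M\to M^n$ with the lifted action $\tilde\rho$; as $\widetilde M$ is simply connected, $\pi^*\eta = d\Phi$ for a smooth $\Phi:\widetilde M\to Z_\rho\otimes\bbR$, well defined up to the period $c_g\in Z_\rho\otimes\bbR$ attached to each deck transformation $g$. Let $\Psi(\tilde z)=\tilde\rho_\bbT(\Phi(\tilde z),\tilde z)$ be the fiberwise torus translation by $\Phi$, and set $\tilde\rho'(v,\cdot)=\Psi\circ\tilde\rho(v,\cdot)\circ\Psi^{-1}$. Because the torus translations $\tilde\rho_\bbT(c_g,\cdot)$ are central (they commute with every $\tilde\rho(v,\cdot)$, the group $\bbR^n$ being abelian) and commute with the deck group, the conjugated action $\tilde\rho'$ is deck-invariant and descends to a genuine $\bbR^n$-action $\rho'$ on $M^n$. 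Infinitesimally this is the clean formula $X'_v = X_v + V(\eta(X_v))$, where $V$ sends an element of $Z_\rho\otimes\bbR$ to the corresponding generator of $\rho_\bbT$ among the vector fields on $M^n$; closedness of $\eta$ is precisely what guarantees $[X'_u,X'_v]=0$. Since $\Psi$ moves points only along $\rho_\bbT$-orbits it preserves every $\rho$-orbit setwise, so $\rho'$ has the same orbits as $\rho$; and since the isotropy is constant along an orbit, $Z_{\rho'}(z)=Z_\rho(\Psi^{-1}z)=Z_\rho(z)$ for every $z$. Nondegeneracy and the semi-local linear models of Theorem \ref{thm:semi-localform} persist, conjugation by a torus translation being an isomorphism of each model, and the monodromy of $\rho'$ equals $\mu+\delta=\mu'$ because transporting a regular base point around a loop $\gamma$ by the reflections of $\rho'$ differs from the transport by the reflections of $\rho$ exactly by $\int_\gamma\eta \bmod Z_\rho=\delta([\gamma])$.

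The main obstacle is the descent step together with the bookkeeping of the torsion part. One must check carefully that the conjugated action is invariant under the whole deck group, that is, that the correction $\tilde\rho_\bbT(c_g,\cdot)$ introduced by a deck transformation is absorbed precisely because it is central and because $\eta$ is valued in $Z_\rho\otimes\bbR$, and that the construction leaves $\mu_{\text{torsion}}$ untouched. The latter is in fact forced: the compatibility condition (*) ties the monodromy of any loop of the form $\{\rho(tw,p)\mid t\in[0,1]\}$, with $w\in Z_\rho(p)\cap (Z_\rho\otimes\bbR)$, to the twisting element $w\bmod Z_\rho$, and since $\rho'$ has the same isotropy as $\rho$ while $\eta$ has vanishing periods on $\tau(Z_\rho)$, all such torsion values are automatically preserved. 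This is also the structural reason why only $\mu_{\text{free}}$, and not $\mu_{\text{torsion}}$, can be prescribed arbitrarily.
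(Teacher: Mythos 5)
Your overall strategy (realize the difference $\delta=\mu'-\mu$ as the period homomorphism of a closed $1$-form and twist $\rho$ by it) is reasonable, but as written the proof has a gap that prevents it from giving the full statement. The monodromy of $\rho$ takes values in $\bbR^n/Z_\rho$, \emph{not} in the torus part $(Z_\rho\otimes\bbR)/Z_\rho$: a regular orbit is $\bbR^n/Z_\rho\cong\bbT^{t(\rho)}\times\bbR^{n-t(\rho)}$ and $\mu(\gamma)$ may well have a nonzero component in the $\bbR^{n-t(\rho)}$-factor (this already happens in the toric degree $n-1$ case with $Q\cong\bbS^1$, where Theorem \ref{thm:class-marked} explicitly realizes \emph{any} $\mu\in\bbR^n/Z$). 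Moreover the theorem allows $\mu'_{\text{free}}$ to be an \emph{arbitrary} homomorphism into $\bbR^n/Z_\rho$. Since your twist $\Psi$ translates only along the compact fibers of $\rho_\bbT$, it can only alter the monodromy within the subgroup $(Z_\rho\otimes\bbR)/Z_\rho$, so your construction proves the theorem only for the special $\mu'$ that differ from $\mu$ in the torus directions. To cover the general case you must let $\eta$ take values in all of $\bbR^n$ and translate along full $\rho$-orbits (which still preserves orbits and isotropy), so the restriction to $Z_\rho\otimes\bbR$ is not just a harmless normalization.

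There is a second, more technical problem with the twist itself. The ``clean formula'' $X'_v=X_v+V(\eta(X_v))$ does \emph{not} yield a commuting family from closedness of $\eta$ alone: the bracket $[X'_u,X'_v]$ contains quadratic terms of the form $\eta(X_u)\,Y(\eta(X_v))-\eta(X_v)\,Y(\eta(X_u))$ which survive in general (already for the translation action of $\bbR^2$ on $\bbR^2$ with $\eta=d(xy)\otimes e_1$ one gets $[X'_1,X'_2]=y\,\partial/\partial x\neq 0$); killing them by requiring $\eta$ to be $\rho$-invariant forces $\eta(X_v)$ to be constant on a connected $M^n$, which trivializes the periods. The conjugation picture does not rescue this, because $\Psi(\tilde z)=\tilde\rho(\Phi(\tilde z),\tilde z)$ with $\Phi$ an unbounded primitive of $\pi^*\eta$ need not be injective, so ``conjugation by $\Psi$'' is not literally available; also the generator of the conjugated action is $X_v+V(\eta(X_v)\circ\Psi^{-1})$, not your formula. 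The paper's route avoids both difficulties: one passes to a covering of $M^n$ on which the monodromy becomes trivial and re-glues by composing the deck transformations with translations $\rho(c_g,\cdot)$, $c:\pi_1(M^n)\to\bbR^n$ a homomorphism lifting $\delta$; no globally defined twisting form or global diffeomorphism is needed. Your final paragraph on why $\mu_{\text{torsion}}$ cannot be changed (compatibility condition (*) with the twisting groups and the vanishing of periods on $\tau(Z_\rho)$) is correct and consistent with the paper's remark.
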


See \cite{ZungMinh_Rn2014} for the proof, which is based on a covering of $M$ which trivializes
the monodromy.

\begin{remark}
In Theorem \ref{thm:trans-monodromy}, it is possible to change $\mu_{\text{torsion}}$ also to another
homomorphism $\mu_{\text{torsion}}' : G_{\text{torsion}} \to \bbR^n/Z_\rho$. Then the construction of 
the proof still works, but the new action $\rho'$ will not have the same isotropy groups as $\rho$ at 
twisted singular orbits in general, and even the diffeomorphism type of $M'$ may be different from $M$,
because the new action of $\pi_1(M^n,z_0)/Im(Z_\rho)$ will not be isotopic to the old one.
\end{remark}

\subsubsection{Monodromy under reduction}

Even though $Q = M^n / \rho_\bbT$ is just an orbifold in general, we can still define the monodromy map
$\mu_{\rho_{\bbR}} : H_1(Q,\bbZ) \to \bbR^n/ (Z_\rho \otimes \bbR) \cong \bbR^{n-t(\rho)}$ of the action $\rho_\bbR$
on $Q$, just like the case of actions on manifolds. The following proposition is an immediate consequence of the
definition of monodromy:
\begin{prop}
 We have the following natural commutative diagramme:
\begin{equation}\label{eq:diag-monodromy}
\xymatrix{ 
H_1(M^n,\bbZ) \ar^{\mu_{\rho}}[r]\ar^{proj.}[d]& \bbR^n/Z_\rho \ar^{proj.}[d]\\
H_1(Q,\bbZ) \ar^{\mu_{\rho_{\bbR}}}[r]        &    \bbR^n/ (Z_\rho \otimes \bbR)\\
} 
\end{equation}
where $proj.$ denotes the natural projection maps.
\end{prop}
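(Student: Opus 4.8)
The plan is to show that the monodromy construction is compatible with the reduction projection $\pi : M^n \to Q = M^n/\rho_\bbT$, so that projecting the reflection sequence downstairs reproduces the very construction of $\mu_{\rho_\bbR}$. First I would fix a base regular point $z_0 \in M^n$ and represent a given class in $H_1(M^n,\bbZ)$ by a loop $\gamma$ that meets the corank-$1$ (hyperbolic) singular orbits transversally and avoids all orbits of codimension $\geq 2$, exactly as required in the definition of $\mu_\rho$. Its image $\bar\gamma = \pi\circ\gamma$ is then a loop in $Q$ representing $\text{proj.}([\gamma]) \in H_1(Q,\bbZ)$, and after a small homotopy I may assume it meets the corank-$1$ singular orbits of the reduced action $\rho_\bbR$ transversally, so that it is admissible for computing $\mu_{\rho_\bbR}$.

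The key step is the compatibility of reflections. Each reflection $\sigma_i : \cU(\cN_{v_i}) \to \cU(\cN_{v_i})$ furnished by Theorem \ref{thm:Reflection} preserves the action $\rho$, hence commutes with the subaction $\rho_\bbT$, and therefore descends to an involution $\bar\sigma_i$ on a neighborhood of $\pi(\cN_{v_i})$ in $Q$. Using the linear normal form \eqref{eqn:NormalForm_n0} one checks that $\pi(\cN_{v_i})$ is precisely a corank-$1$ hyperbolic hypersurface of $\rho_\bbR$ and that $\bar\sigma_i$ is the reflection of $\rho_\bbR$ associated to it. Moreover, reflections arise only at hyperbolic orbits, while the elbolic (corank-$2$) orbits of $\rho$ project to the boundary of $Q$, which $\bar\gamma$ does not cross because $\gamma$ avoids them; hence the hyperbolic crossings of $\bar\gamma$ in $Q$ are in natural bijection with those of $\gamma$ in $M^n$, with matching reflections.

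With this in place I would conclude as follows. Let $z_1 = \sigma_1(z_0),\hdots, z_m = \sigma_m(z_{m-1})$ be the sequence defining $\mu_\rho([\gamma])$ through $z_m = \rho(\mu_\rho([\gamma]),z_0)$. Applying $\pi$ and using $\pi\circ\sigma_i = \bar\sigma_i\circ\pi$, the projected points $\pi(z_0),\hdots,\pi(z_m)$ are exactly the sequence $\bar z_0,\hdots,\bar z_m$ that computes $\mu_{\rho_\bbR}(\text{proj.}([\gamma]))$. Since $\pi$ intertwines $\rho$ with $\rho_\bbR$ modulo $Z_\rho\otimes\bbR$, the relation $z_m = \rho(\mu_\rho([\gamma]),z_0)$ projects to $\bar z_m = \rho_\bbR(\text{proj.}(\mu_\rho([\gamma])),\bar z_0)$, whereas by definition $\bar z_m = \rho_\bbR(\mu_{\rho_\bbR}(\text{proj.}([\gamma])),\bar z_0)$. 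Because $\rho_\bbR$ is totally hyperbolic by Theorem \ref{thm:hypAction-quotientSpace}, it acts freely on each regular orbit, so the element of $\bbR^n/(Z_\rho\otimes\bbR)$ carrying $\bar z_0$ to $\bar z_m$ is unique; comparing the two expressions yields $\text{proj.}(\mu_\rho([\gamma])) = \mu_{\rho_\bbR}(\text{proj.}([\gamma]))$, which is the asserted commutativity.

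I expect the main obstacle to be the second step: checking cleanly that the descended involutions $\bar\sigma_i$ are genuinely the reflections of $\rho_\bbR$, and that the transversality of $\bar\gamma$ downstairs is not spoiled by the orbifold singularities of $Q$ arising from twisting groups and elbolic orbits. Once the local normal forms are used to identify the hyperbolic hypersurfaces upstairs with those downstairs, the remainder is merely bookkeeping of the reflection sequence and is routine.
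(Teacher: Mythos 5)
Your proposal is correct and follows exactly the route the paper intends: the paper offers no written proof, declaring the proposition ``an immediate consequence of the definition of monodromy,'' and your argument is precisely that verification made explicit — projecting an admissible loop to $Q$, checking that the reflections $\sigma_i$ descend to the reflections of $\rho_\bbR$, and using freeness of the reduced action on its regular orbit to identify the two monodromy elements. The only point worth keeping in mind is the one you already flagged, namely that $Q$ may be an orbifold, but since $\gamma$ avoids all orbits of codimension at least $2$ its projection stays away from the boundary, corners and twisted strata of $Q$, so the bookkeeping goes through as you describe.
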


\subsection{Totally hyperbolic actions}
\label{subsection:hyperbolic}

\subsubsection{Hyperbolic domains and their fans}

Recall that a nondegenerate action $\rho: \bbR^n \times M^n \to M^n$
is {\bf totally hyperbolic} if its toric degree is zero, or equivalently, its regular orbits 
are diffeomorphic to $\bbR^n$. Each such orbit will be called a 
{\bf hyperbolic domain} of the action on the manifold. 

Let $\cO$ be a hyperbolic domain of a totally hyperbolic action
 $\rho: \bbR^n \times M^n \to M^n$. According to the toric degree formula (Theorem \ref{thm:HERT-toricdegree}),
when the toric degree of the action is 0, then every orbit is diffeomorphic to some $\bbR^k$ ($0 \leq k \leq n$),
and so we have a cell decomposition of $M^n$, whose cells are the orbits of $\rho.$ Denote by 
$\bar{\cO}$ the closure of $\cO$ in $M^n$ and call it a {\bf closed hyperbolic domain}. 
Then $\bar{\cO}$ also admits a cell decomposition by the
orbits of $\rho$.

Fix an arbitrary point $z_0 \in \cO$. For each orbit $\cH$ of $\rho$ in $\bar{\cO},$ we will denote
by
\begin{equation}
C_{\cH} = \{ w \in \bbR^n \ |\ \lim_{t \to +\infty}\rho(-tw,z_0) \in \cH \} 
\end{equation}
the set of all elements $w \in \bbR^n$ such that the flow of the action $\rho$ 
through $z_0$ in the direction $-w$ tends to a point in $\cH$. It is clear that if $\cH$
and $\cK$ are two different orbits in $\bar{\cO}$ then $C_\cH$ and $C_\cK$ are disjoints.
Using local normal forms, one can prove the following:

\begin{prop} \label{prop:simpli_cone}  With the above notations, we have: \\
1) $C_\cH$ does not depend on the choice of $z_0\in \cO$. \\
2) $C_\cO = \{0\}$ and $C_{F_i} = \bbR_{>0}.v_i$ for each $(n-1)$-dimensional orbit $F_i \subset \bar \cO$, 
where $v_i \in \bbR^n$ is the vector associated to $F_i$ with respect to the action $\rho$.\\
3) If $w \in C_\cH$ then $X_w = 0$ on $\cH$, where  $X_w = \frac{d}{dt} \rho(tw,.)|_{t=0}$ is the
generator of $\rho$ associated to $w.$ \\
4) $\bar C_\cH$ is a simplicial cone in $\bbR^n$ (i.e. a convex cone whose base is a simplex, that is a $k$-dimensional
polytope with exactly $k+1$ vertices for some $k$) 
and $\dim C_\cH + \dim \cH = n$. \\
5) $C_\cK \subset  \bar C_\cH$ if and only if $\cH \subset \bar \cK$
and in that case $C_\cK$ is a face of $\bar C_\cH$. \\
6) If $\bar \cO$ is compact, then the family ($C_\cH; \cH \subset \bar \cO$)
is a partition of $\bbR^n$. 
\end{prop}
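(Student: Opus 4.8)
The plan is to reduce everything to the explicit hyperbolic local normal form. Since the action is totally hyperbolic, Theorem~\ref{thm:HERT-toricdegree} forces $e=t=0$ at every point, so the local model of Theorem~\ref{thm:NormalForm_n0} at a point $p$ of an orbit $\cH\subset\bar\cO$ of dimension $k$ reduces to canonical coordinates $(x_1,\dots,x_h,\zeta_1,\dots,\zeta_k)$ with $h=n-k$, an adapted basis $(v_1,\dots,v_h,u_1,\dots,u_k)$ of $\bbR^n$, and generators $X_{v_i}=x_i\partial/\partial x_i$, $X_{u_j}=\partial/\partial\zeta_j$, with $\cH=\{x_1=\dots=x_h=0\}$ locally. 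Writing $w=\sum_i a_i v_i+\sum_j b_j u_j$, the trajectory through a point $z_0$ of coordinates $(x_i^0,\zeta_j^0)$ is given in closed form by $x_i(t)=e^{-t a_i}x_i^0$ and $\zeta_j(t)=\zeta_j^0-t b_j$, and everything will be read off from these formulas.

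First I would prove 1). If $z_0'=\rho(u,z_0)$ and $p=\lim_{t\to+\infty}\rho(-tw,z_0)\in\cH$ exists, then by continuity of $\rho(u,\cdot)$ and additivity of the action, $\lim_{t\to+\infty}\rho(-tw,z_0')=\rho\big(u,\lim_{t\to+\infty}\rho(-tw,z_0)\big)=\rho(u,p)$, which lies in $\cH$ because $\cH$ is $\rho$-invariant; so membership $w\in C_\cH$ is independent of $z_0$. This freedom lets me place $z_0$ in any convenient orthant of any canonical chart. For 2) and the core computation behind 4), I choose $z_0$ in the orthant of the chart at $p\in\cH$ lying in $\cO$ (after the sign changes permitted by Theorem~\ref{thm:CanonicalCoordinate}, this orthant is $\{x_i>0\}$). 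The limit lies in $\cH=\{x=0\}$ exactly when every $\zeta_j(t)$ stays bounded, i.e. $b_j=0$, and every $x_i(t)\to0$, i.e. $a_i>0$; hence $C_\cH$ is the relative interior of $\mathrm{cone}(v_1,\dots,v_h)$. Since $v_1,\dots,v_h$ are part of a basis they are independent, so $\bar C_\cH$ is a simplicial cone with $\dim C_\cH+\dim\cH=h+k=n$, proving 4); the cases $\cH=\cO$ ($h=0$, giving $\{0\}$) and $\cH=F_i$ ($h=1$, giving $\bbR_{>0}v_i$) yield 2). For 3), when $w\in C_\cH$ the computation forces $b_j=0$, so $X_w=\sum_i a_i x_i\partial/\partial x_i$ vanishes on $\{x=0\}=\cH$ near $p$; since $X_w$ is invariant under $\rho$ and $\cH$ is a single orbit, $X_w\equiv0$ on all of $\cH$.

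For 5), I again work in the chart at $p\in\cH$. The orbits $\cK\subset\bar\cO$ with $\cH\subset\bar\cK$ are locally the coordinate faces $\{x_i=0\ (i\notin S),\ x_i>0\ (i\in S)\}$ for $S\subseteq\{1,\dots,h\}$; the transversal hyperbolic generators at a point of such $\cK$ are $X_{v_i}$ with $i\notin S$, so $\bar C_\cK=\mathrm{cone}(v_i:i\notin S)$ is a face of $\bar C_\cH$ and $C_\cK$ is its relative interior, which gives the ``if'' direction together with the face claim. Conversely, if $C_\cK\subset\bar C_\cH$, pick $w\in C_\cK$; then $w\in\bar C_\cH=\mathrm{cone}(v_1,\dots,v_h)$ has vanishing $u_j$-components, so the trajectory from $z_0$ stays in the chart and, by the closed-form solution, converges to a coordinate-face orbit whose closure contains $\cH$. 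Uniqueness of the limiting orbit identifies this with $\cK$, whence $\cH\subset\bar\cK$.

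Finally, for 6) assume $\bar\cO$ compact, so it carries a cell decomposition into finitely many orbits. Disjointness of the $C_\cH$ is immediate, since $\lim_{t\to+\infty}\rho(-tw,z_0)$, when it exists, lies in a single orbit. The content is that the limit exists for every $w$, equivalently that the simplicial cones $\{\bar C_\cH\}$ --- which by 5) meet only along common faces and hence form a simplicial fan $\Sigma$ --- fill all of $\bbR^n$. My plan is to prove completeness of $\Sigma$ by the standard wall-crossing criterion: every codimension-one cone $\bar C_\cH$ (with $\dim\cH=1$) should be a face of exactly two maximal cones $\bar C_{\{p_-\}}$ and $\bar C_{\{p_+\}}$, corresponding to the two fixed points onto which the two ends of the line $\cH\cong\bbR$ limit, where compactness of $\bar\cO$ is exactly what guarantees that each end does converge to a fixed point. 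Granting completeness, the decomposition of each maximal simplicial cone $\bar C_{\{p\}}$ into the relative interiors of its faces matches, via 5), the cones $C_\cK$ of the orbits $\cK$ with $p\in\bar\cK$, and assembling these over all fixed points exhibits $\{C_\cH\}$ as a partition of $\bbR^n$. I expect the genuine difficulty to be precisely this last point: upgrading ``the trajectory has limit points in the compact set $\bar\cO$'' to honest convergence. The cleanest route is to follow $\rho(-tw,z_0)$ through a finite chain of canonical charts, using that in each chart the hyperbolic coordinates evolve monotonically (each $|x_i(t)|$ equals $e^{-t a_i}$ times its entry value), so the trajectory can enter each chart only in a controlled way and must eventually reach and remain in a chart where the closed-form solution forces convergence.
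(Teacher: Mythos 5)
Your proposal is correct and follows exactly the route the paper indicates: the paper gives no detailed argument, stating only that the proposition is proved ``using local normal forms'' (deferring details to \cite{ZungMinh_Rn2014}), and your reduction of every claim to the explicit hyperbolic canonical chart, with the closed-form flow $x_i(t)=e^{-ta_i}x_i^0$, $\zeta_j(t)=\zeta_j^0-tb_j$, is precisely that method carried out. The only part requiring real care is 6), and your two suggested completions (wall-crossing completeness of the fan via the two fixed-point ends of each one-dimensional orbit, or tracking the trajectory through a finite chain of charts using monotonicity of the hyperbolic coordinates) are both sound ways to finish it.
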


In the literature, a partition of $\bbR^n$ into a finite disjoint union of simplicial cones starting at the origin 
is called a \emph{complete fan}, see, e.g. \cite{Ewald-Combinatorial1996}, \cite{Ishida-toric}. More precisely, we have
the following definition:

\begin{defn} \label{def:fan}
A {\bf fan} in $\bbR^n$ is a set of data $(C_\cH, v_i)$, where $\cH$ and $i$ are indices, such that:

i) The family $(C_\cH)$ is a finite family of disjoint subsets of $\bbR^n$

ii) The closure $\bar  C_\cK$ of each  $C_\cK$ is a simplicial cone in $\bbR^n$ 
whose vertex is the origin of $\bbR^n,$
and $\bar  C_\cK \backslash C_\cK$ is the boundary of the cone $\bar  C_\cK$.

iii) If $\bar  C_\cK \backslash C_\cK \not= \emptyset$ (i.e. $C_\cK \not= \{0\}$) then each face of 
$\bar C_\cK$ is again an element of the family $(C_\cH)$.

iv) The $(v_i)$ are vectors in $\bbR^n$ which lie on 1-dimensional cones in the family $(C_\cH)$,
such that each 1-dimensional $C_{\cK_i}$ contains exactly one element $v_i$: $C_{\cK_i} = \bbR_{>0}.v_i$.

v) If moreover $(C_\cH)$ is a finite partition of $\bbR^n$, i.e. 
$\bbR^n$ is the disjoint union of this family  $(C_\cH)$, then we say that $(C_\cH, v_i)$
is a {\bf complete fan}.
\end{defn}

Proposition \ref{prop:simpli_cone} tells us exactly that to each compact closed hyperbolic domain $\bar \cO$ there is
a naturally associated complete fan of $\bbR^n$, which is an invariant of the action. If $\bar\cO$ is not compact, 
then the cones $(C_\cH)$ do not fill the whole $\bbR^n$, and one has an incomplete fan in that case.
Figure \ref{fig:thefan} is an illustration of the construction of the associated fan for a hyperbolic domain.
\begin{figure}[htb]
\begin{center}
\includegraphics[width=90mm]{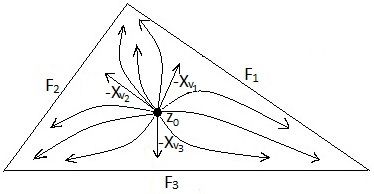}
\caption{The fan at $T_{z_0}M^n \cong \bbR^n$.}
\label{fig:thefan}
\end{center}
\end{figure}

The following theorem shows that, 
conversely, any complete fan can be realized, and is the full invariant of the action on a 
compact closed hyperbolic domain.

\begin{thm} 
\label{thm:classificationByfan}
 1) Let $(C_\cH, v_i)$ be a  fan of $\bbR^n$. Then there exists a totally hyperbolic action  
$\rho: \bbR^n \times M^n \to M^n$ on a  manifold $M^n$ 
with a hyperbolic domain $\cO$ such that the associated fan to $(\bar \cO, \rho)$ is $(C_\cH, v_i)$.
If the fan is complete then $M^n$ can be chosen to be compact without boundary.

2) If there are two closed hyperbolic domains $(\bar \cO_1, \rho_1)$ and $(\bar \cO_2, \rho_2)$ of two
actions $\rho_1$ and $\rho_2$, which have the same associated complete fan $(C_\cH, v_i)$, then there is 
a diffeomorphism from $\bar \cO_1$ to $\bar \cO_2$ which intertwines $\rho_1$ and $\rho_2$.
\end{thm}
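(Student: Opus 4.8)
The plan is to treat this as the smooth, totally hyperbolic analogue of the construction of a toric manifold from its fan, with the compact torus $\bbT^n$ replaced by the group $\bbR^n$ and the role of $(\bbC^*)^n$ played by a single hyperbolic domain $\cO \cong \bbR^n$. Everything is organized around the dictionary of Proposition \ref{prop:simpli_cone}: each orbit $\cH \subset \bar\cO$ corresponds to a cone $C_\cH$ with $\dim C_\cH + \dim \cH = n$, faces of cones correspond to orbits in the closure (item 5), and the edges of the cones carry the marking vectors $v_i$. These data are exactly a combinatorial gluing recipe, and the two parts of the theorem are the ``surjectivity'' and ``injectivity'' of the assignment $(\bar\cO,\rho) \mapsto (C_\cH, v_i)$.

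For part 1, I would build $\bar\cO$ by gluing local models. To each $n$-dimensional cone $\sigma$ (which, being simplicial, has exactly $n$ linearly independent edge vectors, hence a basis of $\bbR^n$) I associate a chart $U_\sigma \cong \bbR^n$ carrying the standard hyperbolic action in which the edge vectors $v_{i_k}$ satisfy $X_{v_{i_k}} = x_k \partial/\partial x_k$; this is the local model of equation \eqref{eqn:NormalForm_n0} with $h=n$, $e=t=0$. A cone of dimension $d$ similarly gives a chart $\bbR^{d}\times\bbR^{n-d}$ with $d$ hyperbolic and $n-d$ regular directions. Two charts $U_\sigma, U_{\sigma'}$ are glued along the orbits attached to the common faces of $\sigma$ and $\sigma'$, using on each such orbit the unique orbit-preserving diffeomorphism intertwining the two copies of the $\bbR^n$-action; its existence and uniqueness come from the semi-local normal form (Theorem \ref{thm:semi-localform}) together with the fact that regular orbits are free copies of $\bbR^n$. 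Since every transition map is rigidly pinned down by the single ambient $\bbR^n$-action, the cocycle condition on triple overlaps is automatic, so the charts assemble into a smooth manifold $M^n$ carrying a totally hyperbolic action whose associated fan, by construction, is the given $(C_\cH,v_i)$. If the fan is complete the cones cover $\bbR^n$, which forces $\bar\cO = M^n$ to be compact without boundary. Unlike in algebraic toric geometry, no unimodularity hypothesis is needed: there is no lattice, and the hyperbolic directions are contractible $\bbR$-factors, so arbitrary simplicial cones are realized by genuine smooth charts.

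For part 2, I would fix base points $z_0 \in \cO_1$ and $z_0' \in \cO_2$ in the open orbits and define an equivariant diffeomorphism on them by $\Phi(\rho_1(w,z_0)) = \rho_2(w,z_0')$; this is well defined since both open orbits are free copies of $\bbR^n$. A point $p$ of a boundary orbit $\cH$ arises as a limit $\lim_{t\to+\infty}\rho_1(-tw,z_0)$ with $w \in C_\cH$, and I extend $\Phi$ by sending it to $\lim_{t\to+\infty}\rho_2(-tw,z_0')$. The content is to verify that this assignment is well defined (independent of the representation of $p$ as such a limit) and smooth: here the semi-local normal form near $\cH$ is used, since in canonical coordinates the convergence is the explicit hyperbolic convergence $x_i \to 0$, and because the two actions share the same cone $C_\cH$ and the same edge vectors $v_i$, their normal forms near $\cH$ are identified by a local isomorphism compatible with $\Phi$ on the dense open orbit. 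Covering $\bar\cO_1$ by these semi-local charts and noting that the resulting local isomorphisms all agree on the open orbit (hence on overlaps by density), I glue them into a global smooth equivariant diffeomorphism $\bar\cO_1 \to \bar\cO_2$. This is the same mechanism as in the proofs of Theorem \ref{thm:class-marked} and Theorem \ref{cor:EquivariantUniqueness}.

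The hard part, in both halves, is the behavior at the deepest strata: I must check that the open-orbit identification extends \emph{smoothly} across orbits of all codimensions simultaneously, and dually that the chart gluings of part 1 are mutually consistent along shared faces where several cones meet. Both reduce to the single statement that the degeneration of $\cO \cong \bbR^n$ onto a boundary orbit $\cH$ is completely dictated by the cone $C_\cH$ and its edges through the semi-local normal form, with the face-incidence relations of Proposition \ref{prop:simpli_cone} guaranteeing that these local pictures agree wherever strata overlap. Once this compatibility is established, smoothness and the cocycle condition follow from the rigidity of the $\bbR^n$-action, and no analytic estimate beyond the normal-form theorems is needed.
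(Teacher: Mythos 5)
Your part 2 is essentially the paper's argument: fix base points in the two open orbits, transport by $\Phi(\rho_1(\theta,z_1))=\rho_2(\theta,z_2)$, and extend to the boundary by continuity, using the equality of the fans and the (semi-)local normal forms to see that the extension is a diffeomorphism. Your general strategy for part 1 (charts indexed by the cones, glued along common faces, with transition maps pinned down on the dense free orbit) is also in the spirit of the paper's ``gluing method''. However, part 1 contains a genuine error at the final step.

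The claim ``if the fan is complete the cones cover $\bbR^n$, which forces $\bar\cO = M^n$ to be compact without boundary'' is false. Completeness of the fan is equivalent (Proposition \ref{prop:simpli_cone}, item 6) to \emph{compactness} of $\bar\cO$, but $\bar\cO$ is never a closed manifold: it is a compact contractible manifold with boundary and corners (the boundary being the union of the lower-dimensional orbits), homeomorphic to a curved simple polytope. The open domain $\cO\cong\bbR^n$ is a single free orbit and cannot equal a compact $M^n$. What your gluing of cone-charts produces is only this abstract domain with corners; to obtain a closed manifold one needs an additional doubling step, which is exactly what the paper does: take $2^k$ identical copies $\bar\cO_1,\dots,\bar\cO_{2^k}$ of the domain, where $k$ is the number of $(n-1)$-dimensional facets, and glue $\bar\cO_a$ to $\bar\cO_b$ along the $i$-th facet iff $|a-b|=2^{i-1}$, so that the local model $x\,\partial/\partial x$ extends across each facet by the reflection $x\mapsto -x$. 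If instead you intend each chart $U_\sigma\cong\bbR^n$ to carry the full model action (hence $2^n$ orthants, i.e.\ pieces of $2^n$ different hyperbolic domains), then the fan only prescribes the action on \emph{one} of these domains, and you have not explained how the identifications among the remaining orthants of overlapping charts are chosen or why they are consistent; the reflection-indexed gluing of whole copies of $\bar\cO$ is precisely the device that resolves this. Without one of these two repairs, the ``compact without boundary'' half of part 1 is not proved.
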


\begin{proof}
1) One uses the gluing method to construct an abstract hyperbolic 
domain associated to the fan, and then use reflections to glue pieces
isomorphic to that domain together into a closed manifold. More precisely, take an arbitrary closed 
hyperbolic domain $\bar\cO$ and take $2^k$ identical copies of it, indexed by  numbers 
$a \in \{1, \hdots, 2^k\}$, where $k$ is the number of $(n-1)$-dimensional facets of $\bar\cO$. Glue these identical
copies together to get a global manifold together with a totally hyperbolic action on it by the following
rule: $\bar\cO_a$ will be glued to $\bar\cO_b$ along the $i$-th facet if and only if $|a-b| = 2^{i-1}.$

2) Take any two points $z_1 \in \cO_1$ and $z_2 \in \cO_2$. Define 
\begin{equation}
\Phi (z_1) =z_2, \Phi(\rho_1(\theta,z_1)) = \rho_2(\theta,z_2)
\end{equation}
for all $\theta \in \bbR^n$, and then extend $\Phi$ to the boundary of $\bar \cO$ 
by continuity. The fact that $(\bar \cO_1, \rho_1)$ and $(\bar \cO_2, \rho_2)$ have the same associated 
complete fan ensures that the constructed map $\Phi : \bar \cO_1 \to \bar \cO_2$ is a diffeomorphism, which sends 
$\rho_1$ to $\rho_2$. 
 \end{proof}

\subsubsection{The topology of closed hyperbolic domains}

Since closed hyperbolic domains are classified by fans, their topology (together with the cell decomposition
by the orbits) is also completely determined by the corresponding fans, which are combinatorial objects.
By starting from fans, one can show that any closed hyperbolic domain $\bar{\cO}$ is {\bf contractible}. It is easy to 
see that any convex simple polytope in $\bbR^n$ (i.e. a convex polytope which has exactly $n$ edges at each vertex)
can be realized by a complete fan (whose vectors are orthogonal to the facets of the polytope and pointing 
towards the facets from inside). A natural question arises: is it true that any compact close hyperbolic
domain $\bar{\cO}$ is also difeomorphic to a convex simple polytope? The following theorem, which was 
pointed out by Ishida, Fukukawa and Masuda in \cite{Ishida-toric}
in the context of topological toric manifolds (see Subsection \ref{subsection:elbolic}) 
gives an answer to this question:

\begin{thm} \label{thm:diff-to-polytope}
Any compact closed hyperbolic domain $\bar \cO$ of dimension $n \leq 3$ is diffeomorphic to a convex
simple polytope. If $n \geq 4$ then there exists a compact closed hyperbolic domain $\bar \cO$ of dimension $n$
which is not diffeomorphic to a polytope. 
\end{thm}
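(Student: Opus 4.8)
The plan is to use the classification of closed hyperbolic domains by complete fans (Theorem \ref{thm:classificationByfan}) to reduce the statement to a combinatorial question about the \emph{polytopality} of simplicial spheres. First I would set up the dictionary furnished by Proposition \ref{prop:simpli_cone}: an orbit $\cH$ of dimension $k$ in $\bar\cO$ corresponds to the cone $C_\cH$ of dimension $n-k$, and inclusions of closures are reversed, so the face poset of $\bar\cO$ (its stratification as a manifold with corners) is anti-isomorphic to the poset of cones of its complete fan; since every cone is simplicial, $\bar\cO$ is a \emph{simple} manifold with corners. Intersecting the fan with a small sphere about the origin gives a simplicial complex $\Sigma$ homeomorphic to $\bbS^{n-1}$. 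The key facts I would invoke are that the corner stratification is a diffeomorphism invariant of a manifold with corners, and that two simple manifolds with corners with isomorphic face posets are diffeomorphic. Since a convex simple polytope is such a manifold and its combinatorics is exactly dual to its normal fan, the whole theorem reduces to the statement: $\bar\cO$ is diffeomorphic to a convex simple polytope if and only if $\Sigma$ is polytopal, i.e. combinatorially isomorphic to the boundary complex of a simplicial $n$-polytope.

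For $n \leq 3$ the sphere $\Sigma$ has dimension $n-1 \leq 2$. The cases $n=1,2$ are immediate, as $\bar\cO$ is then an interval, respectively a convex polygon. For $n=3$, $\Sigma$ is a simplicial $2$-sphere, and Steinitz's theorem guarantees that every simplicial $2$-sphere is the boundary of a simplicial $3$-polytope; its dual is a simple $3$-polytope whose normal fan realizes the combinatorics of $\Sigma$, so by the reduction above $\bar\cO$ is diffeomorphic to it. I would emphasize here that the governing condition is \emph{combinatorial} polytopality of $\Sigma$, not geometric polytopality (projectivity) of the fan itself: even a non-projective complete fan in $\bbR^3$ still yields a domain diffeomorphic to some simple polytope, so non-projectivity is not an obstruction to the positive part.

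For $n \geq 4$ I would produce a complete simplicial fan whose sphere $\Sigma$ is \emph{not} polytopal. The simplicial spheres arising from complete simplicial fans are exactly the \emph{star-shaped} ones, and for every dimension $n-1 \geq 3$ there exist star-shaped simplicial spheres that are not polytopal; such $3$-spheres are classically known, and their realization in the fan (equivalently toric) setting is precisely the observation of Ishida, Fukukawa and Masuda in \cite{Ishida-toric}. Realizing one such sphere geometrically as a complete fan and applying part 1) of Theorem \ref{thm:classificationByfan} yields a compact closed hyperbolic domain $\bar\cO$ of dimension $n$ whose face poset is dual to a non-polytopal sphere; were $\bar\cO$ diffeomorphic to a simple polytope, its corner stratification — a diffeomorphism invariant — would exhibit $\Sigma$ as polytopal, a contradiction. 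The main obstacle is exactly this construction: one needs a sphere that is simultaneously non-polytopal \emph{and} star-shaped, since the fan condition is geometrically restrictive and most simplicial spheres do not arise as fans. Verifying star-shapedness of a non-polytopal sphere in each dimension $n-1 \geq 3$, or equivalently quoting the examples of \cite{Ishida-toric}, is therefore the crux of the argument; the rest is the combinatorial bookkeeping of the first two paragraphs.
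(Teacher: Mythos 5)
Your proposal follows essentially the same route as the paper: reduce everything to the combinatorics of the complete fan (equivalently, of the simplicial $(n-1)$-sphere arising as its link), invoke Steinitz's theorem for $n=3$, and for $n\geq 4$ exhibit a star-shaped but non-polytopal simplicial sphere --- the paper uses Barnette's sphere \cite{Barnette-Diagram1970}, realized as the base of a complete fan in $\bbR^4$ following \cite{Ewald-Combinatorial1996}, exactly the Ishida--Fukukawa--Masuda observation you cite --- and conclude via the diffeomorphism-invariance of the corner stratification. One caveat: your auxiliary claim that any two simple manifolds with corners with isomorphic face posets are diffeomorphic is false in that generality (a Mazur manifold versus $D^4$ already gives a counterexample with the trivial face poset); what is actually needed, and what the paper uses implicitly, is that two closed hyperbolic domains whose complete fans are combinatorially isomorphic are diffeomorphic, which follows from the gluing construction behind Theorem \ref{thm:classificationByfan} rather than from any general fact about manifolds with corners.
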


The case $n=2$ of the above theorem is obvious. The case $n=3$ is a consequence of the classical Steinitz theorem. 
When $n=4$ of higher, there are counterexamples:
The first known counterexample comes from the so-called Barnette's sphere \cite{Barnette-Diagram1970}.
The Barnette's sphere is a simplicial complex whose ambient space is a 3-dimensional sphere $S^3$,
but which cannot be realized as the boundary of a convex simplicial polyhedron in $\bbR^3$ for some reasons of 
combinatorial nature. It is known \cite{Ewald-Combinatorial1996} that Barnette's sphere can be realized 
as the base of a complete fan in $\bbR^4$,
which we will call the Barnette fan. Take the closed hyperbolic domain $\bar\cO$ given by this Barnette fan. 
Then $\bar \cO$ cannot be
diffeomorphic to a convex simple 4-dimensional polytope, because if there is such a polytope, 
then the boundary of the simplicial polytope dual to it will be a realization of the Barnette's sphere, 
which is a contradiction.

\subsubsection{Totally hyperbolic actions in dimension 2} 
\label{subsubsection:hyperbolic_dim2}

The existence of  totally hyperbolic actions on any closed
2-manifold was known to Camacho \cite{Camacho-MorseSmaleAction1973}, who called them ``Morse-Smale 
$\bbR^2$-flows on a 2-manifold''.The following theorem is a slight improvement of Camacho's result:

\begin{thm}[\cite{Camacho-MorseSmaleAction1973,ZungMinh_Rn2014}] 
\label{thm:hyperbolic_dim2}
1) On the sphere $\bbS^2$ there exists a totally hyperbolic $\bbR^2$ action, which has
exactly 8 hyperbolic domains. The number 8 is also the minimal number possible:  
any totally hyperbolic action of $\bbR^2$ on $\bbS^2$ must have at least 8 hyperbolic domains.

2) For any $g \geq  1$, on a closed orientable surface of genus g there 
exists a totally hyperbolic action of $\bbR^2$ which has exactly 4 hyperbolic domains. 
The number 4 is also minimal possible.

3) Any non-orientable closed surface also admits a totally hyperbolic action with 4 
hyperbolic domains, and the number 4 is also minimal possible.
\end{thm}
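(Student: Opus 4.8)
The plan is to turn the action into a finite cell complex whose cells are its orbits and to read both bounds off the combinatorics of this complex together with one auxiliary flow $X_w$. For $n=2$ and toric degree $0$, Theorem \ref{thm:HERT-toricdegree} forces $e=t=0$ at every point, so the only HERT-invariants are $(0,0,2,0)$ (the open hyperbolic domains, i.e.\ the faces), $(1,0,1,0)$ (transversally hyperbolic one-dimensional orbits, i.e.\ the edges, each an $\bbR$ with two vertex endpoints) and $(2,0,0,0)$ (fixed points with local model $x_1\partial/\partial x_1,\ x_2\partial/\partial x_2$, i.e.\ the vertices). Around a vertex there are exactly four local face-germs, four edge-germs and one fixed point, so every vertex has four incident edges and four incident faces; writing $V,E,F$ for the numbers of vertices, edges and faces, counting edge-endpoints gives $E=2V$. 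By Proposition \ref{prop:simpli_cone} each compact closed domain $\bar\cO_j$ is classified by a \emph{complete} fan of $\bbR^2$, whose number $d_j$ of rays (equivalently of $2$-dimensional cones) is at once its number of boundary edges and its number of boundary vertices; since a complete fan of $\bbR^2$ has at least three rays, $d_j\ge 3$, and counting incidences yields $\sum_j d_j = 2E = 4V$.

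I would first dispose of the sphere. From $\sum_j d_j = 4V$ and $d_j\ge 3$ we get $4V\ge 3F$, whence $\chi(M)=V-E+F = F-V \le F/4$, i.e.\ $F\ge 4\chi(M)$. For $M=\bbS^2$ this reads $F\ge 8$, with equality forcing every $d_j=3$ and $(V,E,F)=(6,12,8)$, the octahedral triangulation. To attain it I would realize the triangular (three-ray) fan through Theorem \ref{thm:classificationByfan}, whose construction glues $2^{3}=8$ identical triangular domains by reflections so that four meet at each vertex; the result is a closed surface with $V=6,\ E=12,\ F=8$, necessarily $\bbS^2$ since $\chi=2$, carrying a totally hyperbolic action with exactly eight domains (this recovers Camacho's example \cite{Camacho-MorseSmaleAction1973}).

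The bound $F\ge 4$ for all closed surfaces is the crux, and I would obtain it from the flow of a generic generator $X_w$. By part 3 of Proposition \ref{prop:simpli_cone}, $X_w$ vanishes on an orbit $\cH$ exactly when $w\in C_\cH$; since $C_\cH$ is one-dimensional on edges and $\{0\}$ on faces, for generic $w$ the zeros of $X_w$ are precisely the vertices, where in the adapted basis $X_w = a\,x_1\partial/\partial x_1 + b\,x_2\partial/\partial x_2$ with $ab\ne 0$, a source if $a,b>0$ and a saddle if $ab<0$. Call a corner (a vertex-with-adjacent-face incidence) a \emph{source-corner} if the corresponding $2$-cone of that face's fan contains $w$. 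On one hand each compact face has a complete fan, so exactly one of its corners is a source-corner, giving $F$ source-corners in all; on the other hand, since the local model at a vertex is valid in all four quadrants, the defining condition is $w\in\mathrm{cone}(v_1,v_2)$ in every quadrant simultaneously, so all four corners at a source vertex are source-corners and none at other vertices are, giving $4s_+$ source-corners, where $s_+$ is the number of source vertices. Hence $F=4s_+$, and since $s_+\ge 1$ (the interior of every face flows back to a source) we conclude $F\ge 4$ on every closed surface admitting such an action. Poincaré--Hopf ($\chi=V_n-V_s$) is consistent and yields $F=2V_n$ with $V_n=2s_+$ the number of nodes.

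It remains to attain $F=4$ on the remaining surfaces, where $s_+=s_-=1$, so $V_n=2$ and, by $\chi=V_n-V_s$, one needs $V_s=2g$ saddles in the orientable genus-$g$ case and $V_s=k$ saddles on the non-orientable surface $N_k$, with $V=2+V_s$, $E=2V$ and $F=4$. For orientable genus $g\ge 1$ I would assemble the corresponding four-domain fan-and-gluing datum and realize it via Theorem \ref{thm:classificationByfan}, recovering Camacho's examples \cite{Camacho-MorseSmaleAction1973}; for the non-orientable surfaces I would either insert twisted (orientation-reversing) gluings into the same scheme or take free $\bbZ_2$-quotients of orientable models, e.g.\ $\bbR\bbP^2=\bbS^2/(\bbZ_2)$ from the octahedral action, halving eight domains to four ($V=3,\ E=6,\ F=4$, $\chi=1$). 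The main obstacle is exactly this realization step: one must verify that the glued object is the prescribed surface --- tracking orientability and the twisting data of Definition \ref{defn:TwistingGroup} --- and that the domain count is hit on the nose, whereas the two lower bounds themselves fall out cleanly once $X_w$ is understood, from the fan inequality $d_j\ge 3$ (giving $F\ge 8$ on $\bbS^2$) and from the source count $F=4s_+$ (giving $F\ge 4$ in general).
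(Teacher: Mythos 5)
Your proposal and the paper's proof overlap only on the sphere: the paper's existence constructions are entirely explicit (cut $\bbS^2$ by three coordinate planes into $8$ triangles, cut $\Sigma_g$ into $4$ pieces, declare one piece a hyperbolic domain and propagate by reflections; for non-orientable surfaces, make the $8$-piece action on $\Sigma_g$ antipodally invariant and pass to the quotient), while the minimality claims are simply deferred to \cite{ZungMinh_Rn2014}. Your two lower-bound arguments --- $F\ge 4\chi$ from $E=2V$, $\sum_j d_j=2E$ and $d_j\ge 3$ for the sphere, and $F=4s_+$ from counting which $2$-cone of each domain's complete fan contains a generic $w$ --- are a genuinely worked-out route to the part the paper omits, and they are essentially sound. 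But both counts silently assume that a hyperbolic domain meets each vertex of its closure in exactly one local quadrant (and each edge from exactly one side); a priori a single domain could occupy two or more of the four quadrants at a fixed point, in which case a ``corner'' in your sense is a single incidence and $F=4s_+$ degrades to $F\le 4s_+$, killing the bound. This must be ruled out, and it can be: if $\cO$ occupied two quadrants at $z$, its closure would contain two distinct $1$-dimensional orbits carrying the same ray $\bbR_{>0}v_i$ in the fan of $\bar\cO$ (opposite half-axes at a fixed point are always distinct orbits, since a single orbit $\cong\bbR$ cannot limit to $z$ from both sides as the parameter tends to $-\infty$), contradicting the disjointness of the cones $C_\cH$ in Proposition \ref{prop:simpli_cone}. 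With that lemma inserted, your minimality proofs stand.

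The genuine gap is in existence for parts 2) and 3), which you yourself flag. Theorem \ref{thm:classificationByfan} only realizes a prescribed fan by the generic $2^k$-copy reflection gluing, which yields $2^k$ domains ($k$ the number of facets); it gives you the octahedral action on $\bbS^2$ with $8$ domains, but it cannot produce exactly $4$ domains on $\Sigma_g$, and it says nothing about which closed surface you land on. The missing ingredient is precisely the paper's hands-on construction: an explicit decomposition of $\Sigma_g$ into $4$ curved polygons whose edges meet transversally four at each vertex, compatible with a choice of fans (equivalently, a four-domain gluing datum in the sense of Theorem \ref{class-hyp-invariants} realizing the prescribed surface), followed by extension of the action from one domain by the reflection principle; and for non-orientable surfaces beyond $\bbR\bbP^2$, the symmetric embedding of $\Sigma_g$ in $\bbR^3$ with an antipodally invariant $8$-domain action whose free $\bbZ_2$-quotient has $4$ domains. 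Your plan names the right tools but does not carry out this step, so as written parts 2) and 3) are not proved.
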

\begin{proof} As for the existence of totally hyperbolic actions on orientable surfaces, 
one can cut a sphere into 8 trigones, or a surface $\Sigma_g$ of genus $g \geq 1$
into 4 pieces, as shown in Figure \ref{fig:S2Sigma2}, turn one of the pieces into a hyperbolic domain and then extend the 
$\bbR^2$-action to the whole surface by reflections. In the case of a non-orientable surface,
one can embed $\Sigma_g$  (where $g\geq 0$) into $\bbR^3$ in such a way that
it is symmetric with respect to the 3 planes $\{x=0\}$, $\{y=0\}$, $\{z=0\}$, and is cut into $8$ polygones
by these planes (each polygone has $g+3$ edges). Like in the case of $\bbS^2$, we turn one of these $(g+3)$-gones into a
hyperbolic domain and then extend it to $\Sigma_g$ by reflections, in such a way that the $\bbR^2$ action is invariant
with respect to the antipodal involution. Hence this action can be projected to the quotient space of $\Sigma_g$ with respect to this 
involution to become a totally hyperbolic action with exactly 4 hyperbolic domains on a non-orientable surface.

\begin{figure}[htb]
\begin{center}
\includegraphics[width=100mm]{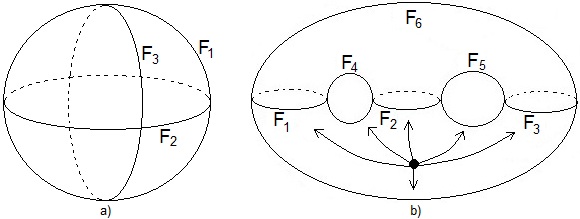}
\caption{Cutting $\bbS^2$ into 8 trigones and cutting $\Sigma_2$ into 4 domains.}
\label{fig:S2Sigma2}
\end{center}
\end{figure}

The minimality of the above numbers is also easy to check, see \cite{ZungMinh_Rn2014}.
\end{proof}

There is a very interesting question of combinatorial nature: what are 
the necessary and sufficient conditions for a graph on a surface $\Sigma$
to be the singular set of a totally hyperbolic action of $\bbR^2$ on $\Sigma$?
We don't know much about this question, but we know that the graph must consist of simple closed curves
which interesect transversally, and besides that there are some other obstructions. For example, 
there does not exist any totally hyperbolic action which contains 3 domains $\cO_1, \cO_2,\cO_3$ 
as in Figure \ref{fig:3domain}a. Indeed, asssume that there is such an action.
Denote by $v_1,v_2,v_3,v_4$ the vectors associated to the curves $F_1,F_2,F_3,F_4$ respectively.
Since $v_1,v_2,v_3$ form the fan of $\cO_1$, we must have: $v_3 = \alpha v_1 + \beta v_2$ for some $\alpha, \beta  < 0$.
Similarly, looking at the fan of $\cO_3$, we have: $v_4 = \gamma  v_1 + \delta  v_2$ for some $\gamma, \delta < 0$.
But looking at the fan of $\cO_2$, we have that either $v_3$ or $v_4$ is a positive linear combination of $v_1$ and $v_2$.
This is contradiction. Similarly, the configuration in Figure \ref{fig:3domain}b is also impossible.

\begin{figure}[htb] 
\begin{center}
\includegraphics[width=120mm]{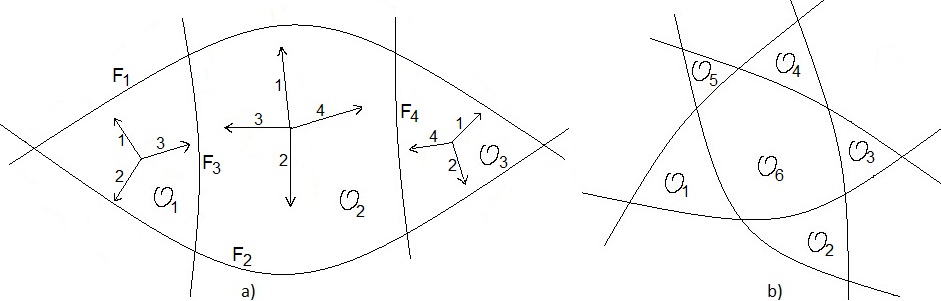}
\caption{Impossible configurations.}
\label{fig:3domain}
\end{center}
\end{figure}

\subsubsection{Classification of totally hyperbolic actions}

As of this writing, we don't know if there is any obstruction to the existence of a totally hyperbolic $\bbR^n$-action on an 
$n$-manifold when $n \geq 3$. We don't even know whether simple manifolds such as 3-dimensional lense spaces
admit a totally hyperbolic $\bbR^n$-action. Nevertheless,
we have the following abstract classification theorem, whose proof is straightforward:

\begin{thm}
\label{class-hyp-invariants}
Nondegenerate totally hyperbolic actions of $\bbR^n$ on a connected $n$-manifold $M^n$
(possibly with boundary and corners) are completely determined by their invariants I1, I2, I3 listed below:

I1) The singular set: smooth invariant hypersurfaces of $M^n$ which intersect transversally and which cut $M^n$
into a finite number of ``curved polytopes'', which are hyperbolic domains of the action.

I2) The family of fans: a fan for each domain, with a correspondence between the cones of the fan and the faces of the closure 
of the domain.  

I3) The monodromy.

In other words, assume that $(M_1^n, \rho_1)$ and $(M_2^n, \rho_2)$ are totally hyperbolic actions,
such that there is a homeomorphism $\varphi : M_1^n \to M_2^n$ which sends hyperbolic domains of $(M_1^n, \rho_1)$
to hyperbolic domains of $(M_2^n, \rho_2)$, such that the monodromy and the associated fans 
are preserved by $\varphi$, then there is a diffeomorphism $\Phi : M_1^n \to M_2^n$ which sends $\rho_1$ to $\rho_2$.
\end{thm}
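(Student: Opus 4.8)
The plan is to promote the combinatorial matching $\varphi$ to an honest equivariant diffeomorphism by reconstructing each action piece by piece from its fans and then gluing the pieces across the singular set, exactly in the spirit of the reconstruction arguments behind Theorem~\ref{thm:classificationByfan}(2) and Theorem~\ref{cor:EquivariantUniqueness}. The only global datum that can obstruct the gluing is the monodromy, and the whole point is that its preservation by $\varphi$ is precisely what forces the construction to be single-valued. First I would fix a regular point $z_1$ in some hyperbolic domain $\cO^{(1)}$ of $M_1^n$ and pick a regular point $z_2$ in the corresponding domain $\cO^{(2)} = \varphi(\cO^{(1)})$ of $M_2^n$. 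Because the action is totally hyperbolic, $Z_\rho$ is trivial and every regular orbit is a copy of $\bbR^n$, so I may set $\Phi(\rho_1(\theta,z_1)) = \rho_2(\theta,z_2)$ for all $\theta \in \bbR^n$, defining $\Phi$ equivariantly on the open orbit $\cO^{(1)}$. Since $\varphi$ identifies the fan of $\bar\cO^{(1)}$ with that of $\bar\cO^{(2)}$, Theorem~\ref{thm:classificationByfan}(2) shows that this base-point-determined map extends by continuity to an equivariant diffeomorphism $\bar\cO^{(1)} \to \bar\cO^{(2)}$ respecting the cell decompositions into orbits.

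Next I would extend $\Phi$ across the codimension-one singular hypersurfaces. Two hyperbolic domains sharing a facet $F_i$ are interchanged by the reflection $\sigma_{v_i}$ of Theorem~\ref{thm:Reflection}, the unique involution preserving $\rho$ and fixing the invariant hypersurface $\cN_{v_i}$; the same holds in $M_2^n$. I would therefore extend $\Phi$ from $\bar\cO^{(1)}$ into a neighbouring domain by the conjugation rule $\Phi \circ \sigma_{v_i}^{(1)} = \sigma_{v_i}^{(2)} \circ \Phi$, iterating this across all facets. The semi-local normal forms of Theorems~\ref{thm:semi-localform} and~\ref{thm:semi-localform2} guarantee that each such extension is smooth across the singular set and agrees with the equivariant identification already produced on the shared facet, while on boundary facets of $M_1^n$ (if any) the extension is simply by continuity, with no reflection. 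Because the facets of adjacent closed domains are glued in the same combinatorial pattern in $M_1^n$ and $M_2^n$, this being part of the data preserved by $\varphi$, each step of the extension is unambiguous domain by domain.

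The main obstacle, and the heart of the argument, is single-valuedness: what I have just described is an analytic continuation of $\Phi$ along paths in $M_1^n$, and to obtain a globally defined map I must verify that traversing any loop returns the same value. If a loop $\gamma$ crosses the hypersurfaces $F_{i_1}, \dots, F_{i_m}$, then continuing $\Phi$ around $\gamma$ replaces the starting orbit by its image under $\sigma_{i_m}\circ\cdots\circ\sigma_{i_1}$, which by the very definition of the monodromy is the translate of the starting orbit by $\mu_1(\gamma)\in\bbR^n$; the identical computation in $M_2^n$ yields the translate by $\mu_2(\gamma)$. Equivariance of the partially built $\Phi$ forces the two continued values to coincide exactly when $\mu_1(\gamma)=\mu_2(\gamma)$, which is precisely the hypothesis that $\varphi$ preserves the monodromy. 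Cleanly phrased: one lifts the whole construction to the universal cover $\widetilde{M_1^n}$, where no monodromy obstruction exists and the continuation is single-valued, and the equality $\mu_1=\mu_2$ makes the lifted map equivariant under the deck group, so that it descends to a well-defined map on $M_1^n$.

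Finally I would check that the resulting $\Phi$ is a diffeomorphism intertwining the two actions. Smoothness and the relation sending $\rho_1$ to $\rho_2$ hold by construction on each domain and across each facet, since both were arranged to be equivariant; bijectivity follows because the inverse map is produced by the very same recipe with the roles of $M_1^n$ and $M_2^n$ exchanged and the same base-point correspondence, so the two maps are mutually inverse. This proves that the triple of invariants I1, I2, I3 is a complete set. I expect the monodromy consistency step to be the only nontrivial point; everything else is a bookkeeping of the local and reflection data already supplied by Theorems~\ref{thm:semi-localform}, \ref{thm:Reflection}, and~\ref{thm:classificationByfan}.
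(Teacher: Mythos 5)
Your proposal is correct and follows exactly the route the paper intends: the paper omits the proof as ``straightforward,'' but the argument it has in mind is the one it uses for Theorem \ref{thm:class-marked}(2a) and Theorem \ref{thm:classificationByfan}(2) --- define $\Phi$ equivariantly on one open orbit, extend by continuity to the closed domain via the matched fans, propagate across facets by the reflection principle, and use preservation of the monodromy (equivalently, your lift to the universal cover) to guarantee single-valuedness. Your identification of the monodromy consistency as the only nontrivial step, and your handling of it, are exactly right.
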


Remark that the family of fans in the above theorem must satisfy the following compatibility condition: all the fans of
the domains with share the same boundary hypersurface must also share the same vector associated to that hypersurface. This 
additional condition is also sufficient for the set of invariants to be realizable.

\subsection{Elbolic actions and toric manifolds}
\label{subsection:elbolic}

\begin{defn} \label{defn:elbolic}
A nondegenerate action $\rho: \bbR^n \times M^n \to M^n$ is called { \bf elbolic}, if it does not admit any
hyperbolic singularity, i.e. all singular points have only elbolic components.
\end{defn}

In the elbolic case, the singular set of the action is of codimension 2, hence the regular part is connected, 
which implies that the action has just one regular orbit. 
The rest of the proof of the following proposition is also straightforward:

\begin{thm}
Let $\rho: \bbR^n \times M^n \to M^n$ be an elbolic action. Then we have: 

1) $\rho$  has exactly one regular open dense orbit in $M^n$.

2) If the action admits a compact orbit of dimension $k$ then the toric degree is
$t(\rho) = (n+k)/2 \geq n/2$. In particular, If $(M^n,\rho)$ admits a fixed point, then the dimension $n$ is even and $t(\rho) = n/2.$

3) The monodromy of $\rho$ is trivial, and the quotient space $Q =M^n/\rho_\bbT$ of $M^n$ by the induced
torus action $\rho_\bbT$ is a contractible manifold with boundary and corners, on which the reduced action $\rho_\bbR$ is 
nondegenerate totally hyperbolic and has only one regular orbit.

\end{thm}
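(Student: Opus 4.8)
The plan is to establish the three assertions in turn, drawing on the normal-form, toric-degree, and reduction results already proved. For assertion~1), I would start from the elbolic hypothesis, which forces $h=0$ at every singular point, so that each singular stratum $\cS_{0,e}$ has dimension $n-2e\le n-2$; hence the singular set $\cS$ has codimension at least two, as already noted in the discussion of the stratification. It follows that the regular part $M^n\setminus\cS$ is connected. Since a regular orbit has full rank $n$ it is open, distinct orbits are disjoint, and a connected set cannot decompose as a disjoint union of more than one nonempty open orbit; therefore there is exactly one regular orbit $\cO$. It is open, and dense because its complement $\cS$ is closed of codimension $\ge 2$ and thus nowhere dense.

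For assertion~2), I would simply combine the total-dimension identity with the toric-degree formula. If $\cO_z$ is a compact orbit of dimension $k$, then $r=0$ and $t=k$, while the elbolic condition gives $h=0$; the identity $n=h+2e+r+t$ then reads $n=2e+k$, so $e=(n-k)/2$. By Theorem~\ref{thm:HERT-toricdegree} the toric degree equals $e+t=(n-k)/2+k=(n+k)/2$, which is $\ge n/2$ because $k\ge 0$. Specializing to a fixed point, $k=0$ forces $n=2e$ to be even and $t(\rho)=n/2$.

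For assertion~3), the key remark is that the monodromy is assembled from the reflections $\sigma_i$ attached to corank-$1$ transversally hyperbolic orbits, and the elbolic hypothesis supplies none of these: since $\cS$ is of codimension $\ge 2$, a generic perturbation of any loop avoids $\cS$ altogether, so $m=0$ in the definition and $\mu\equiv 0$. By Theorem~\ref{thm:TwistingMonodromy} the twisting groups $G_z$ are subgroups of $\mathrm{Im}(\mu)=\{0\}$, so $\rho$ has no twisting, and Theorem~\ref{thm:hypAction-quotientSpace} then guarantees that $Q=M^n/\rho_\bbT$ is a genuine manifold with boundary and corners on which the reduced action $\rho_\bbR$ is totally hyperbolic. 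The regular orbits of $\rho_\bbR$ are precisely the images under the torus quotient of the regular orbits of $\rho$; by assertion~1) there is only one, so $\rho_\bbR$ has a single regular orbit whose closure, being open and dense, is all of $Q$. Thus $Q$ is a single closed hyperbolic domain, and I would finish by invoking the contractibility result for closed hyperbolic domains established in the subsection on their topology.

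The step I expect to require the most care is pinning down that $Q$ is exactly one closed hyperbolic domain and transferring contractibility to it: one must verify that the reduced totally hyperbolic action really has a unique regular orbit whose closure fills $Q$, rather than several domains whose union merely happens to be connected, and that the contractibility statement applies verbatim to the (twisting-free) quotient obtained here. The remaining arguments are routine bookkeeping with the HERT-invariant and the already-proven structure theorems.
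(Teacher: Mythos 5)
Your argument for part 1) is exactly the paper's: the elbolic hypothesis forces the singular set to have codimension at least two, so the regular part is connected and hence a single open dense orbit; the paper then declares the rest "straightforward," and your completions of parts 2) and 3) via the HERT identity $n=h+2e+r+t$ with Theorem \ref{thm:HERT-toricdegree}, the vanishing of the monodromy because generic loops miss the codimension-$\ge 2$ singular set, Theorems \ref{thm:TwistingMonodromy} and \ref{thm:hypAction-quotientSpace}, and the contractibility of closed hyperbolic domains are precisely the intended ones. The proposal is correct and takes essentially the same route as the paper.
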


The case of elbolic actions with a fixed point 
is of special interest in geometry, because of its connection to  toric manifolds. 
Recall that, a {\bf toric manifold} in the sense of complex geometry
is a complex manifold (which is often equipped with a K\"alerian structure, or equivalently, a compatible symplectic
structure) of complex dimension $m$ together with a holomorphic action 
of the complex torus $(\bbC^*)^m$ which has an open dense orbit.
See, e.g., \cite{Audin-Torus2004,Cox-Toric2011} for an introduction to toric manifolds. 
From our point of view, a complex toric manifold has real dimension
$n = 2m$, and the action of $(\bbC^*)^m \cong  \bbR^m \times \bbT^m$ is an elbolic 
nondegenerate $\bbR^{2m}$-action. 
Complex toric manifolds are classfied by their associated fans. So our classification 
of hyperbolic domains (and of the quotient spaces of elbolic actions) are very similar 
to the classification of complex toric manifolds, except that, unlike the complex case,
the vectors of our fans are not required to lie in an integral lattice.

There have been many generalizations of the notion of toric manifolds to the 
case of real manifolds. In particular,  Davis and Januskiewicz introduced
quasi-toric manifolds in 1991 \cite{Davis-Convex1991}. A {\bf quasi-toric
manifold} is a $2m$-dimensional manifold with a almost-everywhere-free action of 
$\bbT^m$ such that the quotient space $M^{2m}/\bbT^m $ is homeomorphic to a simple convex polytope 
(like in the case of complex toric manifolds), and such that
near each fixed point the action is locally isomorphic  to a standard action of $\bbT^m$ on $\bbC^m$. 
Hattori and Masuda introduced torus manifolds in 2003 \cite{Hattori-fan2003}. A {\bf torus manifold} is simply a 
closed connected orientable smooth manifold $M$ of dimension $2m$ with an effective smooth action of $\bbT^m$ having 
a fixed point. Ishida, Fukukawa and Masuda recently introduced the notion of topological toric manifolds in 2010 \cite{Ishida-toric}. A
{\bf topological toric manifold} is a closed smooth manifold $M$ of 
dimension $n = 2m$ with an almost-everywhere-free smooth action of 
$(\bbC^*)^m \cong \bbT^m \times \bbR^m$
which is covered by finitely many invariant open subsets each equivariantly  diffeomorphic to a 
direct sum of complex 1-dimensional linear representation of $\bbT^m \times \bbR^m$. According to the results
of  \cite{Ishida-toric}, topological toric manifolds are \emph{the} right generalization of the notion of toric
manifolds to the category of real manifolds; they have very nice homological 
properties similarly to toric manifolds (see Section 8 of  \cite{Ishida-toric}), and they are also classified by a generalized notion of fans.

It turns out that Ishida--Fukukawa--Masuda's notion of topological toric manifolds is equivalent to
our notion of manifolds admitting an elbolic action whose toric degree is half the dimension of the
manifold. The proof of the following proposition is a simple verification that their conditions and 
our conditions are the same:

\begin{prop}
 A closed manifold $M^{2m}$, together with a smooth  action of $(\bbC^*)^{m} \cong \bbR^m \times \bbT^m$,
is a topological toric manifold if and only if the action (when viewed as an action of $\bbR^{2m}$) 
is elbolic of toric degree $m$.
\end{prop}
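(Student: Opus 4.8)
The plan is to prove the equivalence by comparing local models on both sides, the whole content being a translation dictionary between ``complex $1$-dimensional linear representation of $(\bbC^*)^m$'' and ``elbolic (or regular) local piece of a nondegenerate $\bbR^{2m}$-action''. First I would set up this dictionary. Write $\bbC^* = \bbR_{>0}\times\bbT \cong \exp(\bbR + \sqrt{-1}\,\bbR)$, so that a holomorphic character $(\bbC^*)^m \to \bbC^*$ acting on one coordinate $x \in \bbC$ has the form $x \mapsto \prod_j w_j^{a_j} x$ for an integer weight vector $(a_j)\in\bbZ^m$; writing $w_j = \exp(s_j + \sqrt{-1}\,\theta_j)$ this becomes $x \mapsto \exp(\sum_j a_j s_j)\,\exp(\sqrt{-1}\sum_j a_j \theta_j)\, x$. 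The infinitesimal generators in the $s_j$-- and $\theta_j$--directions are then exactly the radial and rotational vector fields of an elbolic component as in Theorem~\ref{thm:NormalForm_n0}. The crucial observation is that $\bbC$-linearity forces the radial and angular parts on each $\bbC$ summand to be governed by the \emph{same} integer vector $(a_j)$: a nonzero character therefore always carries a genuine rotation, i.e. it produces an \emph{elbolic} component, never a bare hyperbolic factor $x\partial/\partial x$. Hence a direct sum of complex $1$-dimensional representations is precisely a product of elbolic components (on the summands whose coordinate vanishes at the base point) and regular $\bbR^r \times \bbT^t$ directions, with $h = 0$ and, consequently, trivial twisting, since the twisting rank satisfies $k \leq \min(h,t) = 0$.

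For the direction ``topological toric $\Rightarrow$ elbolic of toric degree $m$'', I would argue as follows. The covering by invariant charts equivariantly diffeomorphic to direct sums of complex $1$-dimensional representations shows, via the dictionary, that every point has a neighborhood in which $\rho$ is a product of elbolic and regular pieces; in particular every singular point is nondegenerate with only elbolic components, so $\rho$ is a nondegenerate elbolic action. Almost-everywhere-freeness of the $(\bbC^*)^m$-action forces the generic orbit to be $n$-dimensional with trivial stabilizer, and since $n = 2m$ while the compact factor is $\bbT^m$, a regular orbit is diffeomorphic to $\bbR^m \times \bbT^m$. Thus a regular point has HERT-invariant $(0,0,m,m)$, and Theorem~\ref{thm:HERT-toricdegree} gives toric degree $e + t = m$.

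For the converse ``elbolic of toric degree $m$ $\Rightarrow$ topological toric'', I would invoke the semi-local normal form theorems (Theorems~\ref{thm:semi-localform} and~\ref{thm:semi-localform2}). Since $\rho$ is elbolic we have $h = 0$ everywhere, hence the twisting groups are trivial, and each orbit admits an invariant tubular neighborhood isomorphic to the clean model $B^{2e} \times \bbT^t \times B^r$ carrying the standard elbolic-plus-translation action; by the dictionary this model is exactly a direct sum of complex $1$-dimensional representations of $\bbT^m \times \bbR^m \cong (\bbC^*)^m$, where integrality of the weights is provided by the fact that $\bbT^t$ arises from $\bbR^n / Z_\rho$ with $Z_\rho$ a lattice. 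Compactness of $M^{2m}$ yields a finite subcover by such charts. Finally, toric degree $m$ means the induced torus action is an effective $\bbT^m$, and a regular orbit $\bbR^m \times \bbT^m$ together with the codimension-$2$ singular set, which keeps the regular locus connected and hence produces a single open dense orbit, shows the $(\bbC^*)^m$-action is almost everywhere free. Therefore $M^{2m}$ is a topological toric manifold.

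The only step requiring genuine care---and the main obstacle, though it is elementary---is the dictionary of the first paragraph: one must check both that a complex $1$-dimensional representation can never produce a hyperbolic component (this is where $\bbC$-linearity, forcing proportional radial and angular weights, is used) and that, conversely, the standard elbolic normal form is $\bbC$-linear with \emph{integer} weights matching the topological-toric charts. Alongside this, a little bookkeeping is needed to reconcile the two a priori different notions of ``toric degree'' and to confirm that the ``almost-everywhere-free with finitely many charts'' hypotheses and the ``single open dense orbit'' conclusion correspond exactly under the dictionary; but, as the text indicates, none of this is deep.
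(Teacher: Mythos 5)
Your proposal is correct and takes essentially the same route as the paper, which itself only remarks that the proof ``is a simple verification that their conditions and our conditions are the same''; your translation dictionary between complex $1$-dimensional summands and elbolic components, plus the HERT/toric-degree bookkeeping, is exactly that verification written out. The only caveat is that Ishida--Fukukawa--Masuda allow \emph{smooth} (not holomorphic) characters of $(\bbC^*)^m$, for which the radial weight is a real (or complex) vector not necessarily proportional to the integral angular weight; this does not affect your conclusion, since what the elbolic normal form actually requires is only that the rotational part on each summand be nontrivial and integral, not that it coincide with the radial part.
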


In \cite{Ishida-toric}, topological toric manifolds are are classified by the so-called 
{\bf complete non-singular topological fans}, which encode the following data: the complete fan in $\bbR^n$
associated to the reduced totally hyperbolic action $\rho_\bbR$ on the quotient space $Q = M^{2m}/\rho_\bbT$,
and the vector couples associated to corank-2 transversally elbolic orbits (see Definition \ref{def:AssociatedVector}).
These vector couples tell us how to build back $(M^{2m},\rho)$ from $(Q, \rho_\bbR)$.
So one can recover Ishida--Fukukawa--Masuda's classification theorem for topological toric manifolds
from our point of view of general nondegenerate $\bbR^n$-actions on $n$-manfiolds: one can prove this theorem
in the same way as the proof of Theorem \ref{thm:classificationByfan}, by gluing together local pieces equipped with
canonical coordinates and adapted bases. Another very interesting proof, based on the quotient method,
which represents the  topological toric manifold $(M^{2m},\rho)$ as a 
quotient of another global object, is given in \cite{Ishida-toric}. (The quotient method is also discussed in
\cite{Audin-Torus2004,Cox-Toric2011} for the construction of toric manifolds).

\vspace{0.5cm}

\end{document}